\theoremstyle{definition}
\newtheorem{definition}{Definition}
\newtheorem{example}{Example}
\newtheorem{remark}{Remark}
\theoremstyle{plain}
\newtheorem{theorem}{Theorem}
\newtheorem{corollary}{Corollary}
\newtheorem{proposition}{Proposition}
\newtheorem{lemma}{Lemma}
\newcommand{\lra}{\longrightarrow}
\newcommand{\ie}{{\emph{i.e, }}}
\newcommand{\noi}{{\noindent}}
\newcommand{\su} {\mathbf{U}}
\newcommand{\gl} {\mathbf{GL}}
\newcommand{\Z}{\mathbb{Z}}
\newcommand{\C}{\mathbb{C}}
\newcommand{\R}{\mathbb{R}}
\newcommand{\CP}{{\mathbb P}_{\C}}
\newcommand{\T} {\mathbb{T}}
\newcommand{\vdu}{\mathcal{N}}
\newcommand{\vd} {\mathcal {M}}
\newcommand{\lv} {\mathbf {LVM}}
\newcommand{\lam}{\pmb{\lambda}}
\newcommand{\Lam}{\pmb{\Lambda}}
\newcommand{\ZLam}{Z^{^\C}(\Lam)}
\newcommand{\s}{\mathbb{S}}
\newcommand{\disc}{\mathbb{D}}
\newcommand{\p} {\mathcal{P}}
\newcommand{\te}{\tilde{\mathcal{E}}}
\newcommand{\gato}{\mathop{\pmb{\#}}\limits}
\title[LVM Manifolds]{Intersection of quadrics in $\C^n$,  moment-angle manifolds, complex manifolds and convex polytopes.}
\author{Alberto Verjovsky}
\address{Instituto de Matem\'{a}ticas, Unidad Cuernavaca, Universidad
Nacional Aut\'{o}noma de M\'{e}xico, Av. Universidad S/N, C.P. 62210
Cuernavaca, Morelos, M\'{e}xico}
\email{alberto@matcuer.unam.mx}
\date{}
\begin{document}

\begin{abstract} These are notes for the 
\emph{CIME school on Complex non-K\"ahler geometry}
from July $9^{th}$ to July $13^{th}$ of 2018 in Cetraro, Italy. It is an overview of different properties of a class
of non-K\"ahler compact complex manifolds called LVMB manifolds, obtained as the Hausdorff space of leaves
of systems of commuting complex linear equations in an open set in complex projective space $\CP^{n-1}$.

\end{abstract}

\maketitle
\tableofcontents

\section{Introduction}

The origin of the so-called LVM manifolds is the paper \cite{LdMV} by Santiago L\'opez de Medrano and the author of these notes. There they define and study a new infinite family of compact complex manifolds 
  (a finite number of diffeomorphism classes for each dimension) which, except for a series of cases corresponding to complex tori, are not symplectic.
  The construction is based on the following principle discovered by Andr\'e Heafliger: \\
  
  \noi \emph{If $\mathcal F$ is a holomorphic foliation of complex codimension $m$ on a complex manifold $M$ with $m\leq{n}=dim_\C\,{M}$ 
  and $\Sigma$ is a $C^{\infty}$
  manifold of real dimension $2m$ which is transversal to $\mathcal F$ then $\Sigma$ is a complex manifold. Indeed
it suffices to provide $\Sigma$ with a holomorphic atlas from transversals 
to the plaques of a foliation atlas of $\mathcal F$.}\\

\noi The essential point is that one can obtain {\it non-algebraic} complex manifolds as the space of leaves of holomorphic foliations
of complex \emph{algebraic }manifolds, as long as the space of leaves is Hausdorff. In particular the foliation could be given by a 
holomorphic action of a complex Lie group. In fact the construction in  \cite{LdMV} uses an explicit linear action of $\C$ in $\C^n$ ($n\geq3$) which descends to a projective linear action on complex projective space $\CP^{n-1}$  and there is an open set $\mathcal V\subset\CP^{n-1}$ which is invariant under the action and such that every leaf (orbit) of the action is an immersed copy of $\C$ or $\C^*$; furthermore, the space of leaves of the foliation by orbits of $\mathcal{V}$ is compact and Hausdorff and therefore it is a compact complex manifold. 
In some sense the set 
$\mathcal{V}$ is the union of ``semi-stable orbits''  (or Siegel leaves) of the action in the sense 
of \emph{Geometric Invariant Theory} (GIT) and is the complement of a union of projective subspaces of different dimensions. 
In fact $\mathcal{V}$ is the image under the canonical projection $\C^n\to\CP^{n-1}$ of the set of orbits in $\C^n$ that do not accumulate to the origin (a sort of Kempf-Ness condition). 
In a very pretty paper \cite{CFZ} St\'ephanie Cupit-Foutou
and Dan Zaffran describe how to construct the generalized family of LVMB manifolds from certain Geometric Invariant Theory (GIT) quotients. 

\noi They show that Bosio's generalization parallels exactly the extension obtained by Mumford's GIT to the more general GIT developed by Bia\l{}ynicki-Birula and \'Swiecicka.

\noi The article \cite{LdMV} is a continuation of the foundational papers by J. Girbau, A. Haefliger et D. Sundararaman \cite{GHS}
about the deformations of foliations which are transversally holomorphic. In fact,
  Andr\'e Haefliger used these results to study in
  \cite{Ha} the deformations of Hopf manifolds which are realized as the space of leaves of a foliation. 
  
\noi Another continuation of that work was obtained by Jean-Jacques Loeb and Marcel Nicolau which uses the foliation in order to describe the deformations of the Calabi-Eckmann  manifolds \cite{LN}.
 
 \noi The initial construction in \cite{LdMV} can be extended to the case of projective linear actions of $\C^{m}$ for any positive integer $m$  
 on $\CP^n$ as long as $n>2m$. Then, under two assumptions related to the $n\times{m}$ complex matrix  $\Lam$  of eigenvalues of the linear flows which determine the action one obtains new compact manifolds. These assumptions are that $\Lam$ be admissible \ie it satisfies
the \emph{weak hyperbolicity and Siegel conditions}.
 This was achieved by Laurent Meersseman who studies in detail several aspects of the compact manifolds in \cite{Me}. These compact complex manifolds $N_{\Lam}$ are now known as LVM manifolds.  A very interesting property of these manifolds when $m>1$ is their very rich topology. For instance, any finite abelian group is a summand of the homology group of one of these manifolds. In particular some of the manifolds have arbitrarily large torsion in its homology groups.
In \cite{Bosio}, Fr\'ed\'eric Bosio gives a generalization of the construction of LVM manifolds.  
The idea is to relax the weak hyperbolicity and Siegel conditions $\Lam$ and to look for all the subsets $\mathcal{S}$ of $\C^n$ such that action \eqref{actionLVM} in section \ref{linearactions}
is free and proper. The manifolds that are either LVM manifolds or the generalization by Fr\'ed\'eric Bosio are now known as LVMB manifolds. 
The manifolds $N_{\Lam}$ are obtained as the orbit space of a free action of the circle on an odd-dimensional manifold $M_1(\Lam)$ contained in the sphere $\mathbb{S}^{2n-1}$
which is the intersection of homogeneous quadratic equations and called \emph{moment-angle manifold}. Santiago L\'opez de Medrano has studied deeply these intersection of quadrics in several papers by himself and some collaborators 
\cite{ GLdM, GoLdM, GoLdM2, LdM1, LdM2, LdM4, LdM5} in particular the paper \cite{GLdM} by Samuel  Gitler and Santiago L\'opez de Medrano has been a great advance to understand the topology of moment-angle manifolds.\\

\noi The LVM manifolds are not simplectic (except when $2m+1=n$ when the manifolds are compact complex tori) however under an arbitrarily small deformation
 of $\Lam$ the manifolds $N_{\Lam}$ fiber \`a la Seifert-Orlik over a toric manifold (or orbifold) with fiber a compact complex
 torus. This is due to the following fact: 
 
 \noi The complex manifolds $N_{\Lam}$ of complex dimension $n-m-1$ admit a locally-free holomorphic action of $\C^m$ (recall that $n>2m$); although $N_{\Lam}$ is not K\"ahler, the foliation 
 ${\mathcal G}_{\Lam}$ on $N_{\Lam}$ by the leaves of the action is transversally K\"ahler, in particular  
 ${\mathcal G}_{\Lam}$ is a Riemannian foliation and thus admits a transverse invariant volume form.
 In particular either has a Zariski open set of noncompact leaves or else all are compact complex tori.
 
 \noi If $\Lam$ satisfies a rationality condition called condition  
 {\bf K} in definition \ref{System(K)} then all the leaves of  ${\mathcal G}_{\Lam}$ are compact, in fact they are complex tori
 ${\C^m}/\Gamma$ ($\Gamma\cong\Z^{2m}$) and the quotient is Hausdorff. Hence it is a compact complex manifold (or an orbifold). 
 
 \noi Furthermore, the rationality conditions {\bf K} in definition \ref{System(K)} imply that the transversal K\"ahler form is ``integral'' (a sort of transversal Kodaira embedding condition) which makes this quotient an algebraic manifold or variety with quotient singularities of dimension $n-2m-1$. In fact this quotient admits an action of $(\C^*)^{n-2m-1}$ with a principal
 dense orbit so it is a toric manifold $X(\Delta)$ where $\Delta=\Delta_{\Lam}$ is the corresponding fan which depends on $\Lam$.

\noi  The  reciprocal is true as shown by the author and Laurent Meersseman \cite{MV}: If $X(\Delta)$ is a toric variety with at most singularities which are quotients then there exists an admissible configuration $\Lam$ which satisfies conditions {\bf K} in definition \ref{System(K)} and therefore any toric variety with at most quotient singularities is obtained by the quotient of a LVM manifold
  by a holomorphic locally-free action of a compact complex torus. In this paper one uses Delzant construction over a rational simple convex polytope which is naturally associated to the convex hull $\mathcal{H}(\Lam)$ of the configuration.
When the leaves of  ${\mathcal G}_{\Lam}$ are not compact the leaf space is not Hausdorff and one has a ``noncommutative'' complex manifold in the sense of Alain Connes \cite{Connes, Connes2}.  This happens when the convex polytope $\mathcal{H}_{\Lam}$ is not rational and a convex polytope associated to the foliation 
${\mathcal G}_{\Lam}$ is non-rational. There are important reasons to consider \emph{nonrational} polytopes.
For instance, toric varieties corresponding to simple rational  polytopes are rigid (\ie they cannot be deformed) whereas simple
rational polytopes can be perturbed simply by moving the vertices to non-rational simple polytopes. The problem of associating to a non-rational polytope a geometric space of some kind is an old one and emerges in different subjects, including symplectic geometry, via the convexity theorem and the Delzant construction. In fact it also is connected with the combinatorics of convex polytopes see for instance Stanley \cite{St} where a link between rational simplicial polytopes and the geometry and topology of toric varieties is explained following earlier work of Peter McMullen \cite{McM}
and  Richard P. Stanley \cite{Sta}. There are important reasons to consider \emph{nonrational} simple polytopes and its and give them an interpretation in relation to toric geometry. In this respect the 
article by Elisa Prato \cite{Pr} is the first work that addresses this problem via symplectic geometry and she defines the notion of
\emph{quasifolds}, which is a generalization of the notion of orbifolds and associates to a non-rational simple polytope a quasifold.  In a joint paper \cite{BP2} Elisa Prato and Fiammetta Battaglia generalize the notion of toric variety and associate to each non-rational simplicial polytope a K\"ahler quasifold and compute the Betti numbers (see also \cite{BP2}). The paper by Fiammetta Battaglia and Dan Zaffran
\cite{BaZ} uses also the leaf space of the foliation ${\mathcal G}_{\Lam}$ of the manifolds $N_{\Lam}$ to have either toric orbifolds in the rational case or quasifolds in the non-rational case. Thus the papers \cite{BaZ, BP1, BP2, Pr} are foundational papers in the theory of non-rational polytopes. 

\noi In \cite{KLMV} a different interpretation as \emph{non-commutative toric varieties} is given of the pair 
$(N_{\Lam}, \mathcal{G}_{\Lam})$ in the case $\Lam$ does not satisfy the rational condition ({\bf K}). Non-commutative toric varieties are to toric varieties what non-commutative
tori are to tori and, as such, they can be interpreted in multiple ways: As (noncommutative)
topological spaces, they are $C^*$-algebras associated to dense foliations,
that is to say, deformations of the commutative $C^*$-algebras associated to
tori in the spirit of Alain Connes. 

\noi However, while non-commutative tori correspond to linear foliations (deformations) on
classical tori, non-commutative toric varieties correspond to the holomorphic foliation  ${\mathcal{G}}_{\Lam})$ on 
$N_{\Lam}$. 

\noi The manifolds $N_{\Lam}$ are certain intersections of real quadrics in complex projective
spaces of a very explicit nature. The homotopy type of LVM-manifolds is
described by moment angle complexes.\\

\noi The paper of Fr\'ed\'eric Bosio and Laurent Meerseman \cite{BM} is a beautiful paper with many ideas and interconnection of several branches of mathematics. In fact the title and subject of the present notes is very much inspired on this paper. 

\noi They do a deep study of the properties of of LVM
manifolds and also made significant advances in the study of the topology of the
intersection of $k$ homogeneous quadrics. 

\noi In particular, the question of whether they
are always connected sums of sphere products was considered: they produced new
examples for any $k$ which are so, but also showed how to construct many more
cases where they are not. 

\noi Independently in \cite{DJ} Michael W. Davis and Janusz Januszkiewicz had
introduced new constructions, part of
which essentially coincide with those above, where the main
objective was the study of some important quotients of them
(different from the ones mentioned above) which they called
\emph{toric manifolds} (in contrast with toric \emph{varieties} that are algebraic). These toric manifolds are topological 
analogues of toric varieties in algebraic geometry. They
are even dimensional manifolds with an effective action of an $n$-dimensional compact torus $({\s^1})^{^n}$, there is a kind of ``moment map'' and the
orbit space is a simple convex polytope.
One can do combinatorics on the quotient polytope to obtain information on the manifold above. For example one can compute the Euler characteristic and describe the cohomology ring of the manifold in terms of the polytope.
The paper by Davis and Januszkiewicz originated an important
development through the work of many authors, for which we refer the reader
to the book of Victor M. Buchstaber and Taras E.  Panov  \cite{BP}.  A line of research derived from \cite{DJ} is the paper \cite{BBCG} where a far-reaching generalization is made and a general splitting
formula is derived that provides a very good geometric tool for understanding the
relations among the homology groups of different spaces.

\noi There is a principal circle bundle $p:M_1(\Lam)\to{N_{\Lam}}$   over each manifold $N_{\Lam}$. 
The manifold $M_1(\Lam)$ is a smooth manifold of real odd dimension $2n-2m-1$ called \emph{moment-angle manifold}. The manifold  $M_1(\Lam)$ admits an action of the torus $({\mathbb{S}}^1)^n$. The orbit space of this action is a convex polytope
of dimension $3n-2m-1$ thus there is a ``moment map''. In addition  $M_1(\Lam)$ has in a contact structure and in many cases
is an open book with a very interesting structure \cite{BLdMV}.

\noi In the present notes we present various results and properties of LVMB manifolds:
 
 \noindent I. Complex analytic proprieties  
 
\noindent II. The relation between these manifolds and toric manifolds and orbifolds with quotient singularities.
 
\noindent III. Their topology and geometric structures
 
\noi The main body of the results presented in these notes are in part contained in the papers \cite{BLdMV, BM, LdMV, Me, MV, MV2}.

 \section{Singular holomorphic foliations of $\C^n$ and $\CP^{n-1}$ given by linear holomorphic actions of $\C^m$ on $\C^n$ ($n>2m$)}\label{linearactions}
Let $M$ be a complex manifold of complex dimension $n$ and $0\leq{p}\leq{n}$.
  
\begin{definition}\label{hfoliation} A holomorphic  foliation $\mathcal F$ of complex dimension $p$ (or complex codimension $n-p$) is given 
by a {\em foliated atlas} $(U_\alpha, \Phi_\alpha)_{_{\alpha \in \mathcal{I}}}$ where $U_\alpha$ are open in $M$,
$\left\{U_i\right\}_{i\in\mathcal I}$ is an open covering  of $M$ 
and $\Phi_i:U_i\to {V_i}\subset\C^{n-p}\times\C^{p}=\C^n$ are homeomorphisms 
such that for overlapping pairs $U_i$, $U_j$ the transition functions 
$\Phi_{ij}=\Phi_j{\Phi_i}^{-1}:\Phi_i(U_i\cap{U_j})\to\Phi_j(U_i\cap{U_j})$
are of the form:
\[
\Phi_{ij}(x,y)=(\Phi^1_{ij}(x),\Phi^2_{ij}(x,y))\quad x\in\C^{n-p}, \quad y\in \C^p \tag{\color{red}{$\frak{H}$}}
\]
where $\Phi^1_{ij}$ and $\Phi^2_{ij}$ are holomorphic and $\Phi^1_{ij}$ is a local biholomorphism between open sets of 
$\C^{n-p}$ and $\Phi^2_{ij}$ is a local holomorphic submersion from an open set in $\C^n$ onto an open set of $\C^p$.
\end{definition}

\begin{definition}\label{definition:charts} The atlas $(U_\alpha, \Phi_\alpha)_{_{\alpha \in \mathcal{I}}}$ is called a \emph{holomorphic foliation atlas} and the maps $\Phi_\alpha$
are called \emph{holomorphic flow boxes} or \emph{holomorphic foliation charts}.
The sets of the form $\Phi_\alpha^{-1}(\left\{x\right\}\times{\C^p}),\,x\in\C^{n-p},$ \ie the set of points whose coordinates 
$(X,Y)$ with $X=(x_1,\cdots,x_{n-p})\in\C^{n-p}$ , $Y=(y_1,\cdots,y_p)\in\C^p$ satisfy $X=C$ for some constant vector $C\in\C^{n-p}$ are called {\em plaques}. Condition (\textcolor{red}{$\frak H$})
says that the plaques glue together to form complex submanifolds  
called {\em leaves}, which are immersed in $M$ (not necessarily properly immersed). If  $(U_\alpha, \varphi_\alpha)_{_{\alpha \in \mathcal{I}}}$ is a complex atlas as in definition \ref{hfoliation} the leaves are immersed $p$-dimensional holomorphic submanifolds of $W$.
\end{definition}

The family of biholomorphisms $\{\Phi^1_{ij}\}_{i\in{I}}$ defines a groupoid called the {\it transverse holonomy groupoid}.
It can be used to define noncommutative toric varieties \cite{BaZ, KLMV}.
\bigskip

Let $m$ and $n$ be two positive natural numbers such that $n>2m$.
Let $\Lam:=(\Lambda_1,\cdots,\Lambda_n)$ be an $n$-tuple of vectors in $\C^m$ where
$\Lambda_i=(\lambda_i^1,\cdots,\lambda_i^m)$ for $i=1,\cdots, n$.

\bigskip
To the configuration $(\Lambda_1,\cdots,\Lambda_n)$ we can associate the linear (singular) foliation
of $\C^n$ generated by the $m$ holomorphic linear commuting vector fields ($1\leq{j}\leq{m}$)

\[
 \C^n\ni(z_1,\cdots, z_n) \longmapsto \overset{n}{\underset{i=1}\sum}\,\lambda_i^j {z_i} \frac{\partial}{{\partial}z_i}
\]

\[
\frac{d\mathbf{Z}}{dT}=\begin{bmatrix}
    \lambda_1^j       & 0 &0 & \dots & 0 \\
  0       & \lambda_2^j  & 0 & \dots & 0 \\
    \hdotsfor{5} \\
   0      &0 &  & \dots &\lambda_n^j 
\end{bmatrix}\mathbf{Z},  \quad \quad \ie \quad\quad \frac{d\mathbf{Z}}{dT}=\Lambda_i\mathbf{Z}, 
\tag{\textcolor{blue}{System of linear equations}}
\]  

\bigskip
$$
\mathbf Z=\begin{bmatrix} z_1\\ \vdots	\\	 z_n\end{bmatrix},\quad j=1,\cdots,m,\quad T\in\C
$$

Let us start with the construction of an infinite family of compact complex manifolds.
Let $m$ be a positive integer and $n$  and integer such that $n>2m$.

\begin{definition}\label{admissible}  Let $\Lam=(\Lambda_1,\hdots,\Lambda_{n})$ 
be a configuration of $n$ vectors in ${\mathbb C}^m$. Let 
${\mathcal H}(\Lambda_1,\cdots,\Lambda_n)$
be the convex hull of $(\Lambda_1,\cdots,\Lambda_n)$.

We say that $\Lam$ is {\em admissible} if:
\begin{enumerate}
\item  ({\bf SC}) The Siegel  condition: $0$ belongs to the convex hull $\mathcal H({\Lam}):=\mathcal H(\Lambda_1,\hdots,\Lambda_{n})$ 
of $(\Lambda_1,\hdots,\Lambda_{n})$ in $\mathbb C^m\simeq \mathbb R^{2m}$.

\item  ({\bf WH}) The weak hyperbolicity condition: for every $2m$-tuple of integers $i_1,\cdots, i_{2m}$
such that $1\leq i_1< \cdots < i_{2m}\leq n$ 
we have 
$0\notin\mathcal{H}(\Lambda_{i_1}, \cdots,\Lambda_{i_{2m}})$ 
\end{enumerate}
\end{definition}

This definition can be reformulated geometrically in the following way: the
convex polytope ${\mathcal H}(\Lambda_1,\cdots,\Lambda_n)$ contains $0$, but neither external nor internal
facet of this polytope (that is to say hyperplane passing through $2m$ vertices)
contains $0$. An admissible configuration satisfies the following regularity property

\begin{lemma}
Let $\Lambda_i'=(\Lambda_i,1)\in{\C^{n+1}}$, for $i\in\{1,\cdots,n\}$.
For all set of
integers $J\subset\{1,\cdots,n\}$ such that $0\in{\mathcal H}((\Lambda_j)_{j\in{J}})$ the complex rank of
the matrix whose columns are the vectors $(\Lambda_j)_{j\in{J}}$ is equal to $m+1$, therefore it is of
maximal rank.
\end{lemma}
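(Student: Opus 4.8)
The plan is to show that the matrix $A_J$ whose columns are the augmented vectors $\Lambda_j'=(\Lambda_j,1)\in\C^{m+1}$, $j\in J$, has full rank $m+1$, by deriving a contradiction with the weak hyperbolicity condition (\textbf{WH}) should it fail. So suppose $\operatorname{rank}_{\C} A_J\leq m$. Since $A_J$ has $m+1$ rows, these rows are $\C$-linearly dependent, so there is a nonzero $(a,b)=(a_1,\dots,a_m,b)\in\C^m\times\C$ with $a\cdot\Lambda_j+b=0$ for every $j\in J$, where $a\cdot\Lambda_j:=\sum_{k=1}^{m}a_k\lambda_j^k$. Equivalently, all the points $\Lambda_j$, $j\in J$, lie on the complex affine hyperplane $\{z\in\C^m:a\cdot z=-b\}$.

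The first real step is to use the Siegel condition, in the form $0\in\mathcal H((\Lambda_j)_{j\in J})$, to eliminate $b$. Choose real numbers $t_j\geq 0$, $j\in J$, with $\sum_{j\in J}t_j=1$ and $\sum_{j\in J}t_j\Lambda_j=0$. Applying the $\C$-linear functional $z\mapsto a\cdot z$ and using linearity, $0=a\cdot 0=\sum_{j\in J}t_j\,(a\cdot\Lambda_j)=\sum_{j\in J}t_j(-b)=-b$, so $b=0$ and hence $a\neq 0$. Therefore every $\Lambda_j$, $j\in J$, lies on the \emph{linear} hyperplane $H=\{z\in\C^m:a\cdot z=0\}$, which has complex dimension $m-1$ and therefore real dimension $2m-2$.

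The final step combines Carath\'eodory's theorem with (\textbf{WH}). Regarding $H$ as a real vector space $\cong\R^{2m-2}$ and applying Carath\'eodory's theorem to $0\in\mathcal H((\Lambda_j)_{j\in J})\subseteq H$, there is a subset $J'\subseteq J$ with $|J'|\leq 2m-1$ and $0\in\mathcal H((\Lambda_j)_{j\in J'})$. Since $n>2m$, the set $J'$ can be enlarged to a subset $J''\subseteq\{1,\dots,n\}$ of cardinality exactly $2m$, and then $0\in\mathcal H((\Lambda_j)_{j\in J''})$ as well, contradicting (\textbf{WH}). Hence $\operatorname{rank}_{\C} A_J=m+1$, which is the maximum possible for an $(m+1)\times|J|$ matrix; in particular $|J|\geq m+1$.

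I expect the only delicate point — and the one place where admissibility genuinely enters — to be the dimension count: a \emph{complex} linear dependence among the $\Lambda_j'$ confines the $\Lambda_j$ to a complex hyperplane, which has real codimension $2$ in $\C^m$, so Carath\'eodory needs at most $2m-1<2m$ of the vertices, exactly the regime controlled by (\textbf{WH}); meanwhile the Siegel condition is what rules out the affine (non-linear) alternative $b\neq 0$, since an affine hyperplane not through the origin already cannot meet $\mathcal H((\Lambda_j)_{j\in J})\ni 0$. If the hyperplane were only of real codimension $1$ this comparison would collapse, so it is this step, rather than any computation, that must be gotten right.
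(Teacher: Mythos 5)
Your proof is correct and complete. The paper states this lemma without a proof (it is quoted later as Lemma 1.1 of \cite{MV}), so there is nothing in the text to compare against; your route --- a nonzero complex row relation $(a,b)$ on the augmented matrix, with the hypothesis $0\in\mathcal H((\Lambda_j)_{j\in J})$ forcing $b=0$ so that the $\Lambda_j$ lie in a complex \emph{linear} hyperplane of real dimension $2m-2$, then Carath\'eodory combined with \textbf{(WH)} after padding the Carath\'eodory subset up to cardinality $2m$ (which uses $n>2m$) --- is the natural one, and you correctly identify the crux: a complex hyperplane has real codimension two, so Carath\'eodory needs at most $2m-1$ points, which is exactly the range that weak hyperbolicity forbids.
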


\begin{figure}

\includegraphics[scale=0.25]{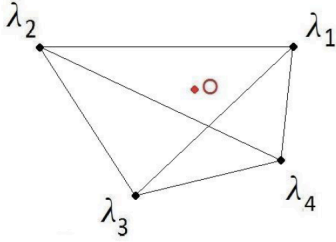}
\caption{Quadrilateral in $\C$}
\end{figure}

One considers the holomorphic (singular) foliation $\mathcal F$ in projective space $\CP^{n-1}$ given by the orbits of the linear action of $\C^m$ on $\C^n$ induced by the linear vector fields (1).

\begin{equation}
\label{actionLVM}
(T,[z])\in\mathbb C^m\times\CP^{n-1}\longmapsto [z_1\cdot\exp\langle \Lambda_1,T\rangle,\hdots, z_n\cdot\exp\langle \Lambda_n,T\rangle]\in\CP^{n-1}\tag{\color{blue}{1}}
\end{equation}

\noi where  $T=(t_1,\cdots,t_m)\in\C^m$, $[z_1,\cdots,z_n]$ are projective coordinates and
 $\langle -,-\rangle$ is inner product $\langle Z, W\rangle=\overset{n}{\underset{i=1}\sum}z_iw_i$  
 
\noi  One can lift this foliation to a foliation $\tilde{\mathcal F}$ in $\mathbb C^n$
given by the linear action
\[
(T,z)\in\mathbb C^m\times\mathbb C^{n}\longmapsto (z_1\cdot\exp\langle \Lambda_1,T\rangle,\hdots, z_n\cdot\exp\langle \Lambda_n,T\rangle)\in\mathbb C^{n}. \label{actionS}\tag{\color{blue}{$\frak{S}$}}
\]
The so-defined foliation is singular, in particular 0 is a singular point.
 The behavior in the neighborhood of 0 determines two different sorts of leaves.
\bigskip

\begin{definition} {\bf (Poincar\'e and Siegel leaves)} Let $L$ be a leaf of the previous foliation. If 0 belongs to the closure
of $L$, we say that $L$ is a {\em Poincar\'e} leaf. In the opposite case, we talk of a {\em Siegel
leaf.}
\end{definition}
\noi If $L$ is a Siegel leaf then the distance from that leaf to the origin is positive and one can show that there exists
a unique point $\mathbf z=(z_1,\cdots,z_n)\in{L}$ which minimizes the distance to the origin and this point satisfies
\[
\sum_{i=1}^n\Lambda_i\vert z_i\vert ^2=0  \tag{2}
\]
This is because the leaf $L_W$ through the point $W=(w_1,\cdots,w_n)$ in the Siegel domain is the Riemann surface
\[
L_W=\left\{(w_1\cdot\exp\langle \Lambda_1,T\rangle,\hdots, w_n\cdot\exp\langle \Lambda_n,T\rangle)\in\mathbb C^{n}\,\,|\,\,
T\in\C^m\right\}
\]    
 and to minimize the (square of the) distance to the  origin we see that Lagrange multipliers imply that the complex line from the origin to a point that minimizes the square of the distance must be orthogonal to the orbit at the point.

\medskip

\noi One has the following dichotomy: 
\begin{enumerate}[label=\roman*]
\item) \, If $0\notin\mathcal H(\Lambda_1,\hdots,\Lambda_{n})$ then every leaf is of Poincar\'e type.
\medskip
\item) \, The set of Siegel leaves is nonempty if and only if $0\in\mathcal H(\Lambda_1,\hdots,\Lambda_{n})$
\end{enumerate}

\begin{figure}
\includegraphics[scale=0.25]{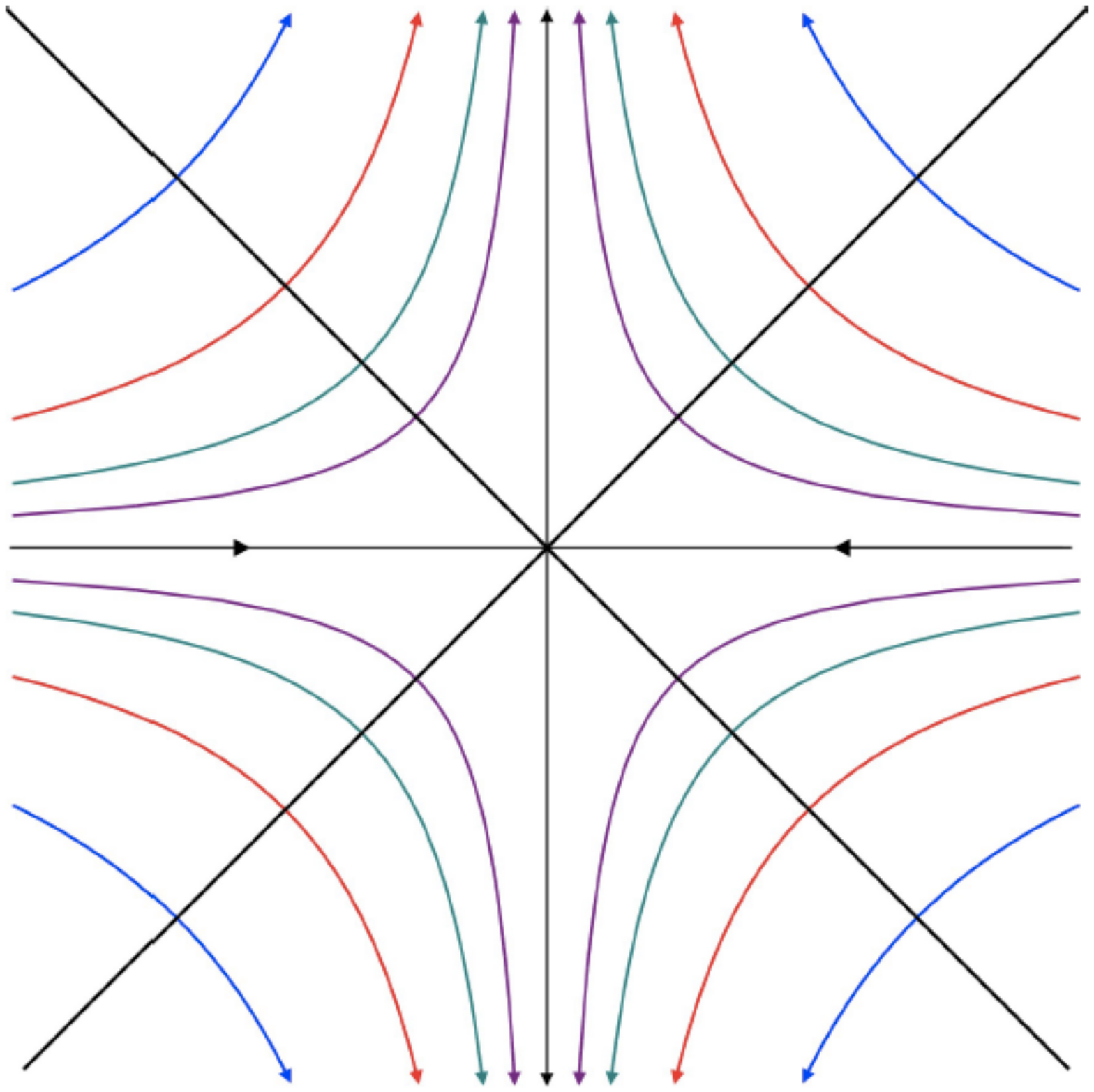}
\includegraphics[scale=0.30]{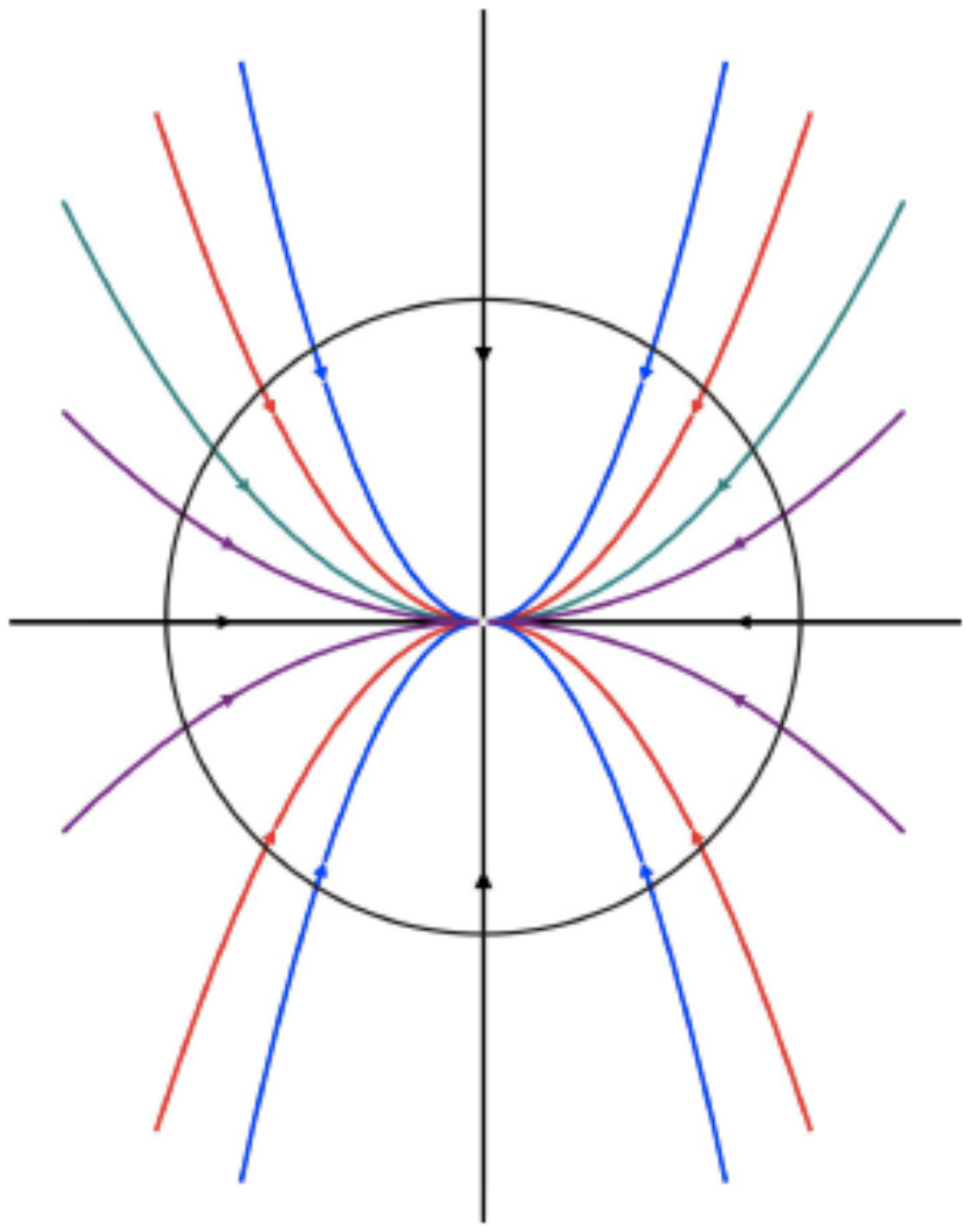}
\caption{Siegel and Poincar\'e leaves}
\end{figure}

\noi For $z=(z_1,\cdots,z_n)\in\C^n$ let $I_z\subset\{1,\cdots,n\}$ defined as follows $I_z=\{j:\,\,z_j\neq0\}$ and
let $\Lambda_{I_{z}}=\{\Lambda_j\,:\,\,j\in{I_z} \}$. One defines:  

\[
\quad\quad\quad\quad\quad\mathcal{S}={\mathcal{S}_{\Lam}}=
\{z\in\mathbb C^n \quad\vert\quad 0\in{\mathcal H}(\Lambda_{I_z}) \}, \hskip.5cm \textcolor{blue}{\mathcal{S}\,\, \rm{as\, the\, complement\,of\,subspaces\, in\, \C^n}}
\label{Scomplement_subspaces}
\tag{\textcolor{blue}{3}}
\]

\begin{definition} We define $\mathcal V={\mathcal V}(\Lam)\subset{\mathbb P}^{n-1}$ to be the image of $\mathcal{S}$ in ${\mathbb P}^{n-1}$ under the canonical projection $\pi:\C^n-\{0\}\to{\mathbb P}^{n-1}$. \label{Siegel-projective}
\end{definition}
\bigskip

\noi Let
\[
\mathcal T={\mathcal T}(\Lam)  =\{z\in\mathbb C^n\quad\vert\quad z\neq0,\,\, \sum_{i=1}^n\Lambda_i\vert z_i\vert ^2=0\,\}
\label{transversal} 
\tag{\textcolor{red}{T}}
\]
then $\mathcal T$ is the set of points that realize the minimum distance in each Siegel leaf. Then $\mathcal T$
meets ever Siegel leaf in exactly one point and it meets each leaf transversally. 
\noi We have that $\overline{\mathcal T}=\mathcal T\cup\{0\}$ is a singular manifold with an isolated singularity at the origin.
 Let

\[
N=N_{\Lam}=\{[z]\in\mathbb P^{n-1}\quad\vert\quad \sum_{i=1}^n\Lambda_i\vert z_i\vert ^2=0\} \label{LVM-definition} 
\quad\quad\quad\quad\quad \quad\quad\quad\quad \textcolor{blue}{\rm{Equations\,\, of\,\, LVM\,\, manifolds}} 
\tag{\large\color{blue}{$\dagger$}}
\]
\noi One can verify that $\mathcal{S}$  is the union of the Siegel leaves and that
 $\mathcal{S}$ is an open set of the form $\mathcal{S}=\C^n-E$  where $E$ is an
analytic set, whose different components correspond to subspaces of $C^n$ where
some coordinates vanish.
 
\noi The leaf space of the foliation restricted to $\mathcal{S}$ , that we call $M$, or $M_{\Lam}$ if we want to emphasize $\Lam$, is
identified with $\mathcal T$. 

\noi Since $\mathcal{S}$ contains $(\C^*)^n$ we see that $\mathcal{S}$ is dense in $\C^n$.

\noi The weak hyperbolicity condition implies that the system of quadrics given by the preceding equations
which define $\mathcal T$ and $N $ have maximal rank in every point

\noi The Siegel condition implies that
$\mathcal T$ and $N $ are nonempty. 
One can show also that  $\tilde{\mathcal F}$ is regular in $\mathcal{S}$ and that $\mathcal T$ 
is a smooth manifold transverse to the restriction
of $\tilde{\mathcal F}$ to  $\mathcal{S}$.
In other words the quotient space of $\tilde{\mathcal F}$ restricted to $\mathcal{S}$ can be identified with $\mathcal T$. 

\noi Therefore by  Haefliger's lemma \ref{Haefliger} below $\mathcal T$ has the structure on a (non-compact) complex manifold
which we call $M$.

\noi Also $N $ can be identified with the quotient space of $\mathcal F$ restricted to $\mathcal V$ 
(definition \ref{Siegel-projective})
and therefore it inherits a complex structure.
Let us denote this complex manifold by $N$.
The complex dimension of $M$ is $n-m$ and of $N$ is $n-m-1$.

\noi The natural projection  $M\to N$, induced by the projection $\pi:\mathbb C^n\setminus\{0\}\to{\mathbb P}_{\C}^{n-1}$, 
is in fact a principal $\mathbb C^*$ fibration. Let $M_1$ denote the total space of the associated circle fibration
It has the same homotopy type as 
$M$ but it has the advantage of being compact. 

\noi Let us observe that  $M_1$ can be identified with the transverse intersection of the cone
$\mathcal T$ (with the vertex at the origin delated) and the unit sphere $\s^{2n-1}$ in $\mathbb C^n$. For this reason we make the following definition 

\begin{definition} \label{moment_angle} Let 
\[
M_1=M_1(\Lam)=\{z=(z_1,\cdots,z_n)\in\mathbb C^n\quad\vert\quad \sum_{i=1}^n\Lambda_i\vert z_i\vert ^2=0,\, \,\,\, \sum_{i=1}^n\vert z_i\vert^2=1\}. \tag{\textcolor{blue}{4}} \label{moment_angle}
\]
Then $M_1(\Lam)$ is called the {\bf moment-angle} manifold corresponding to $\Lam$.
\end{definition}

\begin{remark} 
Let $\Lam$ be an admissible configuration. Then $N_{\Lam}$ 
and $N_{(A\Lam+B)}$
(with $A\in\rm{GL}_m(\C)$ and $B\in\C^m$) 
are biholomorphic (provided that $(A\Lam+B)$ is 
admissible and provided that the corresponding sets $\mathcal S$ are the same).\label{affine_transformation}
\end{remark}

\remark{Remark 1.5} 
The manifold $N$ is naturally equipped with the principal $\C^*$-bundle $\mathcal T\to 
N$.
\endremark
\medskip

\begin{remark}
The natural projection $M_1\to N$ is a $\s^1$-principal bundle. It is
in fact the unit bundle associated to the bundle $\mathcal T\to N$.
\end{remark}
\medskip
 Then, the differentiable
embedding of $N$ into the projective space described  yields an embedding of
fibre bundles
$$
\CD
M_1 @>>> \s^{2n-1} \cr
@VVV @VVV \cr
N @>>> \Bbb CP^{n-1}
\endCD
$$
Let us denote by $\omega$ the pull-back of the Fubini-Study K\"ahlerian form
by this embedding. The form $\omega$ is thus a closed real two-form on $N$ which represents the Euler class of the bundle
$M_1\to N$.

\begin{definition}
We call $\omega$ the {\it canonical Euler form} of the bundle $M_1\to N$.\label{Euler_class}
\end{definition}

\begin{definition}\label{indispensable}
Let $1\leq i\leq n$. We say that $\Lambda_i$ (or more briefly $i$) is {\it indispensable} if $(\Lambda_j)_{j\in \{i\}^c}$ is not admissible. Let 
$I\subset\{1,\hdots,n\}$. We say that $(\Lambda_i)_{i\in I}$ (or more briefly $I$) is {\it removable} if $(\Lambda_j)_{j\in I^c}$ is still admissible.
\end{definition}

\begin{remark}
 Let 
$I\subset\{1,\hdots,n\}$ of cardinal $p$. If $I$ is removable, then the configuration $(\Lambda_i)_{i\in I^c}$ gives rise to a holomorphic LVM submanifold of
$N(\Lambda_1,\hdots,\Lambda_n)$ of codimension $p$.
\end{remark}

\textcolor{blue}{\begin{remark} We write ${\mathcal S}_{\Lam}$, $N_{\Lam}$, $M_1(\Lam)$ etc., if we want to emphasize the configuration $\Lam$.
However many times we omit $\Lam$ if it is clearly understood and no confusion is possible.
\end{remark}}

\noi Another characterization of $\mathcal{S}$ is the following:
\[
\mathcal{S}=\{z\in{\mathbb C}^n\,\,|\,\,0\,\,\text{is not in the closure of the leaf of}\,\, \tilde{\mathcal F}\,\,\text{through}\,\,z \}  \tag{5}
\]
in other words $\mathcal{S}$ is the union of the Siegel Leaves and it open and invariant under the action of ${\mathbb C}^m$

\begin{remark} The space of Siegel leaves $\mathcal{S}$ has the same homotopy type 
as $M$ and therefore also as $M_1$.
\end{remark}

\begin{remark} The linear holomorphic action of $(\C^*)^m$ commutes with the diagonal action (by diagonal matrices) hence
$(\C^*)^n$ acts on $M$.
\end{remark}

\noi The open set $\mathcal{S}$ is a deleted complex cone in $\C^n$:  i.e. if $Z\in\mathcal{S}$ then $\lambda{Z}\in\mathcal{S}$ for all 
$\lambda \in \C^*$. Therefore $\mathcal V=\pi(\mathcal{S})$ (definition \ref{Siegel-projective})
is an open set of ${\mathbb P}_{\C}^{n-1}$.
Then $\pi(\mathcal T)$ is a smooth manifold of dimension equal to the codimension of $\mathcal F$ and transversal to the leaves.
By the following observation by Andr\'e Haefliger it is a complex manifold:
\begin{lemma}\label{Haefliger} {\bf A. Haefliger.} Let $\mathfrak M$ be a complex manifold of complex dimension $n\geq2$ and $\mathcal F$ a holomorphic foliation of $\mathfrak M$ of
codimension $m\geq1$ with $n\geq{m}$. Let
$\mathfrak N\subset{\mathfrak M}$ be a smooth manifold of real dimension $2m$ which is transversal to the leaves of $\mathcal F$. Then $\mathfrak N$ is in a natural way a complex manifold of complex dimension $m$.
\end{lemma}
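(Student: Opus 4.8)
\emph{Proof strategy.} The plan is to equip $\mathfrak N$ with the holomorphic atlas obtained by pushing the transversal slices of a holomorphic foliation atlas of $\mathcal F$ onto the ``transverse factor'' $\C^{m}$, and then to observe that the transition maps of this atlas are restrictions of the biholomorphisms $\Phi^{1}_{ij}$ provided by condition $(\mathfrak H)$ of Definition~\ref{hfoliation}. So, fix a holomorphic foliation atlas $(U_i,\Phi_i)_{i\in\mathcal I}$ for $\mathcal F$, where $\Phi_i\colon U_i\to V_i\subset\C^{m}\times\C^{n-m}$ (the codimension being $m$, the transverse factor is the first one, $\C^{n-p}=\C^{m}$, and the plaques of Definition~\ref{definition:charts} are the slices $\{x=\mathrm{const}\}$); write $\Phi_i=(\Phi_i',\Phi_i'')$ accordingly and let $\mathrm{pr}\colon\C^{m}\times\C^{n-m}\to\C^{m}$ be the projection onto the first factor. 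For each $i$ put $W_i:=U_i\cap\mathfrak N$ and define
\[
\psi_i\colon W_i\longrightarrow\C^{m},\qquad \psi_i:=(\mathrm{pr}\circ\Phi_i)\big|_{W_i}.
\]

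First I would check that, after passing to a refinement of the cover if necessary, each $\psi_i$ is a homeomorphism onto an open subset of $\C^{m}$. In the chart $\Phi_i$ the leaves of $\mathcal F$ are the plaques, so the tangent space to $\mathcal F$ at a point $q\in U_i$ is $(d\Phi_i)_q^{-1}(\{0\}\times\C^{n-m})=\ker\big(d(\mathrm{pr}\circ\Phi_i)_q\big)$. Transversality of $\mathfrak N$ gives $T_q\mathfrak N+T_q\mathcal F=T_q\mathfrak M$ for $q\in W_i$; since $\dim_\R\mathfrak N=2m$, $\dim_\R\mathcal F=2(n-m)$ and $\dim_\R\mathfrak M=2n$, a dimension count forces the sum to be \emph{direct}, $T_q\mathfrak N\oplus T_q\mathcal F=T_q\mathfrak M$. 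Hence $d(\mathrm{pr}\circ\Phi_i)_q$ is injective on $T_q\mathfrak N$, i.e. $(d\psi_i)_q$ is a linear isomorphism onto $\C^{m}$; by the inverse function theorem $\psi_i$ is a local diffeomorphism, and shrinking $U_i$ we may assume it is injective, hence an open embedding.

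Next comes the point that makes the atlas holomorphic. For overlapping charts, on $\Phi_i(U_i\cap U_j)$ condition $(\mathfrak H)$ gives
\[
\mathrm{pr}\circ\Phi_{ij}(x,y)=\mathrm{pr}\big(\Phi^{1}_{ij}(x),\Phi^{2}_{ij}(x,y)\big)=\Phi^{1}_{ij}(x),
\]
which does \emph{not} depend on $y$. Therefore, writing a point $x\in\psi_i(W_i\cap W_j)$ as $x=\psi_i(q)$ with $\Phi_i(q)=(x,y)$, we obtain
\[
\psi_j\circ\psi_i^{-1}(x)=\mathrm{pr}\circ\Phi_j(q)=\mathrm{pr}\circ\Phi_{ij}\big(\Phi_i(q)\big)=\Phi^{1}_{ij}(x).
\]
Thus every transition map of $(W_i,\psi_i)_{i\in\mathcal I}$ is a restriction of the biholomorphism $\Phi^{1}_{ij}$ between open sets of $\C^{m}$, so this is a holomorphic atlas; together with the Hausdorff and second--countable properties inherited from $\mathfrak M$, it makes $\mathfrak N$ a complex manifold of complex dimension $m$.

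Finally, to justify ``in a natural way'' I would verify that this complex structure is independent of the foliated atlas chosen: two holomorphic foliation atlases of $\mathcal F$ admit a common refinement, a change of foliation chart is again of the form $(\mathfrak H)$, and so the identity of $\mathfrak N$ read in the corresponding $\psi$--charts is once more a restriction of some $\Phi^{1}$, hence biholomorphic; equivalently, the holonomy maps of $\mathcal F$ along leaves are biholomorphisms of transversals. The genuinely delicate points are exactly the dimension count that forces $T_q\mathfrak N\oplus T_q\mathcal F=T_q\mathfrak M$ (this is where the hypothesis $\dim_\R\mathfrak N=2m=2\,\mathrm{codim}_\C\mathcal F$ is essential) and the bookkeeping of shrinking the $U_i$ so that the $\psi_i$ are honest charts; the holomorphy of the transitions is then automatic from $(\mathfrak H)$.
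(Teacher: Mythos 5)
Your proof is correct and follows the same route as the paper: restrict each foliation chart to $\mathfrak N$, project onto the transverse factor $\C^{m}$, and use condition $(\mathfrak H)$ to see that the resulting transition maps are the biholomorphisms $\Phi^{1}_{ij}$. The paper states this in three lines; you supply the dimension count showing the projection restricted to $T_q\mathfrak N$ is an isomorphism (so the $\psi_i$ are charts) and the well-definedness of the structure, which are the details the paper leaves implicit.
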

\begin{proof} In fact if $V\subset\mathfrak M$ is an open subset of $\mathfrak N$ which is contained the domain $U_\alpha$ of the foliation chart 
$\Phi_\alpha:U_\alpha\to\C^{m}\times\C^{m-n}$ of $\mathcal F$ then if $\hat\Phi_\alpha={\Phi_\alpha}_\restriction{V}$ is the restriction of $\Phi_\alpha$ to $V$ and $\pi_1:\C^m\times\C^{n-m}$ is projection onto the first factor then $\Psi_\alpha=\pi_1\circ\hat\Phi_\alpha$ is a holomorphic coordinate chart of $\mathfrak M$. Condition (2) in definition \ref{hfoliation} implies that the coordinate changes are holomorphic. 
\end{proof}

\begin{remark} Bogomolov has conjectured that every compact complex manifold $W$ can be obtained by this process for a singular holomorphic foliation of projective space and $W$ transversal to the foliation outside of the singularities. More precisely he asks:
\emph{can one embed every compact complex manifold as a $C^\infty$ smooth
subvariety that is transverse to an algebraic foliation on a complex projective algebraic variety?}

\medskip
In this respect, Jean-Pierre Demailly, Herv\'e Gaussier \cite{DeG}
have shown an embedding theorem for compact almost complex
manifolds into complex algebraic varieties. They show that every almost complex structure can
be realized by the transverse structure to an algebraic distribution on an affine algebraic variety,
namely an algebraic subbundle of the tangent bundle.
\end{remark}

\begin{definition} \label{LVM} ({\bf LVM manifolds}) If $\Lam$ is an admissible configuration the manifold $N=N_{\Lam}$ given by formula \ref{LVM-definition} 
above is called a LVM manifold corresponding to $\Lam$. It is a compact complex manifold and 
$dim_\C\,N_{\Lam}=n-m-1$

\end{definition}   

\section{Examples}
 
 \subsection{Elliptic curves} \label{elliptic curves} In $\mathbb C$ consider a non-degenerate triangle with vertices  $\lambda_1$, $\lambda_2$ and $\lambda_3$. Suppose that the origin is in the interior of this triangle. 
 Then the open set of Siegel leaves ${\mathcal S}\subset {\mathbb C}^3$ is the complement of 
 the three coordinate hyperplanes $z_1=0$, $z_2=0$ and
 $z_3=0$. 
 
 The set in $\mathcal T\subset{\mathbb C}^3-\{0\}$ given by the equation:
 \[
  \lambda_1|z_1|^2+ \lambda_2|z_2|^2+ \lambda_3|z_3|^2=0     \tag{6}  
 \]
is the transversal as in formula \ref{transversal} above and it meets every leaf in  $\mathcal S$ in exactly one point. So that the space of leaves in  $\mathcal S$ can be identified
with the set given by equation (6).

The set $\mathcal T$ is a complex cone with the origin deleted so that if $Z\in{M}$ also
$cZ\in{\mathcal T}$ for al $c\in{\mathbb C}^*$. 

   We see that $N=:N_{(\lambda_1,\lambda_s,\lambda_3)}$ is the projectivization of $\mathcal T$ and therefore $N$ can be identified  is the set of points
satisfying the following two equations:

 \[
   \left\{
                \begin{array}{ll}
                \lambda_1|z_1|^2+ \lambda_2|z_2|^2+ \lambda_3|z_3|^2=0
\\
                                 \\
                |z_1|^2+|z_2|^2+|z_3|^2=1                   \end{array}
              \right.   
  \]

modulo the natural action of the circle given by 
\[
(z_1,z_2,z_3)\mapsto (\mu{z_1},\mu{z_2},\mu{z_3}),\,\, |\mu|=1,\,\, (z_1,z_2,z_3)\in{N}.   
\]

\noi Hence one has a free action of ${\mathbb C}^*$ and
the quotient $N:=M/{\mathbb C}^*$, then a complex, compact manifold of dimension one.
In fact $N$ is an elliptic curve. In the cases where $M$ is not simply connected (\ie when $k=3$ and
$d=n_1=1$), the complex structure on $N$ can be described in terms
of the defining parameters by identifying it with previous
descriptions of these known manifolds, for instance
when $n=k=3$ the manifold $N$ is
diffeomorphic to the torus $\s^1\times \s^1$. To identify the
corresponding complex structure, observe that in this case ${\mathcal{S}}=(\C^*)^3$. The mapping $exp:\C^3 \to {\mathcal{S}}=(\C^*)^3$ given by
$exp(\zeta_1,\zeta_2,\zeta_3)=(e^{\zeta_1},e^{\zeta_2},e^{\zeta_3})$
can be used to identify $N(\lambda_1,\lambda_2,\lambda_3)$ with the
quotient of $\C$ by the lattice generated by $\lambda_3-\lambda_2$
and $\lambda_1-\lambda_2$. So we have that \vskip10pt \noindent {\it
$N_{(\lambda_1,\lambda_2,\lambda_3)}$ is biholomorphically equivalent
to the elliptic curve with modulus
$\frac{\lambda_3-\lambda_2}{\lambda_1-\lambda_2}$.} \vskip10pt
\noi Observe that in this case we obtain all complex structures on the
torus. By choosing adequately the order of the $\lambda_i$ we obtain
a mapping from the Siegel domain to the Siegel upper half-plane in
$\C$. Therefore  {\it Any elliptic curve is obtained this way.}

\subsection{Compact complex tori}\label{tori} ({\bf i}) If $n=2m+1$, the convex hull $\{\Lambda_i\}_{i\in\{1,\cdots,2n+1\}}$ 
is a simplex in $\mathbb C^m\simeq \mathbb R^{2m}$. 

\noi In fact if one removes one the $\Lambda$'s then  $0$ is not in the complex hull of the remaining.
In other words $\mathcal{S}$ is equal to $(\mathbb C^*)^n$
and one can show that $N$ is a complex torus.

\bigskip
\begin{remark}Every compact complex torus is obtained by this process.
In particular, if $n=3$ and $m=1$ we obtain every elliptic curve.
\end{remark}

\subsection{Hopf manifolds} \label{Hopf-manifolds} ({\bf ii}) If $m=1$ let us define for $n\geq 4$
$$
\Lambda_1=1 \qquad \Lambda_2=i \qquad \Lambda_3=\hdots=\Lambda_n=-1-i\ .
$$
It is easy to verify that under these conditions
 $\mathcal{S}$ is equal to $(\mathbb C^*)^2\times \mathbb C^{n-2}\setminus\{0\}$. 
Consider the two real equations that are used to define
 $\mathcal T$:
 \[
   \left\{
                \begin{array}{ll}
                 |z_1|^2=|z_3|^2+\hdots+ |z_n|^2
\\
                                 \\
                  |z_2 |^2=|z_3|^2+\hdots+ |z_n|^2.\\
                   \end{array}
              \right.   
  \]

\noi If we fix the modules of $z_1$ and $z_2$ (by the definition of $\mathcal{S}$ they cannot be 0) 
the above equations imply that these modules are equal and that
 $(z_3,\hdots ,z_n)$ belong to a sphere $\s^{2n-5}$. 
 Therefore these equations define a manifold which is diffeomorphic to
 $\s^{2n-5}\times\mathbb \s^1\times \s^1\times\mathbb R^+_*$. 
The manifold $M_1$ obtained as the intersection of  $\mathcal T$ and the unit sphere of $\mathbb C^n$ is diffeomorphic to
 $\s^{2n-5}\times\mathbb S^1\times \s^1$
and $N$ is diffeomorphic to $\s^1\times\s^{2n-5}$. In particular for $n=4$, on has all the linear Hopf surfaces.

\subsection{Calabi-Eckmann manifolds} \label{Calabi-Eckmann} {\bf (iii)} Let $m=1$, $\,n=5$ and
$$
\Lambda_1=1 \qquad \Lambda_2=\Lambda_3=i \qquad \Lambda_4=\Lambda_5=-1-i\ .
$$

\noi An argument similar to the previous one
shows that $N$ is diffeomorphic to $\s^3\times \s^3$. 
One obtains an example of Calabi-Eckmann of non Ka\"hler manifolds.

\begin{remark} In general one obtains complex structures in products of odd dimensional spheres $\s^{2r+1}\times\s^{2l+1}$ like in the classical Calabi-Eckmann manifolds. In fact: \emph{Every Calabi-Eckmann manifold is obtained by this process}.
\end{remark}

\subsection{Connected sums}\label{Connected sums1} {\bf (iv)} S. L\'opez de Medrano has shown that for the pentagon in the picture below $M_1$ is diffeomorphic to
 the connected sum of five copies of $\mathbb S^3\times \mathbb S^4$.  The complex manifold $N$ is the quotient of this connected sum under a non-trivial action of $\s^1$.

\begin{figure}[h]  
\begin{center}
\includegraphics[height=5cm]{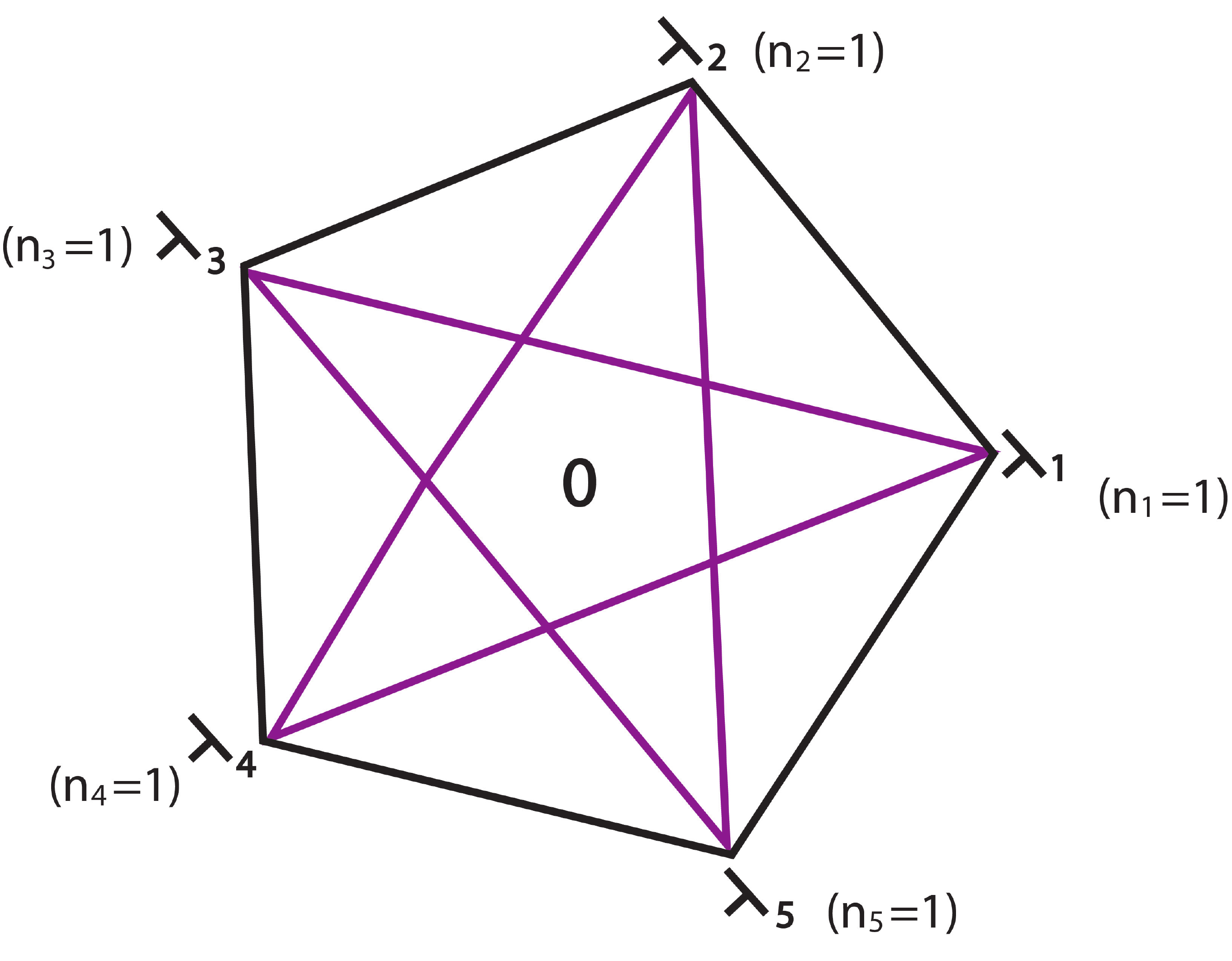}
\end{center}
\caption{\sl Pentagon in $\C$. The number $n_i$ is the multiplicity of $\lambda_i$} 
\label{Figure 1}
\end{figure}

When $m=1$ it can be assumed $\Lam$ is one of the following normal forms: Take $n=n_1+\dots+n_{2\ell+1}$ a partition of $n$ into an odd number of positive integers. Consider the configuration consisting of the vertices of a regular polygon with $(2\ell+1)$ vertices, where the $i$-th vertex in the cyclic order appears with multiplicity $n_{i}$. \\

\noi The topology of $M_1$ and $N$ can be completely described in terms of the numbers $d_i=n_i+\dots+n_{i+\ell-1}$, i.e., the sums of $\ell$ consecutive $n_i$ in the cyclic order of the partition:

\noi  For $\ell=1$: $\quad M_1=\s^{2n_1-1}\times\s^{2n_2-1}\times\s^{2n_3-1}$. For $\ell>1$: $\quad M_1=\#_{j=1}^{2\ell+1}\left(\s^{2d_i-1}\times\s^{2n-2d_i-2}\right)$. 
\noi See theorem \ref{polygon_connected_sum} below.

\begin{center}
\begin{figure}[h]
\centerline{\includegraphics[height=3.5cm]{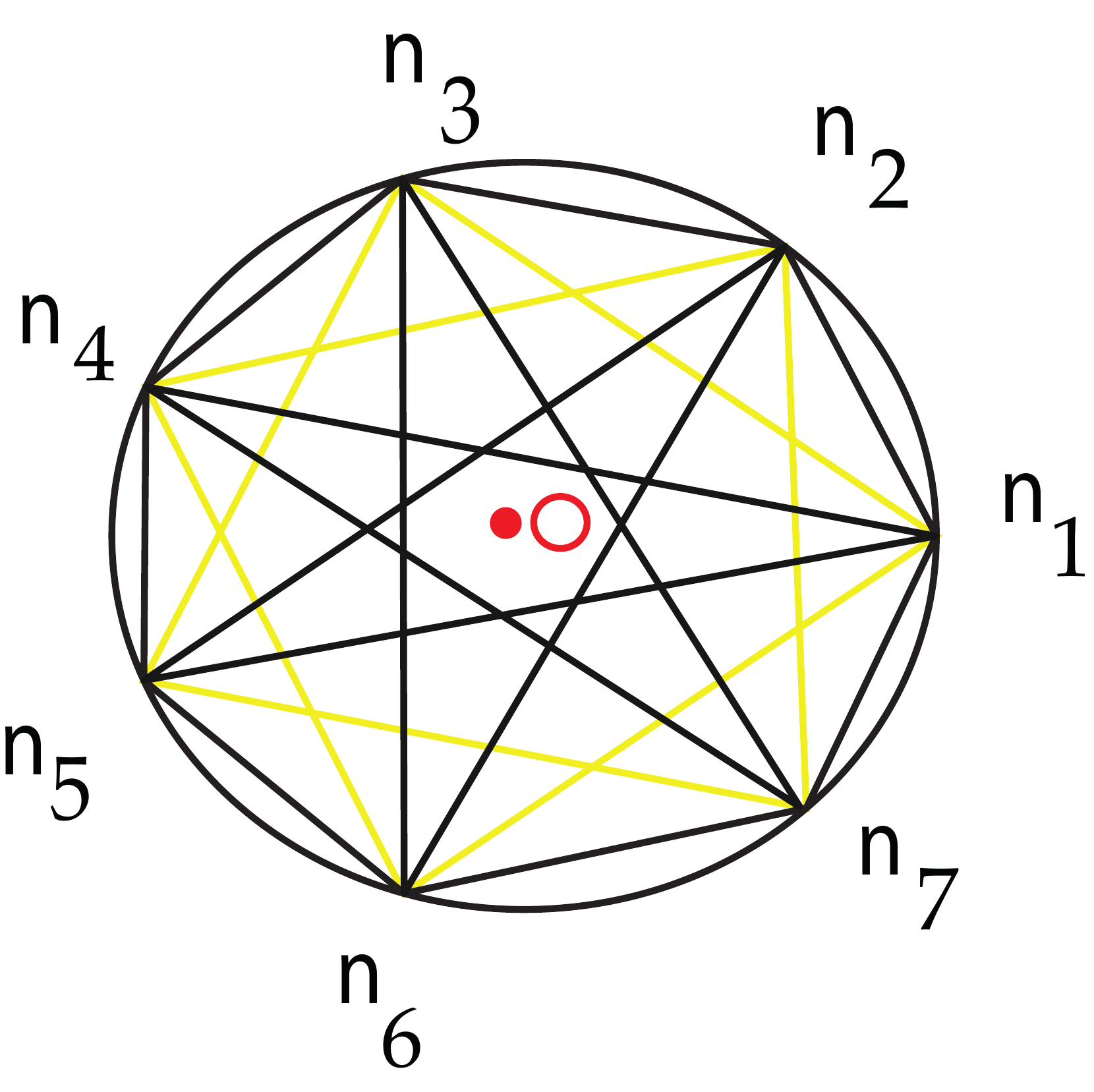}}
\centerline{\includegraphics[height=3.5cm]{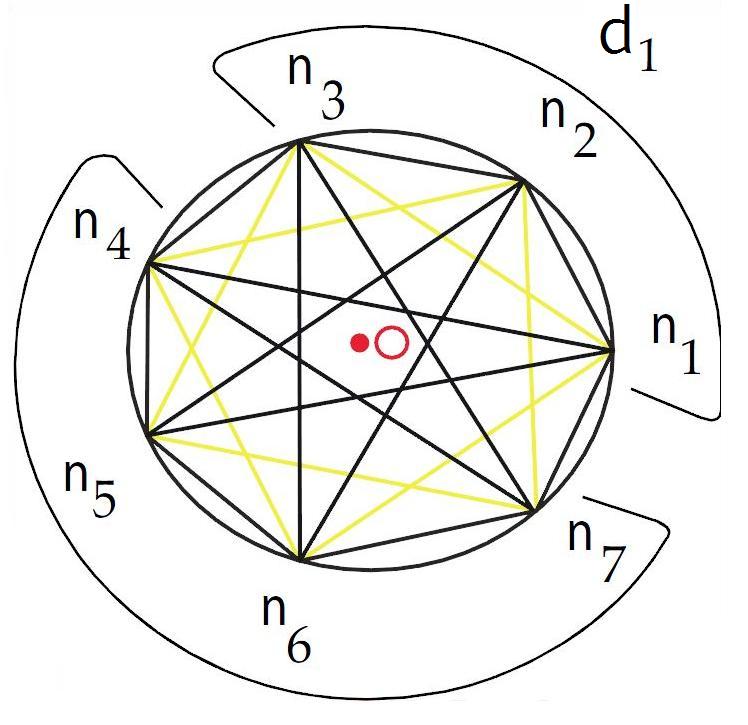}}
\caption{\sl Polygon, the number $n_i$ is the multiplicity of $\lambda_i$} 
\end{figure}
\end{center}  
\vspace{-0.3in}

\medskip

To describe the topology of $N$ we will use the following known
facts about the topology of $M_1$: First observe that the smooth
topological type of $M_1$ (as well as that of $N$) does not change
if we vary continuously the parameters $\Lam$ as long as we do
not violate condition (WH) in definition \ref{admissible} in the process. It is shown in \cite{LdM2}
that the parameters $\Lam$ can always be so deformed until they
occupy the vertices of a regular $k$-gon in the unit circle, where
$k=2l+1$ is an odd integer, every vertex being occupied by one or
more of the $\lambda_i$. 

\noi Therefore the topology of $M_1$ (and that
of $N$ also) is totally described by this final configuration, which
can be specified by the multiplicities of those vertices, that is,
by the partition

$$ n=n_1+\dots+n_k.$$

Observe that different partitions give different open sets ${\mathcal{S}}$ and therefore also different reduced 
deformation spaces. It is
clear that if we permute cyclically the numbers $n_i$ we obtain
again the same manifolds and deformation spaces, but it follows from
the next result that the cyclic order is relevant for their
description.

It is shown in \cite{LdM2} that the topology of $M_1$ is given as
follows: Let $d_i=n_i+n_{i+1}+\dots+n_{i+l-1}$ for $i=1,\dots,k$
(the subscripts being taken modulo $k$). Let also
$$ d=min\{d_1,\dots,d_k\}. $$

\noi These numbers determine the topology of $M_1$:

\newpage

\begin{theorem}\label{polygon_connected_sum}
\noindent (1) If $k=1$ then $M_1=\emptyset.$

\noindent (2) If $k=3$ then $M_1=S^{2n_1-1}\times S^{2n_2-1}\times
S^{2n_3-1}.$

\noindent (3) If $k=2l+1>3$ then $M_1$ is diffeomorphic to the
connected sum of the manifolds $S^{2d_i-1}\times S^{2n-2d_i-2}$,
$i=1,\dots,k$: $\quad M_1=\s^{2n_1-1}\times\s^{2n_2-1}\times\s^{2n_3-1}$. For $\ell>1$: $\quad M_1=\#_{j=1}^{2\ell+1}\left(\s^{2d_i-1}\times\s^{2n-2d_i-2}\right)$.
\end{theorem}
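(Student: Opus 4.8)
\medskip
\noindent\textbf{A plan for the proof.}

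First I would invoke the reduction recalled above: the diffeomorphism type of $M_1$ is constant along any path of configurations respecting condition $(\mathbf{WH})$, and by \cite{LdM2} one may deform $\Lam$ to the normal form in which the $\Lambda_i$ occupy the vertices of a regular $k$-gon on the unit circle, the $i$-th vertex carrying multiplicity $n_i$, with $k=2\ell+1$. Assume this and write $n=n_1+\dots+n_k$, $d_i=n_i+\dots+n_{i+\ell-1}$ (subscripts mod $k$) and $d=\min_i d_i$. The two extreme cases are immediate. If $k=1$ the configuration is never admissible — $(\mathbf{SC})$ forces the single vertex to be $0$, which contradicts $(\mathbf{WH})$ — and in any case $\sum_i\Lambda_i|z_i|^2=0$ and $\sum_i|z_i|^2=1$ are incompatible, so $M_1=\emptyset$. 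If $k=3$, group the coordinates into the three blocks according to which vertex they sit at and set $\rho_j=\sum_{i\in\text{block }j}|z_i|^2$; then $\Lambda_1\rho_1+\Lambda_2\rho_2+\Lambda_3\rho_3=0$ together with $\rho_1+\rho_2+\rho_3=1$ are exactly the barycentric-coordinate equations of $0$ for the triangle $\mathcal H(\Lambda_1,\Lambda_2,\Lambda_3)$, which by $(\mathbf{SC})$ and $(\mathbf{WH})$ is nondegenerate with $0$ in its interior; hence they have a unique solution $(\rho_1,\rho_2,\rho_3)=(a_1,a_2,a_3)$ with every $a_j>0$, and $M_1$ is cut out by $|Z^{(j)}|^2=a_j$, i.e. $M_1\cong\s^{2n_1-1}\times\s^{2n_2-1}\times\s^{2n_3-1}$.

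For $k=2\ell+1>3$ the plan is an induction on $n=\sum_i n_i$. It helps to observe first that $M_1$ is precisely the moment-angle manifold $\mathcal Z_P$ of the simple polytope $P=\{r\in\R^n_{\ge0}:\ \sum_i\Lambda_i r_i=0,\ \sum_i r_i=1\}$, i.e. $P$ is the quotient of $M_1$ by the coordinatewise $(\s^1)^n$-action; here $\dim P=n-3$, $P$ has exactly $k$ facets (condition $(\mathbf{WH})$ is what makes $P$ simple), and the vertices of $P$ correspond to the triples of polygon-vertices whose convex hull contains $0$. In the base case $n=k$ all $n_i=1$, so $d_i=\ell$ and the claim reads $M_1\cong\#_{j=1}^{k}\!\big(\s^{2\ell-1}\times\s^{2\ell}\big)$; here $P$ is an explicit $(k-3)$-dimensional simple polytope with $k$ facets (a pentagon when $k=5$), and I would obtain the asserted connected sum either by a direct handle analysis along the facets of $P$ or by appealing to the connected-sum descriptions available for moment-angle manifolds over such low-codimension polytopes (in the spirit of \cite{BM,GLdM}).

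For the inductive step, suppose $n_v\ge2$ for some vertex $v$ and compare $M_1=M_1(\dots,n_v,\dots)$ with $M_1'=M_1(\dots,n_v-1,\dots)$. Singling out one coordinate $z_{i_0}$ sitting at vertex $v$ exhibits $M_1$ as the union of two pieces glued along $\{z_{i_0}=0\}\cong M_1'$: one piece is diffeomorphic to a $\disc^2$-bundle over $M_1'$, and the other is a standard model built from a disc and a sphere whose dimension is dictated by $d$ and $n$. The gluing realises the passage from $M_1'$ to $M_1$ as a single surgery which, according to whether the index $v$ does or does not realise the minimum $d$, is either a connected sum with a product of two spheres or a ``sphere trade'' that replaces an $\s^{a}\times\disc^{b+1}$ by a $\disc^{a+1}\times\s^{b}$. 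It then remains to record how the multiset $\{d_1,\dots,d_k\}$ and the minimum $d$ change when $n_v$ is raised by one, and to check that each elementary move is matched by the corresponding modification of $\#_{i}\!\big(\s^{2d_i-1}\times\s^{2n-2d_i-2}\big)$; running over all $i$ closes the induction.

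The main obstacle is exactly this inductive step. One must pin the surgery down precisely — in particular verify that the gluing yields a connected sum of \emph{standard} sphere products rather than a twisted $\s^a$-bundle or an exotic smoothing, which requires controlling the normal framings of the spheres along which one cuts — and one must carry out the combinatorial case analysis for the $d_i$, since whether $v$ attains the minimum genuinely changes which surgery occurs (this is precisely where the cyclic order of the $n_i$, and not merely the partition, enters). An alternative route to the same conclusion is Morse-theoretic: a generic function $\sum_i c_i|z_i|^2$ restricts to $M_1$ with critical submanifolds indexed by the sets of vertex-blocks that can be switched on while keeping $0$ in the relative interior of their hull, and one cancels handles in the resulting decomposition; the delicate point there is the same, namely identifying the attaching data sharply enough to recognise the connected sum.
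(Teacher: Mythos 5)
Your plan diverges substantially from the argument the paper attributes to \cite{LdM2} — the text says the proof of part (3) is long and complicated, citing \cite{LdM2}, and recalls its main steps in section I-4. That argument does not induct on the total multiplicity $n$. Instead it builds $M_1$ from the polytope $P$ and its reflections across the coordinate hyperplanes (a cell decomposition giving $H_*(M_1)\approx\bigoplus_J H_*(P,P_J)$), identifies exactly which $J$ contribute — the sets $D_i$ and $\tilde{D}_i$ of $\ell$ or $\ell+1$ cyclically consecutive vertex classes, so that $H_*(M_1)$ already matches the claimed connected sum — represents each class below top dimension by an embedded sphere with trivial normal bundle obtained by reflecting a face $F_{L_i}$ of $P$, thickens these into a manifold-with-boundary $Q$, and finishes with the $h$-cobordism theorem. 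Your treatments of $k=1$ and $k=3$ are fine and essentially standard; for part (3) your route is genuinely different, and, as you already suspect, the inductive step is where it breaks.

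The specific error is the claim that raising one multiplicity $n_v$ by one relates $M_1'$ to $M_1$ by a single surgery, with the complementary piece ``a standard model built from a disc and a sphere.'' Track the $d_i$: exactly $\ell$ of them (those $i$ with $v\in\{i,\dots,i+\ell-1\}$ mod $k$) increase by $1$, the other $\ell+1$ stay fixed, and $n$ increases by $1$. Hence \emph{every} one of the $2\ell+1$ summands $\s^{2d_i-1}\times\s^{2n-2d_i-2}$ changes — one factor or the other gains two dimensions — so the passage from $M_1'$ to $M_1$ modifies the whole connected sum at once and cannot be a single handle attachment or one extra connected summand. (Also, $\{z_{i_0}=0\}\cap M_1=M_1'$ has real codimension two in $M_1$, so $M_1$ is not two pieces glued along $M_1'$; any such decomposition would have to glue along the $\s^1$-bundle boundary of a tubular neighborhood of $M_1'$.) The correct geometric relation is the open-book decomposition of Theorem~\ref{bookR}: $M_1\cong\partial(Z_+\times\disc^2)$ with binding $Z_0=M_1'$ and page $Z_+$. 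But to extract the connected-sum statement from this you need the topology of $Z_+$, which is Theorem~\ref{3} — itself a boundary connected sum of roughly $2\ell$ pieces, far from a single disc-times-sphere — and Theorem~\ref{3} is proved by the very homology-plus-$h$-cobordism route you were trying to bypass. So the induction quietly presupposes a close cousin of the result it aims to prove. A lesser gap: in the base case $n=k$ the polytope $P$ has dimension $k-3$, so McGavran's polygon theorem covers only $k=5$; for $k\ge 7$ a ``direct handle analysis along the facets of $P$'' is not a proof, and the cited connected-sum descriptions over higher-dimensional simple polytopes are precisely what part (3) asserts. Repairing both gaps would amount to reconstructing the argument of \cite{LdM2}.
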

The proof of parts (1) and (2) is quite direct, while the
proof of part (3) is long and complicated \cite{LdM2}. In what follows
we shall only use the fact that the integral homology groups of
$M_1$ coincide with those of the above described connected sum and
the fact that $M_1$ is $(2d-2)$-connected. The homology calculations
(and part (2) of Theorem 1) were first obtained by C. T. C. Wall
(\cite{Wa}). Thus our results will be independent of \cite{LdM2} and will
provide a simplified proof of some of the cases of Theorem 1.
\vskip10pt

\subsection{Some examples of LVM.}

In all the other cases (i.e., when $M_1$ is simply connected) we
obtain new complex structures on manifolds. An intermediate
situation is given by the cases $k=3$, with $n_1=2$, $n_2$ and $n_3$
even, where one can show, using the fact that each $\C^{n_i}$ can be
considered as a quaternionic vector space, that $N$ is diffeomorphic
to $\CP^1 \times \s^{2n_2-1}\times \s^{2n_3-1}$. It is easy to see
that in some cases $N$ can be identified with the product of $\CP^1$ 
with one of the Loeb-Nicolau complex structures on
$\s^{2n_2-1}\times \s^{2n_3-1}$. But in other cases there is no simple
way to stablish such an identification, and it is plausible that
these give new complex structures.

When $k=3$, $n_1>2$ we definitely get a manifold which is not a
product, but a twisted fibration over $\CP^{n_1-1}$. In fact, $N$
clearly fibers over $\CP^{n_1-1}$ with fiber $\s^{2n_2-1}\times
\s^{2n_3-1}$. This fibration does have a section (recall that we are
assuming that $n_1=d$ is not bigger than the other $n_i$) which is
homotopic to the map $\CP^{n_1-1}\to N$ constructed in the Lemma in
section 3. But, by the observation and the end of that section, the
normal bundle of $\CP^{n_1-1}$ in $N$ is stably equivalent to the
normal bundle of $\CP^{n_1-1}$ in $\CP^{n-1}$. By computing the
Pontryagin classes of this bundle one shows that it is not trivial.
 We therefore have:
\vskip10pt \noindent {\bf Theorem 3.} {\it When $3\le n_1\le n_2\le
n_3$ there is a non-trivial $(\s^{2n_2-1}\times
\s^{2n_3-1})$-fibration over $\CP^{n_1-1}$  with an
($n-2$)-dimensional space of complex structures.} \vskip10pt

When $k>3$ we get new complex structures on manifolds. We will give
the complete description of the underlying real smooth manifold only
in the case where all $n_i=1$ (so $n=k=2l+1$), where the
computations and arguments are simpler. To do this we can assume as
before that the $\lambda_i$ are the $n$-th roots of unity:
$\lambda_i=\rho^i$, $\rho$ a primitive root.

In that case $M_1$ is a parallelizable $(2n-3)$-manifold with
homology in the middle dimensions only, where it is free of rank
$n$:
\[H_{n-2}(M_1)=H_{n-1}(M_1)=\Z^n\]

It follows from the Gysin sequence of the fibration $M_1\to N$ (and
from the order of its Euler class found in section 3) that $N$ has
homology only in dimensions $2i$, $i=1,\dots, n-2$ where it is free
of rank $1$, and in dimension $2l-1$ where it is free of rank $2l$.

On the other hand, $M_1$ is the boundary of a manifold $Q$
constructed as follows:

Let
$$
Z=\{z\in\C^n\quad\vert\quad\Sigma \Re(\lambda_i)\,z_i\bar z_i=0,\,\, \Sigma z_i \bar
z_i=1\}.
$$
$Z$ is diffeomorphic to $S^{2l-1}\times S^{2l+1}$ (since the
defining quadratic form has index $2l$) and is the union of two
manifolds with boundary

$$
Q^\pm= \{z\in\C^n\quad\vert\quad\Sigma \Re(\lambda_i)z_i\bar z_i=0,\,\, \pm\Sigma\,
\Im(\lambda_i)z_i\bar z_i\ge 0,\,\,\Sigma z_i \bar z_i=1\}
$$
\noindent whose intersection is $M_1$.

The involution of $\C^n$ which interchanges the coordinates $z_i$
and $z_{n-i}$ preserves $Z$ and $M_1$, and interchanges $Q^+$ with
$Q^-$. Therefore these two are diffeomorphic and $M_1$ is an equator
of $Z$.

Let $Q=Q^+$. It follows now easily from the Mayer-Vietoris sequence
of the triple $(S^{2l-1}\times S^{2l+1},Q,Q^-)$ that $H_i(Q)=0$ for
$i\ne 2l-1, 2l$, in which case it is free of rank $l+1$ and $l$,
respectively, and that $H_i(M_1)\to H_i(Q) $ is always surjective.
$Q$ is also simply connected by Van Kampen's Theorem. The Hurewicz
and Whitehead Theorems now show that all homology classes in $Q$ can
be represented by spheres which for dimensional reasons can be
assumed to be embedded in $M$ by Whitney's imbedding theorem. (This
is enough to show, using the h-cobordism theorem, that $M_1$ is a
connected sum, as described in Theorem 1. in \cite{LdM2} and the
argument used below. It is shown in \cite{LdM2} that these facts are true
in general, by a detailed description of all homology classes in
$M_1$).

The $\s^1$ scalar action leaves $Q$ invariant, so the quotient
$R=Q/\s^1$ is a compact manifold with boundary $\partial R=N$. Now
the fibration $Q\to R$ again embeds in a diagram like 
\ref{diagram-embedding} in lemma
 It follows now from the cohomology Gysin
sequence of the fibration $Q\to R$ that $H_{2i}(R)=\Z$, $i=0,\dots,
l-1$ and $H_{2l-1}(R)=\Z ^l$, all other homology groups being
trivial.

Now we can embed, by lemma \ref{diagram-embedding}, $\CP^{l-1}$ in $R$
representing all even dimensional homology classes, and $l$ disjoint
$(2l-1)$-spheres with trivial normal bundle representing the
generators of the corresponding homology group of $R$ (since all
these classes come from $Q$ and are therefore spherical, and their
normal bundles are again stably equivalent to the trivial normal
bundle of $\s^{2l-1}$ in $\CP^{n-1}$). Taking a tubular
neighborhood of these manifolds and joining them by tubes we get a
manifold with boundary $R^\prime$ whose inclusion in $R$ induces
isomorphisms in homology groups. It follows from the h-cobordism
theorem (\cite{Mi}) that $N=\partial R$ is diffeomorphic to $\partial
R^\prime$ which is a connected sum of simple manifolds. These are
$l$ copies of  $\s^{2l-1}\times \s^{2l-1}$ and the boundary of the
tubular neighborhood of $\CP^{l-1}$ in $R$. By the remark at the
end of lemma \label{diagram-embedding} we know that the normal bundle of this inclusion is
stably equivalent to the normal bundle of $\CP^{l-1}$ in 
$\CP^{2l}$. We have therefore proved the following

\vskip10pt \noindent {\bf Theorem 4.} {\it For every $l>1$ there is
a $(2l-1)$-dimensional space of complex structures on the connected
sum of $\CP^{l-1}\bar\times \s^{2l}$ and $l$ copies of
$\s^{2l-1}\times \s^{2l-1}$, where $\CP^{l-1}\bar\times \s^{2l}$
denotes the total space of the $S^{2l}$-bundle over $\CP^{l-1}$
stably equivalent to the spherical normal bundle of $\CP^{l-1}$ in
$\CP^{2l}$.} 

\newpage

\noi Observe that for $l=2$ we get a manifold
which is close, but not equal, to the one constructed by Kato and Yamada \cite{Kato},
 where the first summand is a product.
Both manifolds had been considered before, from the point of view of
group actions, by Goldstein and Lininger (see \cite{GL}).

In general, these complex structures are very symmetric, in the
sense that we can still find holomorphic actions of large groups on
them (see \cite{LdM1}). In particular, there is an action of the complex,
noncompact, ($n-2$)-torus $(\C^*)^{n-2}$ on them with a dense orbit.
In this sense, our manifolds behave as toric varieties. \vskip10pt

\section{For $m=1$ and $n>3$ the manifolds $N$ are not symplectic.}\label{nonsymplectic}
\begin{theorem} For $n>3$, the manifold
$N=N_{\Lam}$ is a compact, complex manifold that does not admit a
symplectic structure.
\end{theorem}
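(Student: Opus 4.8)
The plan is to exploit the fundamental group. For $m=1$ and $n > 3$, the relevant configuration (after deforming $\Lam$ within admissible configurations, which does not change the diffeomorphism type of $M_1$ or $N$ by Theorem \ref{polygon_connected_sum} and the surrounding discussion) reduces to a regular $(2\ell+1)$-gon with multiplicities $n=n_1+\dots+n_{2\ell+1}$. When $\ell=1$, i.e. $k=3$, one has $M_1 = \s^{2n_1-1}\times\s^{2n_2-1}\times\s^{2n_3-1}$; since $n>3$ at least one $n_i\geq 2$, so $M_1$ is simply connected and therefore so is $N$ (as $N = M_1/\s^1$ with $M_1\to N$ a principal circle bundle, the homotopy exact sequence gives $\pi_1(N)\cong\pi_1(B\s^1)=0$ when $\pi_1(M_1)=0$... more carefully, $\pi_1(M_1)\to\pi_1(N)\to\pi_0(\s^1)=0$ is exact, so $\pi_1(N)$ is a quotient of $\pi_1(M_1)=0$). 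For $\ell>1$, $M_1$ is a connected sum of products of spheres $\s^{2d_i-1}\times\s^{2n-2d_i-2}$ with all sphere dimensions $\geq 1$; since $n=n_1+\dots+n_{2\ell+1}\geq 2\ell+1\geq 5$, each factor $\s^{2n-2d_i-2}$ has dimension $2n-2d_i-2\geq 2$ (as $d_i\leq n-\ell\leq n-2$), and $\s^{2d_i-1}$ has dimension $\geq 1$; a connected sum of manifolds each of which is a product with a simply connected factor of dimension $\geq 2$ is simply connected (in dimension $\geq 3$, connected sum behaves well with $\pi_1$ via van Kampen). Hence $M_1$ is simply connected, and again $N$ is simply connected.

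Next I would compute $H^2(N;\R)$ and identify a candidate symplectic class. The key input is the canonical Euler form $\omega$ of Definition \ref{Euler_class}: it is a closed real $2$-form on $N$ representing the Euler class of the principal $\s^1$-bundle $M_1\to N$. From the Gysin sequence of $M_1\to N$, since $M_1$ is $(2d-2)$-connected (Theorem \ref{polygon_connected_sum}) with $d\geq 1$ — and in fact for $n>3$ one has $2d-2\geq ?$; more robustly, using the homology description, $H_1(N)=0$ and $H^2(N;\R)$ is generated by the Euler class $[\omega]$, which is a nonzero class. So any symplectic form on $N$, being a closed non-degenerate $2$-form, would have cohomology class a nonzero multiple of $[\omega]$ (up to the one-dimensionality of $H^2$; one must check $b_2(N)=1$ from the Gysin sequence and the homology of $M_1$). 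The obstruction will then come from higher powers: $N$ has complex dimension $n-m-1=n-2$, so a symplectic form $\sigma$ would force $[\sigma]^{n-2}\neq 0$ in $H^{2n-4}(N;\R)$.

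The main step — and the main obstacle — is to show $[\omega]^{2}=0$ (or more generally $[\omega]^{k}=0$ for $k$ much smaller than $n-2$), which contradicts the non-degeneracy. This is where the topology of the connected sum enters decisively: in a connected sum of products $\s^p\times\s^q$ with $p,q\geq 1$, the cup product of any two classes pulled back from $H^2$ vanishes once we are away from the top dimension, because the cohomology ring of such a connected sum has trivial products except for the duality pairing into the top class, and a degree-$2$ class pairs nontrivially only with a degree-$(2n-5)$ class, never with another degree-$2$ class (for $n>3$, $4 < 2n-4$). Concretely, $H^2(N;\R)\cong\R$ generated by $[\omega]$, while $H^4(N;\R)$ may be zero or nonzero, but the ring structure inherited through the Gysin sequence from the connected-sum ring of $M_1$ forces $[\omega]\smile[\omega]=0$ when $n>3$. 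Thus $[\omega]^{n-2}=0$, so no power of a putative symplectic class survives, and $N$ admits no symplectic structure. I would carry this out by: (i) establishing simple connectivity and computing $H^*(N;\R)$ via the Gysin sequence from Theorem \ref{polygon_connected_sum}; (ii) showing $b_2(N)=1$ with generator $[\omega]$; (iii) proving $[\omega]^2=0$ from the connected-sum ring structure; (iv) concluding by non-degeneracy. Step (iii) is the crux and requires care in the borderline low-dimensional cases (small $n$, small $\ell$), which I would handle separately by direct inspection of $M_1$.
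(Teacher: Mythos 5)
The overall strategy — use the Euler class $e=[\omega]$ of $M_1\to N$ as the only candidate for a symplectic class and show a low power of it vanishes — is the same as the paper's, but your step (iii) is wrong in a way that breaks the argument. You assert that the connected-sum ring structure of $M_1$ forces $e^2=0$. This is not true. The paper proves precisely that $e^{d-1}\ne 0$ and $e^{d}=0$, where $d=\min d_i$ and $d_i$ are the sums of $\ell$ consecutive multiplicities; $d$ can be much larger than $2$ (e.g.\ the pentagon with all $n_i=2$ gives $n=10$, $d=4$, so $e^3\ne 0$). The reason your heuristic fails is that the cup-product structure of $N$ is \emph{not} inherited in any simple way from the trivial ring of the connected sum $M_1$: the Gysin sequence for the nontrivial circle bundle $M_1\to N$ produces many nonzero classes $1,e,e^2,\dots,e^{d-1}$ in $N$, even though all of $M_1$'s cohomology below the top has trivial products. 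What is true, and what the paper actually establishes, is that $e^{d}=0$ with $d<n-2$, which is enough since a symplectic form on the $(2n-4)$-manifold $N$ would need $e^{n-2}\ne 0$. The paper's mechanism for $e^d=0$ is geometric: $M_1(\Lam)$ misses a $d$-codimensional coordinate subspace of $\C^n$ (because the configuration restricted to the remaining coordinates is Poincar\'e), hence $N$ misses a codimension-$d$ projective subspace of $\CP^{n-1}$, hence the inclusion $N\hookrightarrow\CP^{n-1}$ deforms into a $\CP^{d-1}$; since $e=\iota^*(\text{Fubini--Study class})$, this kills $e^d$. There is nothing in your sketch that replaces this step.

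Two secondary slips: in the $\ell=1$, $k=3$ case you argue $M_1$ is simply connected because at least one $n_i\ge2$, but $\s^1\times\s^{2n_2-1}\times\s^{2n_3-1}$ is not simply connected whenever some $n_i=1$; the paper treats the non-simply-connected case $k=3$, $d=n_1=1$ separately (showing $H^2(N)=0$ there unless $n_2=n_3=1$). Also, the final dimension count you need — that the vanishing power $d$ is strictly less than $n-2$ — should be stated explicitly; it follows from $d\le \ell n/(2\ell+1)<n/2\le n-2$ for $n\ge 4$.
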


\begin{proof} In fact it follows from the classification given by theorem \ref{polygon_connected_sum} that the manifold depends on the polygon of $k$ vertices  and for $k=1$ the manifold $M_1$ is empty and $M_1$ is a nontrivial circle bundle over $N$. In general we
have that $M_1$ lies in the sphere $\s^{2n-1}$ and that $N$ sits
inside the complex projective space $\CP^{n-1}$  (but not as a holomorphic submanifold), so we have an
inclusion of $\s^1$-bundles:

\begin{center}
\[
\begin{tikzcd}
M_1\arrow[hookrightarrow]{r}{}\arrow{d}[swap]{\pi_1} & \s^{2n-1}\arrow{d}{\pi_2} \\
N\arrow[hookrightarrow]{r}[swap]{} & \CP^{n-1} \\
\end{tikzcd}
\]
\end{center}

\noi where $\pi_1$ and $\pi_2$ are the restrictions of the canonical map $\pi:\C^n\setminus\{0\}\to{\mathbb P}^{n-1}$
to $M_1$ and $\s^{2n-1}$ respectively. 

\medskip
We will prove first that the inclusion of $N$ can be deformed down in $\CP^{n-1}$
into a projective subspace of low dimension $d - 1$, but not lower. We will prove first the following:

\begin{lemma}\label{diagram-embedding} The above inclusion of
$S^1$-bundles embeds homotopically in the following sequence of
bundle maps:

\[
\begin{array}{ccccccc} \s^{2d-1}&\to& M_1 &\to& \s^{2d-1}&\to&\s^{2n-1}\\
              \Big\downarrow &\ & \Big\downarrow &\ &\Big\downarrow &\ & \Big\downarrow \\
              \CP^{d-1}&\to& N& \to &\CP^{d-1} &\to& \CP^{n-1}
  \end{array} \tag{\textcolor{blue}{Diagram}}
\]

\noindent where the composition of the bottom arrows is homotopic to
the natural inclusion.
\end{lemma}

\newpage

\noindent {\bf Proof of lemma 3.} 
If we put $d$ coordinates $z_i=0$ we obtain a new manifold
$M_1(\Lam^\prime)$ where $\Lam^\prime$ is a configuration of
eigenvalues that is concentrated in $l+1$ consecutive vertices of
the regular $(2l+1)$-gon. This configuration being in the Poincar\'e
domain, it follows that the above manifold is empty.

\noi This means that the original $M_1(\Lam)$ does not intersect a
linear subspace of $C^n$ of codimension $d$ and that correspondingly
$N$ does not intersect an $d$-codimensional projective subspace of
$\CP^{n-1}$. Then the inclusion of $N$ in $\CP^{n-1}$ can be
deformed into a complementary projective subspace of dimension
$d-1$, which gives the middle bundle map.

\noi Now, $M_1$ being $(2d-2)$-connected (by theorem \ref{polygon_connected_sum}), it follows that $M_1\to N$ is a
universal $S^1$-bundle for spaces of dimension less than $2d-1$ (see
\cite{S} page 19) and therefore the Hopf bundle over $\CP^{d-1}$ admits
a classifying map into it, which gives the first map in the bottom
row. The composition of the bottom maps also classifies this Hopf
bundle and is therefore homotopic to the natural inclusion, so the
Lemma is proved.
\medskip
\noi From the description of $M_1$ it follows that $M_1$ is simply connected, except
for the cases $k=3$, $d=n_1=1$. In these cases the $S^1$-action on
$M_1=S^1\times S^{2n_2-1}\times S^{2n_3-1}$ can be concentrated on
the first factor, and therefore $N$ is diffeomorphic to
$S^{2n_2-1}\times S^{2n_3-1}$. Unless $n_2=n_3=1$ we have that
$H^2(N)=0$ and $N$ is not symplectic.

\noi In all the other cases we have that $d>1$ and $M_1$ is
$2$-connected. From the cohomology Gysin sequence of the fibration
$M_1\to N$ it follows that $H^2(N)=\Z $ generated by the Euler class
$e$. However, it follows from the Lemma that
$$ e^{d-1}\ne 0 $$
$$ e^{d}= 0 $$

\noi so this class does not go up to the top cohomology group
$H^{2n-4}(N,\Z)$, and it follows again that $N$ is not symplectic, and
Theorem 2 is proved. \end{proof}

\noi Nevertheless, observe that $N$ is a real algebraic submanifold of
$\CP^{n-1}$ since it is the regular zero set of the (non
holomorphic) function $g:\CP^{n-1} \to \R ^2$ defined by

$$
g([z_1,\dots, z_n]) = \frac{\Sigma \lambda_i z_i\bar z_i}{\Sigma z_i
\bar z_i}
$$

\noi This implies that the normal bundle of $N$ in $\CP^{n-1}$ is
trivial. Observe also that the map $\CP^{d-1}\to N$ in the Lemma
is homotopic to an embedding, whose normal bundle is then stably
equivalent to the normal bundle of $\CP^{d-1}$ in $\CP^{n-1}$. 
\vskip10pt
\subsection{Compact complex tori are the only K\"ahler LVM manifolds}

\vskip10pt

\noi Let $k$ denote the number of indispensable points (remember definition \ref{indispensable} above). By
Carath\'eodory's theorem  $k\leq2m+1$ the maximum is attained only when $n=2m+1$. One has:

\begin{lemma}
\begin{enumerate}
\item  $\mathcal{S}=(\mathbb C^*)^{k}\times (\mathbb C^{n-k}\setminus A)$ with $A$ 
an analytic set of codimension at least two in every point. 
\item This decomposition descends to a decomposition 
 $M_1=(\mathbb S^1)^k\times M_0$ where $M_0$ is a real compact manifold which is $2$-connected. 
\end{enumerate}\label{indispensable-lemma}
\end{lemma}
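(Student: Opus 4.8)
The plan is to prove both statements by a careful analysis of which coordinate subspaces are deleted from $\C^n$ to form $\mathcal{S}$. First I would recall the defining characterization \eqref{Scomplement_subspaces}: $z\in\mathcal{S}$ iff $0\in\mathcal{H}(\Lambda_{I_z})$, where $I_z=\{j:z_j\neq0\}$. The key observation is that a coordinate $z_i$ is \emph{forced} to be nonzero for every $z\in\mathcal{S}$ precisely when $i$ is indispensable in the sense of Definition \ref{indispensable}: indeed, if $i$ is indispensable, then $(\Lambda_j)_{j\in\{i\}^c}$ is not admissible, and since the Siegel condition is the only admissibility condition that can fail by removing a \emph{single} point (weak hyperbolicity only gets easier with fewer points), this means $0\notin\mathcal{H}((\Lambda_j)_{j\neq i})$. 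Hence any $z$ with $z_i=0$ has $I_z\subseteq\{i\}^c$ and so $0\notin\mathcal{H}(\Lambda_{I_z})$, i.e. $z\notin\mathcal{S}$. Conversely, if $i$ is not indispensable then the point with $z_i=0$ and all other coordinates nonzero lies in $\mathcal{S}$. Relabeling so that the indispensable indices are $1,\dots,k$, this shows $\mathcal{S}\subseteq(\C^*)^k\times\C^{n-k}$, and in fact $\mathcal{S}=(\C^*)^k\times\mathcal{S}'$ where $\mathcal{S}'\subseteq\C^{n-k}$ is cut out by the remaining subspace conditions involving only coordinates $k+1,\dots,n$ — this is where the product structure comes from, and one checks that $(\C^*)^{n-k}$-invariance in the last $n-k$ coordinates forces $\mathcal{S}'$ to be $\C^{n-k}$ minus a union of coordinate subspaces of codimension $\geq 2$. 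The codimension-$\geq 2$ claim is because any removed subspace $\{z_j=0:j\in J\}$ with $|J|=1$, say $J=\{j\}$ with $j>k$, would mean $j$ is indispensable, contradicting $j>k$; so every irreducible component of the complement $A$ has codimension at least two. I would set $A$ to be this analytic set.

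For part (2), I would restrict the homogeneous-quadric equations defining $M_1(\Lam)$ to the product decomposition. Writing $z=(u,v)$ with $u\in(\C^*)^k$ and $v\in\C^{n-k}\setminus A$, the equation $\sum_i\Lambda_i|z_i|^2=0$ becomes $\sum_{i=1}^k\Lambda_i|u_i|^2+\sum_{i=k+1}^n\Lambda_i|v_i|^2=0$ together with the normalization $\sum|u_i|^2+\sum|v_i|^2=1$. The point is that, because $0$ lies in the \emph{interior} of $\mathcal{H}(\Lambda_1,\dots,\Lambda_k)$ relative to the affine span — here one uses that by Carath\'eodory $k\leq 2m+1$ and the indispensable points affinely span, by the regularity Lemma after Definition \ref{admissible}, a space of dimension $k-1$ containing $0$ in its relative interior — the $k$ moduli $|u_i|^2$ can be solved for (up to overall scale) as a smooth positive function of $v$, with the simplex of barycentric coordinates. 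This is exactly the structure that exhibits the $|u_i|$ as determined, so the $u$-part contributes a factor diffeomorphic to $(\s^1)^k$ (the arguments of the $u_i$ are free) times a positive scaling factor that gets absorbed by the normalization, and $M_0$ is the locus in the $v$-variables cut out by the residual equations. Concretely I would define $M_0=\{v\in\C^{n-k}\setminus A: 0\in\mathcal{H}(\Lambda_{I_v}\cup\Lambda_{\{1,\dots,k\}}),\ \text{suitably normalized}\}$ — or more cleanly, identify $M_0$ as $M_1(\Lam)/(\s^1)^k$ — and verify it is a compact smooth manifold by the maximal-rank statement already quoted (weak hyperbolicity gives maximal rank of the quadrics everywhere on $\mathcal{S}$).

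The $2$-connectedness of $M_0$ is the part I expect to be the main obstacle, and I would handle it as follows. Since $M_1=(\s^1)^k\times M_0$, the homotopy exact sequence of this product (or just the K\"unneth/covering-space argument) gives $\pi_1(M_1)\cong\Z^k\times\pi_1(M_0)$ and $\pi_2(M_1)\cong\pi_2(M_0)$. From the description of $M_1$ in the polygon normal form and the homology computations referenced in Theorem \ref{polygon_connected_sum} — and more generally from the fact, invoked in the excerpt, that $M_1(\Lam)$ is $(2d-2)$-connected where $d$ is the minimal partial sum $d_i$ — one knows $M_1$ is highly connected; in all cases the relevant fact is that the "free $S^1$-factors" of $\pi_1(M_1)$ account for exactly the $k$ indispensable directions, so quotienting them out leaves a simply connected manifold, and $\pi_2(M_0)=\pi_2(M_1)=0$ because the connectivity bound $2d-2\geq 2$ holds whenever we are not in the degenerate torus/Hopf situations (and when we are, $M_0$ is a point or a sphere $\s^{2n_2-1}\times\cdots$ which one checks directly). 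So the strategy is: (a) prove $\mathcal{S}=(\C^*)^k\times(\C^{n-k}\setminus A)$ by the indispensability dichotomy above, with $\operatorname{codim} A\geq 2$; (b) descend to $M_1=(\s^1)^k\times M_0$ by solving for the $k$ indispensable moduli via the interior-point property of $\mathcal{H}(\Lambda_1,\dots,\Lambda_k)$; (c) deduce $2$-connectedness of $M_0$ from the known high connectivity of $M_1$ together with $\pi_1(M_1)=\Z^k\times\pi_1(M_0)$ and $\pi_2(M_1)=\pi_2(M_0)$. The delicate point in (b) is checking that the solved-for moduli are strictly positive and smooth on all of $\C^{n-k}\setminus A$, which comes down to verifying that for every $v$ in that set, the combined configuration $\{\Lambda_1,\dots,\Lambda_k\}\cup\Lambda_{I_v}$ still has $0$ in the relative interior of the appropriate face — and this is precisely the content of $\mathcal{S}$ being exactly the subspace complement described in (a).
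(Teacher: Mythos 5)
Part (1) of your argument matches the paper's: $E$ is a union of coordinate subspaces, the codimension-one components are exactly $\{z_i=0\}$ for indispensable $i$, and after factoring these out the remainder $A$ has complex codimension $\geq 2$ because any codimension-one component of $A$ inside $\C^{n-k}$ would exhibit an index $>k$ as indispensable. That part is fine.

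Part (2), however, has two genuine problems. First, your geometric justification for "solving for the $k$ moduli" rests on the claim that $0$ lies in the relative interior of $\mathcal{H}(\Lambda_1,\dots,\Lambda_k)$. This is false in general. Take the Hopf configuration $n=4$, $m=1$, $\Lambda_1=1$, $\Lambda_2=i$, $\Lambda_3=\Lambda_4=-1-i$: here $k=2$ with indispensable points $\Lambda_1,\Lambda_2$, but $\mathcal{H}(1,i)$ is the segment from $1$ to $i$ and does not contain $0$. (In that example one does happen to be able to solve $|z_1|^2=|z_2|^2=|z_3|^2+|z_4|^2$ positively, but the reason is a cone condition — that $-\sum_{i>k}\Lambda_i|z_i|^2$ lies in the simplicial cone spanned by $\Lambda_1,\dots,\Lambda_k$ — not the interior-point statement you invoke, and the regularity lemma after Definition~\ref{admissible} cannot be applied to $J=\{1,\dots,k\}$ since its hypothesis $0\in\mathcal{H}(\Lambda_J)$ need not hold.) Second, and more fundamentally, your proof of $2$-connectedness of $M_0$ leans on the $(2d-2)$-connectivity statement extracted from Theorem~\ref{polygon_connected_sum}, which is a theorem about the $m=1$ (polygon) normal form only. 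The lemma is stated and used for arbitrary $m$; you cannot invoke that theorem here, and in any case the logical flow is reversed — the point of the present lemma is to establish connectivity of $M_0$ as an \emph{input} to the later topological arguments, not to derive it from them.

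The paper's proof of part (2) is both shorter and valid in all generality: $M_1$ has the same homotopy type as $\mathcal{S}$ (the Siegel transversal $\mathcal{T}$ meets each Siegel leaf once, giving a deformation retraction onto $\mathcal{T}\cap \s^{2n-1}=M_1$), and this equivalence respects the product decomposition, so $M_0\simeq \C^{n-k}\setminus A$. Since $A$ is a finite union of coordinate subspaces of complex codimension at least $2$ (real codimension at least $4$), a transversality/general-position argument shows $\C^{n-k}\setminus A$ is $2$-connected, hence so is $M_0$. You should replace your step (b)--(c) with this argument; your analysis of the indispensable moduli is not needed for the homotopy-type conclusion, and as written it is not correctly justified.
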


\noi {\bf Sketch of the proof.}
Let $\mathcal{S}=\mathbb C^n \setminus E$, where $E$ is a union of subspaces (see \ref{Scomplement_subspaces})
$$
E=\{z\in\mathbb C^n \quad\vert\quad 0\notin{\mathcal H}(\Lambda_{I_z}) \}.
$$
The components of codimension one are given by indices corresponding to indispensable points in the configuration.
This proves the first part.
Since $A$ is of complex codimension at least 2 in every point
 $(\mathbb C^{n-k}\setminus A)$ is 2-connected, hence $M_0$ is 2-connected, since they have the same homotopy type.
\qed

\noi In examples (ii) and (iii) one obtains compact complex manifolds which are not symplectic because
the second de Rham cohomology group is trivial. This is in fact a general property of the manifolds we obtain:

\begin{theorem} Let $\Lam$ be an admissible configuration  as in definition \ref{admissible} and $N_{\Lam}$ the corresponding compact complex manifold.
The the following are equivalent:
\medskip
\begin{enumerate}

\item ${\mathcal H}(\Lam)$ is a simplex
\item $N_{\Lam}$ is symplectic.

\item $N_{\Lam}$ is K\"ahler.

\item $N_{\Lam}$ is a complex torus.

\item  $n=2m+1$.
\end{enumerate}
\end{theorem}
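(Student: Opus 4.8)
\noi \textbf{Proof proposal.} The plan is to establish the cycle $(5)\Rightarrow(4)\Rightarrow(3)\Rightarrow(2)\Rightarrow(5)$ together with $(5)\Leftrightarrow(1)$. Here $(5)\Rightarrow(4)$ and $(5)\Rightarrow(1)$ are exactly the content of \S\ref{tori}: when $n=2m+1$ one has $\mathcal{S}=(\C^{*})^{n}$, the convex hull $\mathcal{H}(\Lam)$ is a $2m$-simplex, and $N_{\Lam}$ is a compact complex torus of dimension $m=n-m-1$. The implications $(4)\Rightarrow(3)$ and $(3)\Rightarrow(2)$ are the classical facts that a compact complex torus carries a flat K\"ahler metric and that a K\"ahler form is symplectic. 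For $(1)\Rightarrow(5)$ one uses the combinatorics of admissible configurations: by the Siegel condition $0$ lies in $\mathcal{H}(\Lam)\subset\R^{2m}$, by weak hyperbolicity it lies in the hull of no $2m$ of the $\Lambda_i$ (extend any such subset to a $2m$-set), and Carath\'eodory — applied also inside the affine span of any sub-configuration whose hull contains $0$ — then forces $\mathcal{H}(\Lam)$ to be $2m$-dimensional with at least $2m+1$ vertices; if $\mathcal{H}(\Lam)$ is a simplex this vertex count is exactly $2m+1$ and one reads off $n=2m+1$. So everything reduces to the one substantial implication $(2)\Rightarrow(5)$.

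\smallskip
\noi \textbf{The core implication $(2)\Rightarrow(5)$.} I would prove the contrapositive: if $n>2m+1$, then $N_{\Lam}$ admits no symplectic form. This is the higher-$m$ analogue of the argument given above for $m=1$, the engine being the canonical Euler form $\omega$ of the circle bundle $M_1(\Lam)\to N_{\Lam}$ (Definition~\ref{Euler_class}), which is the restriction of the Fubini--Study class under $N_{\Lam}\hookrightarrow\CP^{n-1}$; write $e=[\omega]\in H^{2}(N_{\Lam};\R)$ and $D=\dim_{\C}N_{\Lam}=n-m-1$. \emph{Step one}: since $0\in\mathcal{H}(\Lam)$, a generic linear functional $\phi$ on $\R^{2m}$ splits $\{1,\dots,n\}$ into two nonempty sets according to the sign of $\phi(\Lambda_i)$, and its larger part $J$ has $|J|\ge\lceil n/2\rceil\ge m+1$; on the cone generated by $(\Lambda_j)_{j\in J}$ the functional $\phi$ is positive, so $0\notin\mathcal{H}((\Lambda_j)_{j\in J})$, hence every $z\in\C^n$ supported on $J$ lies outside $\mathcal{S}$ and $M_1(\Lam)$ misses the coordinate subspace $\{z_i=0:i\notin J\}$ — equivalently the sub-configuration $(\Lambda_j)_{j\in J}$ is of Poincar\'e type with empty moment-angle manifold, just as in the proof of Lemma~\ref{diagram-embedding}. \emph{Step two}: therefore $N_{\Lam}$ avoids a projective subspace of $\CP^{n-1}$ of codimension $d:=n-|J|\le\lfloor n/2\rfloor\le D$, so $N_{\Lam}\hookrightarrow\CP^{n-1}$ deforms through the complementary $\CP^{d-1}$, and pulling back the Fubini--Study class gives $e^{d}=0$, whence $e^{D}=e^{d}\cdot e^{D-d}=0$ because $d\le D$. \emph{Step three}: when $M_1(\Lam)$ is $2$-connected the Gysin sequence of $\s^{1}\to M_1(\Lam)\to N_{\Lam}$ forces $H^{1}(N_{\Lam};\R)=0$ and $H^{2}(N_{\Lam};\R)$ to be $0$ or $\R e$; in the first case $N_{\Lam}$ is not symplectic, in the second any symplectic class is $\lambda e$ with $\lambda\neq0$ and $(\lambda e)^{D}=\lambda^{D}e^{D}=0$ contradicts the non-vanishing of the top power of a symplectic class on a closed $2D$-manifold. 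When $M_1(\Lam)$ is not $2$-connected one reduces to this case via Lemma~\ref{indispensable-lemma}: there $M_1(\Lam)=(\s^{1})^{k}\times M_{0}$ with $M_{0}$ $2$-connected and $1\le k\le 2m+1$, the scalar $\s^1$-action respects the splitting, and the Gysin computation of $H^{\le 2}(N_{\Lam})$ and of the powers of $e$ descends to the sub-configuration of non-indispensable indices, again giving $b_2(N_{\Lam})\le 1$ with $H^2$ spanned by a class that vanishes below degree $2D$. Combining, $(2)\Rightarrow(5)$, and the chain of equivalences closes.

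\smallskip
\noi \textbf{Main obstacle.} The combinatorial step $(1)\Leftrightarrow(5)$ and the classical steps are routine, and $(5)\Rightarrow(4)$ is already in the notes; the real difficulty is Step three of the core implication in the presence of indispensable points, i.e. when $M_1(\Lam)$ fails to be $2$-connected. One must understand precisely how the free scalar circle sits inside the product $(\s^{1})^{k}\times M_{0}$ of Lemma~\ref{indispensable-lemma} — well enough to control $H^{2}(N_{\Lam})$ and the nilpotency order of $e$ simultaneously — and this is exactly where the split between indispensable and removable indices (Definition~\ref{indispensable}) has to be used in full, together with the fact that $M_0$ is itself, up to homotopy, built from a configuration of Poincar\'e/Siegel type on the remaining coordinates.
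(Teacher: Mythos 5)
Your overall plan is sound and, for most steps, parallels the paper's: the chain $(5)\Rightarrow(4)\Rightarrow(3)\Rightarrow(2)$, the equivalence $(5)\Leftrightarrow(1)$, and your Steps one and two establishing $e^{D}=0$ by deforming $N_{\Lam}$ off a missed coordinate $\CP^{n-1-d}$ into the complementary $\CP^{d-1}$ are all correct, and in fact give a cleaner account of what the paper disposes of by the remark that ``the proof is similar to that of theorem \ref{nonsymplectic}.'' Your Step three in the case that $M_1(\Lam)$ is $2$-connected (no indispensable points), using the Gysin sequence to force $H^{2}(N_{\Lam})=\R e$ or $0$, matches the paper's second case.

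The gap is Step three when indispensable points exist, and it is a genuine one. You assert that the Gysin argument ``descends to the sub-configuration of non-indispensable indices, again giving $b_2(N_{\Lam})\le 1$.'' This is false. Take $m=2$, $n=6$: then $K_{\Lam}$ is a segment (dimension $n-2m-1=1$) with $n-k$ facets, forcing $k=4$; by Lemma~\ref{indispensable-lemma} and the triviality of the circle bundle one gets $N_{\Lam}\cong (\s^1)^{3}\times M_0$ with $M_0$ $2$-connected, so $b_2(N_{\Lam})=b_2((\s^1)^3)=3$. More fundamentally, in the presence of an indispensable index $i$, the map $z\mapsto z_i/|z_i|$ is an $\s^1$-equivariant map $M_1(\Lam)\to\s^1$ trivializing the bundle $M_1(\Lam)\to N_{\Lam}$, so $e=0$ and your Steps one and two say nothing. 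The paper's actual argument in this case is structurally different and does not go through the Euler class at all: from the trivialization one has $N_{\Lam}\cong (\s^1)^{k-1}\times M_0$, K\"unneth and the $2$-connectedness of $M_0$ give $H^2(N_{\Lam})\cong H^2((\s^1)^{k-1})$, and for any class $\sigma$ in that subspace $\sigma^{D}$ lands in $H^{2D}((\s^1)^{k-1})$, which vanishes because $2D=2(n-m-1)>2m\ge k-1$ once $n>2m+1$. You must replace the proposed reduction by this K\"unneth-plus-degree count; as written, your proof does not handle the case with indispensable points.
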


\noi {\bf Sketch of the proof.}
It is easy to prove the equivalence of
(3) and (4): If $N$ is a complex torus, one must have $\mathcal S=(\mathbb C^*)^n$ hence all the
 $\Lambda_i$ must be indispensable and in this case the convex hull must be a simplex and 
 $n=2m+1$. If  the convex hull is a simplex then as in example (i) $N$ is a compact complex torus.
 
 \medskip

\noi The most difficult part is that (1) implies (4). One proves that by contradiction. Suppose 
 $n>2m+1$.  As in the examples one must study the de Rham cohomology of $N$ and to prove that it is incompatible with the existence of a symplectic form.
 
\noi We consider two cases:

\medskip
 \noi$\underline{\text{1st case}}$ There exists indispensable points. From here one can deduce that the fibration
 $M\to N$  is trivial. Hence the decomposition
  $M_1=(\mathbb S^1)^k\times M_0$ of the previous lemma gives a decomposition 
   $N=(\mathbb S^1)^{k-1}\times M_0$.  
   
   \noi In other words if $N$ has a symplectic structure it must be supported
   by $(\mathbb S^1)^{k-1}$. The maximal power of this symplectic form must be a volume form in $N$ 
   but that is only possible only if $k-1$ is equal to the real dimension of $N$, i.e. to $2n-2m-2$.
   since $k\leq2m+1$ and $n>2m+1$
    
\medskip

 \noindent $\underline{\text{second case:}}$ If there are not indispensable points then $M$ is 2-connected 
and the fibration  $M\to N$ is not topologically trivial. Therefore the second de Rham cohomology group of $N$
is generated by the Euler class of that fibration. Analyzing carefully this fibration one shows that the Euler class is trivial. Therefore this class is not symplectic, the proof is similar to that of theorem \ref{nonsymplectic} \qed

\section{Meromorphic functions on the manifold $N_{\Lam}$}

Many analytic properties of LVM manifolds are related to the arithmetic properties of the configuration $\Lam$. One nice example of this fact is given by the following

\begin{theorem} \label{algebraic-dimension} {\bf (\cite{Me} Theorem 4)}
Let $N$ be a LVM manifold without indispensable point
Then the algebraic dimension of $N_{\Lam}$ is equal to the dimension over  $\mathbb Q$ of the  $\mathbb Q$-vector space of rational solutions of the system \ref{System(S)}:

 \[
   \left\{
                \begin{array}{ll}
                 \overset{n}{\underset{i=1}\sum}s_i\Lambda_i=0\\
                                 \\
                  \overset{n}{\underset{i=1}\sum} s_i=0\\
                   \end{array}
              \right.   \tag{\textcolor{red}{S}}
  \]
\end{theorem}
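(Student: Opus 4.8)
\noindent The algebraic dimension is $a(N)=\operatorname{trdeg}_{\C}\mathcal{M}(N)$, where $\mathcal{M}(N)$ is the field of meromorphic functions on the compact manifold $N=N_{\Lam}$. The plan is to realise $\mathcal{M}(N)$ as a field of invariant meromorphic functions on $\C^{n}$, exhibit enough algebraically independent ones by hand (this gives ``$\ge$''), and then show there are no others (this gives ``$\le$'' and is the hard part). Since $N$ is the leaf space of $\mathcal F$ on $\mathcal V=\pi(\mathcal S)$, a meromorphic function on $N$ is exactly a meromorphic function on $\mathcal S$ invariant both under the scaling action of $\C^{*}$ and under the linear $\C^{m}$-action attached to $\Lam$; writing the scaling parameter as $\exp t_{0}$ and putting $\Lambda_i'=(\Lambda_i,1)$ (the auxiliary vectors of the regularity lemma above), this is the same as a meromorphic function on $\mathcal S$ invariant under the $\C^{m+1}$-action $\rho(T')\colon z\mapsto (z_i\exp\langle\Lambda_i',T'\rangle)_{i}$, $T'\in\C^{m+1}$; call this group $G$. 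Because $N$ has \emph{no} indispensable point, Lemma~\ref{indispensable-lemma} with $k=0$ gives $\mathcal S=\C^{n}\setminus A$ with $A$ analytic of complex codimension $\ge 2$, so every such function extends uniquely to $\C^{n}$ by the classical extension theorem for meromorphic functions across analytic sets of codimension $\ge 2$. Thus $\mathcal{M}(N)$ is identified with the field $\mathcal{M}^{G}(\C^{n})$ of $G$-invariant meromorphic functions on $\C^{n}$. (This is precisely where the hypothesis enters: with indispensable points one cannot cross the hyperplanes $\{z_i=0\}$ for such $i$, and the answer changes.)

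For the lower bound, note that for $s\in\Z^{n}$ the Laurent monomial $\chi_s:=z_1^{s_1}\cdots z_n^{s_n}$ is meromorphic on $\C^{n}$, and $\rho(T')$ multiplies it by $\exp\langle\sum_i s_i\Lambda_i',T'\rangle$; hence $\chi_s\in\mathcal{M}^{G}(\C^{n})$ precisely when $\sum_i s_i\Lambda_i'=0$, i.e. when $s$ solves the system (S), $\sum_i s_i\Lambda_i=0$ and $\sum_i s_i=0$. The set $V_{S}$ of rational solutions of (S) is a $\mathbb{Q}$-subspace of $\mathbb{Q}^{n}$, so $r:=\dim_{\mathbb{Q}}V_{S}$ is the rank of the lattice $V_{S}\cap\Z^{n}$; fix an integral basis $s^{(1)},\dots,s^{(r)}$ and set $w_j=\chi_{s^{(j)}}$. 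These are algebraically independent over $\C$: on $(\C^{*})^{n}$ distinct Laurent monomials are $\C$-linearly independent, so Laurent monomials with $\Z$-independent exponent vectors are algebraically independent there, and since $(\C^{*})^{n}\subset\mathcal S$ maps onto a dense open subset of $N$, any polynomial relation among the $w_j$ on $N$ would already hold on $(\C^{*})^{n}$. Therefore $a(N)\ge r=\dim_{\mathbb{Q}}V_{S}$.

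For the upper bound one must show $\mathcal{M}^{G}(\C^{n})=\C(w_1,\dots,w_r)$. Given $f\in\mathcal{M}^{G}(\C^{n})$, the divisor $\operatorname{div}(f)$ is $G$-invariant, and since $G\cong\C^{m+1}$ is connected each of its irreducible components is $G$-invariant. A $G$-invariant irreducible hypersurface of $\C^{n}$ is either a coordinate hyperplane $\{z_i=0\}$ or the closure in $\C^{n}$ of the zero locus in $(\C^{*})^{n}$ of a $G$-semi-invariant Laurent polynomial; all monomials of such a polynomial carry the same $G$-weight, so the polynomial is a monomial times a Laurent polynomial in $w_1,\dots,w_r$. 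Using this I would build a $G$-semi-invariant meromorphic function $g_{0}=z^{a_{0}}R(w_1,\dots,w_r)$, with $a_{0}\in\Z^{n}$ and $R\in\C(w_1,\dots,w_r)$, having $\operatorname{div}(g_{0})=\operatorname{div}(f)$. Then $f/g_{0}$ is meromorphic on $\C^{n}$ with empty zero and polar set, hence $f/g_{0}=e^{\varphi}$ for an entire $\varphi$ (as $\C^{n}$ is simply connected), and comparing $G$-weights gives $\varphi(\rho(T')z)=\varphi(z)-\langle c,T'\rangle$ with $c=\sum_i(a_{0})_i\Lambda_i'$.

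The last step, which I expect to be the main obstacle, is to use the compactness of $N$ to force $\varphi$ constant and $c=0$. A nonconstant $\varphi$ of this affine type makes $f=z^{a_{0}}R(w)e^{\varphi}$ grow transcendentally along the noncompact $G$-orbits, so $f$ cannot descend to a meromorphic function on the \emph{compact} manifold $N$; equivalently, the generic fibre of the meromorphic ``monomial map'' $N\to(\C^{*})^{r}$ given by $w_1,\dots,w_r$ is a compact quotient of the subtorus $\{w_1=\dots=w_r=1\}$ by $G$, a homogeneous space carrying no nonconstant meromorphic function precisely because $r=\dim_{\mathbb{Q}}V_{S}$ is already the largest possible for $\Lam$ (a further meromorphic function there would produce an extra rational relation among the $\Lambda_i'$). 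Once $c=0$, the exponent $a_{0}$ itself solves (S), so $z^{a_{0}}\in\C(w_1,\dots,w_r)$ and hence $f\in\C(w_1,\dots,w_r)$. Combined with the lower bound, $\mathcal{M}(N)=\C(w_1,\dots,w_r)$ is purely transcendental of transcendence degree $r$, so $a(N)=r=\dim_{\mathbb{Q}}V_{S}$; the delicate compactness argument of this final step is the one carried out in detail by Meersseman in \cite{Me}.
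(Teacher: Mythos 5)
Your lower‑bound argument is fine and matches the paper: lift to $\mathcal S$, extend across $A$ (codimension $\ge 2$, which is exactly where the hypothesis of no indispensable points is used), and observe that the Laurent monomials $z^{s}$ with $s$ an integral solution of (S) are $G$-invariant and algebraically independent. The first two steps of your upper‑bound setup (the identification $\mathcal M(N)\cong\mathcal M^{G}(\C^{n})$, and the Levi extension across codimension $\ge 2$) also agree with the paper.

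The gap is in your upper bound, and it comes from missing the single observation that makes the paper's argument short. Once $\tilde f$ is extended to all of $\C^{n}$, it is in particular invariant under the scaling action of $\C^{*}$; therefore it descends to a meromorphic function on $\CP^{n-1}$, and meromorphic functions on $\CP^{n-1}$ are \emph{rational}. This is the decisive step. Once $\tilde f$ is known to be a quotient of polynomials, there is no exponential factor $e^{\varphi}$ to contend with at all: one decomposes numerator and denominator into $\C^{m+1}$-weight spaces (monomials of a fixed weight $\sum_i\alpha_i\Lambda_i'$), invariance forces the whole of $\tilde f$ to be a rational function of ratios of monomials of equal weight, and such ratios are precisely the $z^{s}$ with $s\in\Z^{n}$ solving (S). Your route instead tries to classify $G$-invariant divisors on $\C^{n}$ directly, factors out a semi‑invariant $g_{0}$, and is then left having to kill a nowhere‑vanishing factor $e^{\varphi}$. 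You correctly identify this as the hard part, but the argument you give for it (``grows transcendentally along noncompact orbits, so cannot descend to the compact $N$'') is not a proof — unbounded invariant meromorphic functions on a noncompact total space can perfectly well descend to a compact quotient, and your appeal to ``$r$ being maximal'' is circular since that is what is being proved. In short: the step you defer to Meersseman is not the one Meersseman actually performs; his proof never needs to rule out an exponential factor because rationality is established first, and that is the idea missing from your write‑up.
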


The idea of the proof is very simple. If $f$ is a meromorphic function on $N$, then it can be lifted 
to a meromorphic function $\tilde f$ of $\mathcal{S}$ which is constant along the leaves of
$\tilde{\mathcal F}$. Since we have assumed that there is not an indispensable point lemma \ref{indispensable} implies that $\mathcal{S}$ is obtained from $\mathbb C^n$ by removing an analytic subspace
of codimension at least two at every point. Therefore $\tilde f$ can be extended to all $\mathbb C^n$ by Levi's extension theorem (see \cite{BHPV}  p.26).  Furthermore  $\tilde f$ must be invariant by the action given in (2). In particular  $\tilde f$
must be invariant by the standard action of  $\mathbb C^*$ on $\mathbb C^n\setminus\{0\}$, and descends to $\CP^{n-1}$. Therefore $\tilde f$ is a rational function. 
We can show that the fact that  $\tilde f$ is constant along the leaves of $\mathcal F$ implies that an algebraic basis
of these rational functions is given by the monomials

\[z_1^{s_1}\cdot\hdots\cdot z_n ^{s_n},\] 
where $(s_1,\hdots,s_n)$  is a rational basis of the vector space of solutions of   
system (S).

\begin{example}
Let $n=5$ et $m=1$, and:          

$$\Lambda _1=1 \qquad \Lambda _2=i \qquad \Lambda _3=-1-i \qquad \Lambda _4=\dfrac{3}{2}i+1 \qquad
\Lambda _5=-i-\dfrac{1}{2}$$

One verifies immediately that there are not indispensable points. 
The complex dimension of $N$ is 3 and its algebraic dimension is according to the preceding theorem 2. Indeed

\[
f(z)=\dfrac{z_1^5z_2^5z_3^2}{z_4^6z_5^6},\quad\quad  g(z)=\dfrac{z_1z_2^2}{z_3z_4^2}
\]

\noindent 
are meromorphic functions which are algebraically independent on $N$ and in addition every meromorphic
function on $N$ depends algebraically on $f$ and $g$.
\end{example}

\medskip
Recall that a connected Moishezon manifold $M$ is a compact complex manifold such that the 
field of meromorphic functions  has transcendence degree equal the complex dimension of the manifold.
   
When theorem \ref{algebraic-dimension} applies the algebraic dimension of $N$ is at most $n-2m-1$ 
therefore the dimension is strictly inferior to its dimension $n-m-1$. In other words: if there are not
indispensable points $N$ is not  Moishezon. This happens if and only if $\Lam$ is a simplex.
Hence we have the following:

\begin{theorem}\label{Moishezon} {\bf(\cite{Me} Theorem 3)}
The following are equivalent:

\medskip
\noindent (i) $N$ is Moishezon.

\noindent (ii) $N$ is projective.

\noindent (iii) $N$ is a complex projective torus.
\end{theorem}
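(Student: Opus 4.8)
**Proof proposal for Theorem \ref{Moishezon} (equivalence of: $N$ Moishezon, $N$ projective, $N$ a complex projective torus).**

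The plan is to prove the cycle of implications $(iii)\Rightarrow(ii)\Rightarrow(i)\Rightarrow(iii)$, of which only the last is substantial. The implication $(iii)\Rightarrow(ii)$ is immediate: a complex projective torus is by definition an abelian variety, hence projective. The implication $(ii)\Rightarrow(i)$ is a classical fact (every projective manifold is Moishezon, since a projective embedding supplies $\dim_\C N$ algebraically independent meromorphic functions, namely ratios of homogeneous coordinates). So the whole content is $(i)\Rightarrow(iii)$.

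For $(i)\Rightarrow(iii)$ I would argue by cases according to whether the configuration $\Lam$ has an indispensable point, mirroring the structure already used in the proof that the K\"ahler LVM manifolds are exactly the tori. First suppose $N$ has \emph{no} indispensable point. Then Theorem \ref{algebraic-dimension} applies, and the algebraic dimension of $N$ equals the dimension over $\mathbb Q$ of the solution space of system (S), which is an $n\times(m+1)$ linear system; hence the algebraic dimension is at most $n-(m+1)=n-m-1=\dim_\C N$, with equality forcing the rank of that system to be exactly $m+1$ and the solution space to be of dimension $n-m-1$ and \emph{rational}. But if $N$ is Moishezon the algebraic dimension equals $\dim_\C N=n-m-1$, so equality holds; as noted in the text just before the theorem this can only happen when $\Lam$ is a simplex, i.e. when $n=2m+1$. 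By Theorem \ref{admissible}'s discussion and \S\ref{tori}, in that case $\mathcal S=(\C^*)^n$ and $N$ is a complex torus; moreover the rationality of the full solution space of (S) is precisely condition \textbf{K}, which makes the transverse K\"ahler form integral and hence $N$ an abelian variety (a projective torus). So in the no-indispensable-point case, Moishezon forces $n=2m+1$ together with the rationality condition, giving $(iii)$.

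Now suppose $N$ \emph{does} have an indispensable point. By Lemma \ref{indispensable-lemma} we have $\mathcal S=(\C^*)^k\times(\C^{n-k}\setminus A)$ with $k\ge 1$ the number of indispensable points and $A$ of codimension $\ge 2$, and correspondingly $N$ splits off a torus factor $(\s^1)^{k-1}$ as in the first case of the K\"ahler theorem's proof, i.e. $M_1=(\s^1)^k\times M_0$ and $N=(\s^1)^{k-1}\times M_0$ with $M_0$ $2$-connected. I would then show that unless $n=2m+1$ (so that $k=n$ and $M_0$ is a point), $N$ cannot be Moishezon: the factor $M_0$ is $2$-connected, so $H^1(N,\mathcal O_N)$ comes entirely from the torus factor $(\s^1)^{k-1}$, and more to the point, any meromorphic function on $N$ pulls back to one on $\mathcal S$ that extends across $A$ by Levi's theorem (codimension $\ge2$) and is invariant under the $\C^m$-action and under scaling, hence is rational on $\CP^{n-1}$ and built from the monomials $z_1^{s_1}\cdots z_n^{s_n}$ with $(s_i)$ solving (S); but indispensability of some index $i$ means there is no solution of (S) with $s_i\ne 0$ unless that solution is forced by the others, and a rank count shows the space of such monomials again has dimension $\le n-m-1$, strictly less than $\dim_\C N$ whenever the indispensable points fail to exhaust $\Lam$ — the same Theorem \ref{algebraic-dimension}-type bound, now applied after quotienting out the torus factor. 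Either way the algebraic dimension is $<\dim_\C N$ unless $\Lam$ is a simplex, so Moishezon forces $n=2m+1$ and, with the integrality/condition \textbf{K} that the transverse K\"ahler class then automatically satisfies, $N$ is a projective torus.

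The main obstacle is the indispensable-point case: Theorem \ref{algebraic-dimension} is stated only \emph{without} indispensable points, so I cannot quote it directly, and I expect the cleanest route is to reduce to that case by factoring $N=(\s^1)^{k-1}\times M_0$ and observing that $M_0$ is itself (up to the scaling/torus bookkeeping) an LVM-type leaf space for the reduced configuration obtained by deleting the indispensable directions — which has \emph{no} indispensable points — so that Theorem \ref{algebraic-dimension} applies to $M_0$ and the algebraic dimension of $N$ is (torus dimension) $+$ (algebraic dimension of $M_0$) $\le (k-1)+(n-k-m-1+\text{something})$; carefully bounding this by $n-m-1$ with equality iff $n=2m+1$ is the delicate bookkeeping step. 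The rest — that equality of algebraic and complex dimension forces the simplex case, and that the simplex case with the rationality of (S) gives a projective torus — is already essentially in the text surrounding Theorems \ref{algebraic-dimension} and the K\"ahler-classification theorem.
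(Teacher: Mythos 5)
Your chain $(iii)\Rightarrow(ii)\Rightarrow(i)$ is fine, and the no-indispensable-point branch of $(i)\Rightarrow(iii)$ does give the right conclusion, but two things there need correction. First, the bound on the algebraic dimension is off: the system (\textcolor{red}{S}) consists of $m$ \emph{complex} equations $\sum s_i\Lambda_i=0$ (so $2m$ real constraints on the real unknowns $s_i$) plus the real constraint $\sum s_i=0$, hence the space of rational solutions has dimension at most $n-2m-1$, not $n-m-1$; this is already strictly less than $\dim_\C N=n-m-1$ once $m\ge 1$, so in the no-indispensable case Moishezon fails outright and there is no ``equality forces simplex'' dichotomy to discuss (indeed no indispensable points already forces $n>2m+1$). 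Second, and more seriously, the indispensable-point branch has a genuine gap. The decomposition $N\cong(\s^1)^{k-1}\times M_0$ coming from Lemma \ref{indispensable-lemma} is a \emph{smooth} product, not a holomorphic one (Hopf manifolds $\s^1\times\s^{2n-5}$ are precisely the counterexample: they are not complex products of an elliptic curve and anything), so you cannot split the algebraic dimension of $N$ as torus-dimension plus algebraic dimension of $M_0$. Worse, ``the reduced configuration obtained by deleting the indispensable directions'' is, by the very definition of indispensable (Definition \ref{indispensable}), \emph{not} an admissible configuration, so $M_0$ is not the LVM manifold of any subconfiguration and Theorem \ref{algebraic-dimension} cannot be applied to it. Finally, when there are indispensable points the removed set $E=\C^n\setminus\mathcal S$ has codimension-one components, so Levi extension only carries you across $A$ into $(\C^*)^k\times\C^{n-k}$, not to all of $\C^n$, and the ``hence rational on $\CP^{n-1}$'' step does not follow.

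The paper's own argument (following Bosio, p.~1276--1277) circumvents this entirely and is genuinely different in kind: for any $I$ with $0\in\mathcal H((\Lambda_j)_{j\in I})$ one gets an LVM complex submanifold $N_I\subset N$, and when $n>2m+1$ one can always choose $I$ so that $N_I$ has an \emph{even} number of indispensable points, forcing $b_1(N_I)=k_I-1$ to be odd (a generalized Hopf manifold being the basic case). Since every complex subvariety of a Moishezon manifold is Moishezon, hence bimeromorphic to a K\"ahler manifold, hence has even first Betti number, this rules out $N$ Moishezon for all $n>2m+1$; and for $n=2m+1$, $N$ is a complex torus, for which Moishezon $=$ abelian $=$ projective. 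That route buys you a single uniform argument, avoids the codimension-one and non-admissible issues that block your reduction, and as the remark after the theorem notes, gives the non-K\"ahler statement for free. If you want to pursue your algebraic-dimension approach, you would need a version of Theorem \ref{algebraic-dimension} valid in the presence of indispensable points (in which case the formula changes and one only gets a lower bound), which is substantially more work than the Betti-number argument.
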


\noi {\bf \noi{\bf Sketch of the proof.} } We follow the proof given by Fr\'ed\'eric Bosio in \cite{Bosio} p.1276-1277. If  $I$ is a subset of $\{1,\hdots,n\}$ such that $0$ is in the convex envelope of $(\Lambda_i)_{i\in I}$, then
the restriction of action (2) to the complex vector subspace of $\mathbb C^n$ given by the equations 
$$
z_j=0\qquad\text{pour } j\not\in I
$$
defines also a LVM manifold that we denote $N_I$. Then this is a complex submanifold of $N$. One can verify that
if $n>2m+1$, \ie there are points that can be eliminated, one can find always submanifolds which have indispensable and in facr we can fine such a submanifold with odd first Betti number. But if $N$ is Moishezon then all of its complex subvarieties are also Moishezon and therefore must have first Betti number even. \qed
 
\begin{remark}
Exactly this last argument implies that $N$ is not K\"ahler if $n>2m+1$.
\end{remark}

\section{Deformation theory}\label{deformation}

\subsection{Small deformations} \label{small-deformations} We will state without a proof a theorem of stability of LVM manifolds under small deformations. Let $\Lam$ be an admissible configuration and $N$ the associated LVM manifold. For $\epsilon>0$, let $(\Lam_t)_{-\epsilon <t<\epsilon}$ be a small smooth perturbation of $\Lam$ (\ie a smooth function from $(-\epsilon,\epsilon)$ to
$(\C^m)^n$ such that  $\Lam_0=\Lam=(\Lambda_1,\hdots,\Lambda_n)$).

\medskip
Since the Siegel and weak hyperbolicity conditions are open in $(\C^m)^n$, if $\epsilon$ is
sufficiently small all the configurations $(\Lam^t)$ are admissibles. The manifold
$(\epsilon,\epsilon)$, $\underset{t\in(\epsilon,\epsilon)}\bigcup{N_t}\subset \CP^{n-1}\times \mathbb R$
admits an obvious submersion over $(-\epsilon,\epsilon)$ with compact fibers. 
Ehresmann' lemma implies that all the $N _t$ are diffeomorphic, however they are not necessarily biholomorphic
it is enough, for instance, to start with a configuration $\Lam$ which verifies la condition ({\bf K}) in definition \ref{System(K)}, and to 
perturb it  in $(\mathbb C^m)^n$)in order to obtain $(\Lam)^t$ which verify (H) in \ref{System(K)}. This way one obtains a non-trivial
family of de LVM manifold $N_{\Lam}$ parametrized by the interval $(-\epsilon,\epsilon)$.

\medskip

On the other hand, if $\Lam$ et $\Lam'$ are two admissible configurations such that  $\Lam'$ is obtained from $\Lam'$ 
by a complex affine transformation of $\mathbb C^m$, \ie there exists a complex affine
transformation $A$ of $\mathbb C^m$ such that $\Lambda '_i=A(\Lambda_i)$ for all $i$, one sees immediately since  
$A(\mathcal{S}_{\Lam})={\mathcal{S}}'$ and $A$ sends a Siegel leaf of the system corresponding to $\Lam$ to a leaf
corresponding to $\Lam'$

\begin{definition}
Let $\Lam$ be an admissible configuration and $N_{\Lam}$ the corresponding LVM manifold.
One calls \emph{space of parameters} of $N_{\Lam}$ the set of equivalence classes on an open connected neighborhood of
$\Lam$ in $(\mathbb C^m)^n$ consisting of equivalence classes of admissible configurations under the equivalence $\cong$
given by $\Lam\cong\Lam'$ if and only if there exists a complex affine transformation $A$ such that $A(\Lam)=A(\Lam')$. 
\end{definition}

The weak hyperbolicity condition implies that  $\Lam$ affinely generates  the space 
 $\mathbb C^m$  (\cite{MV}, Lemma 1.1). Up to renumbering the vectors on can assume that 
 $(\Lambda_1,\hdots,\Lambda_{m+1})$ are affinely independent.  Given a sufficiently small open connected set 
 of configurations in $(\mathbb C^m)^n$ containing  $\Lam$, one sees that every element in that open set can be transformed
 in a unique way to a configuration where the first
 $m+1$ vectors coincide with those of$\Lam$.  Therefore:

\begin{lemma} 
Let $D$ be an space of parameters for $N_{\Lam}$. Then $D$ can be identified with an open connected subset of
$(\mathbb C^m)^{n-m-1}$.
\end{lemma}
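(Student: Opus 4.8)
The plan is to exploit the normalization procedure described just before the statement: the weak hyperbolicity condition guarantees (by \cite{MV}, Lemma 1.1) that $\Lam$ affinely spans $\C^m$, so after renumbering we may assume $\Lambda_1,\dots,\Lambda_{m+1}$ are affinely independent. The key observation is that the affine group $\mathrm{Aff}(\C^m)$ acts simply transitively on ordered $(m+1)$-tuples of affinely independent points of $\C^m$: given two such tuples there is a unique complex affine transformation carrying one to the other. First I would make this precise, recording that the map sending an affine transformation $A$ to $(A(\Lambda_1),\dots,A(\Lambda_{m+1}))$ is a bijection onto the (open, connected) set of affinely independent ordered $(m+1)$-tuples.

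Next, starting from a sufficiently small connected open neighborhood $\mathcal U$ of $\Lam$ in $(\C^m)^n$ consisting of admissible configurations, I would define the normalization map $\nu:\mathcal U \to (\C^m)^{n-m-1}$ as follows: given $\Lam' = (\Lambda_1',\dots,\Lambda_n') \in \mathcal U$, its first $m+1$ vectors are still affinely independent (this is an open condition, satisfied on a possibly smaller neighborhood, which we absorb into $\mathcal U$), so there is a unique $A \in \mathrm{Aff}(\C^m)$ with $A(\Lambda_i') = \Lambda_i$ for $i=1,\dots,m+1$; set $\nu(\Lam') = (A(\Lambda_{m+2}'),\dots,A(\Lambda_n'))$. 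By Remark \ref{affine_transformation} and the discussion preceding the statement, $A$ sends $\mathcal S_{\Lam'}$ to $\mathcal S$ and Siegel leaves to Siegel leaves, so $\Lam'$ and $A(\Lam')$ define the same point of the space of parameters; thus $\nu$ factors through the quotient by $\cong$ and induces a well-defined map $\bar\nu: D \to (\C^m)^{n-m-1}$ on the space of parameters $D$ obtained from $\mathcal U$.

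Then I would check $\bar\nu$ is a bijection onto its image and that the image is open and connected. Injectivity: if $\nu(\Lam') = \nu(\Lam'')$, then the two normalized configurations $A'(\Lam')$ and $A''(\Lam'')$ agree in all $n$ coordinates (the first $m+1$ by construction, the rest by hypothesis), hence $\Lam' \cong \Lam''$. Surjectivity onto an open connected subset: the normalization $A$ depends continuously (indeed holomorphically) on $\Lam'$, since solving the affine system $A(\Lambda_i') = \Lambda_i$, $i \le m+1$, is Cramer's rule in the entries, so $\nu$ is continuous and open; the image $\nu(\mathcal U)$ is therefore an open connected subset of $(\C^m)^{n-m-1}$, and $\bar\nu$ identifies $D$ with it homeomorphically.

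The main obstacle is a bookkeeping/consistency point rather than a deep one: I must make sure the neighborhood chosen so that all configurations are admissible can simultaneously be taken small enough that the first $m+1$ vectors remain affinely independent throughout, and that the equivalence relation $\cong$ on that neighborhood is exactly the fibers of $\nu$ — i.e. that two nearby admissible configurations related by an affine transformation are necessarily related by one fixing the first $m+1$ normalized coordinates. This follows from the simple transitivity of $\mathrm{Aff}(\C^m)$ on affinely independent $(m+1)$-tuples, but one must be careful that the affine transformation realizing $\Lam' \cong \Lam''$ is genuinely unique given its effect on $m+1$ affinely independent points, which is precisely where weak hyperbolicity (via affine spanning) is used. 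Once this is in hand, the identification $D \cong \nu(\mathcal U) \subset (\C^m)^{n-m-1}$ is immediate.
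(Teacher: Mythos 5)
Your proposal is correct and takes essentially the same approach as the paper: the paper's own justification is precisely the normalization argument given in the paragraph preceding the lemma (affine spanning via weak hyperbolicity, then unique normalization of the first $m+1$ affinely independent vectors), and you simply make that sketch precise by constructing the map $\nu$, verifying it descends to a bijection on equivalence classes, and checking openness and connectedness of the image.
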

 
 Under these conditions on can construct a holomorphic family $\mathcal D$ of deformations of
 $N_{\Lam}$ parametrized by $D$. It is enough to consider the quotient of
 $\mathcal{S}\times D$ under the action in formula \ref{actionLVM}
 with parameters in $D$.

\begin{theorem} {\bf (\cite{Me} Theorem 11)}
Let $D$ be an space of parameters of the LVM manifold $N_{\Lam}$ 
corresponding to the configuration $\Lam$. Let $\mathcal D\to D$ be the associated family of deformation. Then

\medskip

\noindent (i) If $\mathcal{S}$ is at least 3-connected, the family $\mathcal D$ 
is a versal family of deformations of $N_{\Lam}$.

\noindent (ii) If  $\mathcal{S}$ is at least 4-connected and $\Lambda_i\not =\Lambda_j$ if $i\not =j$, the family $\mathcal D$ is universal
\end{theorem}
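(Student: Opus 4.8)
The plan is to identify the family $\mathcal D\to D$ as a family of deformations of $N_\Lam$ in the sense of Kodaira--Spencer and then check the versality/universality criteria by computing the Kodaira--Spencer map and the relevant cohomology of the tangent sheaf. First I would recall the explicit description of $\mathcal D$: it is the quotient of $\mathcal S\times D$ by the holomorphic action of $\C^m$ given by formula \eqref{actionLVM} with the parameter $T$ and the configuration varying in $D\subset(\C^m)^{n-m-1}$ (using the normalization from the preceding lemma that the first $m+1$ vectors are fixed). Since each fibre is an LVM manifold $N_{\Lam'}$ and the quotient is a proper holomorphic submersion with compact fibres (this is Ehresmann plus the properness statements already established for $M_1$), $\mathcal D\to D$ is a genuine analytic family; the base point $0\in D$ corresponds to $\Lam$.

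The key step is the computation of the Kodaira--Spencer map $\rho:T_0D\to H^1(N_\Lam,\Theta_{N_\Lam})$ and of the obstruction space $H^2(N_\Lam,\Theta_{N_\Lam})$. Here one exploits that $N_\Lam$ is the transverse manifold to the holomorphic foliation $\tilde{\mathcal F}$ on $\mathcal S$ (Haefliger's lemma \ref{Haefliger}), so $\Theta_{N_\Lam}$ is the transverse tangent sheaf of that foliation; its cohomology can be computed from the $\C^m$-equivariant cohomology of $\mathcal S$ together with the connectivity hypotheses on $\mathcal S$. The point of assuming $\mathcal S$ is $3$-connected (resp. $4$-connected) is precisely that it forces the relevant low-degree cohomology of $\mathcal S$ to vanish, so that $H^1(N_\Lam,\Theta_{N_\Lam})$ is computed entirely by the ``variation of the $\Lambda_i$'' deformations, i.e. $\rho$ is an isomorphism onto $H^1$; under $3$-connectedness $\rho$ is surjective, which by the completeness theorem of Kodaira--Spencer--Kuranishi (a deformation whose Kodaira--Spencer map is surjective is versal) gives part (i). For part (ii) one must upgrade versality to universality: universality holds when $H^0(N_\Lam,\Theta_{N_\Lam})=0$, and the extra hypothesis (4-connectedness of $\mathcal S$, plus the $\Lambda_i$ pairwise distinct so there are no extra automorphisms coming from permuting equal eigenvalues or from the ambient torus) is exactly what is needed to kill $H^0$, i.e. to show $N_\Lam$ has no nonzero holomorphic vector fields. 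One also checks injectivity of $\rho$ so that $D$ has the right dimension.

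The main obstacle I expect is the cohomology computation for the transverse tangent sheaf: translating the topological connectivity of $\mathcal S$ into the vanishing of $H^0$ and the precise identification of $H^1(N_\Lam,\Theta_{N_\Lam})$ requires a spectral sequence (or a Leray-type argument for the foliation) relating $\mathrm{Dolbeault}$ cohomology on $N_\Lam$ to equivariant data on $\mathcal S\subset\C^n$, and bookkeeping the contribution of the $\C^m$-action and of the scalar $\C^*$-action. Once that identification is in place, the rest is a routine appeal to the standard completeness and uniqueness theorems in deformation theory. Since this is Meersseman's Theorem 11 in \cite{Me}, I would follow that proof: the detailed verification is carried out there, and I would only indicate the structure above and refer to \cite{Me} for the cohomological estimates.
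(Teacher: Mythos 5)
The paper itself does not prove this theorem; it states it and refers to \cite{Me}. The only proof in the paper with the same structure is that of its Theorem~6 (the $m=1$ case, following Loeb--Nicolau): one uses Scheja's extension theorem to relate $H^i(V,\Theta)$ to $H^i(\CP^{n-1},\Theta)$ for $i\leq d-2$, then runs the exact sequences involving $\Theta^{\mathrm{inv}}$ and $\Theta_b$ to compute $H^1(N,\Theta)$, and concludes by comparing dimensions with the explicitly constructed reduced deformation space. Your proposal for part (i) --- identify $\mathcal D\to D$ as a proper analytic family, compute the Kodaira--Spencer map, use the connectivity of $\mathcal S$ (equivalently, the codimension of its complement) via Scheja-type vanishing, and invoke the Kodaira--Spencer completeness theorem --- is faithful to that line of argument.

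Part (ii), however, contains a genuine error. You assert that universality is obtained by showing $H^0(N_\Lam,\Theta_{N_\Lam})=0$, i.e.\ that $N_\Lam$ has no nonzero holomorphic vector fields, and that 4-connectedness of $\mathcal S$ together with the $\Lambda_i$ being distinct is ``exactly what is needed to kill $H^0$.'' This is false: every LVM manifold carries a positive-dimensional group of biholomorphisms, since $N_\Lam$ is an equivariant compactification of the abelian complex Lie group $G_\Lam$ of dimension $n-m-1$ (Theorem~\ref{GLam}). In particular $\dim H^0(N_\Lam,\Theta_{N_\Lam})\geq n-m-1>0$ always, and no hypothesis on connectivity of $\mathcal S$ or distinctness of the $\Lambda_i$ can change that. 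So the criterion ``$H^0=0\Rightarrow$ universal'' simply does not apply here. What actually happens (as in the proof of the paper's Theorem~6, and in Meersseman's argument) is a dimension count: one shows that the reduced deformation space injects into the Kuranishi family (this is the rigidity input, Theorem~5, which is where distinctness of the $\Lambda_i$ enters) and that its dimension matches $\dim H^1(N,\Theta)$; since Kuranishi is semi-universal this forces the two to coincide. The condition ensuring that this semi-universal family is in fact universal when $H^0\neq 0$ is Wavrik-type constancy of $\dim H^0(N_t,\Theta)$ along the family, not vanishing of $H^0$. Your sketch would need to replace the $H^0=0$ step by that argument to be correct.
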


Hence under rather restrictive conditions we have that all the small deformations of $N_{\Lam}$ are obtained by just perturbing the configuration $\Lam$. However this is not the general case: the Hopf surfaces don't admit a universal family.

\subsection{Rigidity and Versality for $m=1$.} We consider the configuration corresponding to the regular polygon with 
$=2l+1$ vertices (see section \ref{Connected sums1}). Let $n=n_1+\dots+n_k$ be an ordered
partition of $n$ with $d\ge 4$. Let $\Lam=(\lambda_1,\hdots,\lambda_k)$, $\lambda_i\in\C$ be the admissible configuration where the multiplicity of $\lambda_i$  is $n_i$.

Recall that the complex structure on $N(\Lam)$ does not vary
within the affine equivalence class of $\Lam$. We show now that
the converse is true in most of the cases. These include in
particular all cases with $k>5$. It is plausible that the result is
true in general.

\vskip10pt \noindent {\bf Theorem 5.} {\it Let $n=n_1+\dots+n_k$ be
an ordered partition of $n$ with $d\ne 2$. Then any two collections
of eigenvalues corresponding to this partition give holomorphically
equivalent manifolds $N$ if, and only if, they are affinely  
equivalent.} 

\begin{proof}The sufficiency of the condition was observed above. For the
necessity, if $d=1$ we are in the Calabi-Eckmann case, and this was
shown by Loeb and Nicolau (\cite{LN}, proposition 12). For $d>2$ we
follow their argument:

Let $V={\mathcal{S}}/\C^*$ which is an open subset of $\CP^{n-1}$. Then
the complement of $V$ in $\CP^{n-1}$ is a union of projective
subspaces whose smallest codimension is $d$. By the results of
Scheja \cite{Sc} we have that

$$H^i(V, {\mathcal O}) = H^i(\CP^{n-1}, {\mathcal O}) \ for \ i\le d-2$$
where ${\mathcal O}_X$ denotes the sheaf of holomorphic functions on a
manifold. The second cohomology groups were computed by Serre and
are $\C$ in dimension 0 and trivial otherwise (see e.g. \cite{GH}
p.118).

Now, let ${\mathcal O}^{inv}$ be the kernel of the map ${\mathcal O} \to
{\mathcal O}$ given by the Lie derivative along the vector field $\xi$
which generates the $\C$ action on $V$, so we have an exact sequence
of sheaves:

\[
0 \lra \mathcal{O}^{inv} \rightarrow {\mathcal O} \overset{L_{\xi}}{\lra}{\mathcal{O}} \lra 0
\]

The associated cohomology exact sequence shows that, for $d\ge3$,\break 
$H^1(V, {\mathcal O}^{inv})= \C$, but this group can be
identified with $H^1(N,{\mathcal O})$. Therefore this group is also $\C$
and since it classifies the principal $\C$-bundles over $N$, any two
non-trivial principal $\C$-bundles over $N$ differ by a scalar
factor.

Let $N_1,N_2$ be two such manifolds which are holomorphically
equivalent and consider a biholomorphism $\phi: N_1 \to N_2$. Over
each $N_i$ there is a principal $\C$-bundle $V_i \to N_i$, where the
total space $V_i$ is in both cases $V$, but is foliated in two
different ways by the projectivized leaves of each system. We have
to lift $\phi$ to an equivalence of the principal $\C$-bundles
$V_i$, which amounts to finding an equivalence between $V_1$ and
$\phi^*V_2$. Now $V_1$ and $\phi^*V_2$ are non-trivial $\C$-bundles
(otherwise they would have sections, $N_i$ would embed
holomorphically in $\CP^{n-1}$ and would be a K\"ahler manifold,
recall \cite{We}, p.182). By the previous computation these differ by a
scalar factor and there is an equivalence between $V_1$ and $V_2$
preserving the leaves of the foliations. By Hartog's Theorem this
equivalence extends to one of $\CP^{n-1}$ into itself which must
then necessarily be linear since the group of biholomorphisms of 
$\CP^{n-1}$ is the corresponding projective linear group. But then it
follows easily that the corresponding eigenvalues must be affinely
equivalent, and Theorem 5 is proved. \end{proof}

Theorem 5 says that when $d\ne 2$ the reduced deformation space of
$N$ injects into its universal deformation space. For $d=1$ the
question of whether the reduced deformation space is universal or
not depends on the existence of resonances among the $\lambda_i$
(see \cite{Ha}, \cite{LN}). For $d\ge 4$ the situation is simpler and only
depends on the condition that all the $\lambda_i$ be different:
\vskip10pt

\noindent {\bf Theorem 6.} {\it Let $n=n_1+\dots+n_k$ be an ordered
partition of $n$ with $d\ge 4$. Let $\Lambda$ be a collection of
eigenvalues corresponding to this partition and assume that all
$\lambda_i$ are different. Then the corresponding reduced
deformation space of $N(\Lambda)$ is universal.} \vskip10pt
\noindent {\bf Proof:}  Following again \cite{LN} we consider
the exact sequences of sheaves over $V$:

$$0 \to \Theta^{inv} \to \Theta  \overset{L_\xi}{\lra} \Theta \to 0$$

$$0 \to {\mathcal O}^{inv} \xi \to \Theta^{inv} \to \Theta_b \to 0$$

\noindent where $\Theta$ denotes the sheaf of holomorphic vector
fields on a manifold and $\Theta^{inv}$ and $\Theta_b$ are defined
by these sequences. Now again by Scheja \cite{Sc} we have

$$H^i(V, \Theta) = H^i(\CP^{n-1}, \Theta) \ for \ i\le d-2$$

$H^0(\CP^{n-1}, \Theta)$ is the space of holomorphic global vector
fields on $\CP^{n-1}$ (all of which are linear) and can be
identified with the space of $n\times n$ matrices modulo the scalar
ones. For $i>0$, $H^i(\CP^{n-1}, \Theta)=0$.

The first sequence above gives a cohomology exact sequence for 
$d\geq4$:

$$0 \to H^0(\Theta^{inv}) \to H^0(\Theta) \overset{L_{\xi}}{\lra} H^0(\Theta)
\to H^1(\Theta^{inv}) \to 0.$$

Since $\xi$ corresponds to the diagonal matrix with entries
${\lambda_i}$ and these are different, the kernel and cokernel of
$L_\xi$ can be identified with the space of diagonal matrices modulo
the scalar ones, so $H^1(\Theta^{inv})$ is a space of dimension
$n-1$. The class of $\xi$ in this vector space is non-zero.

From the exact sequences of sheaves we have the diagram: {\small
$$\begin{matrix}
H^0(\Theta_b) \to & H^1({\mathcal O}^{inv}) &\to& H^1(\Theta^{inv}) &
\to H^1(\Theta_b)\to 0\cr
         &\uparrow{\cong} && \uparrow{} &\cr
        & H^0({\mathcal O})&\to& H^0(\Theta) &
\end{matrix}
$$
\noindent where the two middle horizontal maps are induced by
multiplication by $\xi$. Since the lower one is injective by the
above remark, it follows that so is the upper one and that
$H^1(\Theta_b)$ is of dimension $n-2$.

Now it is easy to see that $H^i(\Theta_b)$ is isomorphic to
$H^i(N,\Theta)$. It follows that $H^1(N,\Theta)$ is of dimension
$n-2$ and is the tangent space to the universal deformation space of
$N$. Since we have shown that the reduced deformation space is
smooth, has dimension $n-2$ and injects into this universal space,
it follows that it is itself a universal deformation space and
Theorem 6 is proved. \qed

\noi Observe that in Theorem 4 for $l\ge 4$ the space of complex
structures is the universal deformation space for any of its
members.

\def\mapup#1{\Bigg\uparrow \rlap{$\vcenter{\hbox{$\gatoriptstyle#1$}}$}}

\subsection{Global deformation theory of LVM manifolds} Here the deformation theory of equivariant LVM manifolds is explained and then together with the reconstruction theorem we conclude that this implies the existence of the moduli stack of torics.

Let $\Lambda$ be an admissible configuration. We want to describe the set $\mathcal M_\Lambda$ of $G$-biholomorphism classes of LVM manifolds $N_{\Lambda'}$ 
such that $\mathcal{S}_{\Lambda'}$ is equal to $\mathcal{S}_{\Lambda}$ up to a permutation of coordinates in $\mathbb C^n$.

We assume that $\Lambda$ 
satisfies \eqref{firstmcondition} and 
\begin{equation}
 \label{km}
 \Lambda_i\text{ is indispensable}\iff i\leq k
\end{equation}
that is, the $k$ indispensable points are the first $k$ vectors of the configuration. In the same way, every class $[N_{\Lambda'}]$ of $\mathcal M_\Lambda$ can be represented by a 
configuration $\Lambda'$ satisfying \eqref{firstmcondition}, \eqref{km} and

\begin{equation}
\label{S=}
\mathcal{S}:=\mathcal{S}_\Lambda=\mathcal{S}_{\Lambda'}.
\end{equation}

\begin{remark}

Condition \eqref{S=} is equivalent to $K_{\Lambda}$ being combinatorially equivalent to $K_{\Lambda'}$ with same numbering \eqref{Knumbering}. Observe that 
because of our convention \eqref{km},
having the same numbering implies having the same number of indispensable points.
\end{remark}

Now, observe that, because of \eqref{firstmcondition}, there exists an affine transformation $T$ of $\mathbb C^m$ sending $\Lambda$ onto a configuration 
(which we still denote by $\Lambda$) whose first $m+1$ vectors satisfies
\begin{equation}
\label{LambdaAffineNorm}
\Lambda_1=ie_1,\, \Lambda_2-\Lambda_1=e_1,\, \hdots,\,\Lambda_{m+1}-\Lambda_1=e_m,\,\,\,
\rm{where} \,\,(e_1,\hdots,e_m)\, \rm{is\, the\, canonical \, basis \,of}\,\, \C^m.
\end{equation}

It is straightforward to check that this does not change $N_{\Lambda}$ up to $G$-biholomor\-phism. In the same way, each class of  $\mathcal M_\Lambda$ can be 
represented by an element $\Lambda'$ satisfying \eqref{firstmcondition}, \eqref{km}, \eqref{S=} and \eqref{LambdaAffineNorm}. We call $\mathcal{S}$-{\it normalized 
configuration} such a configuration.

Let $\mathcal T_\Lambda$ be the set of $\mathcal{S}$-normalized configurations. This is an open and connected set in $(\mathbb C^m)^{n-m-1}$.
\vspace{5pt}\\
Assume now that $N_{\Lambda'}$ is $G$-biholomorphic to $N_\Lambda$. Then, $G_\Lambda$ and $G_{\Lambda'}$ as subgroups of $\text{Aut}(N_\Lambda)$, respectively 
$\text{Aut}(N_{\Lambda'})$ are isomorphic Lie groups. Hence, their universal cover are isomorphic as Lie groups, that is, using the presentation given in 
Proposition \ref{Glattice}, there exists a matrix $M$ in $\text{GL}_{n-m-1}(\mathbb C)$ which sends the lattice of $G_\Lambda$ bijectively onto that of 
$G_{\Lambda'}$. Using notations \eqref{Alattice} and \eqref{Blattice}, this means that there exists a matrix $P$ in $\text{SL}_{n-1}(\mathbb Z)$ such that 
\begin{equation}
\label{isoAB}
M(Id, B_\Lambda A_\Lambda^{-1})=(Id,B_{\Lambda'}A_{\Lambda'}^{-1})P.
\end{equation}
Decomposing $P$ as
\begin{equation}
\label{Pdec}
P=
\begin{pmatrix}
 P_1 &P_2\cr
 Q_1 &Q_2
\end{pmatrix}
\end{equation}
with $P_1$ a square matrix of size $n-m-1$ and $Q_2$ a square matrix of size $m$, we obtain
\begin{equation}
\label{equa1}
MB_\Lambda A_\Lambda^{-1}=(P_1+B_{\Lambda'}A_{\Lambda'}^{-1}Q_1)B_\Lambda A_\Lambda^{-1}=P_2+B_{\Lambda'}A_{\Lambda'}^{-1}Q_2.
\end{equation}

Because of \eqref{LambdaAffineNorm}, this means that 

\begin{equation}
\label{isoLambda}
^t \kern-2ptB_\Lambda=(\,^t\kern-2pt P_2+\,^t\kern-2pt Q_2\,^t\kern-2pt B_{\Lambda'})(\,^t\kern-2pt P_1+\,^t\kern-2pt Q_1\,^t\kern-2pt B_{\Lambda'})^{-1}
\end{equation}

that is
\begin{proposition}
\label{iso}
 Let $\Lambda$ and $\Lambda'$ be two $\mathcal{S}$-normalized configurations. Then $N_\Lambda$ and $N_{\Lambda'}$ are $G$-biholomorphic if and only if $\Lambda$ and $\Lambda'$ satisfies \eqref{isoLambda}.
\end{proposition}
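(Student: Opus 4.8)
The plan is to prove the two implications separately. The forward direction has essentially been assembled in the paragraphs leading up to the statement, so the substance lies in the converse, and the crux of the converse is promoting an isomorphism of the acting groups to a genuine biholomorphism of the manifolds.

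\textbf{Necessity.} If $\phi\colon N_\Lambda\to N_{\Lambda'}$ is a $G$-biholomorphism, then, viewing $G_\Lambda$ and $G_{\Lambda'}$ as subgroups of $\mathrm{Aut}(N_\Lambda)$ and $\mathrm{Aut}(N_{\Lambda'})$, conjugation by $\phi$ identifies $G_\Lambda$ with $G_{\Lambda'}$ as complex Lie groups; lifting to universal covers and using the presentation of Proposition~\ref{Glattice} one obtains $M\in\mathrm{GL}_{n-m-1}(\mathbb C)$ and $P\in\mathrm{SL}_{n-1}(\mathbb Z)$ with \eqref{isoAB}. Decomposing $P$ as in \eqref{Pdec}, equating the two expressions for $MB_\Lambda A_\Lambda^{-1}$ yields \eqref{equa1}, and inserting the affine normalization \eqref{LambdaAffineNorm} (which fixes the first $m+1$ columns of $B_\Lambda$ and $B_{\Lambda'}$) and transposing gives \eqref{isoLambda}. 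So here there is nothing to do beyond collecting the computations already made.

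\textbf{Sufficiency: the algebraic step.} Conversely, suppose \eqref{isoLambda} holds, i.e.\ that there is a $P\in\mathrm{SL}_{n-1}(\mathbb Z)$, in block form \eqref{Pdec} and compatible with the numbering \eqref{Knumbering}, for which the displayed identity is valid; in particular $\,^t\kern-2pt P_1+\,^t\kern-2pt Q_1\,^t\kern-2pt B_{\Lambda'}$ is invertible. Transposing \eqref{isoLambda} back is precisely \eqref{equa1}. Put $M:=P_1+B_{\Lambda'}A_{\Lambda'}^{-1}Q_1$; it is invertible since its transpose is, and $(M,P)$ satisfies \eqref{isoAB} by construction. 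Thus $M$ carries the lattice of $G_\Lambda$ isomorphically onto that of $G_{\Lambda'}$ (see \eqref{Alattice}, \eqref{Blattice}), hence descends to an isomorphism of complex Lie groups $\mu\colon G_\Lambda\to G_{\Lambda'}$.

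\textbf{Sufficiency: from $\mu$ to a biholomorphism, and the main difficulty.} It remains to extend $\mu$ to a $G$-biholomorphism $N_\Lambda\to N_{\Lambda'}$. The inclusion $(\mathbb C^*)^n\subset\mathcal S$ produces, after quotienting, an open dense $G_\Lambda$-orbit in $N_\Lambda$ which is a torsor under $G_\Lambda$ (trivialized by the class of $[1:\cdots:1]$), and likewise for $N_{\Lambda'}$; by \eqref{S=} these two orbits sit in $N_\Lambda$ and $N_{\Lambda'}$ as the complements of the \emph{same} arrangement of strata, indexed combinatorially by $K_\Lambda=K_{\Lambda'}$. Via the trivializations, $\mu$ gives a $\mu$-equivariant biholomorphism between the dense orbits, and the content of the argument is that it extends over the boundary strata. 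This is where the hypotheses are genuinely used: one must check that $M$ (equivalently $P$, since $P$ respects the numbering \eqref{Knumbering} and, by \eqref{km}, does not mix indispensable with removable indices) carries the combinatorial ``fan'' data describing how $N_\Lambda$ is built from its dense orbit to the corresponding data for $N_{\Lambda'}$ --- which it does precisely because both sets of data are encoded by the single complex $K_\Lambda=K_{\Lambda'}$ with its fixed numbering. Granting this, gluing along the strata produces the desired $\phi$, and one verifies directly that $\phi$ intertwines the $G_\Lambda$- and $G_{\Lambda'}$-actions through $\mu$ and that a different admissible choice of $P$ changes $\phi$ only within its $G$-biholomorphism class. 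The main obstacle is exactly this extension/fan-compatibility step; the purely group-theoretic manipulations relating \eqref{isoLambda} and \eqref{isoAB} are routine once it is in place.
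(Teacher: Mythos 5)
Your necessity argument simply collects the computations the paper already carries out in the paragraphs preceding the statement, and that is indeed all the paper itself offers; the paper gives no explicit proof of the converse (the proposition is introduced by ``that is'' immediately after the forward derivation), so on the sufficiency side you are on your own.

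For sufficiency, your algebraic reduction is correct: setting $M := P_1 + B_{\Lambda'}A_{\Lambda'}^{-1}Q_1$ recovers \eqref{isoAB}, hence the lattice isomorphism and the group isomorphism $\mu : G_\Lambda \to G_{\Lambda'}$. But the geometric step leaves a real gap, and you concede it (``Granting this \ldots''). Two points remain unestablished. First, you quietly strengthen the hypothesis by assuming that $P$ is ``compatible with the numbering \eqref{Knumbering}'' and ``does not mix indispensable with removable indices''; equation \eqref{isoLambda} is a complex matrix identity and by itself imposes no such combinatorial constraint on $P$, so if this is needed it has to be derived from \eqref{S=} and the structure of $\mathcal{S}$, not assumed. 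Second, an isomorphism of the dense group orbits does not automatically extend across the boundary strata of an equivariant compactification --- the toric analogue requires fan compatibility --- and you correctly locate this as the main obstacle but do not resolve it. The argument used in Proposition \ref{orbifold} (lifting to a biholomorphism of $\mathbb{C}^{n-m-1}$ via $2$-connectedness and Hartogs, through diagram \eqref{lifts}) suggests a more concrete route than the fan-gluing you sketch: show directly that the linear map $M$ descends to $(\mathbb{C}^*)^{n-m-1}$, preserves $\mathcal{S}_1$ because $\mathcal{S}_\Lambda=\mathcal{S}_{\Lambda'}$, and then intertwines the two $\mathbb{C}^m$-actions. As it stands, the sufficiency direction of your proposal is an honest outline, not a proof.
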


Thus, $\mathcal M_\Lambda$ is the quotient of $\mathcal T_\Lambda$ by the action of $\text{SL}_{n-1}(\mathbb Z)$ described in \eqref{isoLambda}. We claim

\begin{proposition}
\label{orbifold}
If the number $k$ of indispensable points is less than $m+1$, then the moduli space $\mathcal M(X)$ is an orbifold.
\end{proposition}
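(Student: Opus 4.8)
The plan is to exhibit $\mathcal{M}_\Lambda = \mathcal{M}(X)$ as the quotient of a manifold by a properly discontinuous action with finite stabilizers. By the preceding discussion (culminating in Proposition~\ref{iso}), we already know that $\mathcal{M}_\Lambda$ is the quotient of the open connected set $\mathcal{T}_\Lambda\subset(\mathbb{C}^m)^{n-m-1}$ of $\mathcal{S}$-normalized configurations by the action of $\mathrm{SL}_{n-1}(\mathbb{Z})$ given by the transformation rule \eqref{isoLambda}. So the whole content of the proposition is to verify that, \emph{when $k<m+1$}, this action is properly discontinuous and has finite isotropy groups; the orbifold structure then follows from the standard fact that the quotient of a manifold by such an action carries a natural orbifold structure.

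First I would make the action transparent: fixing $\Lambda$, formula \eqref{isoLambda} expresses the matrix $^t\!B_{\Lambda'}$ as a matrix-fractional (linear-fractional in the matrix sense) transformation of $^t\!B_\Lambda$ determined by the block decomposition \eqref{Pdec} of $P\in\mathrm{SL}_{n-1}(\mathbb{Z})$, exactly as $\mathrm{SL}_2(\mathbb{Z})$ acts on the upper half-plane, or more precisely as $\mathrm{SL}_{2g}(\mathbb{Z})$ acts on a Siegel-type domain. The domain here is $\mathcal{T}_\Lambda$, which sits inside $(\mathbb{C}^m)^{n-m-1}$ and is cut out by the conditions \eqref{firstmcondition}, \eqref{km}, \eqref{S=}, \eqref{LambdaAffineNorm}; the admissibility/weak-hyperbolicity bounds give the ``openness'' that plays the role of positive-definiteness of the imaginary part. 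The key algebraic input is that $^t\!P_1+\,^t\!Q_1\,^t\!B_{\Lambda'}$ is invertible on $\mathcal{T}_\Lambda$ — which is precisely the well-definedness already used to write \eqref{isoLambda} — so the action is by diffeomorphisms of $\mathcal{T}_\Lambda$.

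Next, properness: I would argue that the action of $\mathrm{SL}_{n-1}(\mathbb{Z})$ on $\mathcal{T}_\Lambda$ factors through, or is commensurable with, the action of a suitable arithmetic group on a bounded symmetric domain (of Siegel type), for which proper discontinuity is classical. Concretely, a configuration $\Lambda$ together with its lattice data $(A_\Lambda,B_\Lambda)$ determines the complex torus which is the fibre of $\mathcal{G}_\Lambda$, and $\eqref{isoLambda}$ is the change-of-marking action on the period matrix; the relevant period domain is finite-dimensional and the action of the integral symplectic-type group is properly discontinuous on it, hence so is the restricted action on the locally closed subset $\mathcal{T}_\Lambda$. Alternatively one can argue directly: the entries of any $P\in\mathrm{SL}_{n-1}(\mathbb{Z})$ moving a compact set $K\subset\mathcal{T}_\Lambda$ into itself are bounded in terms of $\sup_K\|^t\!B\|$ and the distance of $K$ to the locus where $^t\!P_1+\,^t\!Q_1\,^t\!B$ degenerates, so only finitely many $P$ qualify.

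Finally, the stabilizers. Here is where the hypothesis $k<m+1$ (equivalently $n>2m+1$, so there is at least one removable point and the torus fibre is positive-dimensional with a genuinely varying period) enters and is, I expect, the main obstacle. I would show that for $\Lambda\in\mathcal{T}_\Lambda$ the isotropy subgroup $\{P : \text{\eqref{isoLambda} fixes } \Lambda\}$ is finite: it is contained in the group of integral automorphisms of the period lattice of the fibre torus compatible with its polarization (the transverse Kähler/Euler data of $\mathcal{G}_\Lambda$, cf. Definition~\ref{Euler_class}), and such automorphism groups of a polarized abelian variety are finite. The condition $k<m+1$ guarantees this polarization is non-degenerate — if $k=m+1$ then $n=2m+1$, $\mathcal{S}=(\mathbb{C}^*)^n$, $N_\Lambda$ is itself a torus, and $\mathcal{T}_\Lambda$ degenerates, which is exactly the case excluded. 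Assembling: $\mathcal{M}(X)=\mathcal{T}_\Lambda/\mathrm{SL}_{n-1}(\mathbb{Z})$ is the quotient of a connected complex manifold by a properly discontinuous action with finite stabilizers, hence an orbifold. $\qed$
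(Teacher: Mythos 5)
Your reduction to showing that the action of $\mathrm{SL}_{n-1}(\mathbb Z)$ on $\mathcal T_\Lambda$ has finite stabilizers matches the paper's opening move, and your observation that one also needs proper discontinuity (which the paper passes over in silence) is a reasonable supplement. But your argument for finiteness of the stabilizers misses the actual mechanism and relies on a false identification. You claim that $k=m+1$ forces $n=2m+1$; that is not so — by Carath\'eodory $k\le 2m+1$, and equality $k=2m+1$ (not $k=m+1$) is what characterizes $n=2m+1$. Hopf surfaces have $n=4$, $m=1$, $k=2=m+1$, which your reading misclassifies. So your explanation of why the hypothesis $k<m+1$ enters is incorrect, and in consequence the alleged ``non-degenerate polarization on the fibre torus'' line of reasoning has no support: $G_\Lambda$ is a semi-torus (a noncompact Cousin-type group), not a polarized abelian variety, and a $G$-biholomorphism of $N_\Lambda$ is not \emph{a priori} governed by automorphisms of a compact period lattice.

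The paper's proof uses the hypothesis $k<m+1$ in a concrete complex-analytic way that your proposal does not reproduce. Under the normalization $\Lambda_i$ indispensable $\iff i\le k$, the set $\mathcal S_1=\{w\in\mathbb C^{n-m-1}\mid(1,\dots,1,w)\in\mathcal S\}$ is $\mathbb C^{n-m-1}$ minus a \emph{finite union of codimension-two} linear subspaces precisely because all indispensable indices sit among the $m+1$ frozen coordinates — this is exactly where $k\le m$ is used. Two-connectedness of $\mathcal S_1$ lets a $G$-biholomorphism $f$ of $N_\Lambda$ lift to a biholomorphism $F_1$ of $\mathcal S_1$, which then extends to all of $\mathbb C^{n-m-1}$ by Hartogs. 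Compatibility with the linear lift $\tilde F=M$ on the universal cover of $G_\Lambda$ forces $Q_1=0$, hence $F_1$ is a monomial map $w\mapsto\big(\prod_i w_i^{a_{ij}}\big)_j$; but a monomial biholomorphism of the whole of $\mathbb C^{n-m-1}$ must send coordinate hyperplanes to coordinate hyperplanes without ramification, so $P_1$ is a permutation matrix and the stabilizer injects into $S_{n-m-1}$. That chain (2-connectedness $\to$ lift $\to$ Hartogs $\to$ monomial $\to$ permutation) is the content your proposal would need to supply; the appeal to finiteness of automorphism groups of polarized abelian varieties does not substitute for it.
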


{\bf Proof}
From the previous description, it is enough to prove that the stabilizers of action \eqref{isoLambda} are finite. Let $f$ be a $G$-biholomorphism of $N_\Lambda$. Set
\begin{equation}
\label{S1}
\mathcal{S}_1=\{w\in\mathbb C^{n-m-1}\quad\vert\quad (1,\hdots,1,w)\in\mathcal{S}\}.
\end{equation}
Observe that \eqref{S1} is a covering of the quotient $N_1$ of $\mathcal{S}\cap \{z_1\cdots z_{m+1}\not= 0\}$ by the action \eqref{actionLVM}. Indeed, we have a commutative diagram
\begin{equation}
\label{incl}
\begin{CD}
(\mathbb C^*)^{n-m-1}@>>> \mathcal{S}_1 @>>> \mathcal{S}\\
@VVV @VVV @VVV\\
G_\Lambda @>>> N_1 @>>> N_\Lambda
\end{CD}
\end{equation}
where the horizontal maps are inclusions and the first two vertical ones are coverings.

Then, up to composing with a permutation of $\mathbb C^n$, we may assume that $f$ sends 
$N_1$
onto itself. 
Because of assumption \eqref{km}, the set \eqref{S1} is a $2$-connected open subset of $\mathbb C^{n-m-1}$, hence the restriction of $f$ to $N_1$, say $f_1$, lifts to a biholomorphic map $F_1$ of $\eqref{S1}$. More precisely, $\mathcal{S}_1$ is equal to $\mathbb C^{n-m-1}$ minus a finite union of codimension $2$ vector subspaces, hence by Hartogs, $F_1$ extends as a biholomorphism of $\mathbb C^{n-m-1}$.

On the other hand, the restriction of $f$ to $G_\Lambda$ preserves $G_\Lambda$ and lifts as a biholomorphism $\tilde F$ of its universal covering $\mathbb C^{n-m-1}$. And we have a commutative diagram
\begin{equation}
\label{lifts} 
\begin{CD}
\mathbb C^{n-m-1}@>\exp (2i\pi -)>>(\mathbb C^*)^{n-m-1}\\
@V \tilde F VV @VVF_1 V\\
\mathbb C^{n-m-1}@>\exp (2i\pi -)>> (\mathbb C^*)^{n-m-1}
\end{CD}
\end{equation}
But, since the linear map $\tilde F=M$ must preserve the abelian subgroup of Proposition \ref{Glattice}, using \eqref{LambdaAffineNorm} and \eqref{isoAB}, we have
\begin{equation}
\label{equ3}
\tilde F(z+e_i)=\tilde F(z)+P_1e_i:=\tilde F(z)+\sum_{j=1}^{n-m-1}a_{ij}e_j
\end{equation}
that is $Q_1$ is equal to $0$. But through \eqref{lifts}, this implies that 
\begin{equation}
\label{equ4}
F_1(w)=\Big (w_1^{a_{1j}}\cdots w_{n-m-1}^{a_{n-m-1j}}\Big )_{j=1}^{n-m-1}
\end{equation}

Now, recall that $F_1$ is a biholomorphism of the whole $\mathbb C^{n-m-1}$, so must send a coordinate hyperplane onto another one without ramifying. This shows that $P_1=(a_{ij})$ is a matrix of permutation. Hence every stabilizer is a subgroup of the group of permutations with $n-m-1$ elements, so is finite.

\begin{example}
\label{tori}
{\bf Tori.} Let $n=2m+1$, then there are $2m+1$ indispensable points, $\mathcal{S}$ is $(\mathbb C^*)^n$ and $N$ is a compact complex torus of dimension $m$ \cite[Theorem 1]{Me}. The associate polytope $K$ is reduced to a 
point and $N=G$. The moduli space $\mathcal M$ is equal to the moduli space of compact complex tori of dimension $m$, which is not an orbifold for $m>1$.
\end{example}

{\bf Example}
\label{Hopf}
{\bf Hopf surfaces.} Let $n=4$ and $m=1$, then there are two indispensable points and $\mathcal{S}$ is $(\mathbb C^*)^2\times\mathbb C^2\setminus\{(0,0)\}$.
A $\mathcal{S}$-admissible configuration is given by a couple complex numbers $(\lambda_3,\lambda_4)$ belonging to
\begin{equation}
 \label{zoneHopf}
 \{z\in\mathbb C\quad\vert\quad \Re z <0\text{ and } \Re z<\Im z\}.
\end{equation}
The manifold $N_\Lambda$ is equal to the diagonal Hopf surface obtained by taking the quotient of $\mathbb C^2\setminus\{(0,0)\}$ by the group generated by
\begin{equation}
 \label{eqHopf}
 (z,w)\longmapsto (\exp {2i\pi(\lambda_3-\lambda_1)}\cdot z, (\exp {2i\pi(\lambda_4-\lambda_1)}\cdot w)
\end{equation}

Two points $(\lambda_3,\lambda_4)$ and $(\lambda'_3,\lambda'_4)$ with coordinates in \eqref{zoneHopf} are equivalent if and only if their difference is in the 
lattice $\Z\oplus \Z$ or if the difference of $(\lambda_3,\lambda_4)$ by the switched $(\lambda'_4,\lambda'_3)$ is in this lattice. The isotropy group of 
a point is $\mathbb Z_2$ for the diagonal $\lambda_3=\lambda_4$ and is zero elsewhere. The moduli space is an orbifold.

Observe that not all Hopf surfaces are obtained as LVM-manifolds, but only the linear diagonal ones. Now, they coincide with the set of Hopf surfaces that are 
equivariant compactifications of $(\mathbb C^*)^2$.

\vskip10pt \noindent (b) Generalized Hopf manifolds. \vskip10pt

When $n_1=n_2=1$ the manifold $N$ is diffeomorphic to $\s^1\times \s^{2n_3-1}$. 
Here the mapping $exp:\C^2\times (\C^{n_3}\backslash 0)
\to {\mathcal{S}}=(\C^*)^2 \times (\C^{n_3}\backslash 0)$ given by
$exp(\zeta_1,\zeta_2,\zeta)=(e^{\zeta_1},e^{\zeta_2},\zeta)$ can be
used to identify $N$ with the quotient of $\C^{n_3}\backslash 0$ by
the action of $\Z$ defined by the multipliers

$$\alpha_i=\exp \left( 2\pi i \frac{\lambda_{2+i}-\lambda_2}{\lambda_1-\lambda_2}\right),
\: i=1,\dots,n_3.$$

In this case we obtain all complex structures on $\s^1\times \s^{2n_3-1}$ having $\C^n\backslash 0$ as universal cover when there
is no resonance among the $\alpha_i$. But in the resonant case we do
not obtain all such complex structures since we do not obtain the
non-linear resonant cases of Haefliger. It is clear that, in order
to obtain the latter, one must look at the resonant non-linear
versions of equation (1).

\vskip10pt \noindent (c) Generalized Calabi-Eckmann manifolds.
\vskip10pt

When $n_1=1$ and $n_2$, $n_3$ are both greater than $1$ we have seen
that the manifold $N$ is diffeomorphic to $\s^{2n_2-1}\times\s^{2n_3-1}$. Here the mapping 
$exp:\C \times (\C^{n_2}\backslash
0)\times (\C^{n_3}\backslash 0) \to {\mathcal{S}}=\C^*\times
(\C^{n_2}\backslash 0)\times(\C^{n_3}\backslash 0)$ given by
$exp(\zeta,\zeta_1,\zeta_2)=(e^\zeta,\zeta_1,\zeta_2)$ can be used
to identify $N$ with the quotient of $(\C^{n_2}\backslash
0)\times(\C^{n_3}\backslash 0)$ by the action of $\C$ defined by the
linear differential equation with eigenvalues $\lambda_i^\prime=2\pi
i ( \lambda_i-\lambda_1), \, i=2,\dots,n$. This is exactly the
construction of the Loeb-Nicolau complex structure corresponding to
a linear system of equations of Poincar\'e type \cite{LN}.

Observe that in their construction only the quotients of the
eigenvalues of the system are relevant for the definition of the
complex structure on $N$, so once again only the quotients
$\frac{\lambda_i-\lambda_k}{\lambda_j-\lambda_k}$ of our original
eigenvalues count.

Again we obtain all their examples of complex structures on
$\s^{2n_2-1}\s^{2n_3-1}$ when there is no resonance among the
$\lambda_i^\prime$. But, once more, in the resonant case we do not
obtain all their complex structures since we do not obtain the
non-linear resonant examples.

Observe that in all the cases considered in this section only the
quotients $\frac{\lambda_i-\lambda_k}{\lambda_j-\lambda_k}$ are
relevant in the description of the complex structure of $N$ (in
accordance with the observation made in section 2 that affinely
equivalent configurations of eigenvalues with the same ${\mathcal{S}}$
give the same complex structure) and that they are actually {\it
moduli} of that complex structure.

\section{LVM manifolds as equivariant compactifications}
\label{LVMBec}

Theorem  \ref{algebraic-dimension} has a deeper explanation related to the structure of $N_{\Lam}$ and the arithmetic properties
of $\Lam$. In fact  $\mathcal{S}$ contains always $(\mathbb C^*)^n$  as an open and dense subset {\it invariant under the foliation} $\tilde{\mathcal{F}}$. If we pass to the quotient under the action of \eqref{actionLVM} one obtains that $N_{\Lam}$
has as an open subset $G_{\Lam}$, which is the quotient of $(\mathbb C^*)^n$ by $\tilde{\mathcal{F}}$. Since $\tilde{\mathcal{F}}$ is defined by the action \eqref{actionLVM} and this action commutes with the group structure of the multiplicative group de $(\mathbb C^*)^n$, it follows that $G_{\Lam}$ itself is a connected commutative complex Lie group. 
In other words:

\begin{theorem} \label{GLam}\rm{({\bf \cite{LM}})} $N_{\Lam}$ is the equivariant compactification of a complex commutative Lie group $G_{\Lam}$.
\end{theorem}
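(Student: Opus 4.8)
The plan is to show that $G_{\Lam}$, as defined in the discussion preceding the theorem, is simultaneously (i) an open dense subset of $N_{\Lam}$, (ii) naturally a complex Lie group, and (iii) that its group action on itself by translations extends holomorphically to an action on all of $N_{\Lam}$. First I would make precise the construction of $G_{\Lam}$: since $\mathcal{S}_{\Lam}$ contains $(\C^*)^n$ as an open, dense, $\tilde{\mathcal{F}}$-invariant subset, the saturation is all of $(\C^*)^n$, and its image $G_{\Lam}:=\pi((\C^*)^n)/\!\sim$ in $N_{\Lam}$ — i.e. the quotient of $(\C^*)^n$ by the restricted foliation — is open and dense in $N_{\Lam}$ because the leaf-space quotient map $\mathcal{S}_{\Lam}\to N_{\Lam}$ is open and $(\C^*)^n$ is open dense in $\mathcal{S}_{\Lam}$. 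Here one uses that $N_{\Lam}$ is the Hausdorff leaf space (from the construction recalled after Definition \ref{LVM}), so $G_{\Lam}$ is a Hausdorff complex manifold with the complex structure induced by Haefliger's Lemma \ref{Haefliger}.

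Next I would identify $G_{\Lam}$ as a quotient group. The action \eqref{actionLVM} of $\C^m$ on $\C^n$ preserves $(\C^*)^n$, and on $(\C^*)^n$ it is given by $(T, z)\mapsto (z_i\exp\langle\Lambda_i,T\rangle)_i$, which is exactly multiplication (in the abelian group $(\C^*)^n$) by the one-parameter-type subgroup $h(T):=(\exp\langle\Lambda_i,T\rangle)_i$. The combined action of $\C^m\times\C^*$ (the flow together with the diagonal scalars descending to $\CP^{n-1}$) is thus translation by the image $H$ of the homomorphism $\C^m\times\C^*\to(\C^*)^n$, $(T,\mu)\mapsto(\mu\exp\langle\Lambda_i,T\rangle)_i$. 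Because this is a subgroup, the orbit equivalence relation on $(\C^*)^n$ is precisely the coset relation for $H$, so $G_{\Lam}=(\C^*)^n/H$ is a quotient of complex Lie groups — provided $H$ is a closed subgroup, which I would check follows from admissibility (the weak hyperbolicity condition guarantees the action is free and the Siegel condition that the relevant leaves are Siegel leaves, forcing properness of the action on $(\C^*)^n$, hence $H$ closed). Thus $G_{\Lam}$ inherits a connected commutative complex Lie group structure, and one verifies this complex structure coincides with the Haefliger one by noting both are the unique structure making the quotient map $(\C^*)^n\to G_{\Lam}$ a holomorphic submersion.

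For the equivariant compactification statement I would argue as follows: $G_{\Lam}$ acts on itself by translations; I must extend this to a holomorphic action of $G_{\Lam}$ on $N_{\Lam}$. The diagonal action of $(\C^*)^n$ on $\C^n$ preserves $\mathcal{S}_{\Lam}$ (since $\mathcal{S}_{\Lam}$ is a union of coordinate-subspace complements, manifestly invariant under coordinatewise scaling) and commutes with the foliation $\tilde{\mathcal{F}}$ — indeed $\tilde{\mathcal{F}}$ is itself generated by a sub-action of $(\C^*)^n$. Hence $(\C^*)^n$ acts on the leaf space $N_{\Lam}$, holomorphically, and this action factors through the quotient $(\C^*)^n/H=G_{\Lam}$; on the open dense subset $G_{\Lam}\subset N_{\Lam}$ it restricts to the translation action, and by density and continuity it is the desired extension. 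Therefore $N_{\Lam}$ is a $G_{\Lam}$-equivariant compactification of $G_{\Lam}$.

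The main obstacle I anticipate is the bookkeeping around closedness/properness: one must confirm that restricting the $\C^m$-action to $(\C^*)^n$ it remains proper and free (so that the quotient is a genuine complex Lie group and not merely an immersed subgroup quotient or a non-Hausdorff object), and that the induced complex structure on $(\C^*)^n/H$ agrees with the complex structure $N_{\Lam}$ carries as a subset — i.e. that $G_{\Lam}\hookrightarrow N_{\Lam}$ is a holomorphic open embedding and not just a continuous bijection onto its image. Both points hinge on the admissibility of $\Lam$ and on the fact, recalled in the excerpt, that over $\mathcal{S}_{\Lam}$ the foliation $\tilde{\mathcal{F}}$ is regular with $\mathcal{T}(\Lam)$ as a smooth transversal meeting each leaf once; restricting transversality to $(\C^*)^n$ and invoking Lemma \ref{Haefliger} dispatches the complex-analytic compatibility, and a standard argument that a holomorphic continuous bijective submersion is biholomorphic finishes it. Everything else is essentially formal group-theoretic manipulation.
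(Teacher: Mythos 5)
Your proposal follows essentially the same approach as the paper: the paper's argument (in the paragraph preceding Theorem \ref{GLam} and the paragraph following the definition of Cousin groups) is exactly that $(\C^*)^n$ is an open dense $\tilde{\mathcal F}$-invariant subset of $\mathcal S_{\Lam}$, that its quotient $G_{\Lam}$ is an open subset of $N_{\Lam}$ which inherits a commutative complex Lie group structure because the $\C^m$-action commutes with multiplication in $(\C^*)^n$, and that the full multiplicative $(\C^*)^n$-action on $\mathcal S_{\Lam}$ commutes with projectivization and with \eqref{actionLVM} and hence descends to a $G_{\Lam}$-action on $N_{\Lam}$. You merely make explicit what the paper leaves implicit — the identification $G_{\Lam}=(\C^*)^n/H$ with $H$ the image of $(T,\mu)\mapsto(\mu\exp\langle\Lambda_i,T\rangle)_i$, and the freeness/properness needed for this quotient to be a Hausdorff Lie group — which is a reasonable and correct filling-in rather than a different route.
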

\begin{remark} \textcolor{blue}{In some sense, this theorem is the principal reason of the interconnection between LVMB manifolds, toric varieties,
convex polytopes and moment-angle manifolds}
\end{remark}

\begin{definition} A  connected complex Lie group $G$ is called \emph{Cousin group} (or toroidal group in \cite{K}) if any holomorphic function on it is constant \cite{AK}.
\end{definition}

\begin{proposition} Cousin groups are commutative. Moreover, they are quotients of a complex vector space $\mathbb V$ by a
discrete additive subgroup of $\mathbb V$ \cite{AK}.
\end{proposition}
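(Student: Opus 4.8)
The plan is to establish commutativity first and then invoke the standard structure theory of connected abelian complex Lie groups, the latter being where all the content beyond commutativity lives. To see that a Cousin group $G$ is commutative, I would look at its adjoint representation $\mathrm{Ad}\colon G\to\mathrm{GL}(\mathfrak{g})$, where $\mathfrak{g}$ denotes the Lie algebra of $G$. Since $G$ is a complex Lie group, $\mathrm{Ad}$ is holomorphic, so after choosing a basis of $\mathfrak{g}$ each matrix coefficient $g\mapsto\mathrm{Ad}(g)_{ij}$ is a holomorphic function $G\to\mathbb{C}$. By hypothesis every such function is constant, and evaluating at the identity, where $\mathrm{Ad}(e)=\mathrm{Id}$, forces $\mathrm{Ad}\equiv\mathrm{Id}$. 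Consequently, for each $g\in G$ the inner automorphism $x\mapsto gxg^{-1}$ has differential $\mathrm{Ad}(g)=\mathrm{Id}$ at $e$; since a Lie group homomorphism out of a connected group is determined by its differential at the identity, this inner automorphism is $\mathrm{id}_G$. Hence $G$ is abelian. (Equivalently: $\ker\mathrm{Ad}$ is the center of the connected group $G$, so triviality of $\mathrm{Ad}$ gives $Z(G)=G$.)

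Now suppose $G$ is any connected abelian complex Lie group --- this is all that is retained from the Cousin hypothesis. Then $\mathfrak{g}$ is an abelian Lie algebra, so the Baker--Campbell--Hausdorff formula degenerates to $\exp(X)\exp(Y)=\exp(X+Y)$ and the exponential map $\exp\colon\mathfrak{g}\to G$ is a holomorphic group homomorphism. Write $\mathbb{V}:=\mathfrak{g}\cong\mathbb{C}^{n}$. Since $\exp$ restricts to a biholomorphism from a neighborhood of $0$ onto a neighborhood of $e$, its image is an open subgroup of $G$; open subgroups are closed, so connectedness of $G$ gives $\exp(\mathbb{V})=G$. The same local biholomorphism shows that $0$ is isolated in $\Lambda:=\ker\exp$, so $\Lambda$ is a discrete additive subgroup of $\mathbb{V}$, and $\exp$ induces a biholomorphic isomorphism $\mathbb{V}/\Lambda\cong G$, as claimed.

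I do not expect a serious obstacle: the content is elementary once the two ingredients --- holomorphicity of $\mathrm{Ad}$ combined with the absence of nonconstant holomorphic functions, and surjectivity of $\exp$ for connected abelian groups --- are in place. The one step demanding genuine care is the passage ``$\mathrm{Ad}\equiv\mathrm{Id}$ $\Rightarrow$ $G$ abelian'', which really uses connectedness of $G$ (without it, any finite nonabelian group is a counterexample), and the accompanying observation that discreteness of $\Lambda$ is precisely the statement that $\exp$ is a local diffeomorphism near the origin.
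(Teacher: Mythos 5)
The paper does not actually supply a proof of this proposition; it simply states the result and cites the monograph of Abe and Kopfermann for it. Your argument is, however, correct and is the standard one. For commutativity, you rely on holomorphicity of the adjoint representation: the matrix coefficients $g\mapsto\mathrm{Ad}(g)_{ij}$ are holomorphic functions on $G$, hence constant by the Cousin hypothesis, hence equal to their value $\delta_{ij}$ at the identity, so $\mathrm{Ad}\equiv\mathrm{Id}$; since the center of a connected Lie group is exactly $\ker\mathrm{Ad}$, this forces $G$ abelian. You are right to flag connectedness as the load-bearing hypothesis here --- it is built into the paper's definition of a Cousin group, so there is no gap. For the second half you correctly pass to an entirely general fact about connected abelian complex Lie groups: the exponential $\exp\colon\mathfrak{g}\to G$ is a holomorphic homomorphism (BCH degenerates), its image is an open hence closed subgroup and thus all of $G$, and its kernel is discrete because $d\exp_0=\mathrm{id}$ makes $\exp$ a local biholomorphism; $\exp$ is therefore a holomorphic covering homomorphism and induces $\mathbb{V}/\Lambda\cong G$. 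This is exactly what $G=\mathbb{V}/\Lambda$ means, so the proof is complete and self-contained, which is a modest improvement on the paper's treatment (a bare citation).
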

\begin{proposition} Any commutative connected complex Lie group $G$ can be written in a unique way as a product
$G=C\times\C^l\times({\C^*})^r$ where $C$ is a Cousin group ($l,r\geq0$). 
\end{proposition}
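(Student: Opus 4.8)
**Proof plan for: Any commutative connected complex Lie group $G$ can be written in a unique way as a product $G = C \times \C^l \times (\C^*)^r$ with $C$ a Cousin group.**

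The plan is to use the structure theory of commutative complex Lie groups. Since $G$ is commutative and connected, the exponential map $\exp\colon \mathfrak{g} \to G$ is a surjective homomorphism of complex Lie groups from the additive group of the Lie algebra $\mathfrak{g} \cong \mathbb{V}$ (a complex vector space), with kernel a discrete subgroup $\Gamma \subset \mathbb{V}$; thus $G \cong \mathbb{V}/\Gamma$. First I would analyze the subgroup of $\mathbb{V}$ generated by $\Gamma$ over $\R$: let $\mathbb{V}_0$ be the $\R$-linear span of $\Gamma$ inside $\mathbb{V}$, and let $\mathbb{V}_1 = \mathbb{V}_0 \cap i\mathbb{V}_0$ be the largest $\C$-linear subspace contained in $\mathbb{V}_0$. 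Choose a $\C$-linear complement $\mathbb{V}_2$ so that $\mathbb{V} = \mathbb{V}_1 \oplus \mathbb{V}_2$, and arrange (after an $\R$-linear adjustment) that $\mathbb{V}_0 = \mathbb{V}_1 \oplus \mathbb{V}_0'$ with $\mathbb{V}_0' \subset \mathbb{V}_2$ a totally real subspace, i.e. $\mathbb{V}_0' \cap i\mathbb{V}_0' = 0$. Then $\Gamma \subset \mathbb{V}_1 \oplus \mathbb{V}_0'$.

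Next I would split off the $(\C^*)^r$ and $\C^l$ factors. On $\mathbb{V}_2$, the lattice-part $\Gamma$ projects into $\mathbb{V}_0'$, a totally real subspace; a discrete subgroup of a totally real subspace of a complex vector space of the same complex dimension generates, via $x \mapsto \exp(2\pi i \langle\cdot\rangle)$-type coordinates, a product $(\C^*)^r \times \C^l$, where $r = \dim_\R$ (of the part of $\Gamma$ that is a full-rank lattice in a totally real subspace spanning a $\C$-subspace) and $l$ accounts for the remaining directions of $\mathbb{V}_2$ not met by $\Gamma$. Concretely: choose $\C$-coordinates on $\mathbb{V}_2 = \C^{r+l}$ so that the image of $\Gamma$ in $\mathbb{V}_2$ is exactly $\Z^r \times 0 \subset \R^r \times 0 \subset \C^r \times \C^l$; then $\mathbb{V}_2/(\text{image of }\Gamma) \cong (\C/\Z)^r \times \C^l \cong (\C^*)^r \times \C^l$. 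On the complementary piece, set $C := \mathbb{V}_1/\Gamma_1$ where $\Gamma_1$ is the part of $\Gamma$ landing in $\mathbb{V}_1$; by construction $\Gamma_1$ spans $\mathbb{V}_1$ over $\R$ and $\mathbb{V}_1$ is the maximal complex subspace of that span, so $C$ has no positive-dimensional closed complex subgroup of the form $\C$ or $\C^*$, and one checks directly that every holomorphic function on $C$ is constant — a holomorphic function on $\mathbb{V}_1$ invariant under an $\R$-spanning discrete group must be constant by the identity theorem together with the fact that it is bounded (periodicity in $\R$-spanning directions forces boundedness on $\mathbb{V}_1$, then Liouville). Hence $C$ is a Cousin group, and $G \cong C \times \C^l \times (\C^*)^r$.

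For uniqueness, I would argue that the factors are canonically determined by $G$: the subgroup $\C^l$ is the maximal closed complex-vector-subgroup of $G$ that is a direct factor (equivalently, the "non-compact, non-periodic, simply-connected" part), the subgroup $(\C^*)^r$ is determined by the maximal subtorus-like quotient with nonconstant holomorphic characters, and $C$ is the quotient killing all nonconstant holomorphic functions — more precisely, $C$ is the image of $G$ in its "Cousinification" (the quotient of $G$ by the connected component of the common kernel of all holomorphic homomorphisms to $\C$ and $\C^*$). Since each factor is intrinsically characterized, the numbers $l$, $r$ and the isomorphism type of $C$ are uniquely determined; the splitting itself is unique up to the obvious automorphisms.

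The main obstacle is the clean extraction of the totally real direct summand and the verification that the complement $\mathbb{V}_1/\Gamma_1$ really is Cousin — i.e. genuinely has no nonconstant holomorphic functions — which is where the delicate interplay between the $\R$-span and the maximal $\C$-subspace of $\Gamma$ enters; everything else is linear algebra over $\R$ and $\C$ plus the identity theorem. This is exactly the content cited to \cite{AK}, so I would invoke that reference for the technical heart and present the above as the conceptual outline.
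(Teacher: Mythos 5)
Your decomposition via $\mathbb V_1=\mathbb V_0\cap i\mathbb V_0$ (the largest complex subspace of the real span $\mathbb V_0=\R\Gamma$) does not work: the two claims it rests on, namely that $\Gamma\cap\mathbb V_1$ spans $\mathbb V_1$ over $\R$ and that $\Gamma$ projects to a discrete subgroup of a complement $\mathbb V_2$, both fail in general. Take $\mathbb V=\C^2$ and $\Gamma=\Z(1,0)+\Z(i,1)+\Z(0,\sqrt 2)$. Then $\mathbb V_0=\C\times\R$ and $\mathbb V_1=\C\times\{0\}$, but $\Gamma\cap\mathbb V_1=\Z(1,0)$ has rank $1$ (since $b+c\sqrt 2=0$ with $b,c\in\Z$ forces $b=c=0$), so your $C=\mathbb V_1/\Gamma_1\cong\C/\Z\cong\C^*$ is not Cousin; and the image of $\Gamma$ in $\mathbb V_2=\{0\}\times\C$ is $\Z+\Z\sqrt 2$, dense in $\R$, so there is no Hausdorff quotient $\C^l\times(\C^*)^r$ at all. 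In fact for this $\Gamma$ one checks (via Fourier expansion in $z_1$, or simply by verifying there is no nonzero $\C$-linear $\ell$ with $\ell(\Gamma)\subset\Z$) that every holomorphic function on $G=\C^2/\Gamma$ is constant: $G$ itself is already Cousin, with $l=r=0$, which your construction cannot produce.

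The paper gives no proof here but defers to Abe--Kopfermann \cite{AK} and Morimoto \cite{Mo}. The structure theorem there splits off factors using linear forms rather than by intersecting with $i\mathbb V_0$: a nonzero $\C$-linear $\ell$ with $\ell(\Gamma)=0$ gives a $\C$ factor (here the relevant subspace is the complex span $\mathbb V_0+i\mathbb V_0$ of $\Gamma$, not $\mathbb V_0\cap i\mathbb V_0$), while a nonzero $\ell$ with $\ell(\Gamma)\subset\Z$ gives a $\C^*$ factor, because the surjection $\ell\colon\Gamma\to d\Z$ splits, yielding $\Gamma=\Gamma'\oplus\Z\gamma$ and $\mathbb V=\ker\ell\oplus\C\gamma$ compatibly, hence $G\cong(\ker\ell/\Gamma')\times\C^*$. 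Iterating until no such $\ell$ remains leaves a factor with no nontrivial holomorphic characters, which is Cousin precisely by the characterization the paper states in the very next proposition. Your uniqueness discussion is closer to the mark, since the numbers $l$ and $r$ are indeed intrinsic invariants (codimension of the $\C$-span of $\Gamma$, rank of the character group), but it depends on first having the correct existence argument.
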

\begin{proposition} A commutative complex Lie group is Cousin if and only if it does not have nontrivial characters.
\end{proposition}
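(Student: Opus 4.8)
The plan is to read a \emph{character} of $G$ as a holomorphic group homomorphism $\chi\colon G\to\C^{*}$, to treat the two implications separately, and to let the structure theorem for commutative connected complex Lie groups recalled just above ($G=C\times\C^{l}\times(\C^{*})^{r}$ with $C$ a Cousin group) do the real work in the nontrivial direction. Throughout one may assume $G$ connected, since the notion of Cousin group has only been defined for connected groups.

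For the implication ``Cousin $\Rightarrow$ no nontrivial character'' I would argue directly: a character $\chi\colon G\to\C^{*}$ is in particular a holomorphic function $G\to\C$, hence constant by the very definition of a Cousin group; evaluating at the identity gives $\chi\equiv\chi(e)=1$. This uses nothing beyond the definition.

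For the converse, assume $G$ has no nontrivial character and write $G=C\times\C^{l}\times(\C^{*})^{r}$ with $C$ a Cousin group, as provided by the structure theorem. The idea is to exhibit nontrivial characters whenever $l>0$ or $r>0$, thereby forcing $l=r=0$. If $r\ge 1$, the projection $G\twoheadrightarrow(\C^{*})^{r}$ followed by a coordinate projection $(\C^{*})^{r}\to\C^{*}$ is a surjective holomorphic homomorphism, hence a nontrivial character. If $l\ge 1$, the composition $G\twoheadrightarrow\C^{l}\to\C\xrightarrow{\exp}\C^{*}$ (a coordinate projection followed by the exponential) is again a surjective holomorphic homomorphism, hence a nontrivial character. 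Neither is allowed by hypothesis, so $l=r=0$, i.e. $G=C$ is a Cousin group. One may close with the remark that, together with the structure theorem, this identifies $C$ as the largest Cousin subgroup of $G$ and $\C^{l}\times(\C^{*})^{r}$ as the part of $G$ ``seen'' by its characters.

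I do not expect a serious obstacle: all the analytic substance is already packaged in the cited propositions (the splitting of commutative complex Lie groups, and the defining property of Cousin groups). The only points needing a line of care are pinning down the meaning of ``character'' and verifying that the projection-type maps above are genuine holomorphic homomorphisms and genuinely non-constant, both of which are routine. Should one wish to avoid invoking the splitting theorem, one could instead lift a hypothetical character along the surjective exponential map $\mathrm{Lie}(G)\to G$ to a $\C$-linear functional and recast everything in terms of the period lattice $\Gamma=\ker\exp$; but using the structure theorem is by far the shortest route.
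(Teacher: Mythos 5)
The paper does not actually prove this proposition: it is stated as a recalled background fact, with the surrounding material on Cousin groups attributed to \cite{AK} and the structure theory of commutative complex Lie groups to \cite{Mo}. So there is no in-paper argument to compare yours against. Your proof is correct and is the natural way to fill the gap. The forward implication is indeed immediate from the definition (a character, viewed as a $\C$-valued holomorphic function, must be constant, hence trivial by evaluating at $e$). For the converse you correctly invoke the Morimoto splitting $G=C\times\C^{l}\times(\C^{*})^{r}$ and observe that any nontrivial $\C^{l}$ or $(\C^{*})^{r}$ factor immediately produces a nontrivial character, either by coordinate projection or by composing with $\exp$; both maps are visibly holomorphic homomorphisms and non-constant. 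The only thing worth emphasizing is that virtually all the analytic content sits in that splitting theorem, which the paper also leaves unproved; your closing remark about the period lattice $\Gamma=\ker\exp$ is essentially the ingredient one would need to prove the splitting itself, so there is no substantially shorter route. Your note that $G$ may be assumed connected is appropriate, since the paper's definition of Cousin group is stated only for connected groups.
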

Observe that  $(\mathbb C^*)^n$ acts by multiplication on the space of Siegel leaves $\mathcal{S}_{\Lam}$ with an open and dense orbit, making it a toric variety. This action commutes with projectivization and with \eqref{actionLVM}, making of $N_\Lambda$ an equivariant compactification of an abelian Lie group, say $G_{\Lam}$. A straightforward computation shows the following \cite[p.27]{MeThesis}

\begin{proposition}
\label{Glattice}
Assume that 
\begin{equation}
\label{firstmcondition}
\text{\rm rank}_{\mathbb C}
\begin{pmatrix}
\Lambda_1 &\hdots &\Lambda_{m+1}\cr
1 &\hdots &1
\end{pmatrix}
=m+1.
\end{equation}
Then $G_{\Lam}$ is isomorphic to the quotient of $\mathbb C^{n-m-1}$ by the $\mathbb Z^{n-1}$ abelian subgroup generated by
$(Id, B_\Lambda A_\Lambda^{-1})$ where
\begin{equation}
\label{Alattice}
A_\Lambda=^t\kern-4pt(\Lambda_2-\Lambda_1,\hdots,\Lambda_{m+1}-\Lambda_1)
\end{equation}
and
\begin{equation}
\label{Blattice}
B_\Lambda= ^t\kern-4pt(\Lambda_{m+2}-\Lambda_1,\hdots,\Lambda_{n-1}-\Lambda_1).
\end{equation}
\end{proposition}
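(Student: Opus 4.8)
The plan is to compute $G_{\Lam}$ explicitly by lifting everything to the universal cover of the open torus orbit, and reading off the resulting lattice. First I would identify $G_{\Lam}$ concretely: by the discussion preceding the statement it is the image of $(\C^*)^n$ in $N_{\Lam}\subset\CP^{n-1}$, that is, the quotient of the open torus orbit $(\C^*)^{n-1}=\pi((\C^*)^n)$ by the orbits of the $\C^m$-action \eqref{actionLVM}. In the affine chart $[1:z_2:\cdots:z_n]$ this action reads $z_j\mapsto z_j\exp\langle\Lambda_j-\Lambda_1,T\rangle$, so after taking logarithms it is a translation. Concretely, through the universal covering $\C^{n-1}\to(\C^*)^{n-1}$, $\zeta=(\zeta_2,\dots,\zeta_n)\mapsto(\exp(2\pi i\zeta_j))_{j=2}^{n}$, with deck group $\Z^{n-1}$, the lifted $\C^m$-action becomes the translation action of the complex subspace $V\subset\C^{n-1}$ spanned by the columns of the $(n-1)\times m$ matrix $C$ whose $k$-th row is $\Lambda_{k+1}-\Lambda_1$ (rescaling the columns by $1/2\pi i$ does not change their span). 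Hence $G_{\Lam}\cong\C^{n-1}/(V+\Z^{n-1})$, which is a (Hausdorff) complex Lie group because it is an open subset of $N_{\Lam}$, cf.\ Theorem \ref{GLam}.

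Next I would bring in the hypothesis \eqref{firstmcondition}. It is equivalent to the affine independence of $\Lambda_1,\dots,\Lambda_{m+1}$, that is, to the invertibility of the $m\times m$ matrix $A_{\Lam}$; in particular $\dim_\C V=m$ and $\dim_\C G_{\Lam}=n-m-1$. Decompose $\C^{n-1}=\C^{m}\times\C^{n-m-1}$ into the first $m$ and the last $n-m-1$ coordinates, so that $C=\left(\begin{smallmatrix}A_{\Lam}\\ B_{\Lam}\end{smallmatrix}\right)$ and $V=\{(A_{\Lam}x,\,B_{\Lam}x):x\in\C^m\}$. Since $A_{\Lam}$ is invertible, the $\C$-linear map $(u,v)\mapsto v-B_{\Lam}A_{\Lam}^{-1}u$ has kernel exactly $V$ and therefore identifies $\C^{n-1}/V$ with $\C^{n-m-1}$. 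Under this identification a standard basis vector $f_k$ of $\Z^{n-1}$ goes to the standard basis vector $e_{k-m}$ of $\C^{n-m-1}$ when $k>m$, and to minus the $k$-th column of $B_{\Lam}A_{\Lam}^{-1}$ when $k\le m$. Thus the image lattice is generated by the columns of $(\mathrm{Id}\,|\,-B_{\Lam}A_{\Lam}^{-1})$; changing the sign of the last $m$ generators (which does not alter the lattice) gives exactly the lattice $\Z^{n-1}(\mathrm{Id},B_{\Lam}A_{\Lam}^{-1})$ of the statement. Since every map in sight — the covering and the two linear projections — is a homomorphism for addition, the resulting bijection $G_{\Lam}\xrightarrow{\ \sim\ }\C^{n-m-1}/\Z^{n-1}(\mathrm{Id},B_{\Lam}A_{\Lam}^{-1})$ is an isomorphism of complex Lie groups.

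The content here is essentially routine linear algebra, so the only real hazard is bookkeeping. The point deserving the most attention is the count of generators: the $n$ obvious deck transformations of $(\C^*)^n$ descend to only $n-1$ independent generators because projectivization kills $\C\mathbf 1$, modulo which $e_1\equiv-\sum_{k\ge 2}e_k$; getting this one-dimensional reduction right is exactly what makes the final lattice of rank $n-1$ rather than $n$. One must also check carefully that \eqref{firstmcondition} is genuinely equivalent to $\det A_{\Lam}\ne 0$, so that the coordinate splitting is legitimate, and keep the role of $\Lambda_1$ as the chosen base point consistent across the definitions of $C$, $A_{\Lam}$ and $B_{\Lam}$. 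If one prefers, the affine-normalization freedom already used elsewhere (Remark \ref{affine_transformation}) allows one first to translate so that $\Lambda_1=0$, which trims these verifications further.
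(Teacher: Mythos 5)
Your argument is correct and is the natural computation the paper delegates to Meersseman's thesis (the paper itself only says ``a straightforward computation shows the following'' and cites \cite[p.27]{MeThesis}), so there is no in-text proof to compare against. Passing to the affine chart $z_1=1$, lifting to the universal cover $\C^{n-1}\to(\C^*)^{n-1}$, and using invertibility of $A_\Lambda$ (which is indeed equivalent to \eqref{firstmcondition}) to split $\C^{n-1}=\C^m\times\C^{n-m-1}$ and read off the image of $\Z^{n-1}$ under $(u,v)\mapsto v-B_\Lambda A_\Lambda^{-1}u$ is exactly the right bookkeeping, and your identification of the kernel with $V$ and of the image lattice with the columns of $(\mathrm{Id}\mid -B_\Lambda A_\Lambda^{-1})$ (hence with $(\mathrm{Id}, B_\Lambda A_\Lambda^{-1})$ after negating $m$ generators) is correct. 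Two small remarks: (a) the displayed formula for $B_\Lambda$ in the paper runs only to $\Lambda_{n-1}-\Lambda_1$, which gives only $n-m-2$ rows and is dimensionally inconsistent; you have silently (and correctly) used the version running to $\Lambda_n-\Lambda_1$, which is what makes $C$ an $(n-1)\times m$ matrix; (b) your closing remark about ``$n$ deck transformations descending to $n-1$'' describes the alternative route through $\C^n/\C\mathbf 1$, whereas your actual argument already starts on $\C^{n-1}$ via the affine chart, so that pitfall never arises in the proof as written --- it is fine as a cautionary aside but is not a step you needed.
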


\begin{remark}
It is easy to prove that 
\begin{equation*}
\text{\rm rank}_{\mathbb C}
\begin{pmatrix}
\Lambda_1 &\hdots &\Lambda_n\cr
1 &\hdots &1
\end{pmatrix}
=m+1.
\end{equation*}
(cf. {\rm \cite[Lemma 1.1]{MV}} in the LVM case). Hence, up to a permutation, condition \eqref{firstmcondition} is always fulfilled.
\end{remark}
We say that $N_\Lambda$ and $N_{\Lambda'}$ are {\it $G$-biholomorphic} if they are $(G_\Lambda,G_{\Lambda'})$-equivariantly biholomorphic.

\begin{remark}
When $\mathcal{S}$ is $(\mathbb C^{*})^{n}$, one has
 $N=G$ is a compact complex Lie group and therefore a compact complex torus.
 This is a direct proof of the example presented in \ref{tori} 
 \end{remark}

\medskip
The structure of groups like $G_{\Lam}$ is well-known \cite{Mo}. Since we know that the dimension of $G_{\Lam}$ is equal to that of $N$
we obtain that $G_{\Lam}$ is the quotient of $(\mathbb C^*)^{n-m-1}$ by a discrete multiplicative subgroup $\Gamma$. 
The group $G_{\Lam}$ is sometimes called a semi torus \ie there exists a short equivariant exact sequence 
\[
0\longrightarrow (\mathbb C^*)^{n-m-1}\longrightarrow G_{\Lam} \longrightarrow T \longrightarrow 0
\]
where $T$ is an appropriate  compact complex torus of dimension $n-m-1$.

\medskip

Furthermore, the group $G_{\Lam}$ is isomorphic to $(\mathbb C^*)^a\times C$, where $a\geq 0$
and $C$ is a Cousin group.
Compact Cousin groups are just complex tori. However there are non-compact Cousin group,
for instance If $C=(\mathbb C^*)^{n-m-a-1}/\Gamma_0$, and $\Gamma_0$ is a ``sufficiently generic'' discrete subgroup in order to
have that any holomorphic function on $(\mathbb C^*)^{n-m-a-1}$ which is invariant under $\Gamma_0$  must be constant
then any holomorphic function on the quotient is constant.

In our case, any holomorphic function on $G$ extends to a meromorphic function on $N_{\Lam}$. 
Then theorem 2 shows is the following:

\begin{proposition}
If $N_{\Lam}$ does not have an indispensable point, then the algebraic dimension of $N_{\Lam}$ 
is equal to the dimension $a$ of the factor $\mathbb C^*$ in the associated decomposition
$G=(\mathbb C^*)^a\times C$.
\end{proposition}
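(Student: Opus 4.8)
The plan is to deduce this from Theorem~\ref{algebraic-dimension} by computing the group $\mathcal X^*(G_{\Lam})$ of holomorphic characters (holomorphic homomorphisms $G_{\Lam}\to\mathbb C^*$) of $G_{\Lam}$. By Theorem~\ref{algebraic-dimension} the algebraic dimension of $N_{\Lam}$ equals $\dim_{\mathbb Q}$ of the space of rational solutions of system (S), equivalently the rank of the lattice $\mathrm{Sol}_{\mathbb Z}$ of integer solutions of (S). So it suffices to establish: (a) $\mathrm{Sol}_{\mathbb Z}\cong\mathcal X^*(G_{\Lam})$, and (b) $\mathcal X^*(G_{\Lam})$ is free of rank $a$.

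For (a) I would use that, by Theorem~\ref{GLam} (and the discussion preceding Proposition~\ref{Glattice}), $G_{\Lam}$ is the quotient of the algebraic torus $\mathbb T:=(\mathbb C^*)^n/\mathbb C^*$ (the image of $(\mathbb C^*)^n\subset\mathbb C^n$ in $\CP^{n-1}$) by the holomorphic action \eqref{actionLVM} of $\mathbb C^m$, which acts by translation by the one-parameter subgroups $T\mapsto[\exp\langle\Lambda_1,T\rangle,\dots,\exp\langle\Lambda_n,T\rangle]$ of $\mathbb T$. A Laurent monomial $z^{s}=z_1^{s_1}\cdots z_n^{s_n}$ is a holomorphic character of $(\mathbb C^*)^n$; it factors through $\mathbb T$ precisely when $\sum_i s_i=0$, and then factors further through $G_{\Lam}$ precisely when it is trivial on the translating subgroups, i.e.\ when $\exp\langle\sum_i s_i\Lambda_i,T\rangle\equiv 1$, i.e.\ when $\sum_i s_i\Lambda_i=0$. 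Conversely, every holomorphic character of $G_{\Lam}$ pulls back, along the surjective quotient homomorphism $\mathbb T\to G_{\Lam}$, to a holomorphic character of the algebraic torus $\mathbb T$, and such a character is automatically a Laurent monomial; the pullback is injective because the quotient map is onto. This yields an isomorphism $\mathcal X^*(G_{\Lam})\cong\{(s_1,\dots,s_n):\ \sum_i s_i=0,\ \sum_i s_i\Lambda_i=0\}=\mathrm{Sol}_{\mathbb Z}$.

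For (b) I would invoke the decomposition $G_{\Lam}\cong(\mathbb C^*)^a\times C$ with $C$ a Cousin group, recalled above from \cite{Mo}. Since characters are multiplicative over products, $\mathcal X^*(G_{\Lam})\cong\mathcal X^*((\mathbb C^*)^a)\times\mathcal X^*(C)$; here $\mathcal X^*((\mathbb C^*)^a)\cong\mathbb Z^a$, while $\mathcal X^*(C)=0$ because, by the proposition recalled above, a commutative connected complex Lie group is Cousin if and only if it has no nontrivial characters. Hence $\mathcal X^*(G_{\Lam})\cong\mathbb Z^a$ has rank $a$, and combining with (a) and Theorem~\ref{algebraic-dimension} shows that the algebraic dimension of $N_{\Lam}$ equals $\dim_{\mathbb Q}\mathrm{Sol}_{\mathbb Q}=\operatorname{rank}\mathcal X^*(G_{\Lam})=a$.

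The step needing the most care is (a): one must check that the quotient presentation of $G_{\Lam}$ used here is exactly the one furnished by Theorem~\ref{GLam}, that the $\mathbb C^m$-orbits are the fibres of a genuine Lie-group quotient $\mathbb T\to G_{\Lam}$ (so that characters descend as asserted), and that "holomorphic character of an algebraic torus $=$ Laurent monomial" is applied without any monodromy or convergence subtlety. Everything in (b) — multiplicativity of characters, $\mathcal X^*((\mathbb C^*)^a)\cong\mathbb Z^a$, and the Cousin criterion — is elementary or already recorded in the excerpt, and the appeal to Theorem~\ref{algebraic-dimension} is immediate. Alternatively one could argue directly that a $\mathbb Z$-basis of $\mathcal X^*(G_{\Lam})\cong\mathbb Z^a$ gives $a$ holomorphic functions on $G_{\Lam}$, hence $a$ algebraically independent meromorphic functions on $N_{\Lam}$ generating the meromorphic function field, bypassing Theorem~\ref{algebraic-dimension}; but this merely reproves it in the present case, so citing it is cleaner.
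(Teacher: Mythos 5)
Your proof is correct and follows essentially the same route as the paper: invoke Theorem~\ref{algebraic-dimension} and combine it with the structure theorem $G_{\Lam}\cong(\C^*)^a\times C$ with $C$ Cousin. The paper leaves implicit the identification of $\dim_{\mathbb Q}$ of the rational solution space of system~(S) with the integer $a$; your computation of $\mathcal X^*(G_{\Lam})$ via the presentation $G_{\Lam}=\big((\C^*)^n/\C^*\big)/H$ supplies exactly that missing link.
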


Hence we obtain the following.

\begin{corollary} {\bf (\cite{Me} Proposition IV.1.)}
Let $N_{\Lam}$ be an LVM  manifold which is the equivariant compactification of the connected complex abelian Lie group $G_{\Lam}$. Suppose $N_{\Lam}$ is without indispensable points. Then one has an equivalence:

\medskip
\noindent (i) $N_{\Lam}$ does not have non-constant meromorphic functions

\noindent (ii) $G_{\Lam}$ is a Cousin group, \ie every holomorphic function on $G_{\Lam}$ is constant

\noindent (iii) System \ref{System(S)} has no solution in the rationals.
\end{corollary}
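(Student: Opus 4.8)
The plan is to reduce all three statements to a single numerical invariant: the algebraic dimension $a(N_{\Lam})$, i.e. the transcendence degree over $\C$ of the field $\mathcal{M}(N_{\Lam})$ of meromorphic functions on $N_{\Lam}$, which is finite since $N_{\Lam}$ is compact. First I would dispatch the equivalence (i) $\iff a(N_{\Lam})=0$: if $a(N_{\Lam})=0$ then every element of $\mathcal{M}(N_{\Lam})$ is algebraic over the algebraically closed field $\C$, hence lies in $\C$, so all meromorphic functions are constant; the converse is trivial. Thus (i) is exactly the vanishing of $a(N_{\Lam})$.

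Next, $a(N_{\Lam})=0 \iff$ (iii). Since $N_{\Lam}$ has no indispensable point by hypothesis, Theorem \ref{algebraic-dimension} applies and identifies $a(N_{\Lam})$ with the dimension over $\mathbb{Q}$ of the space of rational solutions of the system (S); this dimension vanishes precisely when (S) has no nontrivial rational solution, which is (iii). The substance of this step is already contained in Theorem \ref{algebraic-dimension}: a meromorphic function on $N_{\Lam}$ is lifted to $\mathcal{S}$, extended across the codimension $\geq 2$ complement by Levi's extension theorem, pushed down to a rational function on $\CP^{n-1}$ invariant under the flow, and then one checks that the monomials $z_1^{s_1}\cdots z_n^{s_n}$ with $(s_1,\dots,s_n)$ running over a rational basis of the solution space of (S) form an algebraic basis.

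Finally, $a(N_{\Lam})=0 \iff$ (ii). By the Proposition stated just above this corollary, the no-indispensable-point hypothesis gives $a(N_{\Lam})=a$, where $G_{\Lam}\cong(\C^*)^{a}\times C$ is the canonical product decomposition with $C$ a Cousin group. If $a>0$, composing the projection $G_{\Lam}\to(\C^*)^{a}$ with one coordinate produces a non-constant holomorphic function on $G_{\Lam}$, so $G_{\Lam}$ is not Cousin; if $a=0$, then $G_{\Lam}=C$ is Cousin by construction. Hence $G_{\Lam}$ is Cousin if and only if $a=0$, and by the definition of a Cousin group this is the same as saying that every holomorphic function on $G_{\Lam}$ is constant, which is (ii). Chaining the three equivalences proves the corollary. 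The only genuinely delicate ingredients are the identification $a(N_{\Lam})=a$, which rests on the uniqueness of the decomposition $G=(\C^*)^a\times C$ so that $a$ is an honest invariant of $G_{\Lam}$, together with the earlier fact that holomorphic functions on $G_{\Lam}$ extend to meromorphic functions on $N_{\Lam}$, and Theorem \ref{algebraic-dimension} itself; once these are granted, the rest is formal.
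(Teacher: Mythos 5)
Your proof is correct and follows essentially the same route the paper has in mind: both pivot on the single invariant $a(N_{\Lam})$, using Theorem \ref{algebraic-dimension} to equate $a(N_{\Lam})$ with the dimension of the rational solution space of (S), and the preceding Proposition to equate $a(N_{\Lam})$ with the exponent $a$ in the decomposition $G_{\Lam}\cong(\C^*)^a\times C$. The only thing you make more explicit than the paper does is the elementary equivalence (i) $\iff a(N_{\Lam})=0$, which rests on the fact that a non-constant meromorphic function on a connected compact complex manifold is necessarily transcendental over $\C$.
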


\section{Toric varieties and Generalized Calabi-Eckmann fibrations}

Let $\Lam=(\Lambda_1,\cdots,\Lambda_n)$ be a configuration which is admissible \ie it satisfies both the Siegel and weak hyperbolicity conditions as before.

Consider the system of equations:

\[
   \left\{
                \begin{array}{ll}
                 \overset{n}{\underset{i=1}\sum}s_i\Lambda_i=0\\
                                 \\
                  \overset{n}{\underset{i=1}\sum} s_i=0\\
                   \end{array}
              \right.   \tag{\textcolor{red}S} \label{System(S)}
  \]

\begin{definition} We say that the configuration $\Lam$ satisfies condition ({\bf K}) if
the dimension over $\mathbb Q$ of the vector space of rational solutions of the system \ref{System(S)}
\label{System(K)}
above is maximal,
in other words is of dimension $n-2m-1$.
\end{definition}
    Observe that any linear diagonal holomorphic vector field
$$
\xi=\sum_{i=1}^n\alpha_iz_i\dfrac{\partial}{\partial z_i}
$$
on $\C^n$ projects onto a holomorphic vector field on $N$. In particular, let
$$
\Lambda_i=(\lambda_i^1,\hdots, \lambda_i^m)
\eqno 1\leq i\leq n
$$
and define the $m$ commuting vector fields on $\mathcal S$
\[
\eta_i(z)=\left\langle \text{Re }\Lambda_i\sum_{j=1}^nz_j\dfrac{\partial}{\partial z_j}\right\rangle =\sum_{j=1}^n \Re (\lambda_j^i)z_j\dfrac{\partial}{\partial z_j} \quad\quad 1\leq i\leq m \tag{\textcolor{blue}{VF}}
\]
for $i$ between $1$ and $m$. The composition of the (holomorphic) flows of these vector fields gives an action of $\C^m$
on $N$. The following Theorem is proven in \cite{Me}, as a generalization of a result of J.J. Loeb and M. Nicolau \cite{LN2}.

\begin{theorem} {\bf (\cite{Me}, Theorem 7)}
The projection onto $N$ of the vector fields $(\eta_1,\hdots,\eta_m)$ gives 
on $N$ a regular holomorphic
foliation $\mathcal G$ of dimension $m$. Moreover, the foliation $\mathcal G$ is transversely K\"ahlerian with respect to $\omega$, 
the canonical Euler form of the bundle $M_1\to N$ (see definition \ref{Euler_class}).
\end{theorem}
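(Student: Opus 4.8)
Each $\eta_k=\sum_j\Re(\lambda_j^k)z_j\,\partial/\partial z_j$ is a linear \emph{holomorphic} vector field on $\C^n$ (its coefficients are real constants). Being diagonal and linear, the $\eta_k$ commute with one another, with the Euler field $R=\sum_j z_j\,\partial/\partial z_j$ generating the scaling $\C^{*}$, and with the fields $\zeta_k=\sum_j\lambda_j^k z_j\,\partial/\partial z_j$ generating $\tilde{\mathcal F}$; and the flow of $\eta_k$ multiplies each $z_j$ by a nonzero scalar, so it leaves the support of $z$ unchanged and preserves $\mathcal S$. Hence each $\eta_k$ descends to a holomorphic vector field $\bar\eta_k$ on $N$ (which is $\mathcal S$ modulo $\tilde{\mathcal F}$ and the scaling), and the $\bar\eta_k$ still commute. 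The one nontrivial point is pointwise independence: at $[z]\in N$ with support $I$ one has $0\in\mathcal H((\Lambda_j)_{j\in I})$, and I would argue that the $2m+1$ vectors $(\Re\lambda_j^k)_{j\in I}$, $(\Im\lambda_j^k)_{j\in I}$ $(1\le k\le m)$ and $(1)_{j\in I}$ are $\C$-linearly independent: otherwise all $\Lambda_j$ $(j\in I)$ would lie on a real affine hyperplane of $\C^m\simeq\R^{2m}$ which, containing their convex hull, passes through $0$, and Carath\'eodory's theorem applied inside that hyperplane would exhibit $0$ in the convex hull of at most $2m$ of the $\Lambda_j$, contradicting (\textbf{WH}). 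This yields $\C$-independence of $\{\eta_k|_z,\zeta_k|_z,R|_z\}$ in $T_z\mathcal S$, hence of $\{\bar\eta_k\}$ in $T_{[z]}N=\C^n/\langle\zeta_1|_z,\dots,\zeta_m|_z,z\rangle$. The commuting, constant-rank-$m$, holomorphic distribution they span is involutive, so by the holomorphic Frobenius theorem it integrates to a regular holomorphic foliation $\mathcal G$ of dimension $m$, whose leaves are the orbits of the $\C^m$-action obtained by composing the flows of the $\eta_k$.

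\textbf{Recognising $\mathcal G$ as a torus foliation and showing $\omega$ is basic.} Write $\tau_k=\sum_j\Im(\lambda_j^k)z_j\,\partial/\partial z_j$, so $\zeta_k=\eta_k+i\tau_k$, and let $H\subset(\s^1)^n$ be the connected subgroup whose Lie algebra is $\mathfrak h=\operatorname{span}_{\R}\{\Re\Lambda^k,\Im\Lambda^k\}\subset\R^n$ (of real dimension $2m$ by (\textbf{WH})); on $M_1$ and on $N$ it acts through the fields $i\eta_k,\,i\tau_k$. Comparing, in $N$, the $H$-orbit $\{[(e^{i\xi_j}z_j)_j]:\xi\in\mathfrak h\}$ of $[z]$ with its $\mathcal G$-leaf $\{[(e^{c_j}z_j)_j]:c\in\mathfrak h\otimes\C\}$, I would check that they coincide, which reduces to $i\mathfrak h+\operatorname{span}_{\C}\{\Lambda^k\}=\mathfrak h\otimes\C$ — a consequence of the independence of $\{\Re\Lambda^k,\Im\Lambda^k\}$ above — together with $\mathbf 1\notin\mathfrak h\otimes\C$, which follows from (\textbf{SC}). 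Thus $\mathcal G$ is the orbit foliation of $H$. Now I would exploit the principal bundle $M_1\to N$: equip $M_1$ with the connection form $\alpha$ obtained by restricting the standard contact form $\alpha_0=\operatorname{Im}\sum_j\bar z_j\,dz_j$ of $\s^{2n-1}$, so that $d\alpha=c\,p^{*}\omega$ for some $c>0$ because $\omega$ is the pull-back of the Fubini--Study form. Since $\alpha_0$ is $(\s^1)^n$-invariant, $\mathcal L_{i\eta_k}\alpha=0$; and on $M_1\subset\mathcal T$ one has $\alpha(i\eta_k)=\sum_j\Re(\lambda_j^k)|z_j|^2=\Re\bigl(\sum_j\lambda_j^k|z_j|^2\bigr)=0$, and likewise $\alpha(i\tau_k)=0$. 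Hence $\iota_{i\eta_k}d\alpha=\mathcal L_{i\eta_k}\alpha-d\,\iota_{i\eta_k}\alpha=0$, and similarly for $i\tau_k$; pushing this relation down through the submersion $p$ gives $\iota_X\omega=0$ for every $X$ tangent to $\mathcal G$, and then $\mathcal L_X\omega=0$ because $d\omega=0$. So $\omega$ is $\mathcal G$-basic and descends to a closed $2$-form $\omega_0$ on the (local) leaf space.

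\textbf{Positivity of the transverse form.} It remains to see that $\omega_0$ is a K\"ahler form for the transverse complex structure. I would observe that $N=\{[z]\in\CP^{n-1}:\sum_j\Lambda_j|z_j|^2=0\}$ is exactly the zero level of the moment map for the Hamiltonian $H$-action on $(\CP^{n-1},\omega_{\mathrm{FS}})$ (the standard toric moment map composed with the projection $\R^n\to\mathfrak h^{*}$ dual to $v\mapsto(\langle\Lambda_j,v\rangle)_j$), so $N/H$ is the Marsden--Weinstein reduction and carries the reduced K\"ahler structure, with $\omega_0$ as reduced form since $p^{*}\omega_0=\iota^{*}\omega_{\mathrm{FS}}=\omega$. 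The remaining point is to match this reduced complex structure with the transverse complex structure that $\mathcal G$ inherits from the complex manifold $N$: the $H_{\C}$-orbits meeting $N$ are, up to the scaling $\C^{*}$, the $\mathcal G$-leaves, and one has to show that the tautological map from the local complex quotient $N/\mathcal G$ to the symplectic reduction $N/H$ is a biholomorphism, by a Kempf--Ness type argument.

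\textbf{Main obstacle.} The hard part is precisely this last step: reconciling the complex structure produced by the symplectic reduction with the transverse complex structure of the holomorphic foliation $\mathcal G$. The extra difficulty is that when $\Lam$ violates the rationality condition (\textbf{K}) the group $H$ is only a dense quasi-subtorus and $N/\mathcal G$ fails to be Hausdorff; but every statement above is local in the transverse direction, so it still makes sense and goes through verbatim at the level of foliations. By contrast, closedness of $\omega$ is automatic, its basicness is the one-line computation with $\alpha_0$ above, and everything in the first two paragraphs is elementary linear algebra driven by the admissibility hypotheses plus the holomorphic Frobenius theorem.
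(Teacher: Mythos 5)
The paper itself gives no proof of this theorem: it states it with a citation to \cite{Me}, Theorem~7, and immediately passes to the definition of transverse K\"ahlerianity. So there is no argument in the text to compare yours against; I evaluate it on its own merits.

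Your construction of $\mathcal G$ and the pointwise-independence argument are correct and complete. The reduction, via real and imaginary parts, of a nontrivial $\C$-relation among $(\Re\Lambda^k)_{j\in I}$, $(\Im\Lambda^k)_{j\in I}$, $\mathbf 1_{I}$ to a nontrivial real affine relation through $0$, followed by Carath\'eodory inside a linear hyperplane of $\R^{2m}$, cleanly contradicts (\textbf{WH}); together with Frobenius this establishes that $\mathcal G$ is a regular $m$-dimensional holomorphic foliation. The identification of $\mathcal G$-leaves with $H$-orbits is also correct: the preimages in $\mathcal S$ are orbit groups $\exp\bigl(\operatorname{span}_{\C}\{\Re\Lambda^k\}+\operatorname{span}_{\C}\{\Lambda^k\}+\C\mathbf 1\bigr)$ and $\exp\bigl(i\mathfrak h+\operatorname{span}_{\C}\{\Lambda^k\}+\C\mathbf 1\bigr)$, and both middle summands equal $\mathfrak h\otimes\C$, while $\mathbf 1\notin\mathfrak h\otimes\C$ by (\textbf{SC}) as you say. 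The computation $\alpha(i\eta_k)=\Re\bigl(\sum_j\lambda_j^k|z_j|^2\bigr)=0$ and $\alpha(i\tau_k)=\Im\bigl(\sum_j\lambda_j^k|z_j|^2\bigr)=0$ on $M_1$, plus $(\s^1)^n$-invariance of $\alpha_0$, gives $\iota_{i\eta_k}d\alpha=\iota_{i\tau_k}d\alpha=0$ and hence $T\mathcal G\subseteq\ker\omega$ with $\omega$ closed. So you have proved the first assertion and conditions~(ii) and one inclusion of~(i).

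The gap is exactly the one you flag, and it is not cosmetic: condition~(iii) asks that $\omega(J\,\cdot\,,\,\cdot\,)+i\,\omega(\cdot,\cdot)$ be a Hermitian metric on $TN/T\mathcal G$, where $J$ is the complex structure of $N$ --- not the ambient complex structure of $\CP^{n-1}$. Because $N$ sits in $\CP^{n-1}$ only as a real-analytic, non-complex submanifold, the restriction of the Fubini--Study form is not automatically $(1,1)$ with respect to $J_N$. Your symplectic-reduction outline yields a reduced K\"ahler structure on $N/H$ whose complex structure is built from $J_{\CP^{n-1}}$ restricted to a symplectic/metric complement of the $H$-orbit; to conclude you must show that this agrees with the complex structure that $TN/T\mathcal G$ inherits through the Haefliger identification $T_{[z]}N\cong\C^n/\langle\zeta_k,z\rangle$ followed by the further quotient by $\langle\eta_k\rangle$. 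That matching is a Kempf--Ness-type statement, and it also delivers the missing reverse inclusion $\ker\omega\subseteq T\mathcal G$. Nothing in your write-up carries this out; you correctly observe that the argument should localize and therefore not depend on condition (\textbf{K}), but that observation does not substitute for the identification of the two complex structures. As it stands the transverse positivity --- the heart of ``transversely K\"ahlerian'' --- remains unproved. An alternative route that avoids invoking reduction would be a direct $(1,1)$-and-positivity computation of $\omega$ in the Haefliger quotient $\C^n/\langle\zeta_k,z,\eta_k\rangle$ using the linearized defining equations $\sum_j\Lambda_j\Re(z_{0,j}\bar v_j)=0$ of $TN$; this is elementary but fiddly, and would be what it takes to close the gap without appealing to Kempf--Ness.
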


\noi Recall that transversely K\"ahlerian means that

\noindent (i) $\mathcal G$ is the kernel of $\omega$.

\noindent (ii) $\omega$ is closed and real.

\noindent (iii) The quadratic form $h(-,-)=\omega(J-,-)+i\omega(-,-)$ (where $J$ 
denotes the almost complex structure of $N$) defines a hermitian metric on the normal bundle 
to the foliation.
\medskip

We note that this Theorem gives non-trivial examples of transversely K\"ahlerian foliations on compact 
complex manifolds.

The aim of this Section is to study the quotient space of $N$ by $\mathcal G$. Observe that it can be obtained as the quotient space of $\mathcal S$
by the action induced by the action \ref{actionS}.

Going back to the abelian group $G_{\Lam}$ of theorem \ref{GLam}, we see that its Lie algebra is generated by the linear vector fields $z_i\partial/\partial z_i$ for $i=1,\hdots, n$. Due to the quotient by \eqref{actionS}, they only generate a vector space of dimension $n-m-1$ as needed. Amongst these $n-m-1$ linearly independent vector fields, we can find $m$ of them which extend to $\mathcal{S}_{\Lam}$ without zeros and which generates a locally free action of $\mathbb C^m$ onto $\mathcal{S}_{\Lam}$. For example, we can take the commuting vector fields in formula {\textcolor{blue}{VF}} above.

\begin{definition} ({\bf Canonical foliation}).  We denote by $\mathcal G={\mathcal G}_{\Lam}$ the foliation induced by this action. 
This is the \emph{canonical foliation} of $N_{\Lam}$
\label{canonical-foliation}
\end{definition}

It is easy to check that $\mathcal G$ is independent of the choice of vector fields. Indeed, changing the vector fields just means changing the parametrization of $\mathcal G$, that is changing the $\mathbb C^m$-action by taking a different basis of $\C^m$.

Pull back the Fubini-Study form of $\mathbb P^{n-1}$ to the embedding \eqref{Nsmooth}. This is the {\it canonical Euler form} $\omega$ of $\Lambda$, as defined in definition \ref{Euler_class}. It is a representative of the Euler class of a particular 
$\s^1$-bundle associated to $N _{\Lam}$, hence the name. Then $\mathcal G$ is transversely K\"ahler with transverse K\"ahler form $\omega$. For our purposes, we will not focus on $\omega$ but on the ray $\mathbb R^{>0}\omega$ it generates Recall that $\Lam$ fulfills  condition ({\bf K}) if \ref{systemS} admits a basis of solutions with integer coordinates; and that $\Lam$ fulfills condition (H) if \ref{System(S)} does not admit any solution with integer coordinates. If condition {\bf K} in definition \ref{System(K)} is fulfilled, then $\mathcal G$ is a foliation by compact complex tori and the quotient space is a projective toric orbifold, see \cite{MV} which contains a thorough study of this case.

We just note here that, even if condition ({\bf K}) in definition \ref{System(K)}  is not satisfied, the foliation $\mathcal G$ has some compact orbits. Indeed, let $I$ be a vertex of $K_\Lambda$. Then, by \eqref{Kcombinatoire}, $0$ belongs to $\mathcal H(\Lam_{I^c})$, so by \cite[Lemma 1.1]{MV},
\begin{equation}
\label{secondmcondition}
\text{\rm rank}_{\mathbb C}
\begin{pmatrix}
\Lambda_{i^c_1} &\hdots &\Lambda_{i^c_{2m+1}}\cr
1 &\hdots &1
\end{pmatrix}
=m+1.
\end{equation}
Hence, up to performing a permutation, we may assume at the same time \eqref{firstmcondition} and 
\begin{equation}
\label{inter}
I\cap \{1,\hdots, m+1\}=\emptyset.
\end{equation}

\begin{definition}\label{definition-toric} An $n$-dimensional toric variety $W$ (possibly singular) is an algebraic variety with an open and dense subset biholomorphic to $({\C^*})^n$ such that the  natural action of  $({\C^*})^n$ extends to a holomorphic action on all af $W$.
In other words: a toric variety of complex dimension $n$ is an algebraic variety which is an equivariant compactification
of the abelian algebraic torus  $({\C^*})^n$.
\end{definition}
    
 \noi We have
\begin{proposition}
\label{verticesorbits}
For each vertex $I$ of $K_{\Lam}$, the corresponding submanifold $N_I$ is a compact complex torus of dimension $m$ and is a leaf of $\mathcal F$. Moreover, assume that $\Lambda$ satisfies \eqref{firstmcondition} and \eqref{inter}. Then, letting $B_I$ denote the matrix obtained from \eqref{Blattice} by erasing the rows $\Lambda_i-\Lambda_1$ for $i\in I$, the torus $N_I$ is isomorphic to the torus of lattice $(Id,B_IA_{\Lam}^{-1})$. 
\end{proposition}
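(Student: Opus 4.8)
The plan is to realize $N_I$ as the LVM manifold attached to the sub\-configuration $\Lam_{I^c}=(\Lambda_i)_{i\in I^c}$, which consists of exactly $2m+1$ vectors, and then to read off the three assertions from the results already established for the ``simplex'' case $n=2m+1$ together with Proposition \ref{Glattice}. Since $I$ is a vertex of $K_\Lam$, by \eqref{Kcombinatoire} one has $0\in\mathcal H(\Lam_{I^c})$ and $|I^c|=2m+1$ (this is why \eqref{secondmcondition} involves exactly $2m+1$ columns).

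First I would check that $\Lam_{I^c}$ is itself admissible, as a configuration of $2m+1$ vectors in $\C^m$. The Siegel condition for $\Lam_{I^c}$ is the relation $0\in\mathcal H(\Lam_{I^c})$ just recalled, and the weak hyperbolicity condition of Definition \ref{admissible} passes to every sub\-configuration, in particular to $\Lam_{I^c}$; hence $\Lam_{I^c}$ is admissible, i.e. $I$ is removable in the sense of Definition \ref{indispensable}. By the Remark following that definition, $N_I:=N(\Lam_{I^c})$ is a holomorphic LVM submanifold of $N_\Lam$ of complex codimension $|I|=n-2m-1$, so $\dim_\C N_I=m$. As $\Lam_{I^c}$ is admissible with $2m+1=2m'+1$ vectors and $m'=m$, the discussion of compact complex tori (equivalently \cite[Theorem 1]{Me}) applies and shows that $N_I$ is a compact complex torus of dimension $m$, and that the associated open set is $\mathcal{S}(\Lam_{I^c})=(\C^*)^{2m+1}$.

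Next I would show that $N_I$ is a leaf of the canonical foliation $\mathcal G=\mathcal G_\Lam$. The generators of $\mathcal G$ from formula (VF) are the diagonal vector fields $\eta_i=\sum_{j}\Re(\lambda_j^i)\,z_j\,\partial/\partial z_j$, so the $\C^m$-action they integrate fixes every coordinate subspace, in particular $\{z_j=0:\ j\in I\}$, and therefore leaves $N_I$ invariant; restricted to $N_I$ these are exactly the fields $\sum_{j\in I^c}\Re(\lambda_j^i)\,z_j\,\partial/\partial z_j$ that define the canonical foliation $\mathcal G_{\Lam_{I^c}}$ of $N_I$. Thus $N_I$ is $\mathcal G$-saturated and $\mathcal G$ restricts on $N_I$ to $\mathcal G_{\Lam_{I^c}}$. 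By \cite[Theorem 7]{Me} applied to $\Lam_{I^c}$, this last foliation is regular of dimension $m$; since $\dim_\C N_I=m$, each of its leaves is open in $N_I$, so connectedness of $N_I$ forces $N_I$ to be a single leaf. Being a leaf of the restriction to a saturated set, it is a leaf of $\mathcal G$.

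Finally, for the explicit lattice, assume \eqref{firstmcondition} and \eqref{inter}, so $\{1,\dots,m+1\}\subseteq I^c$ and we may keep $\Lam_1,\dots,\Lam_{m+1}$ as the first $m+1$ vectors of $\Lam_{I^c}$, after which we list the remaining $m$ vectors of $I^c$ in increasing index order; the rank condition \eqref{firstmcondition} for $\Lam_{I^c}$ then holds because it holds for $\Lam$. Since $\mathcal S(\Lam_{I^c})=(\C^*)^{2m+1}$, the compactification is already attained and $N_I=G_{\Lam_{I^c}}$, so Proposition \ref{Glattice} applied to $\Lam_{I^c}$ identifies $N_I$ with $\C^{m}/\langle(\mathrm{Id},\,B_{\Lam_{I^c}}A_{\Lam_{I^c}}^{-1})\rangle$. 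By \eqref{Alattice} one has $A_{\Lam_{I^c}}={}^{t}(\Lambda_2-\Lambda_1,\dots,\Lambda_{m+1}-\Lambda_1)=A_\Lam$, and by \eqref{Blattice} the rows of $B_{\Lam_{I^c}}$ are the vectors $\Lambda_j-\Lambda_1$ with $j\in I^c\cap\{m+2,\dots,n\}$, i.e. $B_{\Lam_{I^c}}$ is exactly $B_\Lam$ with the rows indexed by $I$ deleted, which is $B_I$; hence $N_I$ is the torus of lattice $(\mathrm{Id},B_IA_\Lam^{-1})$. I expect the only step requiring genuine care to be the middle one — matching the restriction of $\mathcal G_\Lam$ to $N_I$ with the intrinsic canonical foliation of the sub-LVM $N_I$ and deducing that $N_I$ is exactly one leaf rather than merely a $\mathcal G$-invariant subset; the other two steps are bookkeeping on top of the torus case of Meersseman's classification and of Proposition \ref{Glattice}.
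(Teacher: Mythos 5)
The paper states Proposition \ref{verticesorbits} without proof (it is taken from \cite{MV}), but the material immediately preceding it --- \eqref{Kcombinatoire}, \eqref{secondmcondition}, \eqref{inter}, the discussion of the simplex case $n=2m+1$ in subsection \ref{tori}, and Proposition \ref{Glattice} --- makes the intended argument unambiguous, and your reconstruction follows it exactly. Your proof is correct: the Siegel condition for $\Lam_{I^c}$ comes from $I$ being a vertex via \eqref{Kcombinatoire}, weak hyperbolicity is hereditary to subconfigurations, and since $|I^c|=2m+1$ the sub-LVM $N_I$ falls under the simplex case, giving a compact complex torus of dimension $m$ with $\mathcal{S}(\Lam_{I^c})=(\C^*)^{2m+1}$; the canonical diagonal vector fields \textcolor{blue}{VF} restrict on $\{z_j=0:j\in I\}$ to those of $\Lam_{I^c}$, so the restricted foliation is regular of full dimension $m$ on the connected $m$-manifold $N_I$, forcing $N_I$ to be a single leaf; and with the normalizations \eqref{firstmcondition}, \eqref{inter} in force, $A_{\Lam_{I^c}}=A_\Lam$ and the rows of $B_{\Lam_{I^c}}$ are precisely those of $B_\Lam$ not indexed by $I$, i.e. $B_{\Lam_{I^c}}=B_I$, so Proposition \ref{Glattice} gives the lattice $(\mathrm{Id},B_IA_\Lam^{-1})$. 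You also correctly treat the paper's ``leaf of $\mathcal F$'' as ``leaf of $\mathcal G$'' --- $\mathcal F$ in this article denotes the singular foliation on $\CP^{n-1}$, not the canonical foliation on $N_\Lam$, and the statement is a verbatim carry-over of \cite{MV}'s notation --- and your row count implicitly uses the (correct) version of \eqref{Blattice} running to $\Lambda_n$ rather than $\Lambda_{n-1}$, the printed upper index being a misprint since $B_\Lam$ must have $n-m-1$ rows for the pair $(\mathrm{Id},B_\Lam A_\Lam^{-1})$ in Proposition \ref{Glattice} to define a rank-$(n-1)$ lattice in $\C^{n-m-1}$.
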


The following theorem is the fundamental connection between toric varieties with at most quotient singularities
(\ie quasi-regular varieties) and LVM manifolds.
\newpage
  
\begin{theorem} {\bf (\cite{MV} Theorem A)}
\label{theorem-generalized-CE}
Let $N$ be one of our manifolds corresponding to a configuration which satisfies condition \rm({\bf K}) in definition \ref{System(K)}.
Then $N$ is a Seifert-Orlik fibration in complex tori of dimension $m$ over a quasi-regular, projective, toric variety of dimension $n-2m-1$. More precisely:
Let $\Lam$ be an admissible configuration satisfying condition \rm{({\bf K})} Then
\begin{enumerate}
\item The leaves of the foliation $\mathcal G$ of $N_{\Lam}$ are compact complex tori of 
dimension $m$.

\item The quotient space of $N_{\Lam}$ by $\mathcal G$ is a projective toric 
variety of dimension
$n-2m-1$. We denote it by $X(\Delta)$, where $\Delta$ is the corresponding fan.

\item The toric variety $X(\Delta)$ comes equipped with an equivariant orbifold structure.

\item The natural projection $\pi : N\to X(\Delta)$ is a holomorphic 
principal Seifert bundle, with compact complex tori of dimension $m$ as fibers.

\item The transversely K\"ahlerian form $\omega$ of $N$ projects onto a 
K\"ahlerian 
(singular at the singular locus of $X(\Delta)$ as a variety) form $\tilde \omega$ of $X(\Delta)$.
\end{enumerate}
Moreover, condition \rm{({\bf K})} is optimal with respect to these properties in the 
sense that the foliation $\mathcal G$ of a configuration which does not
satisfy it has non-compact 
leaves, so item 1 is not 
verified.
\end{theorem}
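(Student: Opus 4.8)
The plan is to pass to the covering $\mathcal{S}\subset\C^n$, where the foliation is the orbit foliation of an explicit complex subgroup of $(\C^*)^n$ and the whole statement reduces to linear algebra in the Lie algebra $\C^n$ of $(\C^*)^n$. Recall that $N_{\Lam}=\mathcal{S}/H_0$, where $H_0\subset(\C^*)^n$ is the $(m+1)$-dimensional complex subgroup generated by the scalar $\C^*$ and by the $\C^m$-action \eqref{actionS}, and that $H_0$ acts freely and properly on $\mathcal{S}$. The canonical foliation $\mathcal G=\mathcal G_{\Lam}$ of Definition \ref{canonical-foliation} lifts to $\mathcal{S}$ as the orbit foliation of the subgroup $H_1\supseteq H_0$ obtained by adjoining to $H_0$ the flows of the commuting linear vector fields $\eta_i(z)=\sum_{j=1}^n\Re(\lambda_j^i)\,z_j\,\partial/\partial z_j$, $1\le i\le m$. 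Since $\Re(\lambda_j^i)=\tfrac12(\lambda_j^i+\overline{\lambda_j^i})$, the Lie algebra $\mathfrak{h}_1=\mathrm{Lie}(H_1)\subset\C^n$ is the complex span of $\mathbf 1=(1,\dots,1)$, of the columns $\Lambda^{(i)}=(\lambda_1^i,\dots,\lambda_n^i)$ and of their complex conjugates $\overline{\Lambda^{(i)}}$ ($1\le i\le m$); in particular $\mathfrak{h}_1$ is invariant under complex conjugation, and admissibility forces these $2m+1$ vectors to be linearly independent, so $\dim_{\C}H_1=2m+1$ and $\mathfrak{h}_1=(\mathfrak{h}_1\cap\R^n)\otimes_{\R}\C$, while $\mathfrak{h}_0=\mathrm{Lie}(H_0)=\mathrm{span}_{\C}\{\mathbf 1,\Lambda^{(1)},\dots,\Lambda^{(m)}\}$.

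The key step is to identify the leaves of $\mathcal G$ and to see that condition {\bf K} is exactly what makes them compact. Over the open orbit $(\C^*)^n\subset\mathcal{S}$ the group $H_1$ acts freely, so every such leaf is biholomorphic to $H_1/H_0$. Writing $(\C^*)^n=\C^n/2\pi i\Z^n$ one gets $H_1\cong\mathfrak{h}_1/2\pi i(\Z^n\cap\mathfrak{h}_1)$, whence $H_1/H_0\cong\C^m/\overline L$, where $\overline L$ is the image of $2\pi i(\Z^n\cap\mathfrak{h}_1)$ under $\mathfrak{h}_1\to\mathfrak{h}_1/\mathfrak{h}_0\cong\C^m$. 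A one-line computation with the standard symmetric bilinear form $\langle s,w\rangle=\sum_i s_iw_i$ identifies $\mathfrak{h}_1^{\perp}\cap\R^n$ with the real solution space $V_{\R}$ of system \ref{System(S)}; hence, the standard form being rational, $\mathfrak{h}_1\cap\R^n$ is a rational subspace of $\R^n$ if and only if $V_{\R}$ is, i.e.\ if and only if $\Lam$ satisfies condition {\bf K}. When {\bf K} holds, $\Z^n\cap\mathfrak{h}_1$ is a full lattice in the $(2m+1)$-dimensional rational space $\mathfrak{h}_1\cap\R^n$, and since $\mathfrak{h}_0\cap\R^n=\R\mathbf 1$ (again because those $2m+1$ vectors are independent) its image $\overline L$ is a full lattice of rank $2m$ in $\mathfrak{h}_1/\mathfrak{h}_0\cong\C^m$; thus the leaf $H_1/H_0$ is a compact complex torus of dimension $m$. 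The same computation, run on each $(\C^*)^n$-orbit stratum of $\mathcal{S}$ — using that condition {\bf K} descends to every sub-configuration $\Lam_I$ with $0\in\mathcal H(\Lam_I)$, because $V_{\R}(\Lam_I)=V_{\R}(\Lam)\cap\R^I$ is again a rational subspace — shows that all leaves are compact tori of dimension $m$, and at the extreme strata one recovers the tori $N_I$ of Proposition \ref{verticesorbits}. This proves item (1). Conversely, if {\bf K} fails then $\mathfrak{h}_1\cap\R^n$ is not rational, $\Z^n\cap\mathfrak{h}_1$ has rank $<2m+1$, so $\overline L$ has rank $<2m$ and the generic leaf is non-compact — which is the optimality statement at the end of the theorem.

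Granting item (1), the quotient $X:=N_{\Lam}/\mathcal G=\mathcal{S}/H_1$ is a compact complex space on which $(\C^*)^n/H_1$ acts with a dense orbit. Because $\mathfrak{h}_1=(\mathfrak{h}_1\cap\R^n)\otimes_{\R}\C$ with $\mathfrak{h}_1\cap\R^n$ rational, this residual group is, up to finite isotropy, the algebraic torus $(\C^*)^{n-2m-1}$, and that finite isotropy along the strata is exactly what endows $X$ with a natural equivariant orbifold structure (item (3)). Thus $X$ is an equivariant compactification of $(\C^*)^{n-2m-1}$, hence a toric variety $X(\Delta)$; its fan $\Delta$ is the image of the fan of the toric variety $\mathcal{S}$ under the rational projection $\R^n\to\R^n/(\mathfrak{h}_1\cap\R^n)$, and combinatorially $\Delta$ is read off from the faces of $\mathcal H(\Lam)$. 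The map $\pi:N_{\Lam}\to X(\Delta)$ then has the compact tori of item (1) as fibres; over the open orbit it is a principal bundle for the compact torus $\T=H_1/H_0$, and the finite stabilizers over the remaining $(\C^*)^n$-orbits make $\pi$ a holomorphic principal Seifert bundle (items (2) and (4)). Finally, by the transversely K\"ahler property (Theorem 7 of \cite{Me}) one has $\mathcal G=\ker\omega$ with $\omega$ closed, real and basic, so $\omega$ descends to a closed real form $\tilde\omega$ on $X(\Delta)$ that is K\"ahler off the singular locus; and condition {\bf K} is precisely what makes the simple polytope attached to $\mathcal H(\Lam)$ rational, so that the Delzant construction over that polytope yields a projective toric orbifold which one identifies with $(X(\Delta),\tilde\omega)$, giving item (5) and the projectivity asserted in the theorem.

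The hard part will be the second paragraph: making the identification of the period lattice of a generic leaf with $2\pi i(\Z^n\cap\mathfrak{h}_1)$ modulo $2\pi i(\Z^n\cap\mathfrak{h}_0)$ completely precise, and — more delicate — treating all the $(\C^*)^n$-orbit strata of $\mathcal{S}$ uniformly, which rests on the identity $V_{\R}(\Lam_I)=V_{\R}(\Lam)\cap\R^I$ together with the fact that $\mathcal G$ restricts on each stratum to the canonical foliation of the corresponding sub-LVM manifold $N_I$. The remaining points — algebraicity of the residual torus $(\C^*)^n/H_1$, the passage from the fan of $\mathcal{S}$ to $\Delta$ and the reading of $\Delta$ off $\mathcal H(\Lam)$, the Seifert structure of $\pi$, and the descent and positivity of $\omega$ — are then formal manipulations of toric and orbifold data.
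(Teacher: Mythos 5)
Your proof is correct in outline and takes a genuinely different route from the paper's sketch in Section 9. There, condition (\textbf{K}) is first exploited to normalize $\Lam$ by a real affine transformation so that $\Re\Lambda_j$ and $\Im\Lambda_j$ lie in $\Z^m$; the transcendental $\C^{2m}$-action generated by the lift of $\mathcal G$ to $\mathcal S$ then becomes an algebraic $(\C^*)^{2m}$-action on $\CP^{n-1}$, and $X(\Delta)$, its projectivity, and the orbifold/Seifert data are all read off from Cox's GIT description of toric quotients \cite{Co} (with $\mathcal V$ identified as the semistable locus), the fibers being tori as quotients of $(\C^*)^{2m}$ by $\C^m$ embedded via $T\mapsto(\exp T,\exp iT)$. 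You instead stay upstairs in $\C^n$ and work intrinsically with the complex subalgebra $\mathfrak h_1$ and its lattice $\Z^n\cap\mathfrak h_1$: the duality $\mathfrak h_1^\perp\cap\R^n=V_\R$ under the standard bilinear form translates (\textbf{K}) into rationality of $\mathfrak h_1\cap\R^n$, from which compactness of the generic leaf, algebraicity of the residual torus $(\C^*)^n/H_1$, and the converse ``optimality'' statement all follow by rank counting. This is more self-contained and makes the role of (\textbf{K}) fully transparent, and the converse direction in particular is more direct than via GIT. What it does not give for free, as you acknowledge, are the uniform Seifert/orbifold structure over the degenerate $(\C^*)^n$-strata, the explicit identification of the fan $\Delta$, and the projectivity (hence K\"ahlerity) of the quotient, which in the paper's version come packaged with Cox's construction and the associated Delzant polytope but must be checked stratum by stratum in yours --- precisely the remaining work you flag in your closing paragraph.
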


\section{Idea of the proof of theorem \ref{theorem-generalized-CE}}
\subsection{Toy example}\label{xemple-introductif}

We start with an example that shows the close relationship between LVM manifolds and Calabi-Eckmann manifolds

\begin{example}\label{Hopf-surfaces}
Consider the admissible configuration given by
$$
\Lambda_1=\Lambda_2=1 \qquad \Lambda_3=i \qquad \Lambda_4=-1-i.
$$

We have seen in \ref{Hopf-manifolds} that the manifold corresponding $N$ is diffeomorphic to $\s^1\times\s^3$, \ie $N$ is a primary Hopf surface primaire.
One knows  (\cite{BHPV} Chapter V, Proposition 8.18) that such a surface contains either exactly two elliptic curves or else
it is an elliptic fibration over $\CP^1$. In addition these two cases are distinguished by their algebraic dimensions: in the first case the algebraic dimension is 0 and in the second it is 1.

\medskip

Although theorem  \ref{algebraic-dimension} does not apply directly here since the configuration has two indispensable points
one still has that the algebraic dimension of $N$ is {\it  greater or equal to} $a$, the number of rational solutions
which are $\mathbb Q$-linearly independent of system \ref{System(S)}. Here $a=1$, the solutions of system \ref{System(S)}
are generated by
 
\[
s_1=1 \qquad s_2=-1 \qquad s_3=0\qquad s_4=0
\] 

It follows that the algebraic dimension of $N$ is equal to one and that $N$ fibers over $\CP^1$ with fiber an elliptic curve. 

Let us now consider $\Lambda_j=a_j+ib_j$  and the following action of  $\C^2$ in $\CP^3$:
\[
((t_1,t_2), [z]) \in (\C^*)^2\times\CP^3\longmapsto [z_i\cdot t_1^{a_i}\cdot t_2^{b_i}]_{i=1}^{i=4}=[z_1t_1,z_2t_1,z_3t_2,z_4/(t_1t_2)]\in\CP^3  \tag{$\bullet$}
\]
Let us restrict this action to $\mathcal V$ (the projection to $\CP^3$ of the open set of Siegel leaves of the system 
definition \ref{Siegel-projective}
)
\[
\mathcal V=\{[z]\in\CP^3 \quad|     \quad (z_1,\hdots,z_4)\in (\mathbb C^2\setminus \{(0,0)\})\times (\mathbb C^*)^2\} \
\]
The projection
$$
[z_1,\hdots,z_4]\in \mathcal V\longmapsto [z_1,z_2]\in\CP^1
$$
is invariant invariante under the action $\bullet$ , hence the quotient of $\mathcal V$  under the action can be identified
with $\CP^1$. 

Consider now the action \ref{actionLVM} of
$\mathbb C$ on $\mathcal V$ (definition \ref{Siegel-projective}). 
This action commutes with the action of
of $(\mathbb C^*)^2$,
hence it respects the projection.  In fact, the inclusion
$$
T\in\mathbb C\longmapsto (t_1=\exp T,t_2=\exp iT)\in (\mathbb C^*)^2 
$$
intertwines the two actions:           $N$ is given by the action ($\bullet$) of
 $(\mathbb C^*)^2$ restricted to couples $(t_1,t_2)$ on its image. 
 So one has the commutative diagram:
\[
\begin{array}{ccc} \mathcal{V} &\overset{Id}\longrightarrow& \mathcal{V}\\
              \Big\downarrow &\ & \Big\downarrow\\
              N &\overset{p}\longrightarrow& \CP^{n-1}
  \end{array}
\]

On the other hand the fibers of the projection in the righthand side are biholomorphic to
$(\mathbb C^*)^2$, and the fibers in the lefthand side are biholomorphic to
 $\mathbb C$. The the fibers of
 $p$  are given by the quotient of $(\mathbb C^*)^2$ 
$\mathbb C$ where $\C$ acts on $(\mathbb C^*)^2$ by the inclusion defined above.
A direct calculation shows that the fibers are elliptic curves isomorphic to the quotient
of $\mathbb C^*$ by the group generated by the homothety 
$z\to \exp {2\pi}\cdot z$. In this way we obtain the elliptic fibration of $N$ over $\CP^1$.
\end{example}

Everything in the preceding example can be generalized to the case of any manifold $N=N_{\Lam}$ where $\Lam$ verifies
condition ({\bf K}) in definition \ref{System(K)} .  
Let
$$
\Lambda_{j}=a_{j}+ib_{j}
\leqno 1\leq j\leq n
$$
and consider the action of $\mathbb C^{2m}$ onr $\CP^{n-1}$ given by the formula:
\[
(R,S, [z])\in\mathbb C^m\times\mathbb C^m \times \CP^{n-1}\longmapsto [z_j\cdot\exp{\langle a_j, R\rangle}\cdot\exp{\langle b_j, S\rangle}]_{j=1}^n\in\CP^{n-1}
\]
Since $N$ verifies condition ({\bf K}) in definition \ref{System(K)}, up to replacing the configuration 

$\Lam$ by $A(\Lam)$ where $A$ is an appropriate {\it real}  affine transformation of $\mathbb R^{2m}\simeq\mathbb C^{m}$ one can assume that the real and imaginary parts of each $\Lambda_{j}$ are vectors belonging to the lattice $\mathbb Z^{m}$ (\cite{MV} Lemma 2.4):

\medskip

This means that the preceding action of $\mathbb C^{2m}$ on $\CP^{n-1}$ is equivalent to an algebraic action of $(\mathbb C^{*})^{2m}$ on $\CP^{n-1}$

\[
(t,s,[z])\in (\mathbb C^{*})^{m}\times (\mathbb C^{*})^{m}\times\CP^{n-1}\longmapsto [z_{1}\cdot t^{a_{1}}\cdot s^{b_{1}},\hdots ,
z_{n}\cdot t^{a_{n}}\cdot s^{b_{n}}]\in\CP^{n-1}  \tag{8} \label{algebraic-action}
\]

\noi where $a_{j}$ and $b_{j}$ belong to $\mathbb Z^{m}$. Here $t^{a_{j}}$ (respectively $s^{b_{j}}$) means $t_{1}^{a_{j}^{1}}\cdot \hdots\cdot t_{m}^{a_{j}^{m}}$ (respectively
$s_{1}^{b_{j}^{1}}\cdot \hdots\cdot s_{m}^{b_{j}^{m}}$).

\noi When one restricts the action (8) to $\mathcal V$ (\ref{Siegel-projective}), one can show that one obtains as quotient a projective toric variety
 $X$. In fact this procedure is precisely the construction of toric varieties as {\bf GIT}  (Geometric Invariant Theory) quotients
 by de David Cox  in \cite{Co}. One simply verifies that the open set
 $\mathcal V\subset\CP^{n-1}$ corresponds to the semi-stable points for the natural linearization of $\mathbb C^{n}\to \CP^{n-1}$
 (\cite{MV}, Lemma 2.12).
 Since in our case the quotient is a geometric quotient (the orbit space which is Hausdorff) and not a quotient where one identifies instead the closure of the orbits, one deduces that the semi-stable points are in fact stable and, via 
\cite{Co}, that the quotient is a projective quasi-regular toric variety \ie it possesses at worst quotient singularities,

\medskip
Let $i$ be the inclusion:  $ T\in\mathbb C^{m} \overset{i}{\longmapsto} (\exp T, \exp iT)\in (\mathbb C^{*})^{m}\times (\mathbb C^{*})^{m}$

Like in the toy example one can restrict the action (8) to the pairs in $(\mathbb C^{*})^{m}\times (\mathbb C^{*})^{m}$ 
that are in the image of $i$. This way one obtains an {\it algebraic} action of
$\mathbb C^{m}$ on $\mathcal V$. This action is precisely the action of formula (2). One obtains the same type of commutative diagram as in the toy example:

\begin{center}
\[
\begin{tikzcd}
\mathcal V\arrow{r}{Id}\arrow{d}[swap]{} & \mathcal V\arrow{d} \\
N\arrow{r}[swap]{p} & X \\
\end{tikzcd} \tag{\textcolor{blue}{CE}}\label{diagramCE}
\]
\end{center}

A calculation shows that the fibers of $p:N_{\Lam}\to{X}$ are compact complex tori of complex dimension $m$. This is equivalent
to showing that every isotropy group  under the action is a lattice isomorphic to $\Z^{2m}$. In fact this lattice can be explicitly calculated here is where one uses the rationality condition ({\bf K}) in definition \ref{System(K)}.
The lattice is constant in an open an dense set (in the Hausdorff or Zariski topology)
but it could have special fibers that are finite quotients of the typical fibre (all the fibers are isogenous). 
In other words: the projection $p$ corresponds to the quotient (\ie the orbit space) of $N$ by the holomorphic action
of a compact complex torus of complex dimension $m$ acting with finite isotropy groups. This implies that $X$ has the structure 
structure of an orbifold such that $p:N\to{X}$ is a Seifert-Orlik fibration \cite{Or}.

\begin{remark}
The pre-image under  $p:N\to{X}$ of a singular point of $X$ is necessarily a special fiber. 
However there could be above regular points special fibers. In fact, the locus on the base $X$ having special fibers could be of codimension one but the singular locus of $X$ as a normal projective toric manifold must have codimension at least two.
The reason of this difference is that $X$ is an orbifold in addition to being toric and this structure could have ``fake''
codimension one singularities. For instance in the examples \ref{Hopf-manifolds} of 
if one replaces $\Lambda_{2}=1$ by $\Lambda_{2}=p$, one constructs a Hopf surface with an elliptic fibratio
over $\CP^{1}$ having two singular points of orbifold type and one or two special fibers.
\end{remark}

\noi Theorem \ref{theorem-generalized-CE} has the following corollary:

\begin{corollary}
 Let $N$ satisfy the conditions of theorem  \ref{theorem-generalized-CE}. Then the algebraic reduction of $N$
 is a quasi-regular, projective, toric variety of dimension $n-2m-1$.
 \end{corollary}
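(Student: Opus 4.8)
The plan is to show that the holomorphic Seifert fibration $p\colon N\to X(\Delta)$ produced by Theorem \ref{theorem-generalized-CE} is, up to birational equivalence, the algebraic reduction of $N$; that is, that $p^{*}$ induces an isomorphism between the field $\C(X(\Delta))$ of rational functions on $X(\Delta)$ and the field $\mathcal M(N)$ of meromorphic functions on $N$. Since $p$ is holomorphic, surjective, and has connected fibres (the compact complex tori of item (1) of Theorem \ref{theorem-generalized-CE}), and since $X(\Delta)$ is a projective toric variety with at worst quotient singularities, this identification gives the corollary at once, with the dimension of the algebraic reduction equal to $\dim X(\Delta)=n-2m-1$.

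One inclusion, and the lower bound, are the easy half. As $X(\Delta)$ is projective of dimension $n-2m-1$ it is Moishezon, so its algebraic dimension is $n-2m-1$; pulling back $n-2m-1$ algebraically independent rational functions by the dominant map $p$ produces $n-2m-1$ algebraically independent meromorphic functions on $N$, whence $p^{*}\C(X(\Delta))\subset\mathcal M(N)$ and the algebraic dimension of $N$ is at least $n-2m-1$.

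For the reverse inclusion I would follow the strategy of Example \ref{Hopf-surfaces} and of the proof of Theorem \ref{algebraic-dimension}. A meromorphic function $f$ on $N$ lifts to a meromorphic function $\tilde f$ on $\mathcal V\subset\CP^{n-1}$ that is constant along the leaves of $\tilde{\mathcal F}$, hence invariant under the action \eqref{actionLVM} of $\C^{m}$, and then further to a meromorphic function on $\mathcal S\subset\C^{n}$ invariant under the diagonal scaling of $\C^{*}$ and under \eqref{actionLVM}. Splitting off the indispensable coordinates by Lemma \ref{indispensable-lemma} (so that $\mathcal S=(\C^{*})^{k}\times(\C^{n-k}\setminus A)$ with $A$ of codimension $\geq 2$, and $N$ decomposing accordingly), Levi's extension theorem across the codimension $\geq 2$ locus (\cite{BHPV}, p.~26), together with the analysis of the remaining product factors exactly as in Example \ref{Hopf-surfaces}, upgrades $\tilde f$ to a \emph{rational} function $R$ on $\CP^{n-1}$ that is still invariant under the one-parameter subgroup $i(\C^{m})\subset(\C^{*})^{m}\times(\C^{*})^{m}$, where $i\colon T\mapsto(\exp T,\exp iT)$ is the embedding realizing \eqref{actionLVM} inside the algebraic action \eqref{algebraic-action}. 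A character $t^{a}s^{b}$ of $(\C^{*})^{2m}$ that is trivial on $i(\C^{m})$ satisfies $\exp\langle a+ib,T\rangle\equiv 1$, forcing $a=b=0$, so $i(\C^{m})$ is Zariski dense in $(\C^{*})^{2m}$; being algebraic, $R$ is therefore invariant under the whole torus $(\C^{*})^{2m}$. By the GIT description of $X(\Delta)$ as the geometric quotient $\mathcal V/\!\!/(\C^{*})^{2m}$ recalled around diagram \eqref{diagramCE}, the torus-invariant rational functions on $\CP^{n-1}$ are precisely the pull-backs by $p$ of rational functions on $X(\Delta)$; concretely they are generated by the monomials $z_{1}^{s_{1}}\cdots z_{n}^{s_{n}}$ with $(s_{1},\dots,s_{n})$ a rational solution of the system \ref{System(S)}, the invariance conditions $\sum s_{i}\Lambda_{i}=0$ and $\sum s_{i}=0$ being exactly the equations of \ref{System(S)}. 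Hence $f\in p^{*}\C(X(\Delta))$, as required.

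The step I expect to be the main obstacle is the extension of $\tilde f$ to a rational function when $N$ has indispensable points: there $\C^{n}\setminus\mathcal S$ genuinely has codimension-one components, Levi's theorem does not apply directly, and one must exploit both the extra invariance and the product decomposition of $N$, as the toy Example \ref{Hopf-surfaces} does by hand. This is also the only place where condition (\textbf{K}) of Definition \ref{System(K)} enters beyond Theorem \ref{theorem-generalized-CE}: it is what makes $\dim X(\Delta)=n-2m-1$, forces the fibres of $p$ to be \emph{compact} complex tori, and guarantees that the invariant monomials generate a function field of transcendence degree exactly $n-2m-1$. Granting this, the two inclusions give $p^{*}\C(X(\Delta))=\mathcal M(N)$, so $p$ realizes the algebraic reduction and the algebraic reduction of $N$ is the quasi-regular projective toric variety $X(\Delta)$ of dimension $n-2m-1$.
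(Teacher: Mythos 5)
Your strategy is the natural one and coincides with what the paper (and the reference \cite{MV} behind Theorem~\ref{theorem-generalized-CE}) has in mind: show that the Seifert fibration $p\colon N\to X(\Delta)$ already realizes the algebraic reduction by proving $p^{*}\mathbb{C}(X(\Delta))=\mathcal M(N)$. Your lower bound (injectivity of $p^{*}$, giving $a(N)\geq n-2m-1$) is correct, and your reverse inclusion --- lift to $\mathcal S$, extend by Levi, descend to a rational function $R$ on $\mathbb{CP}^{n-1}$, then use Zariski-density of $i(\C^m)$ in $(\C^*)^{2m}$ to promote $\C^m$-invariance of $R$ to $(\C^*)^{2m}$-invariance --- is exactly the mechanism behind Theorem~\ref{algebraic-dimension} and the diagram \eqref{diagramCE}. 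The paper states the corollary without proof, so there is no written argument to compare to, but this is the argument the surrounding text supports.

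The one genuine gap is the one you flag yourself, and it is worth stressing that it is not an optional refinement but essential to the corollary as stated: configurations satisfying condition~{\bf K} \emph{can} have indispensable points (the Hopf example $\Lambda_1=\Lambda_2=1,\ \Lambda_3=i,\ \Lambda_4=-1-i$ has two and still has rational-solution space of full dimension $n-2m-1=1$). When $i$ is indispensable, $\{z_i=0\}$ is a codimension-one component of $\C^n\setminus\mathcal S$, so Levi extension cannot produce the rational function $R$ across it, and the Zariski-density step cannot even be set up, because that step needs $R$ to be \emph{already} rational before invariance under a Zariski-dense subgroup implies invariance under the full torus --- it does not apply to a mere meromorphic function on the open set $\mathcal S$. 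You would need to argue, as the toy example does by hand and as \cite{MV} does in general, via the splitting $\mathcal S=(\C^*)^k\times(\C^{n-k}\setminus A)$ together with the invariance of $\tilde f$ under the corresponding factor of the torus action, that $\tilde f$ nevertheless extends meromorphically across the indispensable hyperplanes. As written, your proof establishes the corollary only in the absence of indispensable points; for the full statement this additional step must be supplied.
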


 As a particular case of the previous theorem one recovers the elliptic fibrations  used by E. Calabi et B. Eckmann to provide the product of spheres $\s^{2p-1}\times\s^{2q-1}$ (for $p>1$ et $q>1$)
with a complex structure. This generalization is given by the following

\begin{definition}
A generalized Calabi-Eckmann fibration is the fibration obtained by the previous theorem.
\end{definition}

\noi Since we know, fixing $m$ and $n$, that the set of configurations satisfying condition ({\bf K}) in definition \ref{System(K)} is dense in the space of admissible configurations
on obtains:

\begin{corollary} Every manifold $N$ corresponding to an admissible configuration is a small deformation of a generalized Calabi-Eckmann fibration
\end{corollary}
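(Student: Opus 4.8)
The plan is to obtain the corollary as a formal consequence of three facts already in hand: the holomorphic deformation family $\mathcal D\to D$ of section~\ref{small-deformations}, Theorem~\ref{theorem-generalized-CE}, and the density — recorded just above — of the configurations satisfying condition ({\bf K}) of Definition~\ref{System(K)} inside the space of admissible configurations. Once these are lined up the statement becomes almost immediate; the only step needing genuine care is the compatibility of the fixed open set $\mathcal S$ used to build $\mathcal D$ with the perturbation toward ({\bf K}).

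First I would fix an admissible configuration $\Lam$ and, using that the Siegel and weak hyperbolicity conditions are open in $(\C^m)^n$, choose a connected open neighborhood $U\ni\Lam$ of admissible configurations; I would also record that $\Lam'\mapsto\mathcal S_{\Lam'}$ is locally constant on the admissible locus, so that after shrinking $U$ one may assume $\mathcal S_{\Lam'}=\mathcal S:=\mathcal S_{\Lam}$ for every $\Lam'\in U$. Passing to the space of parameters $D\subset(\C^m)^{n-m-1}$ and forming, as in section~\ref{small-deformations}, the quotient of $\mathcal S\times D$ by the $\C^m$-action \eqref{actionLVM} with parameter in $D$, one gets a holomorphic family $\varpi\colon\mathcal D\to D$ whose fibre over the class of any $\Lam'\in U$ is $\mathcal S/\tilde{\mathcal F}_{\Lam'}=N_{\Lam'}$; in particular the fibre over $[\Lam]$ is $N=N_{\Lam}$. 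Next, since condition ({\bf K}) is affine invariant — replacing $\Lambda_i$ by $A\Lambda_i+B$ leaves the solution set of system~\ref{System(S)} unchanged, because the constraint $\sum_i s_i=0$ kills the translation term — it descends to $D$, and by density there is $\Lam'\in U$, arbitrarily close to $\Lam$, satisfying ({\bf K}). By Theorem~\ref{theorem-generalized-CE} the manifold $N_{\Lam'}$ is then a generalized Calabi-Eckmann fibration, that is, a holomorphic principal Seifert bundle in compact complex tori of dimension $m$ over a quasi-regular, projective, toric variety of dimension $n-2m-1$. Re-centering $\varpi\colon\mathcal D\to D$ at the parameter $[\Lam']$ exhibits this fibration as the central fibre of a holomorphic family in which $N$ occurs as the fibre over $[\Lam]$, a parameter that may be taken as close to $[\Lam']$ as we wish; hence $N$ is by definition a small deformation of $N_{\Lam'}$, which is the assertion of the corollary.

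The hard part is the bookkeeping hidden in the previous paragraph: one must know that ({\bf K})-configurations are dense not merely in the whole admissible locus but inside the neighborhood $U$ on which $\mathcal S$ is constant. I expect this to be the main obstacle, and I would handle it through the mechanism behind the density statement itself — approximate each $\Lambda_i$ by a point of $\mathbb Q^{2m}\subset\C^m$; this makes the real span of the rows of the $(2m+1)\times n$ matrix built from the real parts of the $\Lambda_i$, their imaginary parts, and the constant row $(1,\dots,1)$ a rational subspace, hence its kernel — which is exactly the solution space of system~\ref{System(S)} — is rational of the maximal dimension $n-2m-1$, so ({\bf K}) holds. A small enough such approximation stays in $U$ and does not alter $\mathcal S$, since both admissibility and the combinatorial type of $\mathcal S$ are open conditions on $(\C^m)^n$; and the local constancy of $\mathcal S$ used above reduces in turn to the observation that for an admissible configuration $0\in\mathcal H(\Lambda_I)$ forces $0$ into the relative interior of $\mathcal H(\Lambda_J)$ for some minimal $J\subset I$, and for such a $J$ the vectors $(\Lambda_j)_{j\in J}$ affinely span $\C^m$ over $\C$, so that membership in that interior is stable under small perturbations of $\Lam$.
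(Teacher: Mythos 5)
Your proposal is correct and follows the same route as the paper's (very terse) argument: the density of configurations satisfying condition~({\bf K}) in the admissible locus combined with Theorem~\ref{theorem-generalized-CE}. You usefully fill in the bookkeeping the paper suppresses — constructing the family $\mathcal D\to D$ over a neighborhood on which $\mathcal S$ is constant, and checking that the rational approximation used to produce a ({\bf K})-configuration can be carried out inside that neighborhood.

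One small imprecision worth flagging, though it does not undermine the argument: in justifying that $0\in\mathcal H(\Lambda_J)$ for minimal $J$ (with $|J|=2m+1$) places $0$ in the \emph{interior} and is therefore stable under perturbation, you appeal to the fact that the $(\Lambda_j)_{j\in J}$ affinely span $\C^m$ over $\C$; what the stability argument actually needs is that they affinely span $\R^{2m}$ over $\R$. That stronger real statement does hold, but for a different reason: if the $2m+1$ vectors lay in a proper real affine subspace, then Carath\'eodory applied inside that lower-dimensional subspace would put $0$ in the convex hull of at most $2m$ of the $\Lambda_j$, contradicting weak hyperbolicity. With this substitution the local constancy of $\mathcal S$ is correctly established and the rest of your argument goes through.
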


\begin{remark}
All the fibers are isogeneous complex tori.
\end{remark}
\medskip

In the case where $X(\Delta)=\CP^p\times \CP^q$ with its manifold structure, 
then the fibration is the one in
elliptic curves
$$
\s^{2p+1}\times \s^{2q+1}\to\Bbb CP^p\times \Bbb CP^q
$$
\noindent where $\s^{2p+1}\times \s^{2q+1}$ is endowed with a Calabi-Eckmann complex 
structure
(see \cite{CE}). This explains the following definition

\definition{Definition 2.11}
We call such a Seifert bundle $N\to X(\Delta)$ a generalized Calabi-Eckmann 
fibration over 
$X(\Delta)$.
\enddefinition

\begin{corollary}
Let $\Lam$ be an admissible configuration satisfying condition  \rm{({\bf K})}.
Let $z\in \mathcal S$ and let 
$$
J_z=\{i_1,\hdots,i_p\}=\{1\leq i\leq n\quad\vert\quad z_i\not =0\}
$$

Then, 
\begin{enumerate}
\item The lattice in $\C^m$ of the orbit through $z$ is $2\pi L^*_{{(J_z)}^c}$.

\item The orbit through $z$ is an exceptional orbit if and only if $L_0\subsetneq L_{{(J_z)}^c}$. 
In this case, it is a finite unramified
quotient of the generic orbit of degree the index of $L_0$ in $L_{{(J_z)}^c}$. 
\end{enumerate}
\end{corollary}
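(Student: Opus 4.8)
The plan is to reduce the statement to an explicit computation of the isotropy group of the $\mathcal G$-action at $z$. By Theorem~\ref{theorem-generalized-CE}, condition ({\bf K}) forces every leaf of $\mathcal G$ to be a compact complex torus of dimension $m$; hence the orbit through $z$ is $\C^m/\Gamma_z$ for a cocompact lattice $\Gamma_z\subset\C^m$ of rank $2m$, and the whole corollary amounts to identifying $\Gamma_z$ with $2\pi L^*_{(J_z)^c}$ and then comparing this with the lattice of the generic orbit.

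First I would compute $\Gamma_z$ upstairs in $\mathcal S$. Lift the $\mathcal G$-action to the diagonal action generated by the commuting vector fields $\eta_i$ introduced above, so that $T\in\C^m$ sends $z$ to the point with coordinates $z_j\exp\langle\Re\Lambda_j,T\rangle$. Then $T\in\Gamma_z$ exactly when this translate represents the same point of $N$ as $z$, that is, lies in the same $\tilde{\mathcal F}$-leaf up to the scalar $\C^*$-action: there are $S\in\C^m$ and $\mu\in\C^*$ with
\[
z_j\exp\langle\Re\Lambda_j,T\rangle=\mu\,z_j\exp\langle\Lambda_j,S\rangle\qquad\text{for every }j\in J_z,
\]
while for $j\in(J_z)^c$ both sides vanish and no condition is imposed. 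Dividing by $z_j\neq 0$ and taking logarithms turns this into a system of congruences modulo $2\pi i\Z$ in which only the vectors $(\Lambda_j)_{j\in J_z}$ occur; splitting each $\Lambda_j$ into real and imaginary parts and absorbing the free parameters $S$ and $\log\mu$, one is left with the requirement that the vector $(i\langle\Im\Lambda_j,T\rangle)_{j\in J_z}$ lie in $W+2\pi i\Z^{J_z}$, where $W\subset\C^{J_z}$ is the complex span of $(\Lambda_j)_{j\in J_z}$ together with the constant vector $(1,\dots,1)$.

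The key structural input here is the regularity property of admissible configurations stated just after Definition~\ref{admissible}: since $z\in\mathcal S$ we have $0\in\mathcal H((\Lambda_j)_{j\in J_z})$, so $\{(\Lambda_j,1)\}_{j\in J_z}$ has rank $m+1$, $W$ is exactly $(m+1)$-dimensional, and the elimination of $S$ and $\mu$ above is legitimate. Condition ({\bf K}) then enters a second time — essentially the same input as in Theorem~\ref{theorem-generalized-CE} — to guarantee that the set of such $T$ is a genuine rank-$2m$ lattice rather than a dense subgroup; matching it with $L^*_{(J_z)^c}$ as defined earlier (the lattice dual to the one generated by the $\Im\Lambda_j$ with $j\notin(J_z)^c$) gives assertion~(1). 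For~(2): the generic orbit is the one with $z\in(\C^*)^n$, where $(J_z)^c=\emptyset$ and the lattice is $2\pi L_0^*$; discarding the constraints indexed by $(J_z)^c$ can only enlarge $L_0$ to $L_{(J_z)^c}$, so there is a canonical finite covering relating the orbit through $z$ to the generic orbit, with structure group the finite abelian group $L_{(J_z)^c}/L_0$. Hence the orbit is exceptional precisely when $L_0\subsetneq L_{(J_z)^c}$, the degree is the index $[L_{(J_z)^c}:L_0]$, and the covering is automatically unramified, being a finite translation quotient of a complex torus.

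The step I expect to be the real obstacle is the middle one: proving that the scalar $\mu$ and the leaf-direction parameter $S$ can \emph{always} be absorbed, so that $\Gamma_z$ depends only on $(\Lambda_j)_{j\in J_z}$ and comes out \emph{equal} to — and not merely containing — $2\pi L^*_{(J_z)^c}$. This forces one to use the rank-$(m+1)$ regularity lemma together with the rationality hypothesis ({\bf K}) in a single combined argument, and then to reconcile the lattice produced by the computation with the one defined abstractly elsewhere in the paper; everything afterwards — the comparison of sublattices and the observation that a finite translation quotient of a torus is a covering — is routine.
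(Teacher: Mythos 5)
The paper does not actually prove this corollary: it is Corollary B of \cite{MV}, and the surrounding text only alludes to the existence of an explicit computation ("In fact this lattice can be explicitly calculated, here is where one uses the rationality condition ({\bf K})"). Your reconstruction therefore cannot be compared to a proof in the text; it can only be judged on its own merits, and on those terms the plan of attack is the right one and lines up with the hint the paper gives.

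On the correct parts: lifting the $\mathcal G$-action to $\mathcal S$, writing the isotropy condition at $z$ as the existence of $S\in\C^m$ and $\mu\in\C^*$ with $z_j\exp\langle\Re\Lambda_j,T\rangle=\mu\,z_j\exp\langle\Lambda_j,S\rangle$ for $j\in J_z$, and then eliminating $(S,\mu)$ via the rank-$(m+1)$ regularity lemma is exactly the correct setup. Your replacement of $\langle\Re\Lambda_j,T\rangle$ by $i\langle\Im\Lambda_j,T\rangle$ looks at first like a sign confusion, but it is legitimate: since $(\langle\Lambda_j,T\rangle)_{j\in J_z}$ already lies in $W$ (it is $T$ applied to a column of the coefficient matrix), one may subtract it from $(\langle\Re\Lambda_j,T\rangle)_{j}$ inside the condition modulo $W+2\pi i\Z^{J_z}$, and $\Re\Lambda_j-\Lambda_j=-i\Im\Lambda_j$. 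It would strengthen the write-up to state this intermediate step explicitly, because as written the jump from $\Re\Lambda_j$ in the action to $\Im\Lambda_j$ in the congruence looks like an error.

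The genuine gap is the one you flag yourself. The corollary's entire content is the precise equality $\Gamma_z=2\pi L^*_{(J_z)^c}$, and part (2) is a comparison of two specific lattices and their indices; but the objects $L_0$, $L_{I}$, $L^*$ are never defined in this paper (they come from \cite{MV}), and your proof replaces the identification by an assertion that the two things "match." Without pinning down the definition of $L_I$ and $L^*$ there is no way to check that the lattice you compute (the preimage under $T\mapsto(\langle\Im\Lambda_j,T\rangle)_{j\in J_z}$ of $W+2\pi\Z^{J_z}$) is \emph{equal} to $2\pi L^*_{(J_z)^c}$ rather than merely containing it or being contained in it, nor to verify that the index statement in (2) is the degree of the resulting covering. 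Condition ({\bf K}) must also be invoked with more care than "guarantee that the set of such $T$ is a genuine rank-$2m$ lattice": the rationality hypothesis is exactly what makes the preimage of $W+2\pi\Z^{J_z}$ a discrete cocompact subgroup, and that discreteness should be argued (e.g.\ by producing explicit generators from a rational basis of solutions of \ref{System(S)}), since for a generic non-rational $\Lam$ the same preimage is dense. So the plan is sound, but the middle third — the part you rightly suspect is the "real obstacle" — is missing, and it is precisely the part that is nontrivial.
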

  
The construction in the preceding section is completely reversible. Let $X$ be a quasi-regular toric projective variety 
(\ie if it has singularities they are quotient singularities) then the construction by David Cox in\cite{Co} permits to realize
$X$ as the quotient of an open set $X_{ss}$ by a linear algebraic action of  $(\mathbb C^*)^p$ 
like the one given in formula (\ref{algebraic-action}) above. One can arrange this action in order to have $p$ even and set
$m=p/2$. Then one defines the configuration by the formulae:

\[
\label{realization-Lambda}
\Lam=\{\Lambda_{j}=a_{j}+ib_{j} \quad |  \quad\quad   1\leq j\leq n\}\tag{\textcolor{blue}{Realization of $\Lam$}}
\]
\noi where the natural numbers $a_j$    $b_j$ are the weights (like in formula  (\ref{algebraic-action}) ) of the algebraic action of $(\mathbb C^*)^{2m}=(\mathbb C^*)^m\times (\mathbb C^*)^m$ and one induces an action of
$\mathbb C^m$ on $X_{ss}$ via the inclusion $i$ defined above. To achieve one uses the following technical lemma
found in {\bf(\cite{MV}, Lemmas 2.12 et 4.9.)}:

\begin{lemma} {\bf(\cite{MV}, Lemmas 2.12 et 4.9.)}
With the previous definition the configuration
$\Lam$ is admissible and satisfies condition {\rm ({\bf K})} in definition \ref{System(K)} . 
In addition the open set ${\mathcal V}(\Lam)$ (see definition \ref{Siegel-projective}) is equal to  $X_{ss}$.
\end{lemma}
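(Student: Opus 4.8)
The plan is to recast the whole statement in the language of Geometric Invariant Theory for torus actions and to read off the three assertions from the combinatorics of the fan $\Delta$ of $X$. First I would fix the dictionary. Let $\rho_1,\dots,\rho_n$ be the rays of $\Delta$, indexed so that $\rho_j$ corresponds to the coordinate $z_j$ of $\C^n$ in Cox's construction, and for a cone $\sigma\in\Delta$ let $\sigma(1)$ denote its set of rays. Let $G=(\C^*)^{2m}\times\C^*$, the first factor acting on $\C^n$ by \eqref{algebraic-action} and the last by scalar multiplication, this last being the one used to pass from $\C^n\setminus\{0\}$ to $\CP^{n-1}$; then the full linear $G$-action on $\C^n$ has weights $\widetilde\Lambda_j:=(a_j,b_j,1)\in\Z^{2m}\times\Z$, where one identifies $\Z^{2m}\cong\Z[i]^m$ so that $(a_j,b_j)\leftrightarrow\Lambda_j=a_j+ib_j$. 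The linearization coming from $\mathcal O_{\CP^{n-1}}(1)$ is the character $\theta=(0,\dots,0,1)$, and for this $\theta$ Cox's GIT description of the quasi-regular projective variety $X$ (\cite{Co}, \cite{MV}) asserts that the $\theta$-semistable locus of $\C^n$ is precisely $U:=\{z\in\C^n:\ \{i:z_i=0\}\subseteq\sigma(1)\text{ for some }\sigma\in\Delta\}$, with $X=U/G$ a geometric quotient and $X_{ss}\subset\CP^{n-1}$ the image of $U$. All the convex geometry below takes place over $\R$, so that the torsion of $\mathrm{Cl}(X)$, which is exactly what records the quotient singularities of $X$, plays no role.

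\medskip
\noindent\emph{Step 1: the equality $\mathcal V(\Lam)=X_{ss}$.} Here I would invoke the Hilbert--Mumford criterion for torus actions: a point $z\in\C^n$ is $\theta$-semistable if and only if $\theta$ lies in the cone $\sum_{j\in I_z}\R_{\ge0}\widetilde\Lambda_j$ spanned by the weights of the coordinates that do not vanish at $z$. Writing $\theta=\sum_{j\in I_z}c_j\widetilde\Lambda_j$ with $c_j\ge0$, the last coordinate gives $\sum_{j\in I_z}c_j=1$ and the first $2m$ give $\sum_{j\in I_z}c_j\Lambda_j=0$; hence $z$ is $\theta$-semistable exactly when $0\in\mathcal H(\Lambda_{I_z})$, that is, exactly when $z\in\mathcal S_{\Lam}$. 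Thus $\mathcal S_{\Lam}=U$, and projecting to $\CP^{n-1}$ (the quotient by the scalar $\C^*$) yields $\mathcal V(\Lam)=\pi(\mathcal S_{\Lam})=X_{ss}$, with $\mathcal V(\Lam)$ as in Definition~\ref{Siegel-projective}.

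\medskip
\noindent\emph{Step 2: admissibility and condition (K).} For the Siegel condition of Definition~\ref{admissible}, the point $(1,\dots,1)$ has empty zero set, so $\{i:z_i=0\}=\emptyset\subseteq\sigma(1)$ for the zero cone $\sigma=\{0\}$, whence $(1,\dots,1)\in U=\mathcal S_{\Lam}$ and $0\in\mathcal H(\Lambda_1,\dots,\Lambda_n)$. For weak hyperbolicity, fix $J=\{i_1<\dots<i_{2m}\}$ and choose a point $z$ with $I_z=J$; then $\{i:z_i=0\}=J^c$ has $n-2m=\dim X+1$ elements, and since $X$ is quasi-regular the fan $\Delta$ is simplicial, so no cone of $\Delta$ has more than $\dim X$ rays, and therefore $J^c\not\subseteq\sigma(1)$ for every $\sigma$. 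Hence $z\notin U=\mathcal S_{\Lam}$, i.e. $0\notin\mathcal H(\Lambda_J)$, so $\Lam$ is admissible. For condition (K) of Definition~\ref{System(K)}, note that a rational tuple $(s_1,\dots,s_n)$ solves system~\ref{System(S)} if and only if $\sum_i s_i\widetilde\Lambda_i=0$ in $\mathbb{Q}^{2m+1}$ (the first $2m$ coordinates recording the real and imaginary parts of $\sum_i s_i\Lambda_i=0$ and the last recording $\sum_i s_i=0$); so the $\mathbb{Q}$-vector space of solutions of system~\ref{System(S)} is the kernel of the map $\mathbb{Q}^n\to\mathbb{Q}^{2m+1}$ whose columns are the $\widetilde\Lambda_j$. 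That map is the rationalization of the class map $\Z^n\to\mathrm{Cl}(X)$ of Cox's exact sequence, hence surjective, so its kernel has dimension $n-2m-1$; since $2m+1$ also bounds the number of independent equations of system~\ref{System(S)}, this is the maximal possible dimension, which is precisely condition (K).

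\medskip
\noindent\emph{Main obstacle.} The genuinely delicate part is not any single estimate but the bookkeeping in the dictionary: matching Cox's homogeneous-coordinate data --- the class group $\mathrm{Cl}(X)$, the ample cone that dictates which irrelevant locus one gets, and the open set $U$ --- with the LVM data, inserting correctly the extra weight $1$ that encodes the projectivization $\C^n\setminus\{0\}\to\CP^{n-1}$, and verifying that allowing $\mathrm{Cl}(X)$ to carry torsion accounts exactly for the quotient singularities of $X$ while leaving all the convex-geometric statements valid over $\mathbb{Q}$. Once this dictionary is in place, each of the three conclusions is a one-line consequence of the Hilbert--Mumford criterion together with the elementary observation that a simplicial fan of dimension $d$ has no cone with more than $d$ rays.
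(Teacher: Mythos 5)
The paper offers no proof of this lemma, deferring entirely to \cite{MV} (Lemmas 2.12 and 4.9); the surrounding text identifies the mechanism as Cox's GIT presentation of $X$ together with the Hilbert--Mumford criterion recognizing $\mathcal V$ as the semistable locus, which is exactly the route you take. Your chain --- semistability of $z$ $\iff$ $\theta\in\mathrm{Cone}(\widetilde\Lambda_j:j\in I_z)$ $\iff$ $0\in\mathcal H(\Lambda_{I_z})$, giving $U=\mathcal S_{\Lam}$ and hence $\mathcal V(\Lam)=X_{ss}$; the simplicial bound $|\sigma(1)|\le\dim X=n-2m-1<n-2m$ giving weak hyperbolicity; the zero cone giving the Siegel condition; and surjectivity of the class map $\Z^n\to\mathrm{Cl}(X)$ giving condition ({\bf K}) --- is correct and is essentially the argument of the cited reference.
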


One can show without difficulty that the variety $X$ obtained by Cox construction is the generalized Calabi-Eckmann fibration associated to $N_{\Lam}$ 
via the commutative diagram \ref{diagramCE}.

\noi Therefore one the following theorem which is the reciprocal of theorem \ref{theorem-generalized-CE}:

\begin{theorem}
 Let $X$  be a projective, quasi-regular, toric variety. Then there exists $m>0$ and a manifold $N$ corresponding to an admissible configuration which
 admits a generalized Calabi-Eckmann over $X$ and whose fibres are  complex torii of complex dimension $m$.

Furthermore, if $X$ is nonsingular (smooth), one can choose $m$ and $N$ such that the fibration is a holomorphic principal fibration.
\end{theorem}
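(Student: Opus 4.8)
The proof reverses the construction of the previous section. The plan is: from a Cox presentation of $X$, manufacture an admissible configuration $\Lam$ satisfying condition ({\bf K}) of definition \ref{System(K)} whose associated open set $\mathcal{V}(\Lam)$ coincides with Cox's semistable locus; then the generalized Calabi--Eckmann fibration of Theorem \ref{theorem-generalized-CE} applied to $N_\Lam$ will be the desired fibration over $X$. Since $X$ is projective with at most quotient singularities, its fan $\Delta$ is simplicial and polytopal, so by Cox's construction \cite{Co} one may write $X$ as a \emph{geometric} quotient $X=X_{ss}/(\C^*)^{q}$, where $X_{ss}\subset\CP^{n-1}$ is the semistable locus of an algebraic torus action of the form \ref{algebraic-action}, $n$ being the number of homogeneous coordinates; geometricity (an honest orbit space, not a quotient identifying orbit closures) is exactly the quotient-singularity hypothesis on $X$. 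Adjoining dummy homogeneous coordinates if necessary --- each carrying a new $\C^*$-factor scaling only that coordinate, which alters neither $X$ nor the geometricity of the quotient --- we may assume $q=2m$ with $m>0$, and we split $(\C^*)^{2m}=(\C^*)^m\times(\C^*)^m$. Write $a_j,b_j\in\Z^m$ for the weights of the $j$-th coordinate under the first, respectively the second, factor, $1\le j\le n$.

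Set $\Lam=(\Lambda_1,\dots,\Lambda_n)$ with $\Lambda_j=a_j+ib_j\in\C^m$, as in \ref{realization-Lambda}. By \cite[Lemmas 2.12 and 4.9]{MV}, $\Lam$ is admissible, satisfies condition ({\bf K}), and $\mathcal{V}(\Lam)=X_{ss}$. Hence $N:=N_\Lam$ is a compact complex manifold of dimension $n-m-1$ carrying its canonical foliation $\mathcal{G}=\mathcal{G}_\Lam$ (definition \ref{canonical-foliation}).

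Because $\Lam$ satisfies ({\bf K}), Theorem \ref{theorem-generalized-CE} applies: the leaves of $\mathcal{G}$ are compact complex $m$-tori, and $N$ is a holomorphic Seifert principal bundle over the projective quasi-regular toric variety $N/\mathcal{G}$, of dimension $n-2m-1=\dim X$. That $N/\mathcal{G}$ is $X$ itself is the content of diagram \ref{diagramCE}: the $\C^m$-action defining $\mathcal{G}$ on $\mathcal{V}(\Lam)$ is the restriction of the action \ref{algebraic-action} along the inclusion $T\mapsto(\exp T,\exp iT)$ --- a short computation with $\langle\Lambda_j,T\rangle=\langle a_j,T\rangle+i\langle b_j,T\rangle$ shows it is precisely the action \ref{actionS} --- so quotienting $\mathcal{V}(\Lam)=X_{ss}$ by this $\C^m$ yields $N$, while quotienting by the whole $(\C^*)^{2m}$ yields $X$. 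The fibre of $p:N\to X$ is therefore the quotient of the relevant $(\C^*)^m$-stabilizer by the image of $\C^m$, a compact complex $m$-torus by the computation of the toy example \ref{Hopf-surfaces} (where ({\bf K}) identifies the lattices). This proves the first assertion. If moreover $X$ is smooth, its fan is regular, so Cox's torus acts \emph{freely} on $X_{ss}$; in the notation of the corollary above on exceptional orbits this forces $L_0=L_{(J_z)^c}$ for every $z\in\mathcal{S}$, whence $\mathcal{G}$ has no exceptional orbit and every fibre of $p$ is one fixed complex torus $\C^m/\Gamma$, $\Gamma\cong\Z^{2m}$. Thus $p:N\to X$ is a holomorphic principal $(\C^m/\Gamma)$-bundle.

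Modulo the cited lemmas, the argument above is a routine assembly; the genuine obstacle is exactly what \cite[Lemmas 2.12 and 4.9]{MV} supply, namely that the configuration $\Lam$ built from the combinatorial weight data is admissible and satisfies ({\bf K}) (the Siegel and weak-hyperbolicity conditions, and the rationality of the solution space of system \ref{System(S)}, extracted from the geometry of the polytopal simplicial fan), and that the combinatorially defined Cox locus $X_{ss}$ equals the convex-hull-defined open set $\mathcal{V}(\Lam)$. Granting these, the theorem follows.
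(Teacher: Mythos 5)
Your proof follows essentially the same route as the paper: run Cox's construction to write $X=X_{ss}/(\C^*)^{2m}$ (padding with dummy coordinates to force the torus rank even), set $\Lambda_j=a_j+ib_j$ from the weights, invoke \cite[Lemmas 2.12 and 4.9]{MV} for admissibility, condition ({\bf K}), and $\mathcal{V}(\Lam)=X_{ss}$, and identify the resulting Calabi--Eckmann base with $X$ via the commutative diagram. You go a bit further than the paper's sketch in spelling out the smooth case (free torus action on $X_{ss}$ implies no exceptional orbits, hence a genuine principal bundle), which the paper leaves implicit; the only small blemish is the phrase ``quotient of the relevant $(\C^*)^m$-stabilizer by the image of $\C^m$'' --- the fibre is the quotient of the $(\C^*)^{2m}$-orbit (not a stabilizer) by the one-parameter $\C^m$, but the conclusion that it is a compact complex $m$-torus is correct.
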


\begin{remark}\textcolor{blue}{The previous theorem motivated a possible definition of {non-commutative toric varieties} and its deformations
(usual toric varieties are rigid). See \cite{BaZ, KLMV}.}
\end{remark}

\begin{example} {\bf(\cite{MV}, Proposition I.)} ({\bf Hirzebruch surfaces})
Let $a\in\mathbb N$. Then the manifold $N_{\Lam}$ associated to the admissible configuration
$\Lam=\{\Lambda_1,\Lambda_2,\Lambda_3,\Lambda_4,\Lambda_5\}$ with
$$
\Lambda_1=\Lambda_4=1 \qquad\Lambda_2=i 
\qquad \Lambda_3=(2a^2+3a)+i(2a+1) \qquad \Lambda_5=-2(a+1)-2i
$$
is diffeomorphic to $\s^3\times\s^3$ and it is a principal fibration in elliptic curves over the
Hirzebruch surface $\mathbb F_a$.
\end{example}

\medskip

The existence of such manifold was noticed by H. Maeda dans \cite{Ma}.

\subsection{Examples}
 In this subection, we shall use the following facts (which are proven in \cite{LdMV}). Let $\Lam=(\lambda_1,\hdots,\lambda_5)$ be an admissible
configuration with $m=1$ and $n=5$. Then, the classification of $N_{\Lam}$ {\it up to diffeomorphism} is completely determined by $k$, the
number of indispensable points. We have

\[
k=0 \iff N \text{ is the quotient of }\gato(5)(\s^3\times\s^4)\text{ by a non-trivial action of }\s^1
\]
\[ 
k=1 \iff N \text{ is diffeomorphic to } \s^3\times \s^3
\]
\[
k=2 \iff N \text{ is diffeomorphic to }(\s^5\times\s^1)
\]
where $\gato(5)\s^3\times \s^4$ denotes the connected sum of five copies of $\s^3\times \s^4$. In all of the following examples, we shall give
the fans in $\Bbb R^2$ with canonical basis $(e_1,e_2)$ and lattice $\Z^2$, or in $\R$ with canonical basis $e_1$ and lattice 
$\Z$. 

\medskip
\begin{remark}\noi \textcolor{blue}{In the examples that follow we also use the very technical fact, 
proven in \cite{MV}, that given the fan $\Delta$ of a toric variety with quotient singularities one can recover the configuration 
$\Lam$ satisfying condition ({\bf K}) of definition \ref{System(K)}. 
In particular one can recover from the fan the number of equations $m$ and the dimension $n$ to have an admissible action of $C^m$ on $C^n$.}.
\end{remark}

\begin{example}

Consider the complete fan $\Delta$ 
generated
by
$$
w_1=e_1 \qquad w_2=e_2 \qquad w_3=-e_1-e_2 
$$
of the complex projective space $\Bbb CP^2$. There is a unique class of K\"ahler classes 
(in the sense of Theorem G in \cite{MV}), which is 
that of the (Chern class) of the
anti-canonical divisor. Up to scaling
and up to translation, the polytope $Q$ is defined as
$$
Q=\{u\in\R^2\quad\vert\quad \langle w_1,u\rangle \geq -1,\ \langle w_2,u\rangle \geq -1,\ \langle w_3,u\rangle \geq -1\}\ .
$$

By the methods of \cite{MV} (\ref{realization-Lambda}) we can recover the configuration:
$$
\lambda_1=\lambda_2=\lambda_3=1 \qquad \lambda_4=-3+i \qquad \lambda_5=-i\ .
$$
It is easy to check that this configuration has two indispensable points ($\lambda_4$ and $\lambda_5$) and thus the manifold $N$
is diffeomorphic to $\s^5\times\s^1$. We obtain finally the well-known (holomorphic) fibration $\s^5\times\s^1\to \CP^2$ 
and the pre-symplectic form
of $N$ scaled by $5$ projects onto the (Chern class) of the anti-canonical divisor.

Notice that we may easily compute the modulus of the fiber in this case. From Corollary B in \cite{MV}, we obtain the lattice and we obtain: 
$$
L_0^*=\dfrac{1}{5}\text{Vect}_{\Z} (-4+i,1+i)
$$
and this modulus is 
equal to
$$
\dfrac{-4+i}{1+i}=\dfrac{-3}{2}+\dfrac{5}{2}i\ .
$$
It is known \cite{CE} (see also \cite{LN}) that, for any choice of a modulus $\tau$, there exists a complex structure on $\s^5\times \s^1$ such that it fibers 
in elliptic curves of modulus $\tau$
over the complex projective space. 

Fix $\tau=\alpha+i\beta$ with $\beta > 0$. A straightforward computation shows that the admissible configuration
\[
\begin{pmatrix}
    \Re\lambda'_i  \\
  \\
    
   \Im \lambda'_i    
  \end{pmatrix}
\bigskip=
\begin{pmatrix}
-\beta &\beta \\
1+\alpha &4-\alpha
\end{pmatrix}.
\begin{pmatrix}
\Re\lambda_i\\ 
\Im \lambda_i
\end{pmatrix}
\eqno 1\leq i\leq 5
\]

\noi determines a complex threefold diffeomorphic to $\s^5\times\s^1$ which fibers over $\CP^2$ with fiber an elliptic curve of 
modulus $\tau$.
\end{example}  

\begin{example}
Consider the complete fan $\Delta$ generated by
$$
w_1=e_1 \qquad w_2=e_2 \qquad w_3=-e_1 \qquad w_4=-e_2
$$
of $\Bbb CP^1\times\Bbb CP^1$. The K\"ahler classes are
$$
D_{\alpha,\beta}=\alpha(D_1+D_3)+\beta(D_2+D_4)
\eqno \alpha>0,\ \beta>0
$$
corresponding to the rectangles
$$
Q_{\alpha,\beta}=\{(u_1,u_2)\in\Bbb R^2 \quad\vert\quad -\alpha\leq u_1\leq \alpha,\ -\beta\leq u_2\leq \beta\}\ .
$$
We get the corresponding configuration:		
$$
\lambda_1=\lambda_3=1\qquad \lambda_2=\lambda_4=i \qquad \lambda_5=-2\alpha-2i\beta .
$$

\noi The corresponding manifold $N_{\alpha ,\beta}$ is diffeomorphic to $\s^3\times \s^3$ and we find the Calabi-Eckmann fibration 
$\s^3\times \s^3\to \CP^1\times \CP^1$. The pre-symplectic form of $N_{\alpha ,\beta}$ projects onto a representative of
the class of
$D_{\alpha,\beta}$ (up to scaling). 

Fix $\alpha$ and $\beta$. As in Example 5, for every choice of $\tau\in\C$ with 
Im $\tau>0$, there exists a matrix $A$ of $\rm{GL}_2(\R)$ such that the product of the previous configuration by $A$ determines
a LVM manifold diffeomorphic to $\s^3\times \s^3$ which fibers over the product of projective lines with an elliptic curve of modulus 
$\tau$ as fiber (compare with \cite{CE}).

Alternatively, we may start from 
$$
\lambda_1=\lambda_3=1\qquad \lambda_2=\lambda_4=i \qquad \lambda_5=-1-i .
$$
which is an admissible configuration such that the class of $\tilde\omega$ is $D_{1,1}$ (up to scaling) and perform a translation on $(\lambda_1,\hdots,\lambda_5)$
to have another K\"ahler ray associated to $\tilde\omega$ (see Remarks 4.11 and 4.12 in \cite{MV}).

More precisely, assume that $\alpha <1$ and $\beta <1$ and let 
$$
b=\dfrac{1-2\alpha}{2\alpha+2\beta+1}+i\dfrac{1-2\beta}{2\alpha+2\beta+1}
$$
then the class of $\tilde\omega$ of $N((\lambda_1,\hdots,\lambda_5)+b)$ is $D_{\alpha,\beta}$.
\end{example}

\begin{example}
Consider the fan of $\CP^1$:
$$
w_1=e_1 \qquad w_2=-e_1
$$

There is a unique K\"ahler ray, that of $D=D_1+D_2$. We choose $p_1=p$ and $p_2=q$ for $p$ and $q$ strictly positive integers, that is we want to recover all
possible codimension one equivariant orbifold singularities on $\CP^1$. We take $n=4$ and $m=1$ and choose

One can show that that configuration is:

$$
\lambda_1=\dfrac{-3}{2p} +i\dfrac{1-2q}{2p} \qquad \lambda_2=\dfrac{-1}{2q}-i\dfrac{2p+1}{2q} \qquad \lambda_3=1 \qquad \lambda_4=i\ .
$$

It is easy to check that the corresponding manifold $N_{p,q}$ is diffeomorphic to $\s^3\times \s^1$ . It is the total space
of a generalized Calabi-Eckmann fibration over $\CP^1$ with at most two singular points of order $p$ and $q$.

Notice that if $p$ and $q$ are not coprime, then the orbifold structure cannot be obtained as a weighted projective space, i.e. cannot be obtained as quotient of 
$\C^2\setminus \{(0,0)\}$ by an
algebraic action of $\C^*$.
\end{example}

\begin{example}
Consider the complete fan $\Delta$ 
generated
by
$$
w_1=e_1, \qquad w_2=e_2, \qquad w_3=-e_1, \qquad w_4=-e_1-e_2, \qquad w_5=-e_2
$$
of the toric del Pezzo surface $X$ obtained as the equivariant blowing-up of $\CP^2$ in two
points. Define $Q$ as the pentagon associated to the anti-canonical divisor of $X$ (which is ample for $X$ is del Pezzo)
$$
Q=\{u\in\Bbb R^2 \quad\vert\quad \langle w_1,u\rangle \geq -1,\hdots,\langle w_5,u\rangle \geq
-1\}\ .
$$
We want to construct a generalized Calabi-Eckmann fibration with elliptic curves as fibers. This implies that we must take $m=1$ and $n=5$, so we cannot add any 
indispensable point. As the sum of the $w_i$ is not zero, we cannot take at the same time $p_1=\hdots=p_5=1$. In other words, there does not exist any {\it non singular}
generalized Calabi-Eckmann fibration over $X$ with elliptic curves as fibers. However, if we allow exceptional fibers, the construction is possible keeping $m=1$. For
example, take
$$
p_1=2 \qquad p_2=2 \qquad p_3=1 \qquad p_4=1 \qquad p_5=1\ 
.
$$
This gives

\[
v_1=2e_1, \quad v_2=2e_2, \quad v_3=-e_1, \quad v_4=-e_1-e_2,\quad v_5=-e_2,  \\
\quad\epsilon_1=\epsilon_2=\dfrac{2}{7}, \qquad \epsilon_3=\epsilon_4=\epsilon_5=\dfrac{1}{7}
\]

Notice that $\mathcal L$ is $\Z^2$.
Taking a linear Gale transform of this, we obtain
$$
\lambda_1=1, \qquad \lambda_2=i, \qquad \lambda_3=-2-4i, \qquad \lambda_4=4+4i \qquad
\lambda_5=-4-2i\ .
$$

This gives a fibration in elliptic curves $N\to X$ where $N$ is the quotient of 
the
differentiable manifold $\gato(5) \s^3\times \s^4$ by a non trivial action of 
$\s^1$. The orbifold structure on $X$ has two codimension one singular sets of index $2$ and the form $7\omega$ projects onto a representant of the Chern class of $D$.
\end{example}

\begin{example}
We consider the same toric variety $X$ and the same polytope $Q$ as in Example 8, but this time we want $p_1=\hdots=p_5=1$, i.e. we want a non singular fibration.
We are thus forced to increase $m$ by one and take $m=2$ and $n=7$, which gives us two additional indispensable points.
We take

\[
v_1=e_1, \, v_2=e_2, \, v_3=-e_1, \, v_4=-e_1-e_2, 
\]
\[
v_5=-e_2 ,\, v_6=-v_1-v_2-v_3-v_4-v_5=e_1+e_2 ,\quad v_7=0
\]

and 
$$
\epsilon_1=\hdots=\epsilon_5=\epsilon_7=\dfrac{1}{9},\qquad \epsilon_6=\dfrac{1}{3}
$$ 
Notice that $\epsilon_6$ is chosen so that $v_6/\epsilon_6$ lies in the interior of ${\mathcal H}(v_1/\epsilon_1,\hdots, v_5/\epsilon_5)$.
Making all the computations, we find that
$$
(\Lambda_1,\hdots,\Lambda_7)=\begin{pmatrix}
1 &i &0 &0 &-1+i &-1 &3-2i \\
0 &0 &1 &i &1 &1+i &-5-4i
\end{pmatrix}
$$
defines a LVM manifold diffeomorphic to $(\gato(5) \s^3\times \s^4)\times \s^1$ 
(by application of Theorem 12 of \cite{Me}) which is the total space of a non singular principal
holomorphic fibration over $X$ with complex tori of dimension $2$ as fibers. The form $9\omega$ projects onto 
the anti-canonical divisor of $X$.
\end{example}

\begin{example}
Let $a\in{\mathbb N}$ and consider the complete fan $\Delta$ 
generated
by
$$
w_1=e_1 \qquad w_2=e_2 \qquad w_3=-e_2 \qquad w_4=-e_1+ae_2 
$$
of the Hirzebruch surface $F_a$.

Let $D= D_1+D_2+D_3+(a+1)D_4$. The divisor $D$ is ample (see \cite{Fu}, p.70). We take $v_i=w_i$ for $1\leq i \leq 4$ and add the vertex $v_5=-v_1-\hdots-v_4=-ae_2$. We have
$m=1$ and $n=5$. We choose
$$
\epsilon_1=\dfrac{1}{2a+5}\qquad \epsilon_2=\dfrac{1}{2a+5} \qquad \epsilon_3=\dfrac{1}{2a+5} \qquad \epsilon_4=\dfrac{a+1}{2a+5}\qquad \epsilon_5=\dfrac{a+1}{2a+5}
$$
Taking a linear Gale transform of this, we obtain
$$
\lambda_1=\lambda_4=1 \qquad \lambda_2=i \qquad \lambda_3=(2a^2+3a)+i(2a+1) \qquad \lambda_5=-2(a+1)-2i
$$
with only one indispensable point $\lambda_5$. We thus have the following proposition.
\end{example}

\begin{proposition}
Let $a\in\mathbb N$. Consider the admissible configuration
$$
\lambda_1=\lambda_4=1 \qquad \lambda_2=i \qquad \lambda_3=(2a^2+3a)+i(2a+1) \qquad \lambda_5=-2(a+1)-2i
$$
\noi Then, the corresponding LVM manifold $N_a$ is diffeomorphic to
$\s^3\times \s^3$ and is a principal fiber bundle in elliptic curves over the Hirzebruch
surface $F_a$ (where $F_0$ is $\Bbb CP^1\times\CP^1$). The scaling of the canonical Euler form of the bundle $M_1\to N_a$ by $2a+5$ projects onto a representant of
the Chern class of the ample divisor $D= D_1+D_2+D_3+(a+1)D_4$ on $F_a$.
\end{proposition}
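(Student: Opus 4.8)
The plan is to read this statement off from the preceding example together with Theorem \ref{theorem-generalized-CE}; only three things genuinely need to be supplied: the diffeomorphism type of $N_a$, the fact that the Seifert fibration is a \emph{principal} elliptic bundle, and the normalization constant $2a+5$. Indeed, the configuration written here is exactly the one obtained in the previous example by applying the Cox--Gale construction of \cite{MV} (Lemmas 2.12 and 4.9) to the complete smooth fan of $F_a$ with ray generators $w_1,\dots,w_4$, auxiliary vertex $v_5=-(w_1+\cdots+w_4)=-a\,e_2$, and weights $\epsilon_1=\epsilon_2=\epsilon_3=\tfrac1{2a+5}$, $\epsilon_4=\epsilon_5=\tfrac{a+1}{2a+5}$ attached to the ample divisor $D=D_1+D_2+D_3+(a+1)D_4$. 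By that lemma $\Lambda$ is automatically admissible and satisfies condition ({\bf K}) of Definition \ref{System(K)}, and $\mathcal V(\Lambda)$ coincides with Cox's semistable locus $X_{ss}$ for $F_a$, so no separate verification of admissibility is needed.

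For the diffeomorphism type I would compute the number $k$ of indispensable points (Definition \ref{indispensable}). For $a\ge1$ the four distinct values are $1$, $i$, $(2a^2+3a)+i(2a+1)$ --- the first three all lying in the closed first quadrant --- and $-2(a+1)-2i$. Deleting the index $5$ leaves three points of the closed first quadrant not all on a common coordinate axis, so $0$ drops out of their convex hull and the Siegel condition fails; hence $\lambda_5$ is indispensable. Deleting $1$ or $4$ does not change the set of values (as $\lambda_1=\lambda_4=1$); deleting $2$ one has $0=\tfrac2{2a+5}\lambda_1+\tfrac2{2a+5}\lambda_3+\tfrac{2a+1}{2a+5}\lambda_5$, and deleting $3$ one has $0=\tfrac{2(a+1)}{2a+5}\lambda_1+\tfrac2{2a+5}\lambda_2+\tfrac1{2a+5}\lambda_5$; since weak hyperbolicity passes to subconfigurations, each of these is still admissible, so none of $1,2,3,4$ is indispensable. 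Thus $k=1$, and by the classification of LVM manifolds with $m=1$, $n=5$ in terms of the number of indispensable points recalled above (proven in \cite{LdMV}, the case $k=1$), $N_a$ is diffeomorphic to $\mathbb S^3\times\mathbb S^3$. When $a=0$ one has $\lambda_2=\lambda_3=i$ and the configuration is a triangle with multiplicities $(2,2,1)$, which is the Calabi--Eckmann situation over $\mathbb{CP}^1\times\mathbb{CP}^1=F_0$ treated in the earlier example.

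Finally I would invoke Theorem \ref{theorem-generalized-CE} (in its reverse, reconstruction form) to conclude that $\mathcal G_\Lambda$ foliates $N_a$ by compact complex tori of dimension $m=1$, that $N_a/\mathcal G_\Lambda$ is the projective toric variety with fan $\Delta$ --- here $F_a$ --- that $\pi:N_a\to F_a$ is a holomorphic Seifert bundle, and that the canonical Euler form $\omega$ of $M_1(\Lambda)\to N_a$ (Definition \ref{Euler_class}) descends to a K\"ahler form $\tilde\omega$ on $F_a$ with $\pi^*\tilde\omega=\omega$. To promote $\pi$ to an honest principal elliptic bundle I would use the description of the isotropy lattices (the corollary stating that the orbit through $z$ is exceptional iff $L_0\subsetneq L_{(J_z)^c}$): one must check $L_{(J_z)^c}=L_0$ for every $z\in\mathcal S$, which here reduces to the assertion that $w_1,\dots,w_4$ together with $v_5=-a\,e_2=a\,w_3$ span $\mathbb Z^2$ over every Siegel face of the configuration. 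This is the step I expect to be the main obstacle, since it is precisely the condition separating a principal bundle from a genuine Seifert fibration with exceptional fibres (compare the del Pezzo example with $p_1=p_2=2$, which is not principal). For the normalization, the weights satisfy $\epsilon_i=a_i/(2a+5)$ with $(a_1,\dots,a_5)=(1,1,1,a+1,a+1)$ and $\sum_i a_i=2a+5$, so by the identification of K\"ahler rays with polytope data in \cite{MV} (Theorem G) the class $[\tilde\omega]$ equals $\tfrac1{2a+5}$ times the class associated to $D$; equivalently $(2a+5)[\tilde\omega]=c_1(\mathcal O_{F_a}(D))$ with $D=D_1+D_2+D_3+(a+1)D_4$, and since $\pi^*\tilde\omega=\omega$ this is exactly the claim that the canonical Euler form of $M_1\to N_a$ scaled by $2a+5$ projects onto a representative of $c_1(D)$.
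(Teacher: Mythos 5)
Your proposal matches the paper's own treatment: the proposition is stated as an immediate consequence of the Gale-transform computation carried out in the preceding example, which produces precisely this configuration and records that $\lambda_5$ is the unique indispensable point, after which one invokes the $(m,n)=(1,5)$ classification by indispensable points for the diffeomorphism type $\mathbb S^3\times\mathbb S^3$ and the reconstruction form of Theorem~\ref{theorem-generalized-CE} (with the weight data $\epsilon_i=a_i/(2a+5)$) for the fibration over $F_a$ and the $(2a+5)$-normalization of the Euler form. Your explicit convex combinations verifying $k=1$, and your caution about checking that the Seifert fibration has no exceptional fibers so that it is genuinely principal, are both well-placed, though the paper also leaves that last point implicit, deferring it to the general lemmas and Theorem~G of \cite{MV}.
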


\begin{remark}
The preceding example shows that, for special complex structures of Calabi-Eckmann type on $\s^3\times\s^3$, there
exists holomorphic principal actions of an elliptic curve whose quotient may be topologically different (since the Hirzebruch
surfaces $F_{2n}$ are all diffeomorphic to $\s^2\times\s^2$ whereas the Hirzebruch surfaces $F_{2n+1}$ are all diffeomorphic
to the non-trivial $\s^2$-bundle over $\s^2$, and these two manifolds have different intersection form).
\end{remark}

\section{From polytopes to quadrics}

This section and the following rely on the paper by T. Panov \cite{PanovKAIST} and we also recommend \cite{BP} for this section. Let $\R^n$ be given the standard inner product $\langle\cdot,\cdot\rangle$ and 
consider convex polyhedrons $P$ defined as intersections of  $m$ closed half-spaces:
$$
\Pi_{(a_i,b_i)}=\{x\in\R^n \,\,\,|\,\,\, \langle a_i, x\rangle+b_i\geq0\}, \quad for \quad i = 1, \dots ,m
$$	
with $a_i\in\R^n$,  $b_i\in\R$. Assume that the hyperplanes defined by the equations 
 $\langle a_i, x\rangle+b_i=0$ are in general position, i. e. at least $n$ of them meet at a single point.
Assume further that $dim\, P = n$ and $P$ is bounded (which implies that $m > n$).
Then $P$ is an $n$-dimensional  compact simple polytope. Set
$$
F_i =\{x \in P : \langle a_i, x\rangle+b_i=0\}\quad\quad (F \,\,\rm{for\,\,\, facet}).
$$
Since the hyperplanes are in general position $F_i$ is either empty or a facet of $P$. If it is empty the linear equation is redundant and we can remove the corresponding inequality without changing $P$.

Let $A_P$ be the $m\times n$ matrix of row vectors $a_i$, and $b_P$ be the column $m$-vector of
scalars $b_i\in\R$ ($i\in\{1,\cdots,m\}$). Then we can write:
$$
P = \{x \in \R^n\, : A_P x + b_P \geq 0\}
$$
and consider the affine map
$$
i_P : \R^n \to \R^m, 
$$
$$ 
i_P (x) = A_P x + b_P.
$$
It embeds $P$ into the first orthant       
$$
\R^m_{\geq0}=\{(y_1,\cdots,y_m) \in \R^m\,\,\,\,|\,\,\,\, y_i > 0, \quad i\in\{1,\hdots,m\}\}.
$$
We identify $\C^m$ (as a real vector space) with $\R^{2m}$ as usual using the map

\noi $ z = (z_1, \hdots, z_m)\mapsto (x_1, y_1, \hdots , x_m, y_m)$,
where $z_k = x_k + iy_k$ for $k = 1, \hdots ,m.$

\noi Consider the following commutative diagram where $\mathcal Z_P$ its obtained by pull-back and 

\noi $\mu:\C^m\to\R^m_{\geq0}$ is given by $\mu(z_1,\hdots,z_m)=(|z_1|,\hdots,|z_m|)$:
\begin{center}
\[
\begin{tikzcd}
\mathcal Z_P\arrow{r}{i_P^*}\arrow{d}[swap]{\pi} & \C^m\arrow{d}{\mu} \\
P\arrow{r}[swap]{i_P} & \R^m_{\geq0} \\
\end{tikzcd}
\]
\end{center}

The map $\mu$ may be thought of as the
quotient map for the coordinatewise action of the standard torus
$$
\mathbb T^m = \{(z_1,\hdots,z_m) \in \C^m: |z_i| = 1 \,\,for\,\, 1\leq{i}\leq{m}\}
$$ on $\C^m$. 

\medskip
    
\noi Therefore, $\mathbb T^m$ acts on $\mathcal Z_P$ with quotient $P$, and $i_{P}^*$ is a $\mathbb T^m$-equivariant
embedding.

\noi The image of $\R^n$ under $i_P$ is an $n$-dimensional affine plane in $\R^m$, which can be
written as
\[
i_P(\R^n) = \{y \in \R^m\,: y = A_P(x) + b_P \, \,for \,\,some\,\, x \in \R^n\}= 
\]

\[
=\{y \in\R^m \, : \,\Gamma y = \Gamma b_P\},
\]
where $\Gamma = ((\gamma_{jk}))$ is an $(m-n)\times m$ matrix whose rows form a basis of linear relations
between the vectors $a_i$. That is, $\Gamma$ is of full rank and satisfies the identity $\Gamma A_P = 0$.

Then we obtain that $\mathcal Z_P$ embeds into $\C^m$ as the set of common zeros of
$m-n$ real quadratic equations:

\[
i_P^*(\mathcal Z_P ) =
\left\{z \in \C^m\,\quad|\quad
\overset{m}{\underset{k=1}\sum}
\gamma_{jk}|z_k|^2 =
\overset{m}{\underset{k=1}\sum}
\gamma_{jk}b_k,\, for\, 1\leq{j}\leq{m-n}\right\} \tag{\bf Quadratic $\star$}
\]
The following properties of $\mathcal Z_P$ easily follow from its construction.
\begin{enumerate}
\item Given a point $z\in\mathcal Z_P$ , the $i^{th}$ coordinate of $i_P^*(z)\in \C^m$
vanishes if and only if $z$ projects onto a point $x\in P$ such that $x\in F_i$ for some facet $F_i$.

\item Adding a redundant inequality to  results in multiplying $\mathcal Z_P$ by a circle.

\item  $\mathcal Z_P$ is a smooth manifold of dimension $m + n$.
The embedding $i_P^* : \mathcal Z_P\to \C^m$ has $\mathbb T^m$-equivariantly trivial normal bundle.
\end{enumerate}

\section{From quadrics to polytopes (Associated Polytope of LVM manifolds)}

Let $N$ and
$$
M_{1}=\{z\in{\mathbb C}^n\quad\vert\quad \sum_{i=1}^{n}\Lambda_{i}\vert z_{i}\vert^{2}=0,\ \sum_{i=1}^{n}\vert z_{i}\vert ^{2}=1\}
$$
be as before in definition  \ref{moment_angle}.

\noi Let us remark that the standard action of the torus $({\mathbb S}^{1})^{n}$ on ${\mathbb C}^{n}$

\[
((\exp i\theta_1,\cdots, \exp i\theta_n) ,z)\longmapsto (\exp i\theta_{1}\cdot z_{1},\hdots,\exp i\theta_{n}\cdot z_{n})\quad
\tag{\textcolor{blue}{$\star\star$}}   \label{torus-action}
\]

leaves $M_{1}$ invariant. The quotient of $M_{1}$ by this action can be identified, via the diffeomorphism 
$r\in\mathbb{R}_{>0}^{+}\to r^{2}\in\mathbb{R}_{>0}^+$, to
\[\label{associated-polytope}
K=\{r\in (\mathbb R^{+})^{n}\quad\vert\quad \sum_{i=1}^{n}r_{i}\Lambda_{i}=0,\ \sum_{i=1}^{n}r_{i}=1\} 
\tag{\textcolor{blue}{P}}
\]

\begin{lemma}
The quotient $K$ is a convex polytope of dimension $n-2m-1$ with $n-k$ facets.
\end{lemma}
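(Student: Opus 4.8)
The plan is to describe $K$ as the intersection of the closed positive orthant of $\mathbb R^n$ (that is how I read $(\mathbb R^{+})^{n}$ here) with the affine subspace $L=\{r\in\mathbb R^n\,:\,\sum_i r_i\Lambda_i=0,\ \sum_i r_i=1\}$, and then extract its dimension and facets directly from admissibility. That $K$ is a convex polytope is immediate: it sits inside the standard simplex $\{\sum_i r_i=1,\ r_i\ge 0\}$ and is obtained from it by imposing finitely many more affine equations, so it is a bounded intersection of affine half-spaces and hyperplanes. The subspace $L$ is cut out by the $2m+1$ real linear equations whose coefficient matrix has columns $(\operatorname{Re}\Lambda_i,\operatorname{Im}\Lambda_i,1)$; this is precisely the real form of the matrix of system \eqref{System(S)}. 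By the regularity statement following Definition \ref{admissible}, applied to $J=\{1,\dots,n\}$ — equivalently, because weak hyperbolicity forces $\Lam$ to affinely generate $\mathbb C^m\cong\mathbb R^{2m}$ (\cite{MV}, Lemma 1.1) — this matrix has rank $2m+1$, so $\dim L=n-2m-1$; and $L\ne\emptyset$ by the Siegel condition, whence $K\ne\emptyset$ and $\dim K\le n-2m-1$.

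Next I would upgrade this to $\dim K=n-2m-1$ by producing a point of $K$ with all coordinates strictly positive (such a point has a full $L$-neighbourhood inside $K$, so the affine hull of $K$ is all of $L$). By Carath\'eodory applied to $0\in\mathcal H(\Lam)$ in $\mathbb R^{2m}$, together with (WH) — which prevents $0$ from lying in the hull of any $2m$ of the $\Lambda_i$, hence in any facet of such a simplex — the origin lies in the interior of the $2m$-simplex spanned by some affinely independent subset $S$ of $2m+1$ of the $\Lambda_i$; thus $0=\sum_{i\in S}r_i\Lambda_i$ with all these $r_i>0$ and $\sum_{i\in S}r_i=1$. Expressing each remaining $\Lambda_j$ in affine coordinates over $S$ and transferring a small positive amount of mass onto the $j$-th coordinate then yields, for small enough transfers, a point of $K$ with every coordinate positive. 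For the facets, every facet of $K$ lies on one of the bounding hyperplanes $\{r_i=0\}$, hence equals $F_i:=K\cap\{r_i=0\}$ for some $i$; and $F_i\ne\emptyset$ precisely when $0\in\mathcal H\big((\Lambda_j)_{j\ne i}\big)$, i.e. — since (WH) passes automatically to any sub-configuration — precisely when $(\Lambda_j)_{j\ne i}$ is still admissible, i.e. when $i$ is \emph{not} indispensable. For such $i$, $F_i$ is affinely identified with the polytope $K$ of the admissible $(n-1)$-point configuration $(\Lambda_j)_{j\ne i}$, so by the first two steps $\dim F_i=(n-1)-2m-1=\dim K-1$ and $F_i$ is a genuine facet. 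There are exactly $n-k$ non-indispensable indices.

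The remaining point — the one I expect to require the most care, and where weak hyperbolicity is used in an essential, non-formal way — is that the facets $F_i$ for distinct non-indispensable $i$ are pairwise distinct, so that the count is exactly $n-k$. If $F_i=F_j$ with $i\ne j$, then the hyperplanes $\{r_i=0\}\cap L$ and $\{r_j=0\}\cap L$ of $L$ coincide, forcing the affine functions $r_i|_L$ and $r_j|_L$ to be proportional; comparing coefficients against the equations defining $L$ produces a nonzero $\alpha\in\mathbb R^{2m}$ with $\langle\alpha,\Lambda_\ell\rangle=0$ for all $\ell\ne i,j$ and $\langle\alpha,\Lambda_i\rangle,\langle\alpha,\Lambda_j\rangle$ nonzero of opposite signs. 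Applying $\langle\alpha,\cdot\rangle$ to any representation $0=\sum_{\ell\ne i}r'_\ell\Lambda_\ell$ (available since $i$ is not indispensable) forces $r'_j=0$, so $0\in\mathcal H\big((\Lambda_\ell)_{\ell\ne i,j}\big)$ with all these vectors lying on the hyperplane $\{\langle\alpha,\cdot\rangle=0\}$; Carath\'eodory inside that hyperplane then exhibits $0$ in the hull of at most $2m$ of the $\Lambda_\ell$, contradicting (WH). Hence the $n-k$ facets are distinct and the lemma follows. (The extreme case $n=2m+1$, where every point is indispensable, $k=n$, and $K$ reduces to a single point with no facets, is consistent with the statement.)
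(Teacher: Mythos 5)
Your proof is correct and follows the same overall strategy as the paper's: describe $K$ as the intersection of the nonnegative orthant with the affine subspace $L$ cut out by the real form of the system, use weak hyperbolicity to see that the defining affine system has maximal rank $2m+1$ so that $\dim L=n-2m-1$, and identify the facets with the non-indispensable indices. You are, however, more careful than the paper on two points that it glosses over: you exhibit a point of $K$ with all coordinates strictly positive (Carath\'eodory plus small mass transfers), which upgrades $\dim K\le n-2m-1$ to equality; and you explicitly check that the facets $F_i$ for distinct non-indispensable $i$ are pairwise distinct.

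That said, your distinctness argument is a detour. The machinery you already set up does the job immediately: applying your Carath\'eodory/mass-transfer step to the admissible sub-configuration $(\Lambda_j)_{j\ne i}$ produces a point $r^*\in K$ with $r^*_i=0$ and $r^*_j>0$ for every $j\ne i$; such a point lies in $F_i$ and in no other $F_j$, so the facets are automatically distinct, and in fact this is precisely the paper's implicit route (it asserts the existence of points with exactly one vanishing coordinate and reads both the facet identification and the count $n-k$ off that). Your $\alpha$-functional argument is nevertheless correct --- the subtle point that saves it is that $r_i-c\,r_j$ is linear with zero constant term, so the coefficient of the affine equation $\sum_\ell r_\ell-1$ in its decomposition must vanish, which is what gives $\langle\alpha,\Lambda_\ell\rangle=0$ for $\ell\ne i,j$ --- but the ``opposite signs'' claim for $\langle\alpha,\Lambda_i\rangle$ and $\langle\alpha,\Lambda_j\rangle$ requires $c>0$, which you do not argue, and is in any case never used: only the nonvanishing of $\langle\alpha,\Lambda_j\rangle$ enters the deduction $r'_j=0$.
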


\begin{proof}
By definition $K$ is the intersection of the space $A$ of solutions of an affine system   with the closed sets  $r_{i}\geq 0$. Each one of these closed sets 
defines an affine half-space $A\cap \{r_{i}\geq 0\}$ in the affine space $A$. In other words, $K$ is the intersection of a finite number of affine half-spaces. 
Since this intersection is bounded (since $M_{1}$ is compact), one obtains indeed a convex polytope. The weak hyperbolicity  condition implies that the  
affine system  that defines $K$ is of maximal rank. Hence, $K$ is of dimension $n-2m-1$.

Let us consider in more detail the definition of $K$. The points $r\in K$ verifying $r_{i}>0$ for all  $i$ are the points which belong to the interior of the convex polytope. They correspond to the points $z$ de $M_{1}$ which also belong to 
$(\C^{*})^{n}$, i.e. to the points of $M_{1}$ such that the orbit under the action (\ref{torus-action})
is isomorphic to  $(\s^{1})^{n}$. The points which belong to a hyperface are exactly the points $r$ of 
$K$ having all of its coordinates
{\it except one} equal to zero. They correspond to the points $z$ de $M_{1}$ which have a unique coordinate equal to zero, i.e. such that its orbit under the action (\ref{torus-action})
 is isomorphic to $(\s^{1})^{n-1}$. One obtains from the definition of $K$ that there exist points of $K$ having all coordinates different from zero
except the i$^{th}$ coordinate if and only $0$ belongs to the convex envelope of the configuration formed by the $\Lambda_{j}$ with $j$ different from $i$; hence
if and only if  $\Lambda_{i}$ is a point which can be eliminated keeping the conditions of Siegel and weak hyperbolicity. 
therefore one has $n-k$ hyperfaces. \end{proof}

 \begin{definition}\label{associated-polytope}
 One calls the convex polytope $K=K_{\Lam}$ corresponding to the admissible configuration $\Lam$ the 
 \emph{associated polytope}. The polytopes ${\mathcal H}(\Lam)$ and $K_{\Lam}$ are related by the Gale transform.
 \end{definition}
One central idea is that the topology of the manifolds $M_{1}$, and therefore of the manifolds $N$, is codified by the
combinatorial type of the polytope $K$. 
To make this idea more precise, it is  interesting  to push to the end the reasoning  involved  in the proof of the preceding lemma. One had seen that  
$$
K_i=K\cap\{r_{i}=0,\ r_{j}>0 \text{ for } \, j\not = i\} 
$$
is nonempty, and therefore is a hyperface de $K$, if and only if 
$$
0\in\mathcal H((\Lambda_{j})_{j\not =i}).
$$
Analogously,  given $I$ a subset of $\{1,\hdots,n\}$, the set
$$
K_I=K\cap\{r_{i}=0 \text{ for}\,  i\in I,\ r_{j}>0\text{ for}  \, j\not \in I\}
$$
is nonempty, and therefore it is a facet of  $K$ of codimension  equal the cardinality of $I$, if and only if 
$$
0\in\mathcal H((\Lambda_{j})_{j\not\in I})
$$

One has therefore stablished a very important correspondence between two convex polytopes: 
the polytope $K$ on one hand and the convex hull of the $\Lambda_{i}$'s on the other hand. 

This correspondence allows us to
 to prove the following result:

\begin{remark}
It follows from {\rm \cite[Lemma 1.1]{MV}} that 
\begin{equation*}
\text{\rm rank}_{\mathbb C}
\begin{pmatrix}
\Lambda_1 &\hdots &\Lambda_n\cr
1 &\hdots &1
\end{pmatrix}
=m+1.
\end{equation*}
Hence, up to a permutation, condition \eqref{firstmcondition} is always fulfilled.
\end{remark}
\begin{definition} We say that $N_{\Lam}$ and $N_{\Lam'}$ are {\it $G$-biholomorphic} if they are $(G(\Lam),G(\Lam')$-equivariantly biholomorphic.
\end{definition}

Recall that by definition \ref{LVM-definition} the manifold $N_{\Lam}$ embeds in $\mathbb P^{n-1}$ as the $C^\infty$ submanifold
\begin{equation}
\label{Nsmooth}
N =\{[z]\in\mathbb P^{n-1}\quad\vert\quad \sum_{i=1}^n\Lam\vert z_i\vert ^2=0\}.
\end{equation}

\noi It is crucial to notice that this embedding is not arbitrary but has a clear geometric meaning. Indeed, it is proven in  that action \eqref{actionLVM} induces a foliation of $\mathcal{S}_{\Lam}$; that every leaf admits a unique point closest to the origin (for the euclidean metric); and finally that $N $ is the projectivization of the set of all these minima.
This is a sort of non-algebraic Kempf-Ness Theorem. So we may say that this embedding is canonical.

\noi The maximal compact subgroup $(\s^1)^n\subset (\mathbb C^*)^n$ acts on $\mathcal{S}_{\Lam}$, and thus on $N_{\Lam}$. This action is clear on the smooth model \eqref{Nsmooth}. Notice that it reduces to a $(\s^1)^{n-1}$ since we projectivized everything.
\vspace{5pt}\\
The quotient of $N_{\Lam}$ by this action is easily seen to be a simple convex polytope of dimension $n-2m-1$, cf.  Up to scaling, it is canonically identified to 
\begin{equation}
\label{KLambda}
K_{\Lam}:=\{r\in(\mathbb R^+)^n\quad\vert\quad \sum_{i=1}^n\Lambda r_i=0,\ \sum_{i=1}^nr_i=1\}.
\end{equation}
It is important to have a description of $K_{\Lam}$ as a convex polytope in $\mathbb R^{n-2m-1}$. This can be done as follows. Take a Gale diagram of $\Lam$, that is a basis of solutions $(v_1,\hdots, v_n)$ over $\mathbb R$ of the system \ref{System(S)}:

\[
\left\{
\begin{aligned}
\sum_{i=1}^n\Lambda_ix_i&=0\cr
\sum_{i=1}^nx_i&=0
\end{aligned}
\right.\tag{\textcolor{red}{S}}
\]

Take also a point $\epsilon$ in $K_{\Lam}$. This gives a presentation of $K_{\Lam}$ as
\begin{equation}
\label{KLambdaproj}
\{x\in\mathbb R^{n-2m-1}\quad\vert\quad \langle x,v_i\rangle\geq -\epsilon_i\text{ for }i=1,\hdots, n\}
\end{equation}
This presentation is not unique. Indeed, taking into account that $K_{\Lam}$ is unique only up to scaling, we have
\begin{lemma}
\label{Kupto}
The projection \eqref{KLambdaproj} is unique up to action of the affine group of $\mathbb R^{n-2m-1}$.
\end{lemma}
On the combinatorial side, $K_{\Lam}$ has the following property. A point $r\in K_{\Lam}$ is a vertex if and only if the set $I$ of indices $i$ for which $r_i$ is zero is maximal, that is has $n-2m-1$ elements. Moreover, we have
\begin{equation}
\label{Kcombinatoire}
r\text{ is a vertex }\iff \mathcal{S}_{\Lam}\cap \{z_i=0\text{ for }i\in I\}\not =\emptyset\iff 0\in\mathcal H(\Lam_{I^c})
\end{equation}
for $I^c$ the complementary subset to $I$ in $\{1,\hdots,n\}$.  This gives a numbering of the faces of $K_{\Lam}$ by the corresponding set of indices of zero coordinates.

More precisely, we have
\begin{equation}
\label{Knumbering}
\begin{aligned}
&J\subset\{1,\hdots,n\}\text{ is a face of codimension Card }J\cr
&\iff \mathcal{S}_{\Lam}\cap \{z_i=0\text{ for }i\in J\}\not =\emptyset\iff 0\in\mathcal H(\Lam_{J^c})
\end{aligned}
\end{equation}

In particular, $K_{\Lam}$ has $n-k$ facets. Observe moreover that the action \eqref{actionLVM} fixes 

\noi $\mathcal{S}_{\Lam}\cap \{z_i=0\text{ for }i\in J\}$, hence its quotient defines a submanifold $N_J$ of $N_{\Lam}$ of codimension $\text{Card J}$.

\section{Moment-angle manifolds}

\noi We will explain the link between the moment-angle manifolds in definition \ref{moment_angle} and the manifolds studied by
 V. Buchstaber et T. Panov in \cite{BP}. Let $P$ be a  simple convex polytope with the set $\mathcal F=\{F_1,\hdots, F_n\}$ of hyperfaces (\ie codimension one faces). 
\noi Let $T_i\simeq \s^1$ for $1\leq{i}\leq{n}$ and let
 $T_{\mathcal F}=T_1\times\cdots\times{T_n}\simeq(\s^1)^n=\T^n$ be the $n$ torus with its standard group structure.
For each hyperface 
$F_i$ associate $T_i$, the circle corresponding to the $i^{\rm{th}}$ coordinate of $\T^n$.

\noi If $G$ is a face of the polytope $P$ let 
\[
T_G=\prod_{F_i \supset G}T_i\subset T_{\mathcal F}
\]

\noi For each point $q\in P$, let $G(q)$  be the unique face of $P$ which contains $q$
 in its relative interior

\begin{definition} The \emph{moment-angle complex} $\mathcal Z_P$ associated to $P$ is defined as
$$
\mathcal Z_P=(T_{\mathcal F}\times P)/\sim
$$
where the equivalence relation is:  $(t_1,p)\sim (t_2,q)$ if and only if $p=q$ and $t_1t_2^{-1}\in T_{G(q)}$.

\end{definition}

The \emph{moment-angle complex} $\mathcal Z_P$ depends only upon the combinatorial type of $P$ and
it admits a natural continuous action of  the $n$-torus $T_{\mathcal F}$ having as quotient $P$. The fact that $P$
is simple implies that  $\mathcal Z_P$ is a topological manifold (see \cite{BP}  Lemma 6.2).

\noi Consider now the moment-angle manifolds $M_1(\Lam)$ defined by formula (\ref{moment_angle})
and let $K_{\Lam}$ the associated polytope (\ref{associated-polytope}). One has the natural projection
$\Pi:M_1(\Lam)\to{K_{\Lam}}$ The faces of codimension $q$ of
 $K(\Lam)$ correspond to the orbits of the of the points of $\mathcal V$ which have some precise $q$ coordinates fixed.
 In other words the orbits above the relative interior of a codimension $q$ face are isomorphic to
 $(\s^{1})^{n-q}$. En poussant un peu plus loin cette
description, on montre le lemme suivant.

\begin{lemma}  {\bf(\cite{BM} Lemma 0.15)}.
Let $N_{\Lam}$ be an LVM manifold without indispensable points. Let $K_{\Lam}$ be its associated polytope.
Then there exists an equivariant homeomorphism between $M_1(\Lam)$ and the moment-angle variety $Z_{K_{\Lam}}$.
\end{lemma}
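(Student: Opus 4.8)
The plan is to construct the equivariant homeomorphism between $M_1(\Lam)$ and $Z_{K_{\Lam}}$ directly from the natural projection $\Pi : M_1(\Lam) \to K_{\Lam}$ and the torus action, using the explicit description of $\mathcal Z_P$ as the quotient $(T_{\mathcal F} \times P)/\sim$. First I would recall that both spaces carry an action of $\T^n = (\s^1)^n$ and both project onto the polytope $K_{\Lam}$: for $M_1(\Lam)$ the projection is $\Pi(z) = (|z_1|^2,\dots,|z_n|^2)$ (after the rescaling $r \mapsto r^2$), and its image is exactly $K_{\Lam}$ by the computation preceding the lemma in the section ``From quadrics to polytopes''; for $Z_{K_{\Lam}}$ it is the tautological quotient map. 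Since $N_{\Lam}$ has no indispensable points, $K_{\Lam}$ has $n$ facets $F_1,\dots,F_n$, one for each index $i$ (the facet $K_i = K_{\Lam}\cap\{r_i=0\}$ is nonempty precisely when $i$ is removable, and with no indispensable points every $i$ is removable), so the facet set of $K_{\Lam}$ is canonically indexed by $\{1,\dots,n\}$, which is what makes the two $\T^n$-actions comparable.

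The key step is the identification of the isotropy groups. For a point $z \in M_1(\Lam)$ with $r = \Pi(z)$, the stabilizer of $z$ under the action $(\star\star)$ is exactly the subtorus $\{(\zeta_1,\dots,\zeta_n) : \zeta_i = 1 \text{ whenever } z_i \neq 0\}$, i.e. the coordinate subtorus $T_{I_z^c}$ where $I_z = \{i : z_i \neq 0\}$. On the other side, if $q$ lies in the relative interior of the face $K_J = K_{\Lam}\cap\{r_i = 0 \text{ for } i \in J\}$, then $G(q)$ is that face and $T_{G(q)} = \prod_{F_i \supset G(q)} T_i = \prod_{i\in J} T_i = T_J$, since the facets containing $K_J$ are precisely the $F_i$ with $i \in J$ (this uses that $K_{\Lam}$ is simple, which follows from the weak hyperbolicity condition, so the codimension of a face equals the number of facets containing it and equals $\text{Card}\,J$). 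Thus in both models the isotropy at a point over $q \in \text{relint}(K_J)$ is the same coordinate subtorus $T_J$. I would then define $\Phi : Z_{K_{\Lam}} \to M_1(\Lam)$ by choosing, for each $q \in K_{\Lam}$, a preferred preimage $s(q) \in M_1(\Lam)$ (for instance the point with nonnegative real coordinates $z_i = \sqrt{r_i}$, $r_i = q_i$, which lies in $M_1(\Lam)$ since it satisfies both defining equations), and setting $\Phi([t,q]) = t\cdot s(q)$; this is well-defined precisely because $t_1 t_2^{-1} \in T_{G(q)}$ implies $t_1 \cdot s(q) = t_2 \cdot s(q)$ by the isotropy computation, it is clearly $\T^n$-equivariant, and it covers the identity on $K_{\Lam}$.

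It remains to check that $\Phi$ is a homeomorphism. Surjectivity is clear since $(\star\star)$ acts transitively on each fiber $\Pi^{-1}(q)$ — any $z$ over $q$ differs from $s(q)$ by the torus element $(z_i/|z_i|)$ on the nonzero coordinates (and arbitrarily on the zero ones). Injectivity follows from the isotropy matching: if $t_1 \cdot s(q) = t_2 \cdot s(q')$ then $q = q'$ and $t_1 t_2^{-1}$ stabilizes $s(q)$, hence lies in $T_{I_{s(q)}^c} = T_{G(q)}$, so $[t_1,q] = [t_2,q]$ in $Z_{K_{\Lam}}$. Continuity of $\Phi$ is immediate from continuity of the action and of $s$ on the open strata, but continuity of $s$ (or of $\Phi$) across faces needs the elementary observation that $z \mapsto (|z_i|^2)_i$ and the section $q \mapsto (\sqrt{q_i})_i$ are continuous on all of $K_{\Lam}$; then since $M_1(\Lam)$ is compact (closed and bounded in $\C^n$) and $Z_{K_{\Lam}}$ is Hausdorff, a continuous equivariant bijection is automatically a homeomorphism. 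The main obstacle — and the only place where the hypotheses really bite — is the bookkeeping that makes the facet indexing of $K_{\Lam}$ agree on the nose with the coordinate indices of $\T^n$, i.e. verifying $T_{G(q)} = T_J$ for $q \in \text{relint}(K_J)$ using simplicity of $K_{\Lam}$ and the no-indispensable-points hypothesis; once that combinatorial dictionary is in place, the rest is a routine quotient-space argument.
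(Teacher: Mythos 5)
Your proof is correct and fills in the standard construction that the paper only sketches: the text records the key isotropy observation (the orbit over the relative interior of a codimension-$q$ face is $(\s^1)^{n-q}$) and then delegates to \cite{BM}, Lemma 0.15, which is exactly the argument you give, via the positive-real section $s(q)=(\sqrt{q_i})_i$, the identification of $T_{G(q)}$ with the stabilizer of $s(q)$, and the compact-to-Hausdorff finish. You also correctly isolate where the no-indispensable-points hypothesis is needed, namely to ensure all $n$ coordinate facets $K_i$ are nonempty so the facet set of $K_{\Lam}$ is canonically indexed by $\{1,\dots,n\}$, making the two $\T^n$-actions comparable on the nose.
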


\emph{Equivariant homeomorphism} means that the homeomorphism conjugates the action (\ref{torus-action}) of
on $M_1{(\Lam)}$ to the action of $T_{\mathcal F}$ on $\mathcal Z_P$.

Hence:

\begin{corollary}
Let $N_{\Lam}$ et $N_{\Lam'}$  two LVM manifolds without indispensable points. Then there exists an equivariant
homeomorphism of the associated moment-angle varieties 
$M_1{(\Lam)}$ and  $M_1{(\Lam')}$ if and only if the associated polytopes
$K_{\Lam}$ et $K_{\Lam'}$ are combinatorially equivalent.
More generally, there exists an equivariant homeomorphism between $M_1{(\Lam)}$ and  $M_1{(\Lam')}$ 
if and only $K_{\Lam}$ and $K_{\Lam'}$ are combinatorially equivalent the number of indispensable points 
 $k$ and $k'$, respectively, are equal.
\end{corollary}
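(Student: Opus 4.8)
The plan is to deduce everything from the preceding Lemma (\cite{BM} Lemma~0.15), which gives, for an LVM manifold without indispensable points, a $(\s^1)^n$-equivariant homeomorphism $M_1(\Lam)\cong\mathcal Z_{K_\Lam}$, together with the fact recorded above that the moment-angle complex $\mathcal Z_P$ depends only on the combinatorial type of the simple polytope $P$. So the whole argument reduces to transporting combinatorial data through the two fixed homeomorphisms $M_1(\Lam)\cong\mathcal Z_{K_\Lam}$ and $M_1(\Lam')\cong\mathcal Z_{K_{\Lam'}}$, and I would first settle the case with no indispensable points.

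For the \emph{if} direction, a combinatorial equivalence $\phi\colon K_\Lam\to K_{\Lam'}$ is by definition an isomorphism of face posets; it induces a bijection $\sigma$ between the facet sets $\{F_1,\dots,F_n\}$ and $\{F_1',\dots,F_n'\}$, and for every face $G$ it carries $T_G=\prod_{F_i\supset G}T_i$ onto $T'_{\phi(G)}=\prod_{F'_j\supset\phi(G)}T'_j$ via $\sigma$. Plugging this into the defining relation $(t_1,p)\sim(t_2,q)\iff p=q,\ t_1t_2^{-1}\in T_{G(q)}$ of $\mathcal Z_P$ yields a homeomorphism $\mathcal Z_{K_\Lam}\to\mathcal Z_{K_{\Lam'}}$ which is equivariant for the automorphism of $(\s^1)^n$ induced by $\sigma$; composing with the two homeomorphisms from the Lemma gives the desired equivariant homeomorphism $M_1(\Lam)\cong M_1(\Lam')$.

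For the \emph{only if} direction, suppose $f\colon M_1(\Lam)\to M_1(\Lam')$ is a homeomorphism equivariant with respect to an isomorphism $\psi$ of the acting tori; via the Lemma we may take $f\colon\mathcal Z_{K_\Lam}\to\mathcal Z_{K_{\Lam'}}$. An equivariant homeomorphism carries point-isotropy groups to point-isotropy groups, and the isotropy groups of $\mathcal Z_P$ are exactly the subtori $T_G=\prod_{F_i\supset G}T_i$ as $G$ ranges over the faces of $P$; in particular the rank-one ones are precisely $T_1,\dots,T_n$. Hence $\psi$ permutes $\{T_1,\dots,T_n\}$ by a bijection $\sigma$ of facets, so $\psi$ sends each $T_G$ to some $T'_{G'}$, i.e.\ $\sigma$ maps the family of ``coface index sets'' $\{\,i:F_i\supset G\,\}$ of $K_\Lam$ bijectively onto that of $K_{\Lam'}$. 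That is an isomorphism of face posets, so $K_\Lam$ and $K_{\Lam'}$ are combinatorially equivalent (and $\bar f\colon K_\Lam\to K_{\Lam'}$ on the quotients realizes it).

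Finally, for the general statement I would invoke Lemma~\ref{indispensable-lemma}: writing $M_1(\Lam)=(\s^1)^k\times M_0$, an inspection of the proof of the preceding Lemma applied after deleting the $k$ indispensable coordinates shows $M_0$ is equivariantly homeomorphic to $\mathcal Z_{K_\Lam}$ (which has $n-k$ facets), so $M_1(\Lam)\cong(\s^1)^k\times\mathcal Z_{K_\Lam}$ as $(\s^1)^n$-spaces. If $K_\Lam\cong K_{\Lam'}$ combinatorially and $k=k'$, then both polytopes have the same number of facets, hence $n=(n-k)+k=(n'-k')+k'=n'$, and the case already treated produces an equivariant homeomorphism $\mathcal Z_{K_\Lam}\cong\mathcal Z_{K_{\Lam'}}$ and therefore one of the $M_1$'s. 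Conversely an equivariant homeomorphism forces $n=n'$ (the acting tori have equal dimension), the coordinate circles $T_i$ acting freely on $M_1(\Lam)$ are exactly those indexed by indispensable points so their number $k$ is intrinsic, and the isotropy bookkeeping above, applied to the $\mathcal Z_K$-factors, yields the combinatorial equivalence. The real content, and the step I expect to be delicate, is precisely this converse: making sure an equivariant homeomorphism remembers the full face lattice of the quotient polytope (not merely its homeomorphism type) and isolates the split $(\s^1)^k$-factor intrinsically; the \emph{if} direction and the bookkeeping around indispensable points are then formal.
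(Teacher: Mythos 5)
Your ``if'' direction is exactly the paper's argument: a combinatorial equivalence of the associated polytopes gives an equivariant homeomorphism of the moment-angle complexes $\mathcal Z_{K_\Lam}\to\mathcal Z_{K_{\Lam'}}$, and Lemma~\cite[Lemma 0.15]{BM} transports this to $M_1(\Lam)\cong M_1(\Lam')$; the general case is handled by splitting off the $(\s^1)^k$-factor as in Lemma~\ref{indispensable-lemma}. What you add, and what the paper's terse proof in fact never states, is the converse: the isotropy argument showing that an equivariant homeomorphism forces combinatorial equivalence. That argument is correct and is the genuine content of the ``only if'' clause --- the isotropy subgroups of $\mathcal Z_P$ are exactly the coordinate subtori $T_G$, the rank-one ones pick out the facets, and a torus isomorphism preserving this family must come from an isomorphism of face posets; similarly the coordinate circles acting freely are those indexed by indispensable points, so $k$ is intrinsic. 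One small caution on phrasing: you cannot literally ``delete the $k$ indispensable coordinates'' from $\Lam$ and stay admissible (that is precisely the definition of indispensable), so the identification $M_0\cong\mathcal Z_{K_\Lam}$ has to go through Lemma~\ref{indispensable-lemma} and the cited lemma of~\cite{BM} rather than a naive coordinate removal; this does not affect the validity of your argument, only the justification of that one sentence.
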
  

\begin{proof}
The combinatorial equivalence between $K_{\Lam}$ and $K_{\Lam'}$  implies the existence
of an equivariant homeomorphism between $\mathcal Z_{K_{\Lam}}$ and $\mathcal Z_{K_{\Lam'}}$, and hence 
by lemma 9 between  entre $M_1{(\Lam)}$ and $M_{1}{(\Lam')}$. 
The proof for any number of indispensable points follows from the first result and lemma 9.
\end{proof}
    
It is more delicate to have the same result up to \emph{equivariant diffeomorphism}, however one has the following theorem:

\begin{theorem} {\bf (\cite{BM} Theorem 4.1)}.
There is an equivalence between the following assertions: 

\medskip
\noindent (i) The manifolds  $M_1{(\Lam)}$ and $M_{1}{(\Lam')}$ are equivariantly diffeomorphic

\noindent (ii) The corresponding associated polytopes 
 $K_{\Lam}$ and $K_{\Lam'}$ are combinatorially equivalent and the number of indispensable points 
 $k$ and $k'$ are equal.
\end{theorem}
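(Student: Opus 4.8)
\noi\textbf{Plan of proof.} I would treat the two implications separately; essentially all the content is in $(ii)\Rightarrow(i)$. The implication $(i)\Rightarrow(ii)$ is immediate: a $\T^n$-equivariant diffeomorphism $M_1(\Lam)\to M_1(\Lam')$ is in particular a $\T^n$-equivariant homeomorphism, and the preceding Corollary says precisely that the existence of such a homeomorphism forces $K_{\Lam}$ and $K_{\Lam'}$ to be combinatorially equivalent and $k=k'$. (Intrinsically: the homeomorphism descends to a homeomorphism of orbit spaces $K_{\Lam}\to K_{\Lam'}$ matching orbit types and hence faces, since by the discussion preceding Lemma \ref{Kupto} the faces of $K_{\Lam}$ are exactly the strata of constant orbit type; counting facets then gives $n-k=n'-k'$.)

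For $(ii)\Rightarrow(i)$ the plan has three steps: (a) peel off the torus factor coming from the indispensable points and reduce to $k=k'=0$; (b) in that case, identify $M_1(\Lam)$ with an explicit transverse intersection of real quadrics that is a \emph{quadric model} of the moment-angle manifold $\mathcal Z_{K_{\Lam}}$ of the simple polytope $K_{\Lam}$; (c) invoke combinatorial invariance of the smooth $\T$-manifold underlying such a quadric model. For (a): by Lemma \ref{indispensable-lemma} there is a $\T^n$-equivariant splitting $M_1(\Lam)=(\s^1)^k\times M_0(\Lam)$, with $(\s^1)^k$ the coordinate subtorus of the indispensable indices and $M_0(\Lam)$ the moment-angle manifold of the sub-configuration indexed by the remaining coordinates --- a configuration with no indispensable point and with the same associated polytope $K_{\Lam}$. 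Since $k=k'$, an $(\s^1)^{n-k}$-equivariant diffeomorphism $M_0(\Lam)\to M_0(\Lam')$ yields, after taking the product with $\mathrm{id}_{(\s^1)^k}$, the required equivariant diffeomorphism; so we may assume $k=k'=0$.

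For (b), with no indispensable point, present $K_{\Lam}$ as in \eqref{KLambdaproj} using a Gale diagram $(v_1,\dots,v_n)$, i.e. the rows of a matrix whose columns form a basis of the solution space of $\sum_i x_i\Lambda_i=0$, $\sum_i x_i=0$. The linear relations among the $v_i$ are then spanned by the $2m+1$ row vectors given by the real and imaginary parts of the $\Lambda_i$ together with $(1,\dots,1)$, so one may take exactly this $(2m+1)\times n$ matrix as the relation matrix in the quadric construction; choosing the shift vector to be a point $\epsilon$ of $K_{\Lam}$ itself (so $\sum_i\epsilon_i\Lambda_i=0$ and $\sum_i\epsilon_i=1$), the construction returns on the nose $\{z\in\C^n\mid\sum_i\Lambda_i|z_i|^2=0,\ \sum_i|z_i|^2=1\}=M_1(\Lam)$, with the standard coordinatewise action of $(\s^1)^n$ as its torus action. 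Thus $M_1(\Lam)$ \emph{is} a smooth quadric model of $\mathcal Z_{K_{\Lam}}$, which upgrades the equivariant homeomorphism of the Bosio--Meersseman lemma (\cite[Lemma 0.15]{BM}) to a statement about this canonically embedded smooth model.

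The hard step, and the main obstacle, is (c): the smooth $\T^n$-manifold underlying a quadric model of a simple polytope $P$ depends only on the combinatorial type of $P$. I would argue by deformation. By Lemma \ref{Kupto}, two presentations \eqref{KLambdaproj} of the same polytope differ by an affine transformation of $\R^{n-2m-1}$, which may be joined to the identity through affine transformations, each presenting a polytope combinatorially equal to $P$; the general-position condition that makes the quadric intersection smooth is preserved along the way, so the associated family is a smooth family of compact submanifolds of $\C^n$ carrying the fibrewise standard $(\s^1)^n$-action, and the equivariant version of Ehresmann's fibration lemma makes them all equivariantly diffeomorphic. The genuinely delicate point is then to connect a presentation of $K_{\Lam}$ to a presentation of $K_{\Lam'}$ through presentations of polytopes all combinatorially equivalent to $K_{\Lam}$: this is a realization-space question, false for arbitrary polytopes but true for the polytopes arising from admissible configurations, where one exploits the combinatorial structure of the corresponding systems of quadrics (cf. \cite{BM}, \cite{BP}); granting it, Ehresmann concludes once more. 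Alternatively one can bypass the realization-space issue entirely: build the smooth structure on $\mathcal Z_P$ by gluing canonical local equivariant models over the faces of $P$ and their stars along transition maps that are smooth because a simple polytope is, near each face, canonically a product of a simplicial cone with a Euclidean factor; the resulting smooth $\T^n$-structure is then manifestly a function of the combinatorial type of $P$ alone, and it remains only to check that the quadric model of step (b) carries precisely this structure. In either approach, combining (a), (b), (c) with the hypotheses $k=k'$ and $K_{\Lam}\cong K_{\Lam'}$ yields the desired $\T^n$-equivariant diffeomorphism $M_1(\Lam)\to M_1(\Lam')$.
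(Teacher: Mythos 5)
Your $(i)\Rightarrow(ii)$ and reduction step (a) to $k=k'=0$ via the splitting $M_1(\Lam)\cong(\s^1)^k\times M_0(\Lam)$ are both correct and match the paper. Step (b), identifying $M_1(\Lam)$ with a quadric model of $\mathcal Z_{K_\Lam}$, is likewise in line with the Bosio--Meersseman lemma (\cite{BM} Lemma~0.15) that the paper quotes. The problem is step (c), which is the whole content of the theorem and which your two suggested arguments do not close.

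Your first argument --- deforming one presentation of $K_\Lam$ into a presentation of $K_{\Lam'}$ through presentations of polytopes of the fixed combinatorial type, then applying equivariant Ehresmann --- hinges on connectivity of the relevant realization space (or of the chamber of admissible configurations with fixed $K_\Lam$). You acknowledge this yourself, but the sentence ``false for arbitrary polytopes but true for the polytopes arising from admissible configurations'' is exactly the gap: every simple polytope arises as a $K_\Lam$, so this would amount to asserting connectivity of realization spaces of all simple polytopes, which is false (Mnev-type universality phenomena already occur in the simple/simplicial setting). Nothing in the hypotheses rescues this. Your second argument --- endowing $\mathcal Z_P$ with a canonical smooth structure built from local equivariant models over the faces of $P$, then checking that the quadric model carries that structure --- is not wrong in spirit but is not a one-line observation: the existence of compatible smooth transition functions is nontrivial, and in the standard literature the canonical smooth structure on moment-angle complexes is \emph{defined} via precisely the quadric model you used in (b), so as written the argument risks circularity. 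At minimum one would have to produce a combinatorially-defined atlas, prove its transition maps are smooth, and separately verify that it is equivariantly diffeomorphic to the quadric model; none of that is done.

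The route taken in \cite{BM}, summarized in the ``Flips of simple polytopes and elementary surgeries'' section of these notes, is different and specifically designed to sidestep the realization-space issue. One shows that any $K_\Lam$ (with $k=0$) can be reached from a simplex by a finite sequence of flips, that each flip corresponds to a well-defined equivariant surgery on the moment-angle manifold, and that the sequence of flips --- hence the sequence of surgeries --- is dictated by the combinatorial type of $K_\Lam$ alone. Two configurations with combinatorially equivalent polytopes therefore yield manifolds obtained from the same sphere $\s^{2p-1}$ by the same sequence of equivariant surgeries, and are thus equivariantly diffeomorphic. Crucially, during the wall-crossings one \emph{does} leave the chamber of a fixed combinatorial type; what is tracked is not the polytope but the effect on the manifold, which is what makes the argument work without any connectivity assumption on the chamber. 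Your proposal, as it stands, replaces this controlled-surgery argument by an unjustified connectivity claim, so step (c) must be reworked along the flip/surgery lines (or by a genuinely complete local-model argument) before the proof is valid.
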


\section{Flips of simple polytopes and elementary surgeries on LVM manifolds}

The motivation of this section is to generalize the following result of Mac Gavran \cite{McG} adapted to our case.

\begin{theorem}\label{Mac Gavran} {\bf (Mac Gavran \cite{McG}).}
Let $\Lam$ be an admissible configuration. 
Suppose that the associated polytope $K_{\Lam}$ is a polygon with $p$ vertices.
Then the moment-angle manifold$M_1{(\Lam)}$ is diffeomorphic via an equivariant diffeomorphism to the connected
sum of products of spheres;

$$
(\#_{j=1}^{p-3} (jC^{j+1}_{p-2})\s^{2+j}\times\s^{p-j})\times (\s^1)^k
$$
\end{theorem}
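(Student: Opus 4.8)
The plan is to reduce the statement to the combinatorial classification of moment-angle manifolds over polygons, using the results already assembled in the previous sections. First I would invoke Lemma~9 (the equivariant homeomorphism $M_1(\Lam)\cong \mathcal Z_{K_{\Lam}}$ for configurations without indispensable points) together with its refinement Theorem (Bosio--Meersseman 4.1), which says that up to equivariant diffeomorphism $M_1(\Lam)$ depends only on the combinatorial type of $K_{\Lam}$ and on the number $k$ of indispensable points. By the structural splitting $M_1(\Lam)=(\s^1)^k\times M_0$ of Lemma~\ref{indispensable-lemma}, it suffices to treat the case $k=0$, i.e.\ to compute $\mathcal Z_Q$ equivariantly for $Q$ an arbitrary $p$-gon, and then tensor the answer with $(\s^1)^k$. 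So the theorem is really a statement purely about $\mathcal Z_Q$ for polygons.

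Next I would carry out the computation of $\mathcal Z_Q$ by induction on $p$, the number of vertices (equivalently edges) of the polygon $Q$. The base cases $p=3$ (triangle) and $p=4$ (square) are handled directly: for the triangle $\mathcal Z_Q$ is the $5$-sphere $\s^5$ — more precisely one checks $\mathcal Z_{\Delta^2}\cong\s^{5}$, consistent with the empty connected sum in the formula — and for the square $\mathcal Z_Q\cong \s^3\times\s^3$, which matches $\#_{j=1}^{1}(1\cdot C^{2}_{2})\,\s^{3}\times\s^{3}$. For the inductive step I would use the standard fact that cutting a vertex of $Q$ (a ``flip'' of the polygon, in the language of the section heading) corresponds to an equivariant surgery on $\mathcal Z_Q$: passing from a $p$-gon to a $(p+1)$-gon replaces $\mathcal Z_Q$ by the connected sum of $\mathcal Z_Q\times\s^{1}$-type pieces with new sphere-product summands, the bookkeeping of which produces exactly the binomial multiplicities $j C^{j+1}_{p-2}$. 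Concretely, one identifies $\mathcal Z_{Q'}$ (for $Q'$ the polygon with one more edge) as obtained from $\mathcal Z_Q$ by removing an equivariant tubular neighborhood of a sub-moment-angle-manifold corresponding to the two facets adjacent to the cut vertex and gluing in the complementary handle; the homology/Whitney-embedding argument (exactly as in the proof sketch of Theorem~\ref{polygon_connected_sum} and the $Q^\pm$ decomposition used there) shows the result is again a connected sum of sphere products, and the count of summands in each dimension satisfies the Pascal-type recursion solved by $j C^{j+1}_{p-2}$.

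The main obstacle I expect is the surgery/connected-sum step: showing that the equivariant surgery corresponding to a polygon flip genuinely produces a \emph{connected sum} (and not merely a space with the right homology) and identifying each new summand as a standard product $\s^{2+j}\times\s^{p-j}$ with the correct multiplicity. This is the analogue of the ``long and complicated'' part of Theorem~\ref{polygon_connected_sum}(3), and the honest way to handle it is to combine: (a) the Gitler--L\'opez de Medrano description of the homology of moment-angle manifolds over polygons, which pins down ranks in each degree; (b) high-connectivity (the relevant $M_1$ or its building blocks are $(2d-2)$-connected) so that Hurewicz and Whitney embedding let every homology class be represented by an embedded sphere with trivial normal bundle; and (c) the $h$-cobordism theorem to upgrade ``homology connected sum'' to ``diffeomorphic to a connected sum,'' just as on page with the $R,R'$ argument earlier. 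Once the recursion for the multiplicities is set up, verifying it equals $j\binom{p-2}{j+1}$ is a routine induction, so I would not dwell on it. The equivariance of all maps is automatic from working throughout with $T^n$-equivariant handle decompositions induced by the face structure of $Q$.
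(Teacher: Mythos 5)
Your plan follows the route the paper sketches for Mac Gavran's theorem: split off $(\s^1)^k$ via Lemma~\ref{indispensable-lemma}, reduce to the moment-angle manifold of the polygon, and induct on the number $p$ of vertices, with base case $p=3$ (the triangle gives $\s^5$) and with the polygon flip as the inductive step, which the paper records as the surgery $M_{p+1}=(M_p\times\s^1)\setminus((\s^1)^{p-2}\times\mathbb D^4\times\s^1)\cup((\s^1)^{p-2}\times\s^3\times\mathbb D^2)$. The multiplicity bookkeeping and base cases you give are correct. The one real issue is in your last paragraph: the theorem asserts an \emph{equivariant} diffeomorphism, and the Hurewicz--Whitney--$h$-cobordism mechanism you propose in step (c) (the same one the paper uses for the non-equivariant Theorem~\ref{polygon_connected_sum}) produces only an ordinary diffeomorphism. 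Your closing claim that equivariance is ``automatic from working throughout with $T^n$-equivariant handle decompositions'' is therefore in tension with your own appeal to the non-equivariant $h$-cobordism theorem; to get equivariance one has to run the surgery and handle-cancellation argument equivariantly, which the paper explicitly identifies as the actual content of the cited work (``the work of Mac Gavran consists of understanding the meaning of these surgeries up to equivariant diffeomorphisms''). So the outline is right, but the equivariance is exactly where the real work lies and cannot be waved off as automatic.
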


There are many cases of configurations for higher-dimensional polytopes where the manifolds $M_1(\Lam)$ are
similar to those  of Mac Gavran \ie
the manifolds  are products of manifolds of the type:

\medskip

\noindent (i) Odd dimensional spheres.

\noindent (ii) Connected sums of products of spheres

\medskip

 F. Bosio and L. Meersseman \cite{BM} showed that some, but not all, moment-angle manifolds
$M_{\Lam}$ are connected
sums of products of spheres, and they conjectured that if the dual to the polytope
is neighborly, then the manifold is such a connected sum. This conjecture was proven by 
Samuel Gitler and Santiago L\'opez de Medrano in \cite{GLdM}.

Let us remember once more the results by S. L\'opez de Medrano (\cite{LdM1} et \cite{LdM2}) on the classification of manifolds $M_1(\Lam)$ when $m=1$ given above in subsection 
\ref{Connected sums1} given by Theorem \ref{polygon_connected_sum}

When $m=1$ the vectors
are vectors $\Lambda_{i}$ in $\C\simeq \R^{2}$ and S. L\'opez de Medrano shows that one can modify the configuration
$\Lam\in\C$ through a smooth homotopy $\Lam_t$ (just moving the vectors) that \emph{satisfies the admissibility conditions 
of Siegel and weak hyperbolicity for all $t\in[0,1]$} such that $\Lam_0=\Lam$ and $\Lam_1$ is a regular polygon with an odd number of vertices $2l+1$ and with multiplicities $n_{1}$, ..., $n_{2l+1}$. Thus, for instance, in 
figure 5, one can move from the pentagon at the left to the pentagon at the right to configurations with different multiplicities,
for instance configurations with 4 vectors with multiplicities
$n_{1}=n_{2}=1$ and $n_{3}=3$, then $3$ vectors of multiplicities $n_{1}=1$, $n_{2}=n_{3}=2$, and finally $5$ vectors of multiplicity $1$.  Ehresmann lemma implies, that all manifolds belonging to the homotopy are diffeomorphic.

\begin{figure}[h]  
\begin{center}
\includegraphics[height=3cm]{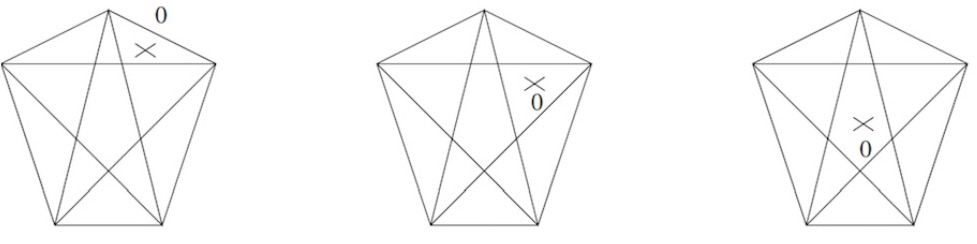}
\end{center}
\caption{\sl Chambers of a pentagon in $\C$ (the small cross is the origin in $\C$)} 
\label{Figure 3}
\end{figure}

With these notations we recall theorem \ref{polygon_connected_sum} which  was seen before:
\begin{theorem} \cite{LdM1}, \cite{LdM2}
Let $N$ be an LVM manifold $m=1$ then $M_{1}$ is diffeomorphic to
\medskip

\noindent (i) The product of spheres $\s^{2n_{1}-1}\times \s^{2n_{2}-1}\times\s^{2n_{3}-1}$ if $l=1$.

\noindent (ii) The connected sum
$$
\#_{i=1}^{2l+1} \s^{2d_{i}-1}\times\s^{2n-2d_{i}-2}
$$
if $l>1$. Where $d_i=n_{[i]}+\hdots+n_{[i+l-1]}$ and $[a]$ is the residue of the euclidean division of $a$ by $2l+1$.
\end{theorem}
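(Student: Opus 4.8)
The plan is to reduce to López de Medrano's polygon normal form and then carry out his surgery argument \cite{LdM1,LdM2}.

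\textbf{Reduction to the regular polygon.} Since conditions (SC) and (WH) of Definition \ref{admissible} are open, and $\bigcup_t M_1(\Lam_t)\subset\s^{2n-1}\times\R$ is a proper submersion onto the interval of deformation parameters, Ehresmann's lemma shows that the diffeomorphism type of $M_1$ is constant along admissible paths; so by the fact recalled above (\cite{LdM2}) I may assume the $n$ points of $\Lam$ sit at the vertices $\zeta_j=\rho^{\,j}$ ($\rho=e^{2\pi i/(2l+1)}$) of a regular $(2l+1)$-gon inscribed in $\s^1$, with multiplicities $n_1,\dots,n_{2l+1}$. Grouping coordinates by vertex, $\C^n=\C^{n_1}\times\cdots\times\C^{n_{2l+1}}$, $w_j\in\C^{n_j}$, $r_j=|w_j|^2$, the map $z\mapsto(r_1,\dots,r_{2l+1})$ exhibits $M_1$ as a union of tori over the polytope $\{r\ge 0:\sum_j r_j\zeta_j=0,\ \sum_j r_j=1\}$, so all combinatorics come from the $(2l+1)$-gon.

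\textbf{The case $l=1$.} Then $2l+1=3$ and the defining relations become the real system $\sum_j r_j\zeta_j=0$, $\sum_j r_j=1$; (SC) and (WH) force the triangle $\zeta_1\zeta_2\zeta_3$ non-degenerate with $0$ in its interior, so there is a unique solution and it has all $r_j>0$. Hence $M_1=\{(w_1,w_2,w_3):|w_j|^2=r_j\}$ is the product $\s^{2n_1-1}\times\s^{2n_2-1}\times\s^{2n_3-1}$, which is (i).

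\textbf{The case $l>1$.} This is the difficult part of \cite{LdM2}. Writing $\Lam_j=a_j+ib_j$, and noting $\Re\zeta_j\ne 0$ for all $j$, the hypersurface $Z=\{z\in\s^{2n-1}:\sum_i a_i|z_i|^2=0\}$ is transverse and, since the underlying real form on $\C^n\simeq\R^{2n}$ has signature $(2p,2q)$ with $p=\sum_{\Re\zeta_j>0}n_j$ and $q=n-p$, one gets $Z\cong\s^{2p-1}\times\s^{2q-1}$. The leftover relation $\sum_i b_i|z_i|^2=0$ cuts $Z$ along $M_1$ into $Q^{\pm}=\{\pm\sum_i b_i|z_i|^2\ge 0\}$. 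An elementary half-plane argument --- an $(l+1)$-arc of the $(2l+1)$-gon spans an angle $<\pi$, hence is of Poincaré type --- shows $M_1$ avoids a coordinate subspace of $\C^n$ of complex codimension $\min_i d_i$, so $M_1$ is $(2\min_i d_i-2)$-connected, in particular $2$-connected ($\min_i d_i\ge l\ge 2$). Together with $H_*(Z)$, the Mayer--Vietoris sequence of $Z=Q^+\cup_{M_1}Q^-$ pins down $H_*(Q^+)$ (in the palindromic case, e.g. all $n_j=1$, the conjugation symmetry $\zeta_j\leftrightarrow\overline{\zeta_j}$ of the polygon makes $Q^+\cong Q^-$ and simplifies this; in general one first produces the explicit homology of $M_1$ as in \cite{LdM2}): $Q^+$ is simply connected (van Kampen), has free homology in two consecutive degrees, and $H_*(M_1)\twoheadrightarrow H_*(Q^+)$. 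By Hurewicz and Whitehead these classes are carried by disjointly embedded framed spheres with trivial normal bundles; thickening them and joining by tubes gives $Q'\subset Q^+$ with $H_*(Q')\xrightarrow{\sim}H_*(Q^+)$, and the $h$-cobordism theorem yields $M_1=\partial Q^+\cong\partial Q'$, a connected sum of the boundaries of the sphere bundles. Identifying which arcs occur shows these are exactly $\s^{2d_i-1}\times\s^{2n-2d_i-2}$, $i=1,\dots,2l+1$, with $d_i=n_{[i]}+\cdots+n_{[i+l-1]}$; this is (ii).

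\textbf{Main obstacle.} Everything nontrivial is in the $l>1$ step: one must obtain the full integral homology of $M_1$ (equivalently of the filling $Q^+$) and then realize \emph{all} of it simultaneously by disjointly embeddable framed spheres of the prescribed dimensions with trivial normal bundles, so that the connected-sum description is a genuine diffeomorphism and not merely a homology equivalence. Keeping track of those normal bundles --- via their stable identification with the normal bundle of a linear $\CP^{d-1}\hookrightarrow\CP^{n-1}$ and the corresponding Pontryagin-class computation, in the spirit of Lemma \ref{diagram-embedding} --- and invoking the $h$-cobordism theorem in the right codimension is the technical core of \cite{LdM2}.
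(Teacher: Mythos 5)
Your proposal is correct and takes essentially the same route as the paper: reduce to the regular $(2\ell+1)$-gon normal form by Ehresmann's lemma, read off the three-sphere product directly when $\ell=1$, and for $\ell>1$ fill $M_1$ by the half-quadric $Q^+\subset Z\cong\s^{2p-1}\times\s^{2q-1}$, compute homology via Mayer--Vietoris, represent classes by disjointly embedded framed spheres, and conclude with the $h$-cobordism theorem. Like the paper, you defer the technical core (the explicit determination and simultaneous geometric realization of $H_*(M_1)$ for arbitrary multiplicities) to \cite{LdM2}, which is exactly where the paper's own sketch stops.
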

If $m>1$ one has a higher dimensional polytope of $n$ elements in $\C^m$ ($n>2m$) and there is not a way to
have a canonical model. One could consider a homotopy that takes the configuration to one with minimal number of vertices,
but that is not enough to determine the polytope  which is the convex hull of the points in the configuration.
For this reason it is better to adapt the approach used by Mac Gavran in \cite{McG}.
 He considers simply connected manifolds 
of dimension $p+2$ which admit a smooth action of a torus $(\s^1)^p$ which satisfies certain conditions,
in particular one requires that the quotient under the action can be identified with a 2-dimensional convex polygon
$K$ with $p$ vertices. If we write  $M_1\simeq (\s^1)^k\times M_0$, where $\simeq$ means ``up to an equivariant diffeomorphism'', then as in lemma \ref{indispensable-lemma} one shows that the factor $M_0$ verifies the hypotheses
of Mac Gavran. The proof of Mac Gavran theorem is done by induction on the number $p$ of vertices of $K$.
If $p=3$ one has a triangle and we know that $M_1$ is $\s^5\times (\s^1)^k$, where $M_0$ is the sphere
$\s^5$. To go from a polygon with $p$  vertices to a polygon with $p+1$ vertices one can do the following ``surgery'':
remove an open neighborhood of a vertex and glue an interval. The reciprocal operation consists in collapsing to a 
point an edge. Now we recall that the faces  of the associated polytope corresponding to the admissible 
sub-configurations of $\Lam$ (\ie subsets of $\Lam$) determine equivariant subvarieties of $M_1$ or $M_0$ where the
quotient space identifies in a natural way with the given face. In other words to remove a neighborhood of a face means
to remove an invariant (under the action of the torus) tubular neighborhood of the subvariety associated to the face in question in $M_0$. The invariant subvarieties have trivial tubular neighborhoods (by the slice theorem). Since we know that
the subvarieties associated to a vertex is a torus and the subvarieties associated to an edge are the product of a torus with
$\s^3$, one sees that if $M_p$ denotes the manifold corresponding to a polygon 
$K$ with $p$ vertices, then to pass from $M_p$  a $M_{p+1}$ consists of applying an equivariant surgery

\[
M_{p+1}=(M_p\times \s^1)\setminus ((\s^1)^{p-2}\times \mathbb D^4\times \s^1)\cup ((\s^1)^{p-2}\times\s^3\times \mathbb D^2)
\]

\noindent Where $\mathbb D^s$ denotes the closed disk of dimension $s$. The work of Mac Gavran consists of understanding the meaning of these surgeries up to equivariant diffeomorphisms.
To generalize this approach to higher dimensional polytopes $K$ we need to generalize the notion of ``surgery''.
and understand what is the construction one has to perform on the moment angle manifold $M_1$ associated to $K$.
This is done using the following notion of cobordism between polytopes inspired by \cite{McM} et \cite{Ti}.

\begin{definition} Let $P$ and $Q$ be two simple convex polytopes of the same dimension $p$. One says that
$P$ and $Q$ are obtained from each other by an \emph{elementary cobordism} if there exists a simple convex polytope
$W$ of dimension $p+1$ such that:

\medskip
\noindent (i) $P$ and $Q$ are disjoint hyperfaces of $W$.

\noindent (ii) There exists a unique vertex $v$ of $W$ that does not belong neither to $P$ or $Q$.

\end{definition}

Let us recall that everything related to polytopes is up to combinatorial equivalence for instance figure 6 illustrates an
elementary cobordism between a square and a pentagon.

\medskip

\begin{figure}[h]  
\begin{center}
\includegraphics[height=5cm]{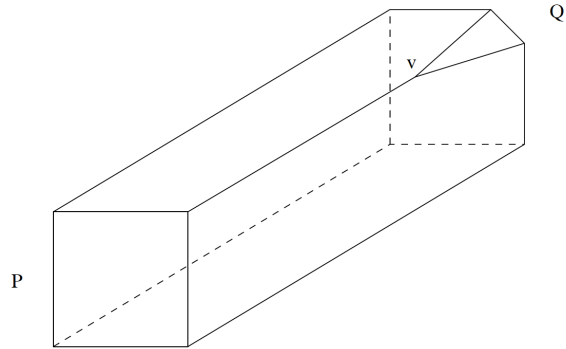}
\end{center}
\caption{\sl Elementary Cobordism between a square and a pentagon} 
\label{Figure 3}
\end{figure}


Given a vertex $v$, since $W$ is simple, there are exactly $q+1$ edges that have $v$ as an end point. Then,
hypothesis (ii) these edges have the second end point either in $P$ or in $Q$, then the \emph{type} of the elementary
cobordism is the pair $(a,b)$ where $a$  (respectively $b$) is the number of edges joining $v$ to $P$ 
(respectively $Q$). Of course  $a+b=q+1$,

\begin{definition}
One says that $Q$ is obtained from $P$ by a flip of type $(a,b)$ 
if there exists an elementary cobordism of type $(a,b)$ between $P$ and $Q$.
\end{definition}

\noi Figure 7 shows an example of type $(2,2)$.

\begin{figure}[h]  
\begin{center}
\includegraphics[height=5cm]{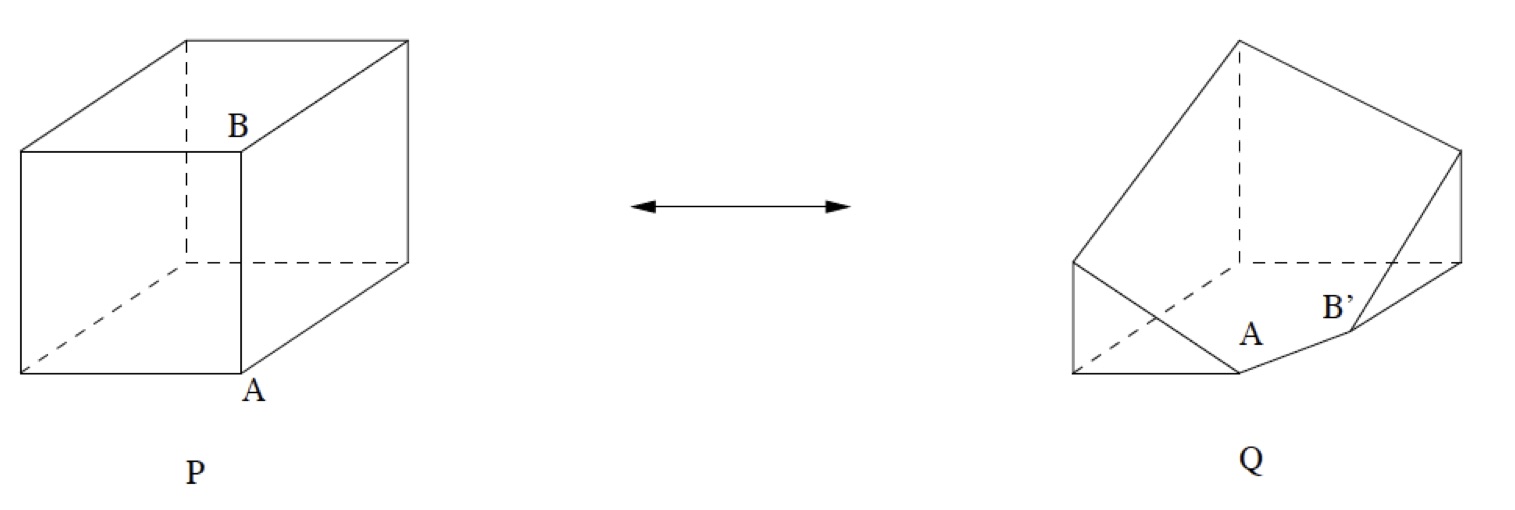}
\end{center}
\caption{\sl Flip de type $(2,2)$} 
\label{Figure 4}
\end{figure}
\noi Let us consider the elementary cobordism $W$ of dimension 4 between the 3-dimensional polyhedra $P$ and $Q$ and let us ``cut'' $W$ with 3-dimensional hyperplanes parallel to $P$. Starting from $P$ one sees that the edge  $[AB]$ is contracted as one moves the cuts up to the point when the edge
collapses to the vertex $v=A$ when the cut meets the vertex $A$. On the other hand if one makes cuts by hyperplanes parallel to $Q$ the edge $[AB]$ is contracted to $v=A$. In some sense $W$ is the {\it trace} of the cobordism. In other words
$Q$ is obtained $P$ by removing a neighborhood of the edge
 $[AB]$ and gluing the ``transverse'' edge $[AB']$.

\medskip

This description can be generalized for higher dimensional polytopes. A flip of type
 $(a,b)$ is obtained by removing a simplicial face of dimension $a$ and gluing the neighborhood of a simplicial face 
 of ``complementary'' dimension $b$. Since the simplicial faces of dimension $a$ correspond to products of a sphere
 of dimension $2a-1$ by a torus (\cite{BM} Proposition 3.6) an argument similar to that of Mac Gavran shows that
 if $K'$ is obtained from  $K$ (of dimension $q$) by a flip of type
$(a,b)$, then the manifold $(M_0)'$ is obtained from $M_0$ (de dimension $p$) by an elementary {\it surgery of type
$(a,b)$} then:
\[
(M_0)'=(M_0\times \s^1)\setminus ((\s^1)^{p-2b}\times \mathbb D^{2b}\times \s^1)\cup ((\s^1)^{p-2b}\times\s^{2b-1}\times \mathbb D^2)
\]
if $a=1$  
and
\[
(M_0)'=M_0\setminus ((\s^1)^{p-2b-2a+1}\times \mathbb D^{2b}\times \s^{2a-1})\cup 
((\s^1)^{p-2b-2a+1}\times\s^{2b-1}\times \mathbb D^{2a})
\]
if $a>1$.

\medskip
\noi The proof of this fact is very delicate and technical since we must prove the equivariance of the constructions.
All the details can be found in \cite{BM}.

\medskip

The essential difference with the case of polygons of Mac Gavran is that starting with an odd-dimensional sphere
as $M_0$ after a finite number of elementary surgeries one does not end up with a manifold of the type connected sum
of products of spheres or product of spheres. In fact in the next section one will describe the homology. However the previous
considerations prove again that if two moment-angle manifolds of type $M_1$ of dimension $p$ are combinatorially equivalent they are obtained from the sphere $\s^{2p-1}$ by the same sequence of elementary surgeries and therefore
they are equivariantly diffeomorphic.

\section{The homology of LVM manifolds}

Recall that from lemma \label{indispensable-lemma} we have that for a configuration $\Lam$ the associated moment angle manifold $M_1(\Lam)$ factorizes as $M_1(\Lam)=\simeq (\s^1)^k\times M_0(\Lam)$ where $k$ is the number of indispensable points and $M_0(\Lam)$ is 2-connected. Since $M_0$ is a moment-angle manifold one can use
the results of V. Buchstaber and T. Panov \cite{BP} to compute the homology and cohomology  of these manifolds

\begin{theorem} \label{homology-M_0}{\bf (\cite{BM}, Theorem 10.1.)}
Soit $N_{\Lam}$ be an LVM manifold and $M_0(\Lam)$ the 2-connected factor as in \ref{indispensable-lemma}. Let  $K$ 
be the associated polytope (quotient under the action of the torus). Let
 $K^*$ be its dual which is therefore a convex simplicial polytope. Let 
$\mathbf F$ be its set of vertices. Then the homology of $M_0(\Lam)$ with coefficients in $\Z$ is given by the formula:

\[
H_i(M_0(\Lam),\mathbb Z)=\bigoplus_{\mathcal I\subset\mathbf F}\tilde H_{i-|     \mathcal I|     -1}(K_{\mathcal I}^*,\mathbb Z)
\]
where $\tilde H_i$ denotes the reduced homology, $| \mathcal{I}|$ is the cardinality of $\mathcal{I}$
 and $K_{\mathcal I}^*$ is the maximal simplicial subcomplex of $K^*$ with vertices $\mathcal{I}$.
 \rm{(}We remark that $H_i(M_0(\Lam),\mathbb Z)=0\,\, \rm{if}\,\,i<0$\rm{)}.
\end{theorem}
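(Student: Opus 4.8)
The plan is to reduce the statement to the computation of the homology of a moment-angle complex over a simplicial complex, which is the theorem of Buchstaber and Panov \cite{BP}. First I would invoke Lemma \ref{indispensable-lemma}, which gives an equivariant splitting $M_1(\Lam) \simeq (\s^1)^k \times M_0(\Lam)$ with $M_0(\Lam)$ $2$-connected, the $k$ free circle factors being the coordinates indexed by the indispensable points (which are nowhere zero on $M_1(\Lam)$); it therefore suffices to compute $H_*(M_0(\Lam),\Z)$. By the lemma of Bosio--Meersseman (\cite{BM}, Lemma 0.15) --- the equivariant homeomorphism between the moment-angle manifold of an LVM manifold and the moment-angle complex of its associated simple polytope --- and since $K_\Lam$ has exactly $n-k$ facets, one identifies $M_0(\Lam)$ with the moment-angle complex $\mathcal{Z}_{\mathcal{K}}$ of the simplicial complex $\mathcal{K} = \partial K_\Lam^*$, the boundary of the dual simplicial polytope. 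Here $\mathcal{K}$ has vertex set $\mathbf{F}$ with $|\mathbf{F}| = n-k$, and its full subcomplex on a subset $\mathcal{I} \subseteq \mathbf{F}$ is precisely the complex $K^*_{\mathcal{I}}$ of the statement.

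The second step is to recall the polyhedral-product model $\mathcal{Z}_{\mathcal{K}} = \bigcup_{\sigma \in \mathcal{K}} \big( \prod_{i \in \sigma}\disc^2 \times \prod_{i \notin \sigma}\s^1 \big) \subset (\disc^2)^{\mathbf{F}}$ and to apply the Hochster-type decomposition of \cite{BP} --- equivalently, the stable wedge splitting $\Sigma \mathcal{Z}_{\mathcal{K}} \simeq \bigvee_{\mathcal{I}\subseteq\mathbf{F}} \Sigma^{|\mathcal{I}|+2}\,|K^*_{\mathcal{I}}|$ of \cite{BBCG}. Either form yields, for every $i$,
\[
\tilde H_i(\mathcal{Z}_{\mathcal{K}},\Z) \;\cong\; \bigoplus_{\mathcal{I}\subseteq\mathbf{F}} \tilde H_{\,i-|\mathcal{I}|-1}(K^*_{\mathcal{I}},\Z),
\]
and combining this with the identification $M_0(\Lam)\cong\mathcal{Z}_{\mathcal{K}}$ of the first step gives the asserted formula. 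The vanishing for $i<0$ is automatic: every summand already vanishes in that range, the extreme term $\mathcal{I}=\emptyset$ (with $K^*_{\emptyset}=\emptyset$) contributing $\tilde H_{-1}(\emptyset)=\Z$ only in degree $0$.

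I expect the one genuinely delicate point to be the proof of the Hochster-type decomposition over $\Z$, if one wants a self-contained argument instead of a citation to \cite{BP}. I would obtain it by filtering the cellular cochain complex of $\mathcal{Z}_{\mathcal{K}}$ by the natural $\Z^{\mathbf{F}}$-multidegree coming from the product structure $(\disc^2)^{\mathbf{F}}$: the summand whose multidegree is supported on $\mathcal{I}\subseteq\mathbf{F}$ is, up to a shift by $|\mathcal{I}|+1$, isomorphic to the reduced simplicial cochain complex of the full subcomplex $K^*_{\mathcal{I}}$, so taking cohomology summand by summand proves the decomposition for $H^*$, and then universal coefficients --- legitimate since all groups in sight are finitely generated, $\mathcal{Z}_{\mathcal{K}}$ being a finite complex --- transfers it to homology. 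Alternatively one simply quotes the stable splitting, which delivers the homology statement directly. In all of this the sole combinatorial input is the polytope duality $\mathcal{K} = \partial K_\Lam^*$, which guarantees that the full subcomplexes arising in the decomposition are exactly the $K^*_{\mathcal{I}}$ appearing in the theorem.
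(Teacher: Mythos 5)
Your proposal is correct and follows exactly the route the paper itself indicates: strip off the $(\s^1)^k$ factor via Lemma \ref{indispensable-lemma}, identify $M_0(\Lam)$ with the moment-angle complex of $\partial K_{\Lam}^{*}$ via the Bosio--Meersseman equivariant homeomorphism, and invoke the Buchstaber--Panov/Hochster decomposition (or, equivalently, the BBCG stable wedge splitting). For what it is worth, the paper's later discussion of the homology of intersections of quadrics (Part I--2) gives a concrete cell-level realization of the very same splitting you sketch via multidegree filtration: one uses the reflection cell structure on $\ZLam$ and the idempotents $h_J = 1 - g_J$ to split the chain complex as $\bigoplus_J C_{\ast-|J|}(\p,\p_J)$, and the nerve identification $\p_J \simeq |K^*_J|$ together with the contractibility of $\p$ turns $H_{i-|J|}(\p,\p_J) \cong \tilde H_{i-|J|-1}(K^*_J)$ into the stated formula; this packaging has the modest extra payoff of exhibiting explicit geometric generators (reflected faces of the polytope) for each summand.
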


\newpage

Let us explain the meaning of \emph{maximal simplicial subcomplex of $K^*$ with vertices $\mathcal{I}$}. 
Given a $q$-tuple $(i_1,\hdots, i_q)$ in $\mathcal I$ it is a face of the simplicial subcomplex $K^*_{\mathcal I}$ 
if and only if a $q$-face of the simplicial complex $K^*$. 
For instance in figure 8 ($K^{*}$ is the octahedral  which the dual of the cube  $K$) the subcomplex which corresponds to
the vertices $\{1,2,3,4\}$ is indicated in boldface. 

\medskip
\begin{figure}[h]  
\begin{center}
\includegraphics[height=5cm]{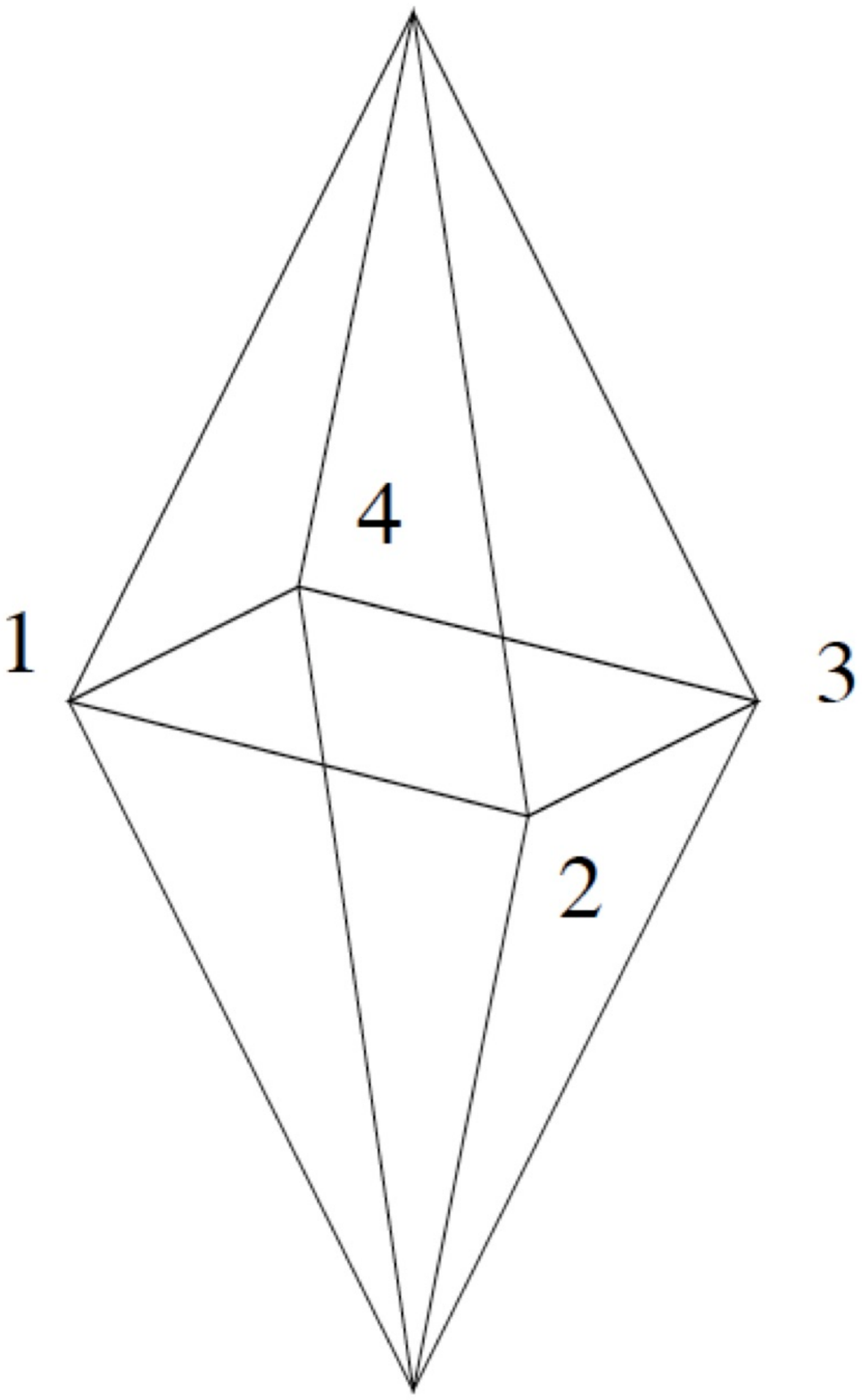}
\end{center}
\caption{\sl } 
\label{Figure 6}
\end{figure}

Let us consider now the question of the level of complexity of the homology of the manifolds $M_0(\Lam)$.
By theorem \ref{homology-M_0} the dual polytope $K^*$ can be an arbitrary simplicial complex and the question of complexity becomes to ask which simplicial complexes can be {\it maximal} subcomplexes of a simplical convex polytope.
We claim that any finite simplicial complex can be a {\it maximal} sub-complexes of a simplical convex polytope. In effect,
let $K_0$ be any finite simplicial complex, we can always embed $K_0$ in a simplex $\mathfrak{S}^d$ of dimension
$d$ equal to the number of vertices of $K_0$ minus one. In general is not embedded in a maximal subcomplex
For instance in figure 8,  $K_0$ is the one-dimensional complex with is a circuit of four edges with vertices in boldface. It can be embedded
in a tetrahedron $\mathfrak{S}^3$ as a circuit with 4 vertices but the maximal associated subcomplex is the tetrahedron itself so this embedded copy is not maximal but we can fix this by choosing a barycentric subdivision of the tetrahedron. In general
it is enough to make barycentric subdivision of all the faces that belong to the maximal simplicial complex generated by $K_0$ to obtain an embedding which is maximal. \emph{This is always possible} (see \cite{BM}).
 
\noi Therefore one has:
\begin{theorem} {\bf(\cite{BM}, Th\'eor\`em 14.1)}. Let $K_0$  be any finite simplicial complex. Then there exists a 2-connected LVM manifold $N$ such that its homology verifies: 

$$
H_{i+q+1}(N,\mathbb Z)=\tilde H_i(K_0,\mathbb Z)\oplus\hdots 
$$
for all $i$ between $0$ and the dimension of $K_0$.
\end{theorem}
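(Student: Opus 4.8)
The plan is to realize $K_0$ as a maximal subcomplex of the dual of the associated polytope of an LVM manifold, and then to read off the homology from Theorem \ref{homology-M_0}. First I would take the finite simplicial complex $K_0$ with vertex set $V$, say $|V| = N+1$, and embed it combinatorially as a subcomplex of the boundary of a simplex $\mathfrak S^{N}$ (the full simplex on $V$). The issue, as noted in the discussion preceding the statement, is that this naive embedding is typically not \emph{maximal}: extra simplices of $\mathfrak S^N$ spanned by vertices of $K_0$ may be present in the ambient complex even though they are not faces of $K_0$. The key fix is to pass to a suitable iterated barycentric subdivision: subdividing the faces of $\mathfrak S^N$ that lie in the maximal subcomplex generated by $K_0$ separates the vertices of $K_0$ enough that no spurious higher simplex appears. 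After subdivision one obtains a simplicial \emph{convex} polytope boundary $K^*$ (a stellar subdivision of $\partial\mathfrak S^N$ is still polytopal) together with a subset $\mathcal I \subset \mathbf F$ of its vertices — namely the images of the original vertices $V$ — such that the maximal simplicial subcomplex $K^*_{\mathcal I}$ is exactly (isomorphic to) $K_0$. This step is the crux and I expect it to be the main obstacle: one must check that a finite number of barycentric (or stellar) subdivisions genuinely makes the copy of $K_0$ maximal, and that the resulting complex is still the boundary complex of a simplicial convex polytope. This is precisely the point the excerpt defers to \cite{BM}, so I would cite that verification.

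Once $K^*$ is in hand as the dual of a simple polytope $K$, I would invoke the reconstruction machinery (the passage from a simple polytope $K$ with $n-k$ facets to an admissible configuration $\Lam$ whose associated polytope is $K$, as in section \ref{associatedpolytope} and the Gale-duality discussion) to produce an LVM manifold $N = N_\Lam$ with associated polytope $K$ and with no indispensable points, so that $M_1(\Lam) = M_0(\Lam)$ is $2$-connected. Here $q = \dim K^*$ (equivalently $q$ is determined by the dimensions $m,n$ via $\dim K = n-2m-1$), and I would arrange the construction so that $q$ equals the relevant offset in the statement.

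Finally I would apply Theorem \ref{homology-M_0}:
\[
H_j(M_0(\Lam),\mathbb Z) = \bigoplus_{\mathcal J \subset \mathbf F}\tilde H_{j-|\mathcal J|-1}(K^*_{\mathcal J},\mathbb Z).
\]
Taking the single summand corresponding to $\mathcal J = \mathcal I$ with $|\mathcal I| = N+1 = q+1$ (or whatever the subdivided cardinality dictates — one tracks the bookkeeping so that $|\mathcal I| = q+1$), this summand contributes $\tilde H_{j-q-1}(K^*_{\mathcal I},\mathbb Z) = \tilde H_{j-q-1}(K_0,\mathbb Z)$. Setting $j = i+q+1$ gives the claimed
\[
H_{i+q+1}(N,\mathbb Z) = \tilde H_i(K_0,\mathbb Z) \oplus \cdots
\]
for $0 \le i \le \dim K_0$, where the ``$\cdots$'' absorbs all the other summands indexed by $\mathcal J \ne \mathcal I$. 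The remaining routine point is to confirm that these other summands do not accidentally cancel or obstruct the displayed direct-sum decomposition — but since the formula in Theorem \ref{homology-M_0} is already a direct sum over subsets $\mathcal J$, the summand for $\mathcal I$ splits off canonically, and no further argument is needed.
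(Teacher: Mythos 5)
Your proof follows essentially the same route as the paper: embed $K_0$ in the boundary of a simplex on its vertex set, repair non-maximality by barycentric subdivision of the faces in the maximal subcomplex generated by $K_0$ (deferring the verification that this works and stays polytopal to \cite{BM}, exactly as the paper does), dualize and reconstruct an admissible configuration with no indispensable points, then read off the stated summand from the direct-sum homology formula of Theorem \ref{homology-M_0}. One small bookkeeping slip: in the formula $\tilde H_{j-|\mathcal J|-1}$ the choice $|\mathcal I|=q+1$ gives $\tilde H_{j-q-2}$, not $\tilde H_{j-q-1}$, so for $H_{i+q+1}$ to contain $\tilde H_i(K_0)$ one needs $q=|\mathcal I|$, the number of vertices of $K_0$; since the excerpt never defines $q$, this is a reasonable ambiguity to flag, and you did so.
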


Hence there exist an LVM manifold such that its homology has as a direct summand the homology of
a given simplicial complex, in particular its homology can be as complex as one wishes. For instance, given a
finite abelian group $\mathfrak{G}$ there exists a configuration $\Lam$ such that $N_{\Lam}$
 has as subgroup $\mathfrak{G}$ in its group of torsion.

\begin{remark} In {\bf (\cite{BM}, Theorem 10.1)} one finds a formula describing the ring structure via the cup product of the cohomology of these manifolds.
\end{remark}

\begin{remark} \textcolor{blue}{More details and results about the homology of moment-angle manifolds using the fact that they have in many cases an open-book structure will be found in section \ref{maiq}  subsection \ref{I 2}}
\end{remark}

\section{Wall-crossing}

Let us consider again figure 5 before now considered as figure 9 with the purpose of illustrating
the process of \emph{wall-crossing}

\begin{figure}[h]  
\begin{center}
\includegraphics[height=3cm]{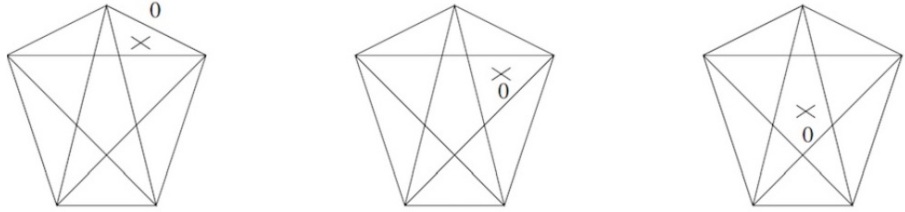}
\end{center}
\caption{\sl Wall-crossing from one chamber to another} 
\label{Figure 3}
\end{figure}

Consider in figure 9 different positions of the origin (marked as a cross) with respect to a configuration which is a regular pentagon and in the three positions the pentagon has been translated so that the origin is in different ``chambers'' 
bounded by the diagonals of the pentagon. 
We see that if the point marked with a cross moves from the figure on the left to the figure on the right then the figure in the left has two
indispensable points, in the second there is one indispensable point and in the figure at right there are not indispensable
points. The the manifolds from left to right are, respectively, $\s^{5}\times\s^{1}\times\s^{1}$, then 
$\s^{3}\times\s^{3}\times\s^{1}$ and finally $\#5(\s^{3}\times\s^{4})$. 

\medskip

As we mentioned before, these configurations are similar: one passes from one to the other translating $\Lam$  by a family
$\Lam_t$ or if one wishes
translating the origin. If we take the latter perspective and if we regard the translation of the origin as a homotopy along 
which $0$ moves, we see that there is a moment in which $0$ crosses at certain moment  a 
``wall'' $[\Lambda_{i}\Lambda_{j}]$ (in fact one crosses first a wall to go from left to the middle and the another to go from the middle to the right). The topology changes exactly after crossing the wall. In effect, if $0$ does not encounters the wall 
the configuration is admissible and $M_1(\Lam)$ does not change differentiably (again using Ehresmann lemma).
After crossing the mure the topology of $M_{1}$ changes drastically and after crossing the wall, by the same argument using Ehresmann lemma, nothing happens for the rest of the homotopy.

\medskip  
This situation generalizes to every dimension 

\medskip 
\noindent {\bf Question.}
\emph{How does the topology of $M_{1}$ changes  when we cross a wall?}

\medskip 

Let us see what happens in our example in figure 9 at the level of the associated polygons.
At  the left one has a triangle, in the middle a square and finally at right  a pentagon. In other words
one passes from the configuration at the left to the configuration in the middle by a surgery of type
$(1,2)$, then from the configuration in the middle to that in the right to a second surgery of type 
$(1,2)$. This solves completely this particular case.

\medskip

Some simple arguments of convex geometry allows us to see that everything is analogous in the general case.
When one crosses a wall in a configuration $(\Lambda_{1},\hdots,\Lambda_{n})$, 
 the wall is supported by  $2m$ vectors $\Lambda_{i}$. This wall separates the 
 convex envelope of $\Lam$ in two connected components one contains $0$ {\it before} the other contains $0$  {\it after}.
The $\Lambda_{j}$'s which do not belong to the wall divide in two parts:
 $a$ belong to the part that contains $0$ before crossing the wall and $b$ to the part that contains $0$
 after crossing the wall.  One of course has $a+b=n-2m$, namely $a+b$ is equal to the dimension of the 
 associated polytope plus one. We say that it is a wall-crossing of type $(a,b)$. 
 
 \noi With this notation we have:

\begin{theorem} {\bf (\cite{BM}, Theorem 5.4.)}
Let $\Lam$ et $\Lam'$ be two admissible configurations. 
 Suppose that $\Lam'$ is obtained from $\Lam$ through a wall crossing of type
$(a,b)$. Then

\medskip
\noindent (i) The polytope associated to $K'$ is obtained from $K$ by a flip of type $(a,b)$.

\noindent (ii) The manifold $M_{1}(\Lam')$ is obtained from $M_{1}(\Lam)$ by an elementary 
surgery of type $(a,b)$.

\end{theorem}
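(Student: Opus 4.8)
The plan is to establish both claims by a careful analysis of what happens to the set $\mathcal{S}_\Lam$ and to the associated polytope $K_\Lam$ as the origin crosses the wall, reducing everything to the combinatorial picture and then invoking the surgery description already developed in the previous sections. First I would fix the wall $H$, supported by $2m$ of the vectors, say $\Lambda_{i_1},\dots,\Lambda_{i_{2m}}$, and denote by $A$ (resp. $B$) the set of the remaining $n-2m$ indices whose vectors lie on the side of $H$ containing $0$ before (resp. after) the crossing. The key combinatorial observation is that for a subset $J\subset\{1,\dots,n\}$, whether $0\in\mathcal{H}(\Lambda_j)_{j\in J}$ can only change, as we move the origin across $H$, when $J$ is such that the convex hull of $(\Lambda_j)_{j\in J}$ actually meets $H$ transversally near $0$; by the weak hyperbolicity hypothesis (which holds on both sides and in fact along the whole homotopy except at the instant of crossing, by the assumption that $\Lam'$ is obtained from $\Lam$ by a wall-crossing), such a $J$ must contain all of $i_1,\dots,i_{2m}$. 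This gives a clean description of the symmetric difference between $\mathcal{S}_\Lam$ and $\mathcal{S}_{\Lam'}$ in terms of the strata indexed by subsets of $A$ versus subsets of $B$.

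Next, for part (i), I would translate this into the polytope language using the numbering \eqref{Knumbering}: faces of $K_\Lam$ correspond to subsets $J$ with $0\in\mathcal{H}(\Lam_{J^c})$. The analysis above shows that the faces of $K_\Lam$ and $K_{\Lam'}$ indexed by subsets disjoint from $\{i_1,\dots,i_{2m}\}$ or meeting it but not containing it are unchanged, while exactly the faces built using a subset containing a chosen simplicial face among the $A$-directions get replaced by faces using the complementary simplicial face in the $B$-directions. Matching this against the definition of an elementary cobordism and a flip of type $(a,b)$ — recalling that $a=|A|$, $b=|B|$ and $a+b=n-2m=\dim K_\Lam+1$ — shows $K_{\Lam'}$ is obtained from $K_\Lam$ by a flip of type $(a,b)$. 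This is essentially a bookkeeping argument once the combinatorial stability lemma is in place; it is cleanest to phrase it by exhibiting the $(n-2m)$-dimensional polytope $W$ realizing the cobordism explicitly as the associated polytope of the configuration with the origin placed \emph{on} the wall and then perturbed in a one-parameter family — though one must be slightly careful that this degenerate configuration is not admissible, so $W$ should instead be built combinatorially.

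For part (ii), with part (i) in hand I would invoke the equivariant homeomorphism between $M_1(\Lam)$ and the moment-angle manifold $\mathcal{Z}_{K_\Lam}$ (the lemma attributed to \cite{BM}, after splitting off the $(\s^1)^k$ factor as in Lemma \ref{indispensable-lemma}), together with the surgery description developed in the section on flips: there it is shown that if $K'$ is obtained from $K$ by a flip of type $(a,b)$, then the corresponding $M_0$ undergoes precisely the elementary surgery of type $(a,b)$ written out in that section. One has to check that the number of indispensable points changes in the way compatible with these formulas — crossing the wall can turn an indispensable point into a dispensable one or vice versa, which is exactly the bookkeeping of the $(\s^1)^k$ factors appearing in the surgery formulas — and this again follows from the combinatorial stability lemma, since $\Lambda_i$ is indispensable iff the facet $K_i$ is absent, i.e.\ iff $0\notin\mathcal{H}(\Lam_{\{i\}^c})$.

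The main obstacle I anticipate is the first, combinatorial stability step: proving rigorously that as the origin moves across the wall, the \emph{only} incidence relations $0\in\mathcal{H}(\Lam_J)$ that change are those with $\{i_1,\dots,i_{2m}\}\subset J$, and controlling exactly how they change (a subset of the $A$-side faces disappears and a complementary family of $B$-side faces appears). This requires using weak hyperbolicity to rule out any higher-codimension degeneracies occurring simultaneously at the crossing instant — i.e.\ that the origin meets only one wall at a time and meets it transversally in the relevant affine subspaces — and then a local convex-geometry argument near the wall identifying the new and old faces with simplices of complementary dimensions $a-1$ and $b-1$. Once this is done, both (i) and (ii) are formal consequences of the already-established dictionary between configurations, polytopes, moment-angle manifolds, flips and surgeries.
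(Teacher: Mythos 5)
Your outline is correct and follows the route the paper indicates: establish combinatorial stability of the incidence conditions $0\in\mathcal H(\Lam_J)$ across a single generic wall-crossing (using weak hyperbolicity on both sides to localize the change to the unique wall $H$), translate this via the numbering \eqref{Knumbering} into a flip of type $(a,b)$ of the associated polytope, and then read off part (ii) from the flip-to-surgery dictionary together with the equivariant identification of $M_1(\Lam)$ with the moment-angle complex over $K_{\Lam}$ — the paper itself only sketches the convex-geometry step and defers the full proof to \cite{BM}. One bookkeeping slip worth fixing: since $K_J$ exists iff $0\in\mathcal H(\Lam_{J^c})$, your stability lemma (stated for $0\in\mathcal H(\Lam_J)$) shows that the faces which can change are exactly those with $J$ \emph{disjoint} from $\{i_1,\dots,i_{2m}\}$, i.e.\ $J\subset A\cup B$, with $K_J$ disappearing when $B\subset J$ and appearing when $A\subset J$ — you described the changed/unchanged roles the other way around, but this does not affect the overall structure of the argument.
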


One can be more precise and characterize the face of the polytope where the ``flip'' happens
(or equivalently the subvariety along which we perform an equivariant surgery in function of the wall)

\section{LVMB manifolds}
\noi Recall that
\label{LVMB}
\begin{equation}
\label{SLambda}
\mathcal S:=\{z\in\mathbb C^n \quad\vert\quad 0\in\mathcal H(\Lambda_{I_z})\}
\end{equation}
where 
\begin{equation}
\label{Iz}
i\in I_z\iff z_i\not =0.
\end{equation}
Then $N_{\Lam}$ is the quotient of the projectivization $\mathbb P(\mathcal S)$ by the holomorphic action
\begin{equation}
\label{actionLVM}
(T,[z])\in\mathbb C^m\times \mathbb P(\mathcal S)\longmapsto \big [z_i\exp \langle \Lambda_i,T\rangle\big ]_{i=1,\hdots,n}
\end{equation}
where $\langle -,-\rangle$ denotes the inner product of $\mathbb C^m$, and not the hermitian one. It is a compact complex manifold of dimension $n-m-1$, which is either a $m$-dimensional compact complex torus (for $n=2m+1$) or a non k\"ahler manifold (for $n>2m+1$).

\subsection{Bosio manifolds} In \cite{Bosio}, Fr\'ed\'eric Bosio gave a generalization of the previous construction. His idea was to relax the weak hyperbolicity and Siegel conditions for $\Lam$ and to look for all the subsets $\mathcal{S}$ of $\C^n$ such that action \eqref{actionLVM} is free and proper. 

To be more precise, let $n\geq 2m+1$ and let $\Lam=(\Lambda_1,\hdots,\Lambda_n)$ be a configuration of $n$ vectors in $\C^m$. 
Let also $\mathcal E$ be a non-empty set of subsets of $\{1,\hdots,n\}$ of cardinal $2m+1$ and set
\begin{equation}
\label{SB}
\mathcal{S}=\{z\in\C^n\mid  I_z\supset E\text{\, for some\, }E\in\mathcal E\}
\end{equation}
Assume that 
\begin{enumerate}
\item[(i)] For all $E\in\mathcal E$, the affine hull of $(\Lambda_i)_{i\in E}$ is the whole $\C^m$.
\item[(ii)] For all couples $(E,E')\in\mathcal E\times\mathcal E$, the convex hulls $\mathcal H((\Lambda_i)_{i\in E})$ and $\mathcal H((\Lambda_i)_{i\in E'})$ have non-empty interior.
\item[(iii)] For all $E\in\mathcal E$ and for every $k\in\{1,\hdots,n\}$, there exists some $k'\in E$ such that $E\setminus\{k'\}\cup\{k'\}$ belongs to $\mathcal E$.
\end{enumerate}
Then, action \eqref{actionLVM} is free and proper \cite[Th\'eor\`eme 1.4]{Bosio}. We still denote it by $N_\Lambda$ althought it also depends on the choice of $\mathcal{S}$. As in the LVM case, it is a compact complex manifold of dimension $n-m-1$, which is either a $m$-dimensional compact complex torus (for $n=2m+1$) or a non K\"ahler manifold (for $n>2m+1$).

Assume now that $(\Lambda_1,\hdots,\Lambda_n)$ is an admissible configuration. Let
\begin{equation}
\label{ELVM}
\mathcal E=\{I\subset\{1,\hdots, n\}\mid 0\in\mathcal H((\Lambda_i)_{i\in I}\}
\end{equation}
Then \eqref{SB} and \eqref{SLambda} are equal, the previous three properties are satisfied and the LVMB manifold is exactly the corresponding LVM.

We say that $\Lambda_i$, or simply $i$, is an {\it indispensable point} if every point $z$ of $\mathcal{S}$ satisfies $z_i\not =0$. We denote by $k$ the number of indispensable points.

\subsubsection{The associated polytope of a LVM manifold}
\label{asspol}

In this section, $N_\Lambda$ is a LVM manifold.
Recall that the manifold $N_\Lambda$ embeds in $\mathbb P^{n-1}$ as the $C^\infty$ submanifold
\begin{equation}
\label{Nsmooth}
N =\{[z]\in\mathbb P^{n-1}\quad\vert\quad \sum_{i=1}^n\Lambda\vert z_i\vert ^2=0\}.
\end{equation}
It is crucial to notice that this embedding is not arbitrary but has a clear geometric meaning. Indeed, it is proven in \cite{MeThesis} that action \eqref{actionLVM} induces a foliation of $\mathcal{S}$; that every leaf admits a unique point closest to the origin (for the euclidean metric); and finally that \eqref{Nsmooth} is the projectivization of the set of all these minima\footnote{This is a sort of non-algebraic Kempf-Ness Theorem.}. So we may say that this embedding is canonical. 
\vspace{5pt}\\
The maximal compact subgroup $(\mathbb S^1)^n\subset (\mathbb C^*)^n$ acts on $\mathcal{S}$, and thus on $N_\Lambda$. This action is clear on the smooth model \eqref{Nsmooth}. Notice that it reduces to a $(\mathbb S^1)^{n-1}$ since we projectivized everything.
\vspace{5pt}\\
The quotient of $N_\Lambda$ by this action is easily seen to be a simple convex polytope of dimension $n-2m-1$, cf. \cite{MeThesis} and \cite{MV}. Up to scaling, it is canonically identified to 
\begin{equation}
\label{KLambda}
K_{\Lam}:=\{r\in(\mathbb R^+)^n\quad\vert\quad \sum_{i=1}^n\Lambda r_i=0,\ \sum_{i=1}^nr_i=1\}.
\end{equation}
It is important to have a description of $K_{\Lam}$ as a convex polytope in $\mathbb R^{n-2m-1}$. This can be done as follows. Take a Gale diagram of $\Lam$, that is a basis of solutions $(v_1,\hdots, v_n)$ over $\mathbb R$ of the system
\begin{equation}
\label{systemS}
\left\{
\begin{aligned}
\sum_{i=1}^n\Lambda_ix_i&=0\cr
\sum_{i=1}^nx_i&=0
\end{aligned}
\right.
\end{equation}
Take also a point $\epsilon$ in $K_{\Lam}$. This gives a presentation of $K_\Lambda$ as
\begin{equation}
\label{KLambdaproj}
\{x\in\mathbb R^{n-2m-1}\quad\vert\quad \langle x,v_i\rangle\geq -\epsilon_i\text{ for }i=1,\hdots, n\}
\end{equation}
This presentation is not unique. Indeed, taking into account that $K_\Lambda$ is unique only up to scaling, we have
\begin{lemma}
\label{Kupto}
The projection \eqref{KLambdaproj} is unique up to action of the affine group of $\mathbb R^{n-2m-1}$.
\end{lemma}
On the combinatorial side, $K_\Lambda$ has the following property. A point $r\in K_\Lambda$ is a vertex if and only if the set $I$ of indices $i$ for which $r_i$ is zero is maximal, that is has $n-2m-1$ elements. Moreover, we have
\begin{equation}
\label{Kcombinatoire}
r\text{ is a vertex }\iff \mathcal{S}\cap \{z_i=0\text{ for }i\in I\}\not =\emptyset\iff 0\in\mathcal H(\Lambda_{I^c})
\end{equation}
for $I^c$ the complementary subset to $I$ in $\{1,\hdots,n\}$.  This gives a numbering of the faces of $K_\Lambda$ by the corresponding set of indices of zero coordinates. To be more precise, we have
\begin{equation}
\label{Knumbering}
\begin{aligned}
&J\subset\{1,\hdots,n\}\text{ is a face of codimension Card }J\cr
&\iff \mathcal{S}\cap \{z_i=0\text{ for }i\in J\}\not =\emptyset\iff 0\in\mathcal H(\Lambda_{J^c})
\end{aligned}
\end{equation}
In particular, $K_\Lambda$ has $n-k$ facets. Observe moreover that the action \eqref{actionLVM} fixes $\mathcal{S}\cap \{z_i=0\text{ for }i\in J\}$, hence its quotient defines a submanifold $N_J$ of $N_\Lambda$ of codimension $\text{Card J}$.

Also, \eqref{Knumbering} implies that
\begin{equation}
\label{Sdelta}
\mathcal{S}=\{z\in\C^n\mid I_z^c\text{ is a face of } K_\Lambda\}
\end{equation}

\medskip

\section{\textcolor{blue}{Moment-angle manifolds and intersection of quadrics}}\label{maiq}

\begin{remark}\textcolor{brown}{This section is based on the papers
\cite{BLdMV, BLV, GLdM}  and it borrows freely a lot from them.
In order to be compatible with the notation in these papers, we use in this section sometimes different notations that the ones used in the previous
sections, for instance the moment-angle manifolds $M_1(\Lam)$ are called here $\ZLam$ and the corresponding
LVM manifolds $N_{\Lam}$ are denoted here $\vdu(\Lam)=\ZLam/\s^{^1}$}
\end{remark}

The topology of generic intersections of quadrics in $\R^n$ of the form:
\[
\sum_{i=1}^n\lam_ix_i^2=0,\quad\quad\sum_{i=1}^n x_i^2=1, \,\,\, \rm{where}\,\, \lam_i\in\R^k,\, i=1,\dots,n
\label{intersection_quadrics} 
\]

\noi appears naturally in many instances and has been studied for many years. If $k=2$ they are  diffeomorphic to a triple product of spheres or to the connected sum of sphere products (\cite {GoLdM, LdM2}); for $k>2$ this is no longer the case (\cite{BBCG}, \cite{BM}) but there are large families of them which are again connected sums of spheres products (\cite{GLdM}).\\

\noi The generic condition, known as \emph{weak hyperbolicity} and equivalent to regularity of the manifold, is the following:\\

\noi\emph{If $J\subset{1,\dots,m}$ has $k$ or fewer elements then the origin is not in the convex hull of the $\lam_i$ with $i\in J$.}\\

\noi A crucial feature of these manifolds is that they admit natural group actions: all of them admit $\Z^{^n}_2$ actions by changing the signs of the coordinates.\\ 

\noi Their complex versions in $\C^n$, which we denote by $Z^{\C}$ or $Z^{\C}(\Lam)$  
(\textcolor{red}{denoted by $M_1(\Lam)$ in the previous sections}),

$$
\sum_{i=1}^n\lam_i|z_i|^2=0,\quad\quad \sum_{i=1}^n|z_i|^2=1 , \,\,\, \rm{where}\,\, \lam_i\in\C^k,\, i=1,\dots,n
\label{intersection_quadrics} 
$$
\noi (now known as \emph{moment-angle manifolds}) admit natural actions of the $n$-torus $\T^n$
\[
\left((u_i,\hdots,u_n),(z_i,\hdots,z_n)\right)\mapsto(u_1z_1,\hdots.u_nz_n)
\]

The quotient can be identified in both cases with the polytope $\p$ given by

$$
\sum_{i=1}^n\lam_ir_i=0,\quad\quad\sum_{i=1}^nr_i=1,\quad\quad r_i\geqslant 0.
$$
that determines completely the varieties (so we can use the notations $Z(\p)$ and $Z^{^\C}(\p)$ for them) as well as the actions. The weak hyperbolicity condition implies that $\p$ is a simple polytope and any simple polytope  can be realized as such a quotient. 

\noi The facets of $\p$ are its non-empty intersections with the coordinate hyperplanes. If all such intersections are non-empty $Z$ and $Z^{^\C}$ fall under the general concept of \emph{generalized moment-angle complexes} (\cite{BBCG}).\\

\noi If we take the quotient of $\ZLam$ by the scalar action of $\s^{^1}$: 
$$
\vdu(\Lam)=\ZLam/\s^{^1},
$$

\noi we obtain a compact, smooth LVM manifold  $\vdu(\Lam)\subset \CP^{^{n-1}}$.\\

\noi When $k$ is even, $\vdu(\Lam)$ and $\ZLam\times\s^{^1}$ have natural complex structures and so does $\ZLam$ itself when $k$ is odd, but admit symplectic structures only in a few well-known cases (\cite{LdMV}, \cite{Me}). \\

\noi An open book construction was used to describe the topology of $Z$ for $k=2$ in some cases not covered by Theorem 2 in \cite{LdM2}. In \cite{GLdM} it is a principal technique for studying the case $k>2$. In section I-1 \ref{I-1} we recall this construction, underlining the case of moment-angle manifolds:\\

\noi \textit{If $\p$ is a simple convex polytope and $F$ one of its facets, $Z^{\C}(\p)$ admits an open book decomposition with binding $Z^{^\C}(F)$ and  trivial mo\-no\-dro\-my.}\\

\noi When $k=2$, the varieties $Z$ and $\ZLam$ can be put in a normal form given by an \emph{odd cyclic partition} (see section I-1 \ref{I-1}) and they are  diffeomorphic to a triple product of spheres or to the connected sum of sphere products (see \cite{LdM2, GLdM}). Using the same normal form, we give a topological description of the leaves 
of their open book decompositions which is complete in the case of moment-angle manifolds:\\

\noi \textit{The leaf of the open book decomposition of $\ZLam$ is the interior of:}
\begin{itemize}
\item [a)]\noi  \textit{a product $\s^{2n_2-1}\times\s^{2n_3-1}\times\disc^{2n_1-2}$,}

\item [b)]\noi  \textit{a connected sum along the boundary of products of the form $\s^p\times\disc^{2n-p-4}$,}

\item [c)]\noi  \textit{in some cases, there may appear summands of the form:}
\vspace{0.025in}

\noi \textit{a punctured product of spheres $\s^{2p-1}\times\s^{2n-2p-3}\backslash\disc^{2n-4}$ or}
\vspace{0.025in}

\noi \textit{the exterior of an embedding $\,\,\,\,\,\,\,\s^{2q-1}\times\s^{2r-1} \subset \,\,\,\s^{2n-4}.$}
\end{itemize}

\noi The precise result (Theorem \ref{pageC} in section \ref{I-1}) follows from Theorem \ref{3} in section I-4, a general theorem that gives the topological description of the \emph{half} real varieties $Z_{_+}=Z\cap\{x_1\geqslant 0\}$,  and requires additional dimensional and connectivity hypotheses that should be avoidable using the methods of \cite{GoLdM}. Some of the proofs follow directly from the result in \cite{LdM2}, while other ones require the use of some parts of its proof. All these manifolds with boundary are also generalized moment-angle complexes.\\

\noi In part II \ref{S3}, using a recent deep result about contact forms due to Borman, Eliashberg and Murphy \cite{BEM}, we show that every odd dimensional moment-angle manifold admits a contact structure. This is surprising since even dimensional moment-angle manifolds admit symplectic structures only in a few well-known cases. We also show this for large families of more general odd-dimensional intersections of quadrics by a different argument.\\

\newpage

\section*{Part I. Open book structures.}\label{open-book-structure}

\subsection*{I-1. Construction of the open books.}\label{I-1}

\noi Let $\Lam'$ be obtained from $\Lam$ by adding an extra $\lambda_1$ which we interpret as the coefficient of a new extra variable $x_0$, so we get the variety $Z'$:
$$
\lambda_1\left(x^2_0+x^2_1\right)+\sum_{i>1}\lambda_ix^2_i=0,\,\,\,\,\,\,\,\,\,\,(x^2_0+x^2_1)+\sum_{i>1}x^2_i=1.
$$

\noi Let $Z_+$ be the intersection of $Z$ with the half space $x_1\geqslant 0$. $Z_+$ admits an action of $\Z^{^{n-1}}_2$ with quotient the same $\p$: $Z_+$ can be obtained by reflecting $\p$ on all the coordinate hyperplanes except $x_1=0$. $Z_+$ is a manifold with boundary $Z_0$ which is the intersection of $Z$ with the subspace $x_1=0$.\\

\noi Consider the action of $\s^1$ on $Z'$ by rotation of the coordinates  $\left(x_0,x_1\right)$ . This action fixes the points of $Z_0$ and all its other orbits cut $Z_+$ transversely in exactly one point. So $Z'$ is the open book with binding $Z_0$, page $Z_+$ and trivial monodromy:

\begin{theorem}\label{bookR}
i) Every manifold $Z'$ is an open book with trivial monodromy, bin\-ding $Z_0$ and page $Z_+$.\\

\noi ii) If $\p$ is a simple convex polytope and $F$ is one of its facets, there is an open book decomposition of $Z^{^\C}(\p)$ with binding $Z^{^\C}(F)$ and trivial monodromy.
\end{theorem}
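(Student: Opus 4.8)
The plan is to read off the open book directly from the obvious circle symmetry, doing first the elementary bookkeeping that makes everything smooth, and then to deduce (ii) from (i) by realification.

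\emph{Proof of (i).} Write $w=x_0+ix_1\in\C$, so that the equations of $Z'$ become $\lambda_1|w|^2+\sum_{i>1}\lambda_ix_i^2=0$ and $|w|^2+\sum_{i>1}x_i^2=1$, and let $\s^1$ act on $Z'$ by $w\mapsto e^{i\theta}w$, fixing the other coordinates. I would first record two combinatorial facts: duplicating a vector of a configuration (here $\lambda_1$, used for both $x_0$ and $x_1$) leaves the family of convex hulls of subfamilies of size $\le k$ unchanged up to reindexing, and deleting a vector only shrinks it; hence $Z'$ satisfies weak hyperbolicity because $Z$ does, so $Z'$ is a regular intersection, and $Z_0=Z'\cap\{x_0=x_1=0\}=Z\cap\{x_1=0\}$ is again a regular intersection, of codimension $2$ in $Z'$. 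Next, at any point $p$ with $x_1(p)=0$ each differential of a defining equation, being of the form $\sum 2\lambda_i^jx_i\,dx_i$ or $\sum 2x_i\,dx_i$, has vanishing $dx_1$-component (and, for the equations of $Z'$, also vanishing $dx_0$-component). Consequently $0$ is a regular value of $x_1|_Z$, so $Z_+=Z\cap\{x_1\ge 0\}$ is a manifold with boundary $\partial Z_+=Z_0$, and the normal bundle of $Z_0$ in $Z'$ is spanned by $\partial_{x_0},\partial_{x_1}$; that is, it is the trivial plane bundle trivialized by the coordinate $w$, on which $\s^1$ acts by standard rotation.

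With this in hand the open book is immediate. The map $\pi\colon Z'\setminus Z_0\to\s^1$, $\pi(z)=w/|w|$, is $\s^1$-equivariant, and $\s^1$ acts freely on $Z'\setminus Z_0$. The map $\operatorname{int}(Z_+)\times\s^1\to Z'\setminus Z_0$, $(p,e^{i\theta})\mapsto e^{i\theta}\cdot p$, is a diffeomorphism — given $q$ with $w(q)=|w|e^{i\varphi}$, put $p=e^{-i\varphi}\cdot q$, which lies in $\operatorname{int}(Z_+)=Z\cap\{x_1>0\}$ and determines $\varphi$ uniquely — so $\pi$ is a trivial fibration with fibre $\operatorname{int}(Z_+)$ and identity monodromy. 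An $\s^1$-equivariant tubular neighbourhood of $Z_0$, which by the normal-bundle computation is equivariantly $Z_0\times\disc^2$ with $\s^1$ rotating $\disc^2$, turns $\pi$ into the standard angular projection there; the usual bookkeeping then assembles these pieces into an open book $(Z';Z_0,\pi)$ with page $Z_+$ and trivial monodromy. (If $\lambda_1$ is indispensable then $Z_0=\varnothing$ and $Z'\cong Z_+\times\s^1$, still an open book in the degenerate sense.)

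\emph{Proof of (ii).} Realize the simple polytope $\mathcal P$ as the quotient of $Z^{^\C}(\Lam)$ for a configuration $\Lam=(\lambda_1,\dots,\lambda_n)$, as in the construction relating polytopes and quadrics recalled above, and take $F$ to be the facet $\{r_1=0\}$, so $Z^{^\C}(F)=Z^{^\C}(\Lam)\cap\{z_1=0\}$. Writing $z_j=x_j+iy_j$, the manifold $Z^{^\C}(\Lam)\subset\C^n\cong\R^{2n}$ is exactly the real intersection of quadrics with coefficient list $(\lambda_1,\lambda_1,\lambda_2,\lambda_2,\dots,\lambda_n,\lambda_n)$ attached to $(x_1,y_1,x_2,y_2,\dots,x_n,y_n)$; regarding $(x_1,y_1)$ as the doubled pair, with $y_1$ the old coordinate and $x_1$ the new one, this is precisely a variety of the form $Z'$ built over the real variety $Z^\flat$ with coefficients $(\lambda_1,\lambda_2,\lambda_2,\dots,\lambda_n,\lambda_n)$ on $(y_1,x_2,y_2,\dots,x_n,y_n)$. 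Part (i) then gives an open book structure on $Z^{^\C}(\Lam)$ with trivial monodromy whose binding is $Z^\flat\cap\{x_1=y_1=0\}=Z^{^\C}(\Lam)\cap\{z_1=0\}=Z^{^\C}(F)$, as claimed. (Equivalently, with the same content, one may run the argument of (i) verbatim using the subcircle of $\T^n$ rotating only $z_1$ and the projection $z\mapsto z_1/|z_1|$.)

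The only point requiring more than bookkeeping — the expected obstacle — is the identification near the binding: one must upgrade ``$\pi$ is a trivial fibration over $\s^1$ away from the codimension-$2$ submanifold $Z_0$'' to ``$(Z';Z_0,\pi)$ is an open book in the usual sense'', which needs the $\s^1$-equivariant tubular neighbourhood theorem together with the explicit trivialization of the normal bundle of $Z_0$ by $w=x_0+ix_1$. The remaining steps are elementary.
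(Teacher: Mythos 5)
Your argument is correct and follows the paper's approach (the $\s^1$-rotation of $(x_0,x_1)$, whose one-line sketch you flesh out with the regularity check, the normal-bundle computation, the equivariant tubular-neighbourhood step near $Z_0$, and the realification argument for (ii)). One small slip: with $w=x_0+ix_1$ and $\pi(z)=w/|w|$, the point $p=e^{-i\varphi}\cdot q$ has $x_1(p)=0$ and $x_0(p)=|w(q)|>0$, so it lies on the page at angle $0$ rather than in $Z\cap\{x_1>0\}=Z'\cap\{x_0=0,\,x_1>0\}$ as stated; this is harmless (the two pages differ by a $\pi/2$ rotation of the $(x_0,x_1)$-plane, or use $p=e^{i(\pi/2-\varphi)}\cdot q$), but the identification as written is off by that rotation.
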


\noi (ii) follows because if we write the equations of $Z^{^\C}(\p)$ in real coordinates, we get terms $\lambda_i (x_i^2+y_i^2)$ so each $\lambda_i$ appears twice as a coefficient and $Z^{^\C}(\p)$ is a variety of the type $Z'$ in several ways. It is then an open book with binding the manifold $Z^{^\C}_0(\p)$ obtained by taking $z_i=0$.\\

\noi When $k=2$ it can be assumed $\Lam$ is one of the following normal forms (see \cite{LdM2}): Take $n=n_1+\dots+n_{2\ell+1}$ a partition of $n$ into an odd number of positive integers. Consider the configuration $\Lam$ consisting of the vertices of a regular polygon with $(2\ell+1)$ vertices, where the $i$-th vertex in the cyclic order appears with multiplicity $n_{i}$. \\
\begin{center}
\begin{figure}[h]
\vspace{-0.3in}\centerline{\includegraphics[height=3.5cm]{1-eps-converted-to.pdf}}
\end{figure}
\end{center}   
\vspace{-0.35in}
\noi The topology of $Z$ and $\ZLam$ can be completely described in terms of the numbers $d_i=n_i+\dots+n_{i+\ell-1}$, i.e., the sums of $\ell$ consecutive $n_i$ in the cyclic order of the partition (see \cite{LdM2}, \cite{GoLdM} 
and section I-1 \ref{I-1}):
\begin{center}
\begin{figure}[h]
\centerline{\includegraphics[height=3.5cm]{2bis.jpg}}
\end{figure}
\end{center}  
\vspace{-0.3in}
\noi For $\ell=1$: $Z=\s^{n_1-1}\times\s^{n_2-1}\times\s^{n_3-1},\,\,\,Z^\C=\s^{2n_1-1}\times\s^{2n_2-1}\times\s^{2n_3-1}$.\\

\noi For $\ell>1$: $Z=\gato_{j=1}^{2\ell+1}\left(\s^{d_i-1}\times\s^{n-d_i-2}\right),\,\,\, Z^\C=\gato_{j=1}^{2\ell+1}\left(\s^{2d_i-1}\times\s^{2n-2d_i-2}\right)$.\\

\noi Now we have a similar description of the topology of the leaves in all moment-angle manifolds, where $\coprod$ denotes connected sum along the boundary and $\te_{2n_2-1,2n_4-1}^{2n-4}$ is the exterior of $\s^{2n_2-1}\times\s^{2n_5-1}$ in $\s^{2n-4}$ (see section I-3 \ref{I-3}):

\begin{theorem}\label{pageC}
Let $k=2$, and consider the manifold $Z^{^\C}$ corresponding to the cyclic partition $n=n_1+\dots+n_{2\ell+1}$. Consider the open book decomposition of $Z^{^\C}$ corresponding to the binding at $z_1=0$, as given by Theorem 1. Then the leaf of this decomposition is diffeomorphic to the interior of:

\begin{itemize}
\item [a)] If $\ell=1$, the product 
$$
\s^{2n_2-1}\times\s^{2n_3-1}\times\disc^{2n_1-2}.
$$

\item [b)] If $\ell>1$ and $n_1>1$, the connected sum along the boundary of $2\ell+1$ manifolds:
$$
\coprod_{i=2}^{\ell+2}\left(\s^{2d_i-1}\times\disc^{2n-2d_i-3}\right)\coprod\coprod_{i=\ell+3}^1\left(\disc^{2d_i-2}\times\s^{2n-2d_i-2}\right).
$$

\item [c)] If $n_1=1$ and $\ell>2$, the connected sum along the boundary of $2\ell$ manifolds:
$$
\coprod_{i=3}^{\ell+1}\left(\s^{2d_i-1}\times\disc^{2n-2d_i-3}\right)\coprod\coprod_{i=\ell+3}^1\left(\disc^{2d_i-2}\times\s^{2n-2d_i-2}\right)
$$
$$
\coprod\left(\s^{2d_2-1}\times\s^{2d_{\ell+2}-1}\backslash\disc^{2n-4}\right).
$$

\item [d)] If $n_1=1$ and $\ell=2$, the connected sum along the boundary of two manifolds:
$$
\left(\s^{2d_2-1}\times\s^{2d_4-1}\backslash \disc^{2n-4}\right)\coprod\te_{2n_2-1,2n_5-1}^{2n-4}.
$$

\end{itemize}
\end{theorem}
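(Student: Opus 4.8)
The plan is to deduce Theorem~\ref{pageC} from the open book description of Theorem~\ref{bookR}, reducing it to a purely real problem about the \emph{half} varieties $Z_+$ that the paper isolates as Theorem~\ref{3}. First I would rewrite $\ZLam$ in real coordinates $z_j=x_j+iy_j$: the defining equations become $\sum_j\lambda_j(x_j^2+y_j^2)=0$ and $\sum_j(x_j^2+y_j^2)=1$, so $\ZLam$ is the real intersection of quadrics attached to the \emph{doubled} polygon configuration, in which the $i$th vertex of the $(2\ell+1)$-gon occurs with multiplicity $2n_i$. Taking the pair $(x_1,y_1)$ (both with coefficient $\lambda_1$) as the distinguished pair, Theorem~\ref{bookR}(i) presents $\ZLam$ as an open book with binding $\{z_1=0\}=Z^{\C}(F)$, trivial monodromy, and page the half real variety $Z_+$ attached to the (still odd) polygon normal form with partition $(2n_1-1,2n_2,\dots,2n_{2\ell+1})$, cut by a half-space in one coordinate over the first vertex. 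Hence the leaf is the interior of that $Z_+$, and everything comes down to identifying $Z_+$ up to diffeomorphism.

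For $\ell=1$ this is immediate and is exactly case (a): the partition $(2n_1-1,2n_2,2n_3)$ is a triangle, the full real variety is literally $\s^{2n_1-2}\times\s^{2n_2-1}\times\s^{2n_3-1}$, and intersecting with the half-space in one coordinate of the first block turns that first sphere into a hemisphere, i.e.\ replaces $\s^{2n_1-2}$ by $\disc^{2n_1-2}$. For $\ell>1$ the full real variety of $(2n_1-1,2n_2,\dots)$ is, by \cite{LdM2}, $\gato_{i=1}^{2\ell+1}\bigl(\s^{e_i-1}\times\s^{2n-e_i-3}\bigr)$, with $e_i=2d_i-1$ if the $i$th cyclic block of $\ell$ consecutive parts contains the first vertex and $e_i=2d_i$ otherwise, where $d_i=n_i+\dots+n_{i+\ell-1}$; and $Z_+$ is one of the two halves of the involution that reflects the distinguished coordinate, whose fixed locus is $Z_0=\partial Z_+$, the real variety of $(2n_1-2,2n_2,\dots)$. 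Rather than halve this connected sum by hand, I would build $Z_+$ directly by adapting the handle/Morse arguments of \cite{LdM2} and \cite{GoLdM} used for the closed varieties: attach, one per cyclic block, a handle $\s^{2d_i-1}\times\disc^{2n-2d_i-3}$ or $\disc^{2d_i-2}\times\s^{2n-2d_i-2}$ glued along its boundary to the running piece, and then use an h-cobordism / handle-cancellation argument to recognize the result as the boundary connected sum of those pieces; the consistency check is that the double of this $Z_+$ returns $\gato_i(\s^{e_i-1}\times\s^{2n-e_i-3})$.

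The four-way case split then records exactly the degeneracies of this procedure. When $n_1>1$ the binding partition $(n_1-1,n_2,\dots)$ is a nondegenerate odd normal form, the construction is clean, and every block contributes a hemisphered sphere product, giving (b). When $n_1=1$ the distinguished block has size $2n_1-1=1$ and deleting its single coordinate makes the binding an even polygon; the two pieces over the blocks flanking the first vertex (indices $i=2$ and $i=\ell+2$) can no longer be separated and fuse into one punctured product $\s^{2d_2-1}\times\s^{2d_{\ell+2}-1}\setminus\disc^{2n-4}$, which with the remaining hemisphered pieces is (c). For $\ell=2$ the polygon is so small that, in addition, the surviving hemisphered pieces collapse under boundary handle cancellation into the exterior $\te^{2n-4}_{2n_2-1,2n_5-1}$ of an embedded product of spheres, leaving the two summands of (d). In each case one then substitutes $d_i=n_i+\dots+n_{i+\ell-1}$ and checks dimensions.

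The hard part is the general half-variety statement (Theorem~\ref{3}) standing behind this: ``half of a connected sum'' is not automatically a boundary connected sum of halves, so the equivariant handle structure must be produced explicitly and the trivial pieces absorbed by h-cobordism, which needs the dimensional and high-connectivity hypotheses the paper flags. Fortunately, passing from $\Lam$ to the doubled configuration roughly doubles every dimension and the $(2d-2)$-connectivity of $\ZLam$, so those hypotheses hold automatically when $n_1>1$; the borderline situations are precisely $n_1=1$, which is why (c) and (d) are stated apart and carry the exotic $\setminus\disc$ and $\te$ summands. Checking the hypotheses — or bypassing them with the methods of \cite{GoLdM} — in the $n_1=1$ cases, and verifying that the surviving pieces reassemble as claimed, is where the genuine work lies; the rest is bookkeeping on the cyclic sums $d_i$.
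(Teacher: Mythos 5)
Your proposal matches the paper's proof: both reduce the complex open-book page to the real half variety $Z_+$ for the partition $2n-1=(2n_1-1)+2n_2+\dots+2n_{2\ell+1}$ and then invoke Theorem~\ref{3}, observing that its dimension and simple-connectivity hypotheses are automatic in the doubled (complex) case. The paper's proof of Theorem~\ref{pageC} is in fact exactly this short reduction, with all the genuine work — the handle constructions, the $h$-cobordism identifications, and the $n_1=1$ degeneracies you sketch — carried out beforehand in the proof of Theorem~\ref{3}.
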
 

\noi Theorem \ref{pageC}
 will follow from its real version (see Theorem \ref{3}). It follows also that in cases c) and d) the product of the leaf with an open interval is diffeomorphic to the interior of a connected sum along the boundary of the type of case b).\\

\noi For $k>2$, the topology of moment-angle manifolds and their leaves is much more complicated and it seems hopeless to give a complete description of them: they may have non-trivial triple Massey products (\cite{Ba}), any amount of torsion in their homology (\cite{BM}) or may be a different kind of open books (\cite{GLdM}). Nevertheless, it is plausible that a description of their leaves as above may be possible for large families of them in the spirit of \cite{GLdM}.\\

\noi The manifold $\vdu(\Lam)$, defined in the introduction, also admits an open book decomposition, since the $\s^1$ action on the first coordinate commutes with the diagonal one.\\

\noi Let
$$
\pi_{_{\Lambda}}:\ZLam\to\vdu(\Lam),
$$
denote the canonical projection.\\

\noi Consider now the open book decomposition of $Z^{^\C}$ described above, co\-rres\-pon\-ding to the variable $z_{_1}$. If $\Lam_{_0}$ is obtained from $\Lam$ by removing $\lambda_{_1}$ it is clear that the diagonal $\s^{^1}$-action on $Z^{^\C}$ has the property that each orbit intersects each page in a unique point and at all of its points this page is intersected tranversally by the orbits. This implies that the restriction of the canonical projection $\pi_{_{\Lambda}}$ to each page is a diffeomorphism onto its image $\vdu(\Lam)-\vdu(\Lam_{_0})$.\\

\noi For $k$ even we therefore obtain, since $\vdu(\Lam)-\vdu(\Lam_{_0})$ has a complex structure:\\

\noi \emph{For $k$ even, the page of the open book decomposition of $Z^{^\C}(\pmb{\Lambda})$ in Theorem 2 with binding $Z^{^\C}_{_0}(\pmb{\Lambda}_{_0})$ admits a natural complex structure which makes it biholomorphic to $\vdu(\pmb{\Lambda})-\vdu(\pmb{\Lambda}_{_0})$.}\\

\noi \emph{For $k$ odd, both the whole manifold and the binding admit natural complex structures.}\\

\noi So we have a very nice open book decomposition of every moment-angle ma\-nifold. Unfortunately, it does not have the right properties to help in the cons\-truction of contact structures on them.\\

\noi The topology of these manifolds and of the leaves of their foliations is more complicated, even for $k=2$, and only some cases have been described (see \cite{LdMV} for the simpler ones).

\subsection*{I-2. Homology of intersections of quadrics and their halves.}\label{I 2}

We recall here the results of \cite{LdM2}, whose proofs are equally valid for any intersection of quadrics and not only  for $k=2$.\\

\noi Let $Z=Z(\Lam)\subset \R^n$ as before, $\p$ its associated polytope  and $F_1,\dots,F_n$ the intersections of $\p$ with the coordinate hyperplanes $x_i=0$ (some of which might be empty).\\

\noi Let $g_i$ be the reflection on the $i$-th coordinate hyperplane and for $J\subset\{1,\dots,n\}$ let $g_J$ be the composition of the $g_i$ with $i\in J$. Let also $F_J$ be the intersection of the $F_i$ for $i\in  J$.\\

\noi The polytope $\p$, all its faces (the non-empty $F_J$) and all their combined reflections on the coordinate hyperplanes form a cell decomposition of $Z$. Then the elements $g_J(F_L)$ with non-empty $F_L$ generate the chain groups $C_\ast(Z)$, where to avoid repetitions one has to ask $J\cap L=\emptyset$ (since $g_i$ acts trivially on $F_i$).\\

\noi A more useful basis is given as follows: let $h_i=1-g_i$ and $h_J$ be  the product of the $h_i$ with $i\in J$. The elements $h_J(F_L)$ with $J\cap L=\emptyset$ are also a basis, with the advantage that $h_JC_\ast(Z)$ is a chain subcomplex of $C_\ast(Z)$ for every $J$ and, since $h_i$ annihilates $F_i$ and all its subfaces, this subcomplex can be identified with the chain complex $C_\ast(\p,\p_J)$, where $\p_J$ is the union of all the $F_i$ with $i\in J$. It follows that
$$
H_\ast(Z)\approx\oplus_JH_\ast(\p,\p_J).
$$

\noi For the manifold $Z_+$ we start also with the faces of $\p$, but we cannot reflect them in the subspace $x_1=0$. This means we miss the classes $h_J(F_L)$ where $1\in J$ and we get\footnote{The retraction $Z\to Z_{_+}$ induces an epimorphism in homology and fundamental group.}
$$H_\ast(Z_{_+})\approx\oplus_{1\notin J} H_\ast(\p,\p_J).$$

\medskip
\noi To compute the homology of $\ZLam$ one can just take that of its real version (with each $\lambda_i$ duplicated) or directly using instead of the basis $h_J(F_L)$ with $J\cap L=\emptyset$ the basis of (singular) cells $F_L \times T_J$ (with $J\cap L=\emptyset$) where $T_J$ is the subtorus of $T^n$ which is the product of its $i-$th factors with $i\in J$. This gives the splitting 

$$
H_i(\ZLam)\approx\oplus_J H_{i-\vert J \vert}(\p,\p_J).
$$

(See \cite{BM}).\\

\noi These splittings have the same summands as the ones in \cite{BBCG} derived from the homotopy splitting of $\Sigma Z$. Even if it is not clear that they are the \emph{same} splitting, having two such with different geometric interpretations is most valuable.

\subsection*{I-3. The space $\te_{p,q}^m$.}\label{I-3}

\bigskip

\noi Consider the standard embedding of $\s^{p}\times \s^q$ in $\s^m$, $m>p+q$ given by 

$$\s^{p}\times \s^{q}\subset \R^{p+1}\times \R^{q+1}=\R^{p+q+2}\subset \R^{m+1}.$$
whose image lies in the $m$-sphere of radius $\sqrt{2}$.\\

\noi We will denote by $\te_{p,q}^m$ the exterior of this embedding, i.e., the complement in $S^m$ of the open tubular neighborhood $U=int(\s^{p}\times \s^q\times \disc^{m-p-q})\subset \s^{m}$. Observe that the boundary of $\te_{p,q}^m$ is $\s^{p}\times \s^q\times \s^{m-p-q-1}$ and that the classes $[\s^{m-p-q-1}]$, $[\s^{p}\times \s^{m-p-q-1}]$ and $[\s^{q}\times \s^{m-p-q-1}]$ are the ones bellow the top dimension that go to zero in the homology of $U$. By Alexander duality, the images of these classes freely generate the homology of $\te_{p,q}^m$.\\

\noi Theorem A2.2 of \cite{GLdM} tells that, under adequate hypotheses (and probably always) $\te_{p,q}^m \times \disc^1$ is diffeomorphic to a connected sum along the boundary of products of the type $\s^a \times \disc^{m+1-a}$.\\

\noi Under some conditions (and probably always), $\te_{p,q}^m$ is characterized by its boundary and its homology properties: Let $X$ be a smooth compact manifold with boundary $\s^{p}\times \s^q\times \s^{m-p-q-1}$ and $\iota$ the inclusion $\partial X\subset X$.\\

\noi \textbf{Lemma.} \textit{Assume that}
\vspace{0.1in}

(i) \textit{$X$ and $\partial X$ are simply connected.}
\vspace{0.1in}

(ii) \textit{the classes $\iota_*[\s^{m-p-q-1}]$, $\iota_*[\s^{p}\times \s^{m-p-q-1}]$ and $\iota_*[\s^{q}\times \s^{m-p-q-1}]$ freely generate the homology of $X$.}
\vspace{0.1in}

\textit{Then $X$ is diffeomorphic to $\te_{p,q}^m$.}\\

\noi \textbf{Proof:} Observe that condition (i) implies that $p,q,m-p-q-1 \ge 2$ so $dim(X)=m \ge 7$. Consider the following subset of $\partial X$:
$$K=\s^{p}\times * \times \s^{m-p-q-1} \cup * \times \s^{q}\times \s^{m-p-q-1}$$
and embed $K$ into the interior of $X$ as $K\times \{1/2\}$ with respect to a collar neighborhood $\partial X \times [0,1)$ of $\partial X$. Finally, let $V$ be a smooth regular neighborhood (\cite{H}) of $K\times \{1/2\}$ in $\partial X \times [0,1)$.\\

\noi Now, the inclusion $V \subset X$ induces an isomorphism in homology. Since the codimension of $K$ in $X$ is equal to $1 + min(p,q)\ge 3$, $X\setminus int(V)$ is simply connected and therefore an h-cobordism, so $X$ is diffeomorphic to $V$.\\

\noi Since $\te_{p,q}^m$ verifies the same properties as $X$, the above construction \textit{with the same $V$} shows that $\te_{p,q}^m$ is also diffeomorphic to $V$ and the Lemma is proved.

\subsection*{I-4 Topology of $Z$ and $Z_+$ when $k=2$}\label{I 4}

{\noi For $k=2$ and $\ell=1$ a simple computation shows that
$$
Z_+=\disc^{n_1-1}\times\s^{n_2-1}\times\s^{n_3-1}.
$$}

\noi For the case $\ell>1$ we recall here the main steps in the proof of the result about the topology of $Z$ in \cite{LdM2}, underlining those that are needed to determine the topology of $Z_+$. For the cyclic partition $n=n_1+\dots+n_{2\ell+1}$ we will denote by $J_i$ the set of indices corresponding to the $n_i$ copies of the $i$-th vertex of the polygon in the normal form. Let also $D_i=J_i\cup\dots\cup J_{i+\ell-1}$ and $\tilde{D}_i$ its complement.

\noi It is shown in \cite{LdM2} that for $k=2$, the pairs $(\p,\p_J)$ with non-trivial homology are those where $J$ consists of $\ell$ or $\ell+1$ consecutive classes, that is, those where $J$ is some $D_i$ or $\tilde{D}_i$. In those cases there is just one dimension where the homology is non-trivial and it is infinite cyclic.\\

\noi  In the case of $D_i$ that homology group is in dimension $d_i-1$ where $d_i=n_i+\dots+n_{i+\ell-1}$ is the length of $D_i$. A generator is given by the face $F_{L_i}$ where
$$L_i=\tilde{D}_i\backslash\left(\{j_{i-1}\}\cup\{j_{i+\ell}\}\right)$$
and $j_{i-1}\in J_{i-1}$, $j_{i+\ell}\in J_{i+\ell}$ are any two indices in the extreme classes of $\tilde{D}_i$ (in other words, those conti\-guous to $D_i$).\\

\begin{center}
\begin{figure}[h]
\vspace{-0.25in}\centerline{\includegraphics[height=3.5cm]{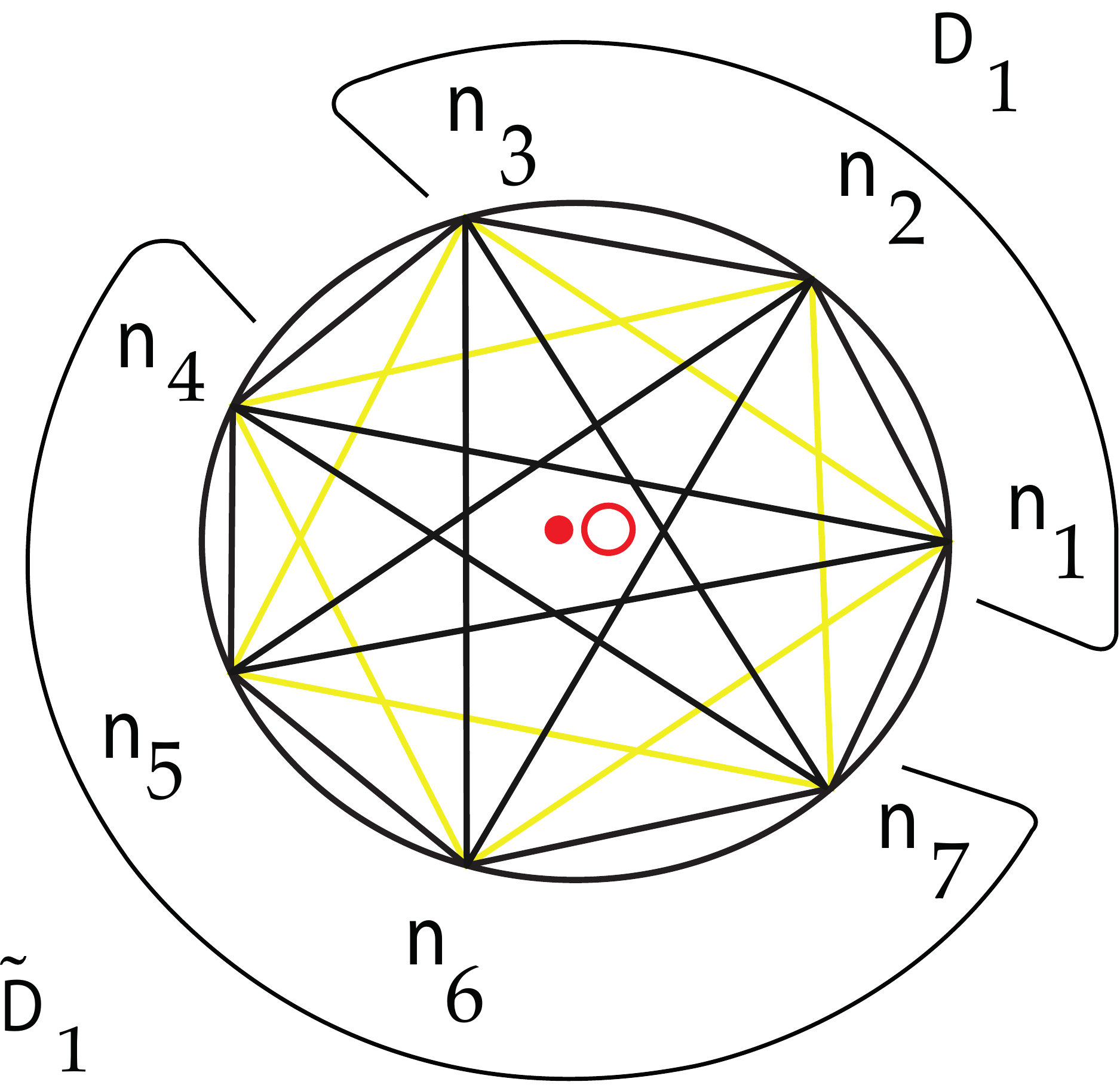}}
\end{figure}
\end{center}   
\vspace{-0.4in}
 
\noi $F_{L_i}$ is non empty of dimension $d_i-1$. It is not in $\p_{D_i}$, but its boundary is. Therefore it represents a homology class in $H_{d_i-1}(\p,\p_{D_i})$, which defines a generator $h_{D_i}F_{L_i}$ of $H_{d_i-1}(Z)$. Since $F_{L_i}$ has exactly $d_i$ facets it is a $(d_i-1)$-simplex so when reflected in all the coordinate subspaces containing those facets we obtain a sphere, which clearly represents $h_{D_i}F_{L_i}\in H_{d_i-1}(Z)$.\\
 
 \noi The class corresponding to $\tilde{D}_i$ is in dimension $n-d_i-2$ and is represented by the face $F_{\tilde{L}_i}$, where $\tilde{L}_i=D_i\backslash\{j\}$ and $j$ is any index in one of the extreme classes of $D_i$. It represents a generator of $H_{n-d_i-2}(Z)$, but now it is a product of spheres. For $\ell=1$ this cannot be avoided, but for $\ell>1$, with a good choice of $j$ and a surgery, it can be represented by a sphere (this also follows from \cite {GLdM}). We will not make use of this class in what follows.\\
 
 \noi The final result is that, if $\ell>1$, all the homology of $Z$ below the top dimension can be represented by embedded spheres with trivial normal bundle.\\
 
\noi Let $Z_+'$ be the manifold with boundary obtained by setting $x_0\ge0$ in $Z'$ (as defined in section I-1 \ref{I-1}). Then $Z_+'$ can be deformed down to $Z_+$ by folding gradually the half-plane $x_0\ge 0, x_1$ onto the ray $x_1\ge 0$. This shows that the inclusion $Z \subset Z'_+$ induces an epimorphism in homology so one can represent all the classes in a basis of $H_\ast(Z_{_+}')$ by embedded spheres with trivial normal bundle. Those spheres can be assumed to be disjoint since they all come from the boundary $Z$ and can be placed at different levels of a collar neighborhood. Finally, one forms a manifold $Q$ with boundary by joining disjoint tubular neighborhoods of those spheres by a minimal set of tubes and then the inclusion $Q\subset Z'_+$ induces an isomorphism in homology. If $Z$ is simply connected and of dimension at least 5, then $Z'_+$ minus the interior of $Q$ is an $h$-cobordism and therefore $Z$ is diffeomorphic to the boundary of $Q$ which is a connected sum of spheres products. Knowing its homology we can tell the dimensions of those spheres:\\

\noi \emph{If $\ell>1$ and $Z$ is simply connected of dimension at least $5$, then:}
$$
Z=\gato_{j=1}^{2\ell+1}\left(\s^{d_j-1}\times\s^{n-d_j-2}\right).
$$
For the moment-angle manifold $Z^\C$ this formula gives, without any restrictions
$$
Z^\C=\gato_{j=1}^{2\ell+1}\left(\s^{2d_j-1}\times\s^{2n-2d_j-2}\right).
$$
\noi (In \cite{GL} this has recently been proved without any restrictions also on $Z$).\\

\noi The topology of $Z_+'$ is implicit in the above proof: $Z_+'$ is diffeomorphic to $Q$ and therefore it is a connected sum along the boundary of manifolds of the form $\s^p\times\disc^{n-3-p}$. Since any $Z$  with $n_1>1$ is such a $Z'$ we have:\\

\noi \emph{If $Z_0$ is simply connected of dimension at least $5$, and $\ell>1$, $n_1>1$ then:}
$$
Z_+=\coprod_{i=2}^{\ell+2}\left(\s^{d_i-1}\times\disc^{n-d_i-2}\right)\coprod\coprod_{i=\ell+3}^1\left(\disc^{d_i-1}\times\s^{n-d_i-2}\right).
$$
The clases $D_i$ and $\tilde{D}_i$ that now give no homology are the ones that contain $n_{1}$.\\

\noi The case $n_1=1$ is different. When $n_1>1$ the inclusion $Z_0\subset Z_+$ induces an epimorphism in homology (since it is of the type $Z\subset Z'_+$). This is not the case for $n_1=1$: for the partition $5=1+1+1+1+1$, the polytope $\p$ is a pentagon and an Euler characteristic computation (from a cell decomposition formed by $\p$ and its reflections) shows that $Z$ is the surface of genus $5$. Now $Z_0$ has partition $4=1+2+1$ and consists of four copies of $\s^1$. From this, $Z_+$ must be a torus minus four disks that can be seen as the connected sum of a sphere minus four disks (all whose homology comes from the boundary) and a torus that carries the homology not coming from the boundary.\\

\noi In general, when $n_1=1$ $Z_0$ is given by a normal form with $2\ell-1$ classes, has $4\ell-2$ homology generators below the top dimension, only half of which survive in $Z_+$. But $Z_+$ has $2\ell+1$ homology generators, so two of them do not come from its boundary and actually form a handle. 

\noi To be more precise, the removal of the element $1\in J_1$ allows the opposite classes $J_{\ell+1}$ and $J_{\ell+2}$ to be joined into one without breaking the weak hyperbolicity condition.
\begin{center}
\begin{figure}[h]
\vspace{-0.25in}\centerline{\includegraphics[height=3.5cm]{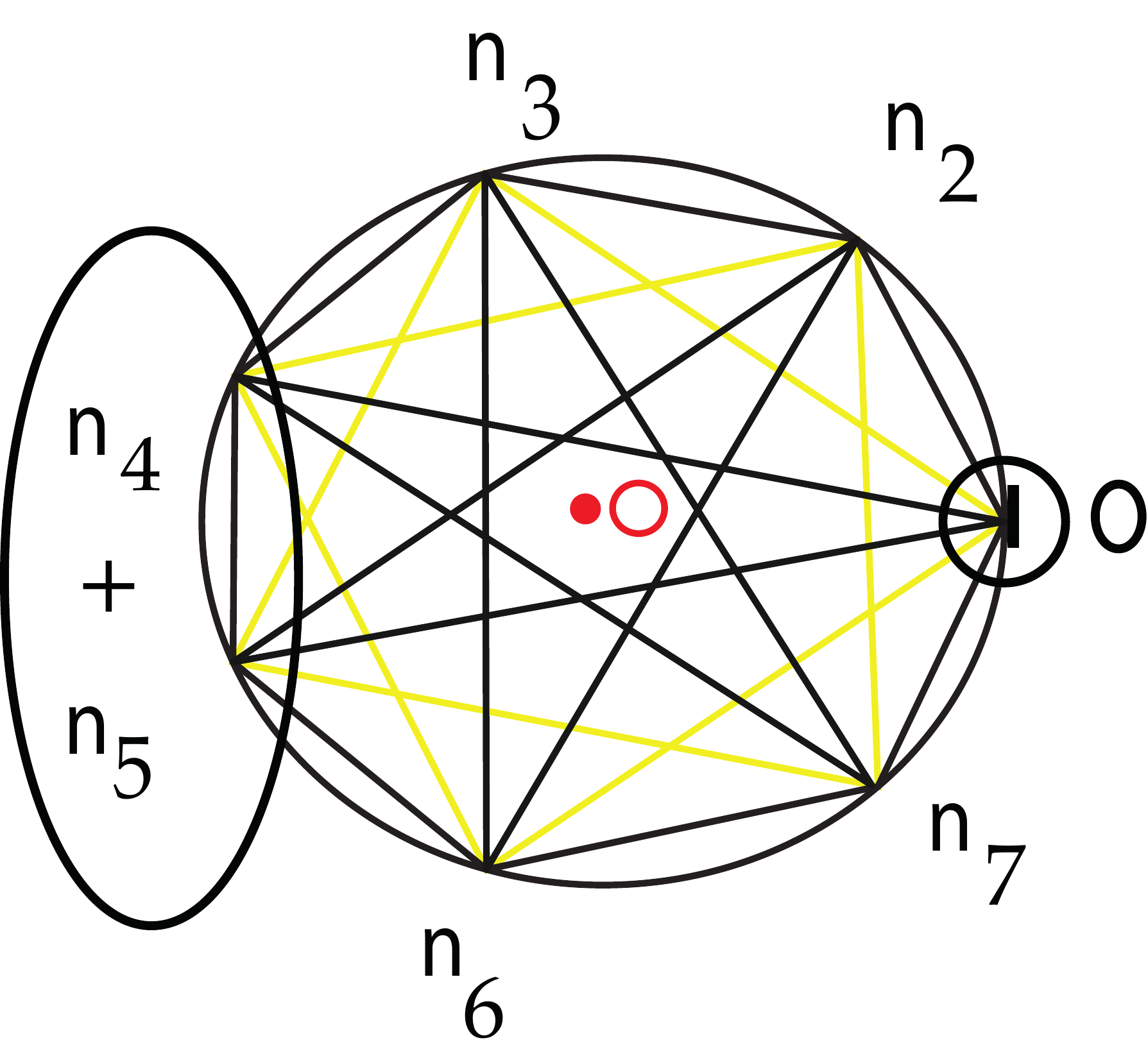}}
\end{figure}
\end{center}   
\vspace{-0.35in}
\noi Therefore $Z_0$ has fewer such classes and $D_2=J_2\cup\dots\cup J_{\ell+1}$, which gives a generator of $H_\ast(Z_{_+})$, does not give anything in $H_\ast(Z_{_0})$ because there \emph{it is not a union of classes}  (it lacks the elements of $J_{\ell+2}$ to be so).\\

\noi The two classes in $H_\ast(Z_{_+})$ missing in $H_\ast(Z_{_0})$ are thus those corresponding to $J=D_2$ and $J=D_{\ell+2}$; all the others contain both $J_{\ell+1}$ and $J_{\ell+2}$ and thus live in $H_\ast(Z_{_0})$.\\

\noi As shown above, these two classes are represented by embedded spheres in $Z_+$ with trivial normal bundle built from the cells $F_{L_2}$ and $F_{L_{\ell+2}}$ by reflection. Now $F_{L_2}\cap F_{L_{\ell+2}}$ is a single vertex $v$, all coordinates except $x_1$, $x_{j_{\ell+1}}$, $x_{j_{\ell+2}}$  being $0$.
\begin{center}
\begin{figure}[h]
\vspace{-0.25in}\centerline{\includegraphics[height=3.5cm]{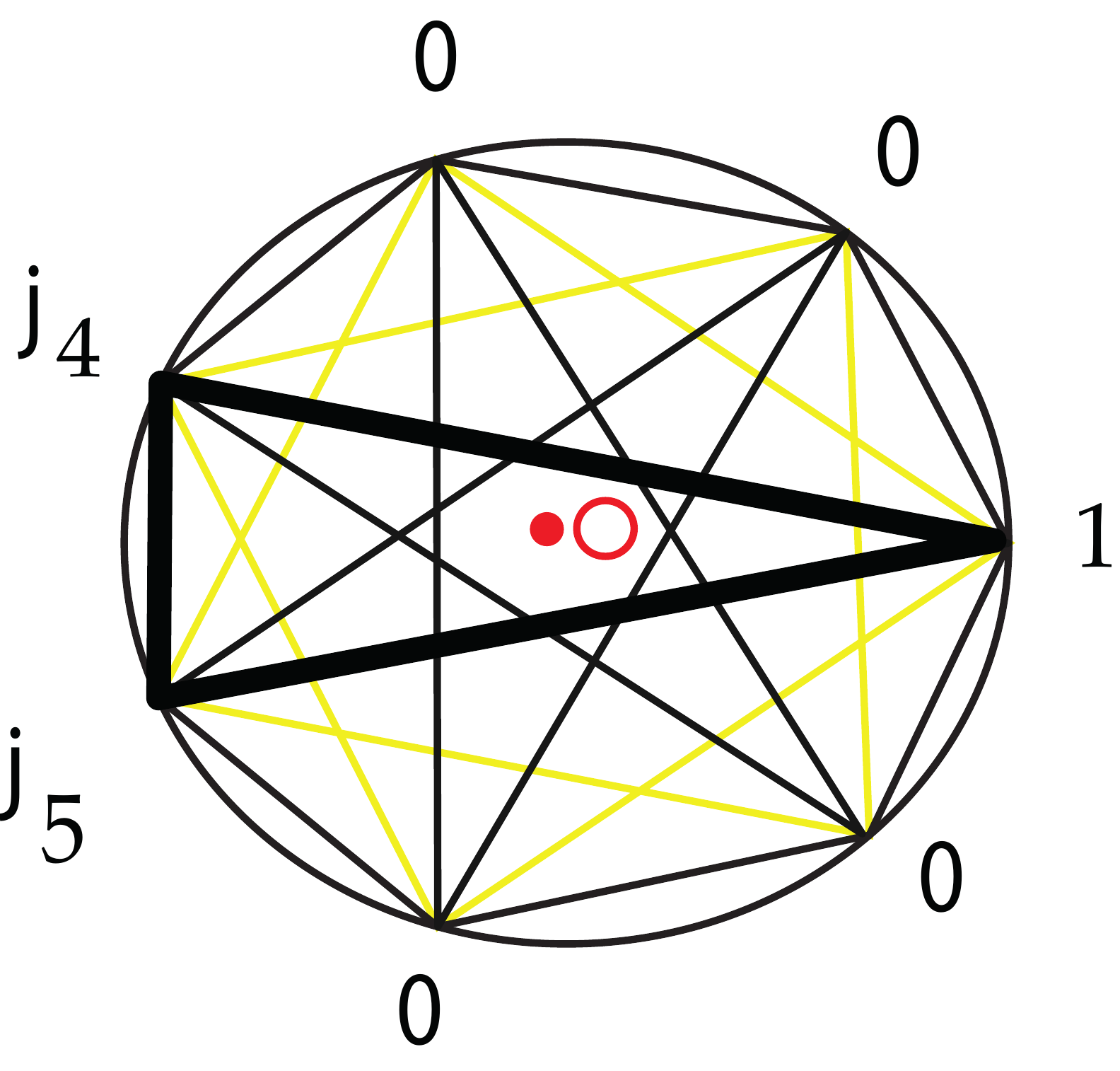}}
\end{figure}
\end{center}   
\vspace{-0.25in}
\noi The corresponding spheres are obtained by reflecting in the hyperplanes co\-rres\-pon\-ding to elements in $D_2$ and $D_{\ell+2}$, respectively. Since these sets are disjoint, the only point of intersection is the point $v$.\\

\noi Now, a neighborhood of the vertex $v$ in $\p$ looks like the first orthant of $\R^{n-3}$ where the faces $F_{L_2}$ and $F_{L_{\ell+2}}$ correspond to complementary subspaces. When reflected in all the coordinates hyperplanes of $\R^{n-3}$, one obtains a neighborhood of $v$ in $Z_+$ where those subspaces produce neighborhoods   of the two spheres. Therefore the spheres intersect transversely in that point.\\

\noi A regular neighborhood of the union of those spheres is diffeomorphic to their product minus a disk:
$$
\s^{d_2-1}\times\s^{d_{\ell+2}-1}\backslash \disc^{n-3}.
$$

\noi Joining its boundary with the boundary of $Z_+$ we see that $Z_+$ is the connected sum along the boundary of two manifolds:
$$Z_+= \s^{d_2-1}\times\s^{d_{\ell+2}-1}\backslash \disc^{n-3}\coprod X$$
where $\partial X =Z_0$ and $X$ is simply connected. 

\noi Now, all the homology of $X$ comes from its boundary which again is $Z_{0}$ , since all those classes actually live in the homology of $Z$ and are the ones corresponding to the clases $D_i$ and $\tilde{D}_i$ that do not contain $n_{1}$. Those classes also exist in the homology of $Z_{0}$ and are given by the same generators, so this part of the homology of $Z_{0}$ embeds isomorphically onto the homology of $X$.\\
  
\noi If $\ell>2$, $Z_{0}$ is a connected sum of sphere products, so the homology classes of $X$ can be represented again by disjoint products $\s^p\times\disc^{n-p-3}$ and finally we construct the analog of the manifold with boundary $Q$ and the $h$-cobordism theorem gives:\\

\noi \emph{If $Z$ is simply connected of dimension at least $6$, and $n_1=1$, $\ell>2$ then}
$$
Z_{_+}=\coprod_{i=3}^{\ell+1}\left(\s^{d_i-1}\times\disc^{n-d_i-2}\right)\coprod\coprod_{i=\ell+3}^1\left (\disc^{d_i-1}\times\s^{n-d_i-2}\right)
$$
$$
\coprod \left(\s^{d_2-1}\times\s^{d_{\ell+2}-1}\backslash \disc^{n-3}\right).
$$

\noi The homology classes of $Z_{+}$ are those corresponding to $D_{2}$, $D_{4}$ (not coming from the boundary) and to $D_{3}$, $\tilde{D}_{1}$, $\tilde{D}_{5}$. Clearly the last ones come from the classes $[\s^{n_{3}+n{4}-1}]$, $[\s^{n_{2}-1}\times \s^{n_{3}+n{4}-1}]$ and $[\s^{n_{5}-1}\times \s^{n_{3}+n{4}-1}]$ in the boundary. This means that $X$ satisfies the hypotheses of the Lemma in section I.3 with $p=n_{2}-1$, $q=n_{5}-1$ and $m=n-3$, so we can conclude that $X$ is diffeomorphic to $\te_{n_{2}-1,n_{5}-1}^{n-3}$. We have proved all the cases of the

\begin{theorem}\label{3}
Let $k=2$, and consider the manifold $Z$ corresponding to the cyclic decomposition $n=n_1+\dots+n_{2\ell+1}$ and the half manifold $Z_{_+}=Z\cap\{x_1\geqslant 0\}$. When $\ell>1$ assume $Z$ and $Z_{_0}=Z\cap\{x_1=0\}$ are simply connected and the dimension of $Z$ is at least $6$. Then $Z_+$ diffeomorphic to:
\begin{itemize}
\item [a)] If $\ell=1$, the product 
$$
\s^{n_2-1}\times\s^{n_3-1}\times\disc^{n_1-1}.
$$
\item [b)] If $\ell>1$ and $n_1>1$, the connected sum along the boundary of $2\ell+1$ manifolds:
$$
\coprod_{i=2}^{\ell+2}\left(\s^{d_i-1}\times\disc^{n-d_i-2}\right)\coprod
\coprod_{i=\ell+3}^1\left(\disc^{d_i-1}\times\s^{n-d_i-2}\right).
$$
\item [c)] If $n_1=1$ and $\ell>2$, the connected sum along the boundary of $2\ell$ manifolds:
$$
\coprod_{i=3}^{\ell+1}\left(\s^{d_i-1}\times\disc^{n-d_i-2}\right)\coprod\coprod_{i=\ell+3}^1\left(\disc^{d_i-1}\times\s^{n-d_i-2}\right)
$$
$$
\coprod\left(\s^{d_2-1}\times\s^{d_{\ell+2}-1}\backslash \disc^{n-3}\right).
$$
\item [d)] If $n_1=1$ and $\ell=2$, the connected sum along the boundary of two manifolds:
$$
\left(\s^{d_2-1}\times\s^{d_4-1}\backslash \disc^{n-3}\right) \coprod \te_{n_{2}-1,n_{5}-1}^{n-3}.
$$ 
\end{itemize}

\noi When $n_1=1$ and $\ell=2$ we have the additional complication that restricting to $x_1=0$ takes us from the \emph{pentagonal} $Z_+$ to the \emph{triangular} $Z_0$ , which is not a connected sum but a product of three spheres and not all of its homology below the middle dimension is spherical.
\begin{center}
\begin{figure}[h]
\vspace{-0.25in}\centerline{\includegraphics[height=3.5cm]{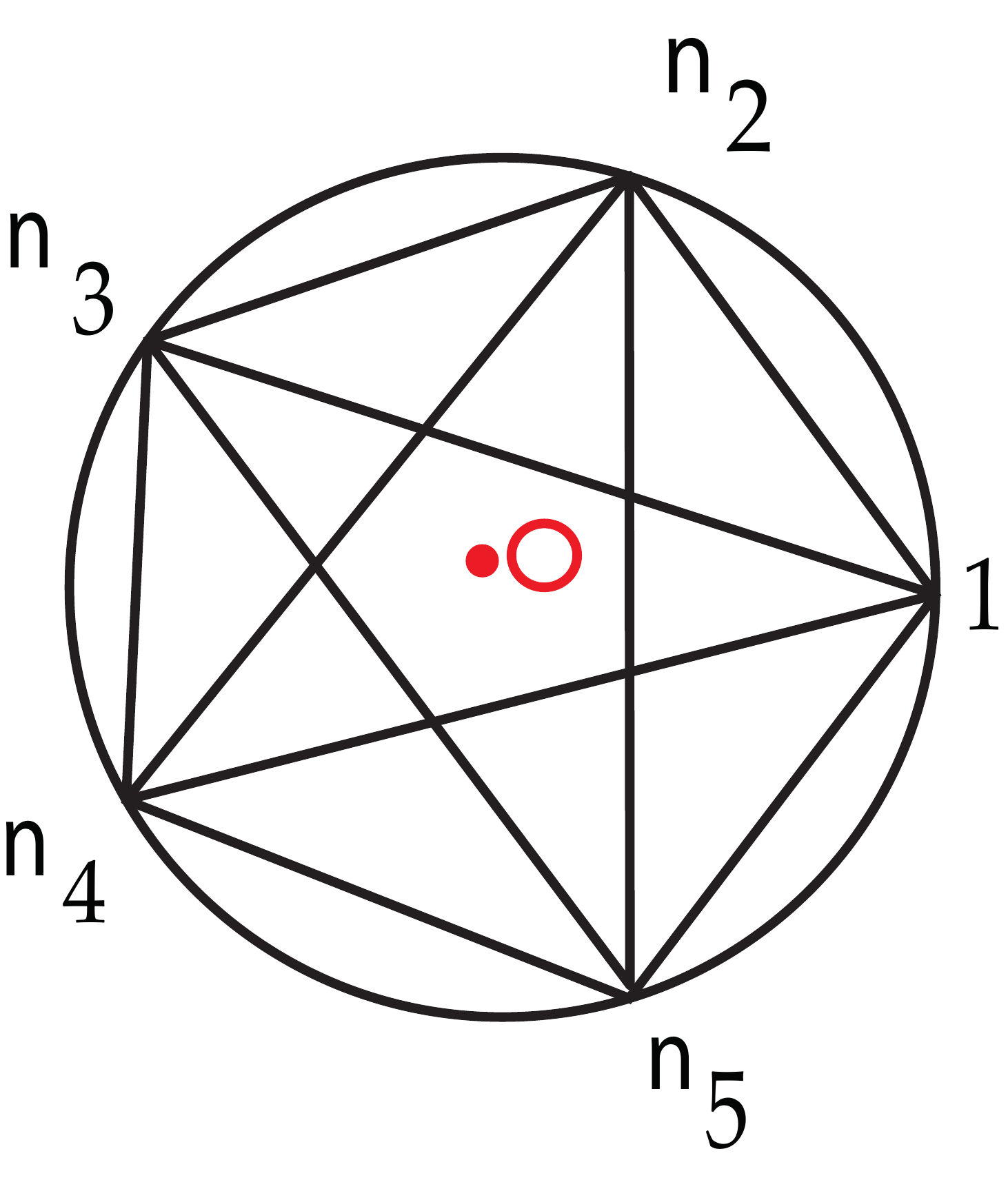}}
\end{figure}
\end{center}   
\vspace{-0.25in}
\end{theorem}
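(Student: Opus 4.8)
The plan is to run the strategy of \cite{LdM2} for computing the topology of $Z$, but carrying the half-space condition $x_1\geqslant 0$ through every step. The input is the cell decomposition of $Z$ (respectively $Z_+$) built from the polytope $\mathcal{P}$ and all its reflections in the coordinate hyperplanes, which yields the splittings recalled in section I-2,
\[
H_*(Z)\approx\bigoplus_J H_*(\mathcal{P},\mathcal{P}_J),\qquad H_*(Z_+)\approx\bigoplus_{1\notin J}H_*(\mathcal{P},\mathcal{P}_J).
\]
For $k=2$ the pairs $(\mathcal{P},\mathcal{P}_J)$ with nonzero homology are exactly those in which $J$ is a block $D_i$ or $\tilde D_i$ of $\ell$, respectively $\ell+1$, consecutive classes, and in each case the homology is a single $\Z$ concentrated in dimension $d_i-1$ (respectively $n-d_i-2$) and generated by an explicit face $F_{L_i}$ (respectively $F_{\tilde L_i}$). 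Deleting the $J$ containing the index $1$ — equivalently, those $D_i,\tilde D_i$ whose block contains $n_1$ — gives $H_*(Z_+)$ and pins down which spheres survive.

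When $\ell=1$ the manifold $Z_+$ reflects only in the coordinates indexed by $J_2$ and $J_3$, so a direct computation gives $Z_+=\disc^{n_1-1}\times\s^{n_2-1}\times\s^{n_3-1}$. When $\ell>1$ and $n_1>1$, the manifold $Z$ is of the form $Z'$ (one $\lambda_i$ is duplicated), so $Z\hookrightarrow Z_+'$ is surjective on homology; hence every generator of $H_*(Z_+)$ is represented by an embedded sphere with trivial normal bundle, and these spheres can be placed at distinct levels of a collar of $\partial Z_+=Z_0$ so as to be disjoint. Joining tubular neighborhoods of them by a minimal system of tubes produces a handlebody $Q\hookrightarrow Z_+$ inducing a homology isomorphism between simply connected manifolds, and the $h$-cobordism theorem (dimension $\geqslant 6$) gives $Z_+=Q$, the boundary connected sum of the listed pieces $\s^{d_i-1}\times\disc^{n-d_i-2}$ and $\disc^{d_i-1}\times\s^{n-d_i-2}$, indexed precisely by the $D_i,\tilde D_i$ avoiding $n_1$.

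The delicate case is $n_1=1$, where the extra summands appear. Here $Z_0\hookrightarrow Z_+$ is no longer surjective on homology: exactly the two classes $J=D_2$ and $J=D_{\ell+2}$ fail to come from the boundary, because in $Z_0$ the opposite blocks $J_{\ell+1}$ and $J_{\ell+2}$ merge and $D_2$ stops being a union of blocks. These two classes are represented by spheres obtained by reflecting the cells $F_{L_2}$ and $F_{L_{\ell+2}}$; analyzing a neighborhood of their unique common vertex $v$ shows the spheres meet transversely in one point, so a regular neighborhood of their union is $\s^{d_2-1}\times\s^{d_{\ell+2}-1}\setminus\disc^{n-3}$. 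Splitting this off writes $Z_+=(\s^{d_2-1}\times\s^{d_{\ell+2}-1}\setminus\disc^{n-3})\coprod X$ with $\partial X=Z_0$, $X$ simply connected and $H_*(X)$ entirely coming from $Z_0$. If $\ell>2$, $Z_0$ is a connected sum of sphere products by Theorem \ref{polygon_connected_sum}, so $X$ is again a handlebody and the same $h$-cobordism argument yields case (c). If $\ell=2$, $Z_0$ is the triangular product $\s^{n_2-1}\times\s^{n_3+n_4-1}\times\s^{n_5-1}$; one then checks that $X$ satisfies the hypotheses of the Lemma of section I-3 with $p=n_2-1$, $q=n_5-1$, $m=n-3$, which identifies $X\cong\te_{n_2-1,n_5-1}^{n-3}$ and gives case (d).

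The main obstacle is precisely the case $\ell=2$, $n_1=1$: the complementary piece $X$ is not a handlebody, since $\partial X=Z_0$ is a product of three spheres rather than a connected sum and not all of the sub-top homology of $Z_0$ is spherical, so the $h$-cobordism theorem cannot be applied to a handle decomposition of $X$. The characterization of $\te_{p,q}^m$ by its boundary and homology (the Lemma of section I-3) is what carries this case, and the crux is to verify its hypotheses for $X$: simple connectivity of $X$ and of $\partial X$, and that the three Alexander-dual classes $[\s^{m-p-q-1}]$, $[\s^p\times\s^{m-p-q-1}]$, $[\s^q\times\s^{m-p-q-1}]$ freely generate $H_*(X)$ — which follows by tracing the surviving generators coming from $D_3$, $\tilde D_1$, $\tilde D_5$ of $H_*(Z_+)$ back into the boundary.
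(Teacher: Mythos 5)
Your proposal reproduces the paper's argument step for step: the homology splittings $H_*(Z_+)\approx\bigoplus_{1\notin J}H_*(\mathcal P,\mathcal P_J)$, the identification of the surviving $D_i,\tilde D_i$, the handlebody-plus-$h$-cobordism argument when all homology is spherical and boundary-induced, the splitting off of $\s^{d_2-1}\times\s^{d_{\ell+2}-1}\setminus\disc^{n-3}$ from the two transversely meeting spheres in the $n_1=1$ case, and the appeal to the Lemma of section I-3 to identify $X\cong\te_{n_2-1,n_5-1}^{n-3}$ when $\ell=2$. This is essentially the paper's own proof, with a correct identification of the $\ell=2$, $n_1=1$ case as the obstruction to the pure $h$-cobordism approach.
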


\noi Theorem \ref{3} immediately describes, under the same hypotheses, the topology of the page of the open book decomposition of $Z'$ given by Theorem 1, since this page is precisely the interior of $Z_{+}$.\\

\noi Theorem \ref{pageC} about the page of the open book decomposition of the moment-angle manifold $Z^\C$ follows also, since this page is $Z_{+}$ for $Z$ the (real) intersection of quadrics corresponding to the partition $2n-1=(2n_{1}-1) + (2n_{2}) + \dots + (2n_{2\ell+1})$. In this case all the extra hypotheses of Theorem \ref{3} hold automatically.\\

\noi Theorem \ref{3} applies also to the topological description of some \textit{smoothings} of the cones on our intersections of quadrics. In this case the normal form is not sufficient to describe all possibilities as it was in (\cite{LdM1}) where actually only the sums $d_{i}$ were needed to describe the topology or in the present work where additional information about $n_1$ only is required.\\

\section*{Part II. Some contact structures on moment-angle mani\-folds }\label{S3}

\noi The even dimensional moment-angle manifolds and the $\lv$-manifolds have the characteristic that, except for a few, well-determined cases, do not admit symplectic structures.
We will show that the odd-dimensional moment-angle manifolds (and large families of intersections of quadrics) admit contact structures.

 \begin{theorem}\label{contact} If $k$ is even, $\ZLam$ is a contact manifold.
  \end{theorem}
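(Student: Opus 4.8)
The plan is to reduce the statement to a homotopy‑theoretic one and then feed it to the existence $h$‑principle of Borman, Eliashberg and Murphy \cite{BEM}. Recall that an \emph{almost contact structure} on a closed manifold $M^{2q+1}$ is a corank‑one distribution $\xi\subset TM$ together with a complex bundle structure $J$ on $\xi$ (equivalently, a reduction of the structure group of $TM$ to $U(q)\times\{1\}$); it is a purely bundle‑theoretic datum. The main theorem of \cite{BEM} asserts that on any closed manifold of dimension $2q+1\geq 5$ every almost contact structure is homotopic to a genuine (indeed overtwisted) contact structure. So the entire task is: construct an almost contact structure on $\ZLam$ when $k$ is even, and check that $\dim\ZLam\geq 5$.

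For the construction I use the circle fibration already at hand. The scalar circle $\s^1\subset\C^{*}$ acts freely on the moment‑angle manifold $\ZLam=M_1(\Lam)$, with quotient the LVM manifold $N_{\Lam}=\vdu(\Lam)$, and $\pi\colon\ZLam\to N_{\Lam}$ is a principal $\s^1$‑bundle. By the LVM construction (Haefliger's Lemma \ref{Haefliger}) $N_{\Lam}$ is a compact \emph{complex} manifold, so $TN_{\Lam}$ is a complex vector bundle of complex rank $n-k-1$. Fix a principal connection on $\pi$; it yields a splitting $T\ZLam=\underline{\R}\,R\oplus H$, where $R$ is the fundamental vector field of the action (nowhere vanishing, since the action is free) and the horizontal bundle $H$ is carried isomorphically onto $\pi^{*}TN_{\Lam}$ by $d\pi$. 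Put $\xi:=H$ and let $J$ be the complex structure on $\xi$ transported from $N_{\Lam}$ through $d\pi$. Then $(\xi,J)$ is an almost contact structure on $\ZLam$: $\xi$ is a corank‑one subbundle, $J^{2}=-\operatorname{id}_{\xi}$, and $T\ZLam/\xi\cong\underline{\R}\,R$ is trivial; together with a connection $1$‑form $\alpha$ (so $\ker\alpha=\xi$ and $\alpha(R)=1$) this is precisely a reduction of the structure group to $U(n-k-1)\times\{1\}$. (As a cross‑check, one also sees a stable complex structure directly from the quadric embedding $\ZLam\subset\s^{2n-1}\subset\C^{n}$, whose normal bundle is trivial of real rank $2k+1$, so $T\ZLam\oplus\underline{\R}^{2k+1}\cong\underline{\C}^{n}$; but it is the splitting above that supplies the honest hyperplane field.)

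It remains to run \cite{BEM}. Since $k$ is even we have $k\geq 2$, and by the standing assumption $n>2k$ (forced anyway by the Siegel and weak hyperbolicity conditions), so $\dim_{\R}\ZLam=2n-2k-1\geq 2k+1\geq 5$. The manifold $\ZLam$ is closed, and it is orientable since it carries an almost contact structure. Applying the existence part of \cite{BEM} to $(\xi,J)$ produces a contact structure in its homotopy class; hence $\ZLam$ is a contact manifold.

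The argument is short precisely because the analytic heart of the matter is outsourced to \cite{BEM}: the one genuinely deep ingredient is that $h$‑principle, which we are entitled to assume. Inside the proof the only point needing care is the bundle bookkeeping — that the vertical line of $\pi$ splits off $T\ZLam$ as a trivial subbundle and that the complement inherits a complex structure from $N_{\Lam}$ — together with the (equally soft) dimension count that places us in the range $\geq 5$ covered by \cite{BEM} rather than in the three‑dimensional Martinet--Lutz regime. Neither step presents any real difficulty, so the evenness of $k$ enters the proof only through the inequality $\dim\ZLam\geq 5$.
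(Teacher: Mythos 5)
Your proof takes essentially the same route as the paper: establish that $\ZLam$ is almost contact by exploiting the circle fibration $\pi:\ZLam\to N_\Lambda$ over the complex manifold $N_\Lambda$, then invoke the Borman--Eliashberg--Murphy overtwisted $h$-principle \cite{BEM}. Your implementation of the first step --- choosing a principal connection, splitting $T\ZLam$ into vertical plus horizontal, and transporting the complex structure $J$ from $TN_\Lambda$ along $d\pi$ --- is a cleaner rendition of what the paper does via a transversally holomorphic foliation atlas adapted to the diagonal $\s^1$-action and a retraction of the resulting structure group onto $\su(q)\times\R$; the underlying content is identical. One small slip in the bookkeeping: the standing hypothesis coming from the Siegel and weak hyperbolicity conditions (translated to the $k$ real equations of Part~II, where $k=2m$) is $n>k$, not $n>2k$, so the chain $\dim\ZLam\geq 2k+1\geq 5$ does not hold in general --- e.g.\ $n=3$, $k=2$ gives $\dim\ZLam=3$. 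This is harmless, since \cite{BEM} together with the classical three-dimensional result of Lutz--Martinet/Eliashberg covers all odd dimensions $\geq 3$, so the dimension restriction you imposed was not actually needed.
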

 
 \noi First we show that $\ZLam$ is an almost-contact manifold. Recall that a $(2n+1)$-dimensional manifold $\vd$ is called \emph{almost contact} if its tangent bundle admits a reduction to $\su(n)\times \R$. This is seen as follows: consider the fibration $\pi: \ZLam\to \vdu(\Lam)$ with fibre the circle, given by taking the quotient by the diagonal action. Since $\vdu(\Lam)$ is a complex manifold, the foliation defined by the diagonal circle action is transversally holomorphic. Therefore,  $\ZLam$ has an atlas modeled on $\C^{n-2}\times\R$ with changes of coordinates of the charts of the form
 $$
 \left(\left(z_1,\cdots,z_{n-2}\right), t\right)  \mapsto \left(h\left(z_1,\cdots,z_{n-2},t\right), g\left(z_1,\cdots,z_{n-2},t\right)\right),
 $$ 
 \noi where $h:U\to\C^{n-2}$  and $g:U\to\R$ where $U$ is an open set in $\C^{n-2}\times\R$ and, for each
 fixed $t$ the function $\left(z_1,\cdots,z_{n-2}\right)\mapsto h\left(z_1,\cdots,z_{n-2},t\right)$ is a biholomorphism onto an open set of $\C^{n-2}\times\{t\}$. This means that the differential, in the given coordinates, is represented by a matrix of the form
  $$
 \left[\begin{array}{ccc|c}
  &  &  &  \\
   & A &  & \ast \\
  &  &  & \\ \hline
  0& \dots & 0 & r
  \end{array}\right]
$$
 \noi where $\ast$ denotes a column $(n-2)$-real vector and $A\in{\gl(n-2,\C)}$. 
 The set of matrices of the above type form a subgroup
 of $\gl(2n-3,\R)$. By Gram-Schmidt this group retracts onto $\su(n-2)\times\R$.\\
 
 \noi Now it follows from \cite{BEM} that $\ZLam$ is a contact manifold and the Theorem is proved.\\
 
 \noi  In \cite{BV} it is given a different construction, in some sense more explicit, of contact structures, not on moment-angle manifolds but on certain non-diagonal generalizations of moment-angle manifolds of the type that has been studied by  G\'omez Guti\'errez and Santiago L\'opez de Medrano in \cite{GL}. It consists in the construction of a positive confoliation which is constructive and uses the heat flow method described in \cite{AltWu}.\\

The argument used there applies however for many other intersections of quadrics that are not moment angle manifolds, for which the proof of the previous Theorem need not apply:

\begin{theorem}
There are infinitely many infinite families of odd-dimensional generic intersections of quadrics that admit contact structures.
\end{theorem}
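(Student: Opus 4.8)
The plan is to follow the strategy of \cite{BV} rather than that of Theorem \ref{contact}: instead of appealing to the Borman--Eliashberg--Murphy existence theorem (which used that $\vdu(\Lam)$ is a complex manifold and the scalar foliation is transversally holomorphic, a structure the non-diagonal quadrics of \cite{GL} need not possess), one exhibits on each manifold in the family an explicit \emph{positive confoliation} and then deforms it to a genuine contact structure by the heat-flow method of \cite{AltWu}. Recall that on a closed oriented $(2\ell-1)$-manifold a positive confoliation is a coorientable hyperplane field $\xi=\ker\alpha$ with $\alpha\wedge(d\alpha)^{\ell-1}\geq 0$ pointwise: it is a foliation where the form vanishes and a contact structure where the form is strictly positive, and the relevant output of \cite{AltWu} is that a positive confoliation which is contact on a nonempty open set can be $C^{0}$-approximated by an honest contact structure.

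The first step is to fix the model manifolds. Following \cite{GL} and \cite{BV}, I would take links of the form
\[
Z=\Big\{x\in\R^{n}\ \Big|\ \sum_{i}A_i\,q_i(x)=0,\ \sum_i q_i(x)=1\Big\},
\]
where the $q_i$ are the norm-squared forms of a block decomposition of the $n$ real variables, and the $A_i\in\R^{2\ell}$ satisfy the weak hyperbolicity condition making $Z$ a smooth manifold; choosing the block sizes with the appropriate parity makes $\dim Z$ odd, and for generic $A_i$ this $Z$ is not a polytopal moment-angle manifold (one checks, as in \cite{BM}, that its cohomology ring, or its triple Massey products, cannot occur for any $\mathcal Z_P$). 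Each such $Z$ carries the torus symmetry (or its real $\Z/2$ analogue) inherited from the blocks and, as in Theorem \ref{bookR}, an open book decomposition with trivial monodromy whose binding is again a member of the same family, one block shorter. On $Z$ I would then build $\alpha$ as in \cite{BV}: away from the binding the open book gives a submersion to $\s^1$ whose fibres foliate that region, so one takes $\alpha_0$ to be a multiple of the pull-back of $d\theta$ (hence $d\alpha_0=0$ there) and corrects it by a small positive multiple $\varepsilon\beta$ of a $1$-form dual to the Reeb-like direction coming from the scalar/radial coordinate. A direct computation with the explicit quadric equations shows $\alpha=\alpha_0+\varepsilon\beta$ is a positive confoliation globally, strictly positive near the binding where the local model is the standard contact disk bundle over the (inductively contact) binding; equivalently one may phrase this as interpolating, by the heat flow, the page foliation with the contact germ along the binding, the key integrability-defect estimate being exactly the ``constructive positive confoliation'' lemma of \cite{BV}.

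Applying the deformation theorem of \cite{AltWu} to this confoliation produces a contact structure on $Z$. To obtain \emph{infinitely many infinite families} one varies the discrete data: for each fixed even $k=2\ell$ and each fixed combinatorial (Gale/polytope) type, letting the block sizes and the number $n$ of variables grow yields an infinite family of pairwise non-diffeomorphic such $Z$, distinguished e.g. by Betti numbers and connectivity via the homology splitting of Part~I; letting $\ell$ and the combinatorial type also vary produces infinitely many such families, and since the genericity of the $A_i$ is an open dense condition each family is genuinely infinite.

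The main obstacle I expect lies in the second step together with the precise hypotheses of the heat-flow result: one must guarantee that the perturbation $\varepsilon\beta$ keeps $\alpha$ a \emph{genuine} confoliation (no sign is ever lost) while simultaneously keeping it contact on a nonempty open set, and that this holds \emph{uniformly} across the whole infinite family; one must also confirm that the Altschuler--Wu flow converges in the dimensions at hand, since their argument is cleanest in low dimensions and its higher-dimensional analogue used in \cite{BV} requires care. A secondary obstacle is that Theorem \ref{bookR}(ii) was stated for diagonal complex quadrics, so for the non-diagonal manifolds of \cite{GL} one must re-derive the open book decomposition (or a substitute codimension-one foliation) by the appropriate block version of the ``coefficients appearing in pairs'' trick, and verify that the binding is again a member of the family, so that the inductive contact model feeding the confoliation construction is available.
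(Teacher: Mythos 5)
Your proposal takes a genuinely different route from the paper. You propose to extend the \cite{BV} confoliation--plus--heat-flow strategy: build a positive confoliation from the open book structure and deform it to a contact structure via \cite{AltWu}, then vary the block decomposition to produce families. The paper, by contrast, completely avoids confoliations in this theorem and instead proceeds via a chain of classical results on top of the diffeomorphism classification of $Z$: (i) for odd-dimensional $\s^m\times\s^n$ with $m>n$, it exhibits an open book with page $\R^{m-1}\times\s^n$, which is parallelizable, of even dimension, and satisfies Eliashberg's index bound, hence Stein; Giroux's theorem then yields a contact structure on $\s^m\times\s^n$; (ii) the Meckert--Weinstein theorem on contact connected sums promotes this to all odd-dimensional connected sums of sphere products, a class that by \cite{GLdM} and \cite{LdM2} contains a huge set of diagonal intersections of quadrics; (iii) Bourgeois's result ($\vd$ contact $\Rightarrow \vd\times\T^2$ contact) enlarges the class further by products with tori; (iv) the $Z\mapsto Z'$ construction applied repeatedly in various coordinates yields infinitely many infinite families. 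This buys a short, essentially topological proof requiring no analytic deformation estimates, at the price of only covering those $Z$ captured by the connected-sum classification; your approach, if the confoliation estimates and higher-dimensional deformation can be carried out uniformly, would in principle cover a broader and more ``generic'' class (including the non-diagonal manifolds of \cite{GL} and the moment-angle manifolds with torsion in homology, which are not connected sums).

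There are two concrete points you should attend to. First, the theorem as stated and proved in the paper is about \emph{diagonal} $Z$, and the families are manufactured not by varying the coefficients $A_i$ (for fixed combinatorics and polytope, varying $A_i$ within the weak-hyperbolicity region does not change the diffeomorphism type, by Ehresmann), but by varying the partition data and applying the $Z\mapsto Z'$ stabilization; your claim that ``for generic $A_i$ this $Z$ is not a polytopal moment-angle manifold'' needs reworking, since within a fixed block structure genericity of $A_i$ alone does not move you off a fixed diffeomorphism type. Second, you yourself flag the principal gap: the Altschuler--Wu deformation theorem is proved in dimension three, and the higher-dimensional analogue you would need (that a confoliation contact on an open set can be $C^0$-deformed to a contact structure) is precisely the delicate part of the $\cite{BV}$ argument; the present paper sidesteps this entirely by working with the already-classified connected sums, so if you want to retain the confoliation approach you must either import the relevant higher-dimensional deformation lemma from \cite{BV} with its exact hypotheses, or replace that step with something self-contained.
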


\noi First consider the odd-dimensional intersections of quadrics that are connected sums of spheres products: \\

\noi An odd dimensional product $\s^m\times\s^n$ of two spheres admits a contact structure by the following argument: let $n$ even and $m$ odd, and $n,m>2$. Without loss of generality, we suppose that $m>n$ (the other case is analogous) then $\s^m$ is an open book with binding $\s^{m-2}$ and page $\R^{m-1}$. Hence $\s^n\times\s^m$ is an open book with binding $\s^{m-2}\times\s^n$ and page $\R^{m-1}\times\s^{n}$. This page is parallelizable since 
$\R\times\s^n$ already is so. Then, since $m+n-1$ is even the page has an almost complex structure. Furthermore, by hypo\-thesis, $2n\leq{n+m}$ hence by a theorem of Eliashberg \cite{Elias} the page is Stein and is the interior of an compact manifold with contact boundary $\s^{m-2}\times\s^n$. Hence by  a theorem of E. Giroux \cite{Giroux} $\s^n\times\s^m$ is a contact manifold. 

\noi Now, it was shown by C. Meckert \cite{Meckert} and more generally by Weinstein \cite{Weinstein} (see also \cite{Elias}) that the connected sum of contact manifolds of the same dimension is a contact manifold. Therefore all odd dimensional connected sums of sphere products admit contact structures.

\noi Additionally, it was proved by F. Bourgeois in \cite{Bour} (see also Theorem 10 in \cite{Giroux}) that if a closed manifold $\vd$ admits a contact structure, then so does $\vd\times \T$. Therefore, all moment-angle manifolds of the form $Z\times\T^{2m}$, where $Z$ is a connected sum of sphere products, admit contact structures.
For every case where $Z$ is a connected sum of sphere products we have an infinite family obtained by applying construction $Z\mapsto Z'$ an infinite number of times and in the different coordinates (as well as other operations). The basic cases from which to start these infinite families constitute also a large set and the estimates on their number in each dimension keep growing. Adding to those varieties their products with tori we obtain an even larger set of cases where an odd-dimensional $Z$ admits a contact structure.

\noi Another interesting fact is that most of them (including moment-angle mani\-folds) also have an open book decomposition. However, for these open book decompositions there does not exist a contact form which is supported in the open book decomposition  like in Giroux's theorem because the pages are not Weinstein manifolds (i.e manifolds of dimension $2n$ with a Morse function with indices of critical points lesser or equal to $n$). It is possible however that the pages of the book decomposition admit Liouville structures in which case one could apply the techniques of D. McDuff (\cite{McDuff}) and P. Seidel (\cite{Seidel}) to obtain contact structures.

\section*{Acknowledgements} I would like to thank Yadira Barreto, Santiago L\'opez de Medrano, Ernesto Lupercio
and Laurent Meersseman for their suggestions, lecture notes,  and sharing their thoughts about the different aspects of the subject of these notes with me during several years.

\noi {\bf This work was partially supported by project IN106817, PAPIIT, DGAPA,

\noi Universidad Nacional Aut\'onoma de M\'exico}

\bibliographystyle{abbrve}
\bibliography{main}

\end{document}